\titleformat{\section}{\Large\bfseries}{\thesection.}{4pt}{}
\titleformat{\subsection}{\large\bfseries}{\thesection.\arabic{subsection}.}{4pt}{}
\titleformat{\subsubsection}{\bfseries}{\thesection.\arabic{subsection}.\arabic{subsubsection}.}{4pt}{}
\titleformat*{\paragraph}{\bfseries}
\titleformat*{\subparagraph}{\bfseries}
\def\RR{\mathbb{R}}
\newcommand{\be}{\begin{equation}}
\newcommand{\ee}{\end{equation}}
\newcommand{\Ls}{\mathscr{L}}
\newcommand{\e}{\varepsilon} 
\newcommand{\la}{\langle} 
\newcommand{\ra}{\rangle} 
\newcommand{\pa}{\partial} 
\newcommand{\lab}{\label} 
\newcommand{\ba}{\begin{array}}
\newcommand{\ea}{\end{array}}
\newcommand{\bee}{\begin{eqnarray*}}
\newcommand{\eee}{\end{eqnarray*}}
\newcommand{\bea}{\begin{eqnarray}}
\newcommand{\eea}{\end{eqnarray}}
\newcommand{\non}{\nonumber}
\newcommand{\lb}{\lambda}
\newcommand{\NL}{\text{NL}}
\newcommand{\mH}{\mathcal{H}}
\newcommand{\tH}{\tilde{\mathcal{H}}}
\def\fref#1{{\rm (\ref{#1})}}
\newcommand{\sfs}{\mathsf{s}}
\newcommand{\sfF}{\mathsf{F}}
\newcommand{\sfu}{\mathsf{u}}
\newcommand{\sfY}{\mathsf{Y}}
\newcommand{\sfw}{\mathsf{w}}
\newcommand{\sfR}{\mathsf{R}}
\newcommand{\sfS}{\mathsf{S}}
\newcommand{\sfM}{\mathsf{M}}
\newcommand{\sfE}{\mathsf{E}}
\newcommand{\sfK}{\mathsf{K}}
\newcommand{\sfL}{\mathsf{L}}
\newtheorem{theoremmain}{Theorem}
\newtheorem{theorem}{Theorem}[section]
\newtheorem{corollary}[theorem]{Corollary}
\newtheorem{definition}[theorem]{Definition}
\newtheorem{lemma}[theorem]{Lemma}
\newtheorem{proposition}[theorem]{Proposition}
\newtheorem{remark}[theorem]{Remark}
\numberwithin{equation}{section}
\begin{document}
\title{On singularity formation for the two dimensional unsteady Prandtl system around the axis}

\author[C. Collot]{Charles Collot}
\address{Department of Mathematics, New York University in Abu Dhabi, Saadiyat Island, P.O. Box 129188, Abu Dhabi, United Arab Emirates.}
\email{cc5786@nyu.edu}
\author[T.-E. Ghoul]{Tej-Eddine Ghoul}
\address{NYUAD Research Institute, New York University Abu Dhabi, PO Box 129188, Abu Dhabi,   United Arab Emirates.}
\email{teg6@nyu.edu}
\author[S. Ibrahim]{Slim Ibrahim}
\address{Department of Mathematics and Statistics, University of Victoria, 3800 Finnerty Road, Victoria, B.C., Canada V8P 5C2.}
\email{ibrahims@uvic.ca}
\author[N. Masmoudi]{Nader Masmoudi}
\address{NYUAD Research Institute, New York University Abu Dhabi, PO Box 129188, Abu Dhabi,   United Arab Emirates.
 Courant Institute of Mathematical Sciences, New York University, 251 Mercer Street, New York, NY 10012, USA}
\email{masmoudi@cims.nyu.edu}

\keywords{Prandtl's equations, blow-up, singularity, self-similarity, stability, analyticity, blowup rate}
\subjclass{35B44, 35Q35, 35K58, 35B40, 35B35, 35A20, 35B36}
\maketitle

\begin{abstract} 
We consider the two dimensional unsteady Prandtl system. For  a special  class  of outer Euler flows
 and solutions of the Prandtl system, the trace of the tangential derivative of the tangential velocity along the transversal axis solves a closed one dimensional equation. First, we give a precise description of singular solutions for this reduced problem. A stable blow-up pattern is found, in which the blow-up point is ejected to infinity in finite time, and the solutions form a plateau with growing length. Second, in the case where, for a general analytic solution, this trace of the derivative on the axis  follows the stable blow-up pattern, we show persistence of analyticity around the axis up to the blow-up time, and establish a universal lower bound of $(T-t)^{7/4}$ for its radius of analyticity. 
\end{abstract}

\section{Introduction}

We consider the two dimensional unsteady Prandtl boundary layer equations:
\begin{align}\label{2DPrandtl}
\left\{\begin{array}{ll}
&u_t-u_{yy}+uu_x+vu_y=-p^E_x\quad\quad (t,x,y)\in[0,T)\times\mathbb R \times\RR_+,\\
&u_x+v_y=0,\\
&u\arrowvert_{y=0}=v\arrowvert_{y=0}=0,\quad\quad u\arrowvert_{y\rightarrow\infty}=u^E,
\end{array}
 \right.
\end{align}
where $\overrightarrow{u}=(u,v)$ is the velocity field,
$u^E$ and $p^E$ are the traces at the boundary of the tangential component of the underlying inviscid velocity field and the pressure. Prandtl in \cite{Prandtl} introduced this model to describe the behaviour of a fluid close to a physical boundary for high Reynolds numbers. He obtained this model as a formal limit of the Navier-Stokes equation when the viscosity goes to zero. He proposed the appearance of a boundary layer where the viscosity is still effective, describing the solution between the boundary and the interior part where  the dynamics is inviscid. The leading order term in the expansion in the boundary layer solves \eqref{2DPrandtl}, see for example \cite{SC,SC2,M14} for more on the derivation of the system.\\

\subsection{On singularity formation for the $2$-dimensional Prandtl equations}

In this paper we are interested in the formation of a singularity in  the Prandtl system. The fact that a singularity can appear in this system is a physical phenomenon that is called the unsteady separation. Van Dommelen and Shen \cite{VanShen80} obtained the first reliable numerical result in this direction, and explained how the separation is linked to the formation of singularity. They described the singularity as being a consequence of particles squashed in the streamwise direction, with a compensating expansion in the normal direction of the boundary. We refer to \cite{Cow83,RLS,GSS,HH} and references therein for additional numerical results on the singularity formation.\\

\noindent Singularity formation is one problem out of many others regarding the  Prandtl boundary layer system. 
 The system  is locally well-posed in the analytical setting  \cite{SC,LCS,KV}, or Gevrey setting \cite{DG}. Under monotonicity assumptions, the well-posedness holds in Sobolev regularity \cite{Oleinki68,MW,AWXY} and global weak solutions  also exist  globally \cite{XZ}. Note that the solutions we consider here do not satisfy the monotonicity assumption. In this case, the equation can be ill-posed in Sobolev regularity \cite{GVD}. Similar instabilities prevent Prandtl's system from being a good approximation of the Navier-Stokes equations at high Reynolds number in certain cases \cite{GGN}. Indeed, monotonicity and/or Gevrey regularity in the tangential x-variable are necessary to insure that this approximation holds. We refer to \cite{SC,GMM} and the references therein. Finally, let us mention that the Goldstein singularity in the steady case has been recently constructed in \cite{DM}.\\

\noindent The precise description of the formation of singularity is still an open problem. However, E and Engquist \cite{EE} proved that blow-up can happen. They make some symmetry assumptions and consider a trivial inviscid flow in the outer region ($u^E=p^E=0$). In this case, the trace of the tangential derivative of the horizontal component of the velocity along the transversal axis solves a closed one dimensional equation \eqref{1DPrandtl}. They proved existence of blow-up for this reduced problem. Their approach is by contradiction and does not provide any information about the mechanism that leads to the singularity. For a more general class of non-trivial inviscid outer flows  $(u^E,p^E)$ but still with a suitable assumption of symmetry, such a  reduction remains still possible, and the corresponding one dimensional problem still admits blow-up solutions as shown in \cite{KVW}. The authors of \cite{KVW} also use a convexity argument that does not give details about the singularity.\\

\noindent In this paper, our first results are a complete description of the mechanism that leads to the singularity for the reduced one dimensional problem, including the case of nontrivial inviscid flows in the outer region. In particular, we prove the existence of a stable blow-up pattern, and other unstable ones.\\

\noindent Our approach is inspired by the description of the so-called ODE blow-up for the semi-linear heat equation, see \cite{GK,BK,HV,MZ} in particular. Note that the incompressibility condition generates difficulties through the appearance of a nonlocal nonlinear transport term. Actually, this nonlocal term will induce two new effects, the singular point is ejected to infinity in finite time, and the solution forms a plateau with a growing length. Another difficulty comes from the boundary. Indeed, the blow-up is not localized near a single point but happens on a large zone. We perform a careful treatment near the boundary to show that the solution stays bounded in its vicinity. \\

\noindent The reduced one dimensional problem \eqref{1DPrandtl} with a different domain and boundary conditions also appears in a special class of infinite energy solutions to the Navier-Stokes equations \cite{GV}. The authors proved the existence of a similar stable blow-up pattern as the one we describe here, for a particular class of solutions. Their approach is based on parabolic methods and maximum principles, allowing for a non-perturbative argument, but requires many special assumptions. In particular, their argument does not seem to apply to the problem that we consider in the present paper. In addition, our approach based on energy methods is more robust, since it allows us to prove the stability of the fundamental profile, to construct  unstable blow-ups and to derive weighted estimates.\\

\noindent One may wonder how the one dimensional reduction is related to the full two dimensional problem. From the numerics in \cite{GSS} it seems that for certain solutions with symmetries the blow-up indeed happens on the vertical axis. However, for other solutions, such as the singularity considered by Van Dommelen and Shen, the numerics show that another singularity appears before the one on the vertical axis. Our second result shows that for analytic solutions, if the solution of the reduced one dimensional problem blows up with the aforementioned stable blow-up pattern, then the solution exists up to this blow-up time in a suitable neighborhood of the vertical axis with a universal lower bound on its local analyticity radius. This justifies that the one-dimensional profile constructed in Theorem \ref{th:main} describes blowing up solutions for the two-dimensional Prandtl system \fref{2DPrandtl}.\\

\noindent In \cite{CGM} we treated a two dimensional Burgers model with transverse viscosity. This corresponds to a simplified version of the Prandtl system with a trivial flow at infinity $u^E=p^E=0$ and no vertical velocity $v=0$. A similar one dimensional reduction can be made. More interestingly we were able to prove that the one dimensional problem captures the main features of the two dimensional singularity. As a result we obtained a complete description of the mechanism that leads to singularity for the two dimensional problem.\\

\noindent In the present work, we show that the viscosity is asymptotically negligible during the singularity formation. This indicates that the full $2$-d blow-up could correspond to leading order to that of the inviscid Prandtl equations. This has been proposed for the Van Dommelen and Shen singularity in \cite{VC,ESC,CSW}. In the recent paper \cite{CGM2}, Collot, Ghoul and Masmoudi studied the self-similar blow-up profiles of the inviscid $2$-d Prandtl equations. In particular, they show that there exists  one  of the form
$$
u(t,x,y)=(T-t)^{\frac 12} \Theta\left(\frac{x}{(T-t)^{\frac 32}},\frac{y}{(T-t)^{-\frac 12}} \right)
$$
where $T$ is the blow-up time, and the profile $\Theta (X,Y)$ satisfies $\pa_X \Theta(0,Y)=-\sin^2(Y/2)\mathds 1_{0\leq Y\leq 2\pi}$. Our main result in Theorem \ref{th:main} shows that this is precisely the profile of the reduced one dimensional equation. Therefore our result can be understood as a partial stability result for the profile $\Theta$. In a forthcoming paper, we will pursue its stability analysis for the full two dimensional viscous Prandlt's system.

\subsection{A first result on the blow-up of the derivative along the vertical axis} \label{sec:blowupresult}

Without loss of generality, we consider a trivial vanishing outer flow $u^E=p^E=0$. Our result adapts straightforwardly to more general outer flows, as they just generate additional lower order terms, see comments below. Consider an initial datum $u_0(x,y)$ of the horizontal component of the velocity field for the Prandtl equation that is odd in $x$. Consequently, the corresponding solution $u(t,x,y)$ is also odd in $x$ and
$$
u(t,0,y)=u_{xx}(t,0,y)=0.
$$
This allows one to consider only the dynamic of the tangential derivative of $u$ along the $y$-axis.
To do so, we set
\begin{align}\label{tangderdef}
\xi(t,y)=-u_x(t,0,y),
\end{align}
which obeys the following equation for $y\in [0,+\infty)$:
\begin{align}\label{1DPrandtl}
\left\{\begin{array}{ll}
\xi_t-\xi_{yy}-\xi^2+\left(\int_0^y \xi \right)\xi_y=0,\\
\xi(t,0)=0, \\
 \xi(0,y)=\xi_0(y).
\end{array}
 \right.
\end{align}
The local well-posedness for the above equation is standard, see for example Proposition \ref{pr:cauchy} which adapts the result of \cite{W}. In particular, solutions for initial data in $L^1([0,+\infty))$ exist, are instantaneously regularised and there holds the following blow-up criterion. If the maximal time $T$ of existence of the solution is finite, then
\be \lab{id:blowupcriterion}
\limsup_{t\uparrow T} \| \xi (t,\cdot)\|_{L^{\infty}([0,+\infty))}=+\infty.
\ee
Our first main result is the precise description of the singularity formation for the reduced one-dimensional problem \fref{1DPrandtl}.

\begin{theoremmain}[Stable blow-up for Equation \fref{1DPrandtl}] \lab{th:main}

There exists $\lb_0^*\gg 1$ such that for all $\lb_0\geq \lb_0^*$, an $\epsilon (\lb_0)>0$ exists with the following property. For an initial datum of the form:
\be \lab{id:condtion initiale}
\xi_0(y)=\lb_0^2 \cos^2 \left(\frac{y-\lb_0 \pi}{2\lb_0} \right)\mathds 1_{0\leq y\leq 2\lb_0 \pi}+\tilde \xi_0(y), \ \ \text{with} \ \ \| \tilde \xi_0 \|_{L^1([0,+\infty))} \leq \epsilon (\lb_0),
\ee
the unique solution to \fref{1DPrandtl} blows up at some time $T>0$ \footnote{note that our proof will show $T\rightarrow 0$ as $\lb_0\rightarrow +\infty$} with:
$$
\xi(t,y)=\lb^2(t) \cos^2 \left(\frac{y-y^*(t)}{2\lb(t)\mu(t)} \right)\mathds 1_{-\pi \leq \frac{y-y^*}{\lb \mu}\leq \pi }+\tilde \xi,
$$
where, for some $\mu_{\infty}>0$:
\be \lab{th:bd para}
\lb (t)=\frac{1}{\sqrt{T-t}}+O((T-t)^{3/2}), \ \ \mu(t)=\mu_{\infty}+O((T-t)), \ \ y^*(t)=\frac{\mu_{\infty}\pi}{\sqrt{T-t}}+O((T-t)^{-1/4}),
\ee
and
\be \lab{th:bd xi}
\| \tilde \xi \|_{L^{\infty}}\leq (T-t)^{-1+\frac 18}.
\ee
Moreover, on any compact set, the solution remains uniformly regular up to time $T$, so that for any $y\in [0,+\infty)$, the limit $\lim_{t\uparrow T} \xi (t,y)=\xi^*(y)$ exists and satisfies:
\be \lab{eq:blowupprofile}
\xi^*(y)\sim \frac{y^2}{4\mu_{\infty}^2} \ \ \text{as} \ \ y\rightarrow +\infty.
\ee

\end{theoremmain}

\begin{remark}

Our analysis could be extended to show the existence of other unstable blow-up dynamics for \eqref{1DPrandtl}. We show in Proposition \ref{pr:F} that there exists a countable family of blow up profiles $(G_k)_{k\geq 1}$, with $G_1(Z)=\cos ^2 (Z/2)\mathds 1_{-\pi\leq Z\leq \pi}$. We thus mention here as an open problem to show the existence of solutions to \eqref{1DPrandtl} blowing up with a $G_k$ profile for $k\geq 2$ according to:
$$
\xi(t,y)=(T-t)^{-1} G_k \left( \frac{y-y^*(t)}{\mu_\infty (T-t)^{\frac{1}{2k}-1}} \right)\mathds 1_{-a_k \leq \frac{y-y^*(t)}{\lb \mu}\leq a_k}+l.o.t.,
$$
where $a_k>0$ is defined in Proposition \ref{pr:F}, $y^*(t)= \mu_{\infty}a_k(T-t)^{\frac{1}{2k}-1}$, and $\mu_\infty>0$. A sketch of proof is given in arXiv:1808.05967v1 version 1.

\end{remark}

Let us make the following comments on the results of Theorem \ref{th:main}.

\vspace{0.2cm}

\noindent{\it 1. On the implication for Prandtl's boundary layer}. Our result shows that the blow-up does not happen at the boundary, nor at a finite distance from it, but the singularity is ejected to infinity. This fact is rarely emphasized, but can be seen on numerical results, see \cite{GSS} for example. This suggests that the boundary layer should interact with the outer Euler flow in connection  with other high order boundary layer models like the Triple Deck model \cite{IV}, which has been proposed to
describe flow regimes where the Prandtl theory is expected to fail. 

Moreover, Prandtl's equations are derived neglecting the viscosity effects in the horizontal direction $x$. Since the $x$-derivative becomes unbounded in our result, the approximation of the Navier-Stokes equations by the Prandtl system  is not valid just before the singularity formation. \\

\noindent{\it 2. On the symmetry assumptions and the stable singularity formation}. The reduction to the one-dimensional problem \fref{1DPrandtl} breaks down in the general case without symmetry assumptions. Hence our stability result in Theorem \ref{th:main} should be understood within the symmetry class of odd solutions. Actually, the stable $2$-d singularity is expected to be a non-symmetrical one from \cite{VanShen80,VC,ESC,CSW}. In particular, the blow-up scales in the transversal $y$ direction are different from the one of Theorem \ref{th:main}, see \cite{GSS}. \\

\noindent{\it 3. On more general outer flows}. Our results could be extended to other non-trivial outer flows satisfying suitable symmetry assumptions (e.g. $u^E$ odd and $p^E$ even in $x$). Indeed, this will just induce the presence of new terms that are of lower order asymptotically during singularity formation, and will not perturb the blow-up mechanism. Hence the statement of Theorem \ref{th:main} would remain true. This is the case, for example, of the impulsively started cylinder \cite{VanShen80} $u^E=\kappa \sin x$ and $p^E=(\kappa^2/4) \cos (2x)$, for which the reduced equation \fref{1DPrandtl} becomes:
\be \label{eq:1D more gen}
\left\{\begin{array}{ll}
\xi_t-\xi_{yy}-\xi^2+\left(\int_0^y \xi \right)\xi_y=-\kappa^2,\\
\xi(t,0)=0, \ \ \xi(t,y)\underset{y\rightarrow +\infty}{\longrightarrow}-\kappa.
\end{array}
 \right.
\ee

\noindent{\it 4. Displacement thickness}. The displacement thickness $\delta^*$ is a quantity that measures the effect of the Prandtl layer on the outer Eulerian flow. It is defined as:
$$
\delta^*(t,x)=\int_0^{\infty}\left(1-\frac{u(t,x,y)}{u^E(t,x)} \right)dy,
$$
see for example \cite{SG,VanShen80}. For the aforementioned flow $u^E(t,x)=\kappa \sin x$, we have $\delta^*(t,0,\kappa)=\int_0^{\infty}(1+\frac{\xi(t,y)}{\kappa} )dy$ (using L'Hopital's rule). Kukavica, Vicol and Wang in \cite{KVW} proved the existence of blow-up solutions to \fref{eq:1D more gen}, by establishing that a quantity similar to $\delta^*(t,0)$ could blow-up in finite time. For $u^E=0$, the analogous quantity is $\int_0^\infty \xi (t,y)dy$ (which is $\lim_{\kappa \to 0} \kappa \delta^*(t,0,\kappa) $). For initial data more localised than $L^1$, we obtain that this quantity blows-up as $t\uparrow T$ and give an equivalent, see Proposition \ref{pr:stability weighted}.

\subsection{A second result on a general quantitative persistence of analyticity around the vertical axis up to the blow-up time}

In the sequel, as in Subsection \ref{sec:blowupresult}, we restrict ourselves to solutions of \fref{2DPrandtl} that are odd in $x$, with vanishing outer flow $u^E=p^E=0$ (again, this second assumption is for simplicity only). We consider higher order derivatives restricted to the vertical axis and introduce for $i\geq 0$:
\be \label{def:xii}
\xi_{i}(t,y):=\pa_x^{2i+1} u(t,0,y)
\ee
(hence $\xi=-\xi_{0}$ with this notation). They solve the following system for $i\geq 0$ and $y\in[0,+\infty)$:
\be \label{eq:evolutionxii}
\left\{
\begin{array}{l l}
\pa_t \xi_{i} =\pa_{yy}\xi_{i}-\sum_{j=0}^{i}  \binom{2i+1}{2j+1} \xi_{j} \xi_{i-j}+\sum_{j=0}^{i} \binom{2i+1}{2j} (\pa_y^{-1}\xi_{j})\pa_y \xi_{i-j}, \\
\xi_{i} (t,0)=0,\\
 \xi_{i} (0,y)=\pa_x^{2i+1}u(0,0,y).
\end{array}
\right.
\ee
Our second result describes solutions $u$ to \fref{2DPrandtl} around the axis $\{x=0\}$, combining the study of \fref{eq:evolutionxii} and an analytic extension. It shows that if $u_0$ is any initial datum to \fref{2DPrandtl} that is analytic in $x$ around the axis $\{x=0\}$ at time $t=0$, and such that $\pa_x u_{|x=0}$, defined as the solution to \eqref{1DPrandtl}, blows up at any time $T$ satisfying the properties in the conclusion of Theorem \ref{th:main}, then there is a local analytic solution up to time $T$ on a $2$ dimensional set around the vertical axis, with a radius of analyticity greater than $ (T-t)^{7/4}$. This justifies the blow-up profile of Theorem \ref{th:main} on a two-dimensional set with universal size (i.e. regardless\footnote{More precisely this set is $\{|x| \leq \tau (T-t)^{7/4}\}$, and higher order derivatives than $\pa_x u_{0|_{x=0}}$ only influence $\tau$.} of other information on $u_0$ other than $\pa_x u_{0|_{x=0}}$), establishing a bound for the blow-up rate for the analyticity radius. Moreover, this set is causal regarding the finite speed of propagation of the Prandtl equations. Other singularities of $u$ might form before time $T$, but this shows that they cannot happen too close to the vertical axis.

Given a function $\tau \in \mathcal C^{0}([0,T],(0,\infty))$, we introduce the set $E_{T,\tau}$:
\be \label{statement:id:defE}
 E_{T,\tau}:=\{(t,x,y)\in [0,T)\times \mathbb R \times [0,\infty), \ |x|\leq \tau (t) (T-t)^{7/4}\}.
\ee
Note that $\tau\geq \tau^*>0$ for some $\tau^*>0$. Writing $\langle a \rangle=\sqrt{1+a^2}$, we have

\begin{theoremmain} \label{th:analytic}

Assume $p^E=u^E=0$. Assume that $u_0:\mathbb R\times \mathbb R_+\rightarrow \mathbb R$ is odd in $x$, and analytic in $x$ on the set $\{|x|<\delta\}$ for some $\delta>0$, and satisfies the following hypotheses:
\begin{itemize}
\item[(i)] \emph{Analytic bound on the axis at initial time:} There exist $C_0,\tau_0>0$ such that for all $i\geq 0$, $\pa_x^{2i+1}u_0\in C([0,\infty))$ with for all $y\geq 0$:
\be \label{bd:analytichypothesis}
|\pa_x^{2i+1}u_0(0,y)|\leq C_0 \tau^{-2i-1}_0 (2i+1)! \langle y \rangle^{-2}.
\ee
\item[(ii)] \emph{Stable blow-up behaviour on the axis:} There exist $T,\mu,\iota,C_0'>0$ such that the solution $\xi$ to \fref{1DPrandtl} with initial datum $\xi_0(y)=-\pa_x u_{0}(0,y)$ blows up at time $T$ with:
$$
\xi (t,y)= \frac{1}{T-t} \cos^2 \left(\frac{y-\mu\pi (T-t)^{-\frac 12}}{2\mu (T-t)^{-\frac 12}} \right)\mathds 1_{-\pi \leq \frac{y-\mu\pi (T-t)^{-1/2}}{\mu (T-t)^{-1/2}}\leq \pi }+\tilde \xi (t,y)
$$
where for all $t\in [0,T)$:
\begin{align}
&\label{bd:lowerglobal} |\tilde \xi (t,y)| + (T-t)^{-\frac 12 }|\pa_y \tilde \xi (t,y)|\leq C_0' (T-t)^{-1+\iota}\langle y\sqrt{T-t} \rangle^{-2} \qquad  \quad \mbox{for } y\in [0,+\infty), \\
&\label{id:lowernearboundary} |\xi (t,y)|+|\pa_y \xi (t,y)|\leq C_0' \qquad \qquad \qquad   \quad \qquad  \qquad \qquad  \qquad \qquad  \ \mbox{for } y\in [0,1/2].
\end{align}
\end{itemize}

Then, there exists $\tau \in C^0([0,T], (0,+\infty))$ and a function $u\in C(E_{T,\tau})$ (as defined by \eqref{statement:id:defE}) with $u\in C^\infty(E_{T,\tau}\cap \{t>0\})$ such that:
\begin{itemize}
\item[(i)] $u$ is a classical solution to \fref{2DPrandtl} on $E_{T,\tau}\cap \{t>0\}$ and $u=u_0$ on $E_{t,\tau}\cap \{t=0\}$.
\item[(ii)] There exist $C_1,\tau_1>0$ such that for all $t\in [0,T)$ and $y\geq 0$:
\begin{align} \label{bd:analytiresult}
& |\pa_x^{3} u(t,0,y)|\leq C_1 (T-t)^{-4}  \\
&  |\pa_x^{2i+1} u(t,0,y)| \leq C_1 (T-t)^{-\frac 72i-\frac 18}\tau^{-2i-1}_1 (2i+1)! \qquad \qquad \mbox{for }i\geq 2.
\label{bd:analytiresult2}
\end{align}
\item[(iii)] the set $E_{T,\tau}$ is causal in the sense that at its boundary:
\be \label{id:causality}
|u_{|_{\left\{x=\pm \tau(t)(T-t)^{7/4} \right\}}}|<|\frac{d}{dt}( \tau(t) (T-t)^{7/4})|.
\ee
\end{itemize}

\end{theoremmain}

\noindent{\it 1. Uniqueness}. Assume that $u_0$ is everywhere $x$-analytic, with $|\pa_x^{i} u_0(x,y)|\leq \bar Ci!\bar \tau^{-i} \langle y\rangle^{-2}$ for all $(x,y)\in \mathbb R\times \mathbb R_+$, for some $\bar C,\bar \tau>0$. In this case, there exists $T_0>0$ and an everywhere $x$-analytic solution $\bar u$ to \fref{2DPrandtl} on $[0,T_0]\times \mathbb R\times \mathbb R_+$, as proved in \cite{KV}. Then the solution $u$ of Theorem \ref{th:analytic} coincides with $\bar u$ as long as it is defined, i.e. $u=\bar u$ on $E_{T,\tau}\cap \{ t\leq T_0\}$. This is because both solutions can be obtained by the same Picard iteration scheme.\\

\noindent{\it 2. On the assumptions}. Note that there are no conditions imposed on the parameters $T_0$, $T$, $\mu$, $\iota$, $C_0$, $C'_0$ and $\tau_0$. Thus $\xi(t=0)$ can, at the initial time, be away from the blow-up regime, in the sense that both $T$ and $\tilde \xi(t=0)$ can be arbitrarily large. The existence of solutions satisfying (i) is obtained as an easy extension of the proof of Theorem \ref{th:main}. We shall prove that for initial data in the space $\mathcal B$ with norm $\| f\|_{\mathcal B}=\sup_{y\geq 0} (|f(y)|+|\pa_y f(y)|)\langle y \rangle^{2} $:

\begin{proposition} \lab{pr:stability weighted}
There exists an open set in $\mathcal B$ of initial data $\xi_0$ such that the solution $\xi$ to \fref{1DPrandtl} satisfies the assumption (i) of Theorem \ref{th:analytic}. Moreover, there holds: $\int_0^\infty \xi(t,y)dy\sim (T-t)^{-3/2}\mu \pi$ as $t\uparrow T$.
\end{proposition}

\noindent{\it 3. Optimality of the lower bound}. We believe that the exponent $7/4$ is optimal. This value comes from optimal bounds for the linearised dynamics induced by assumption (i), and from certain nonlinear bounds for what we identify as the worst terms, which we believe are optimal, see the formal computation in Subsubsection \ref{subsubsec:analytic}. This value was critical for the analysis and reaching it required a delicate treatment.\\

\noindent{\it 4. Causality}. Prandtl's equations have finite speed of propagation along the tangential direction, see for example \cite{KMVW}. The inequality \fref{id:causality} states that at the boundary of $E_{T,\tau}$, the vector field $\pa_t+u\pa_x$ points outward.

\subsection{Strategy of the proof and organisation of the paper}

The proof of Theorems \ref{th:main} relies on a perturbative bootstrap argument around the blow-up profile. The maximum of the solution is the most sensitive location, where the viscosity effects are non negligible at the parabolic scale. There, the dynamic is given by an elliptic operator with compact resolvent \fref{eq:def L} in a suitable weighted space, as in \cite{GK,BK,HV,MZ}. A decomposition of the solution onto the eigenmodes allows to derive modulation equations for the parameters and decay for the remainder due to a spectral gap. In the midrange zone, away from the maximum but still on the support of the blow-up profile, the viscosity is negligible and we face a singularly perturbed problem \fref{eq:u}. We use a new Lyapunov functional with an adapted weight and take derivatives with a suitable vector field, which are the main technical novelties of the present paper. Finally, the solution is studied near the boundary via a no blow-up argument inspired from \cite{GK,HV2,MZ2}.

The proof of Theorem \ref{th:analytic} relies on the study of all $x$ and $y$ derivatives $\xi_{i,k}=\pa_y^k\xi_i$. Analyticity in $y$ is first obtained by a parabolic regularisation argument. Then, linear bounds for the dynamics of $\xi_{i,k}=\pa_y^k\xi_i$ are showed, using maximum principle and an explicit treatment of a non-local term. Then, a suitable analytic norm based on a weighted $L^{\infty}$ space is defined. It is controlled using a bootstrap type argument. The analytic norm  controls the nonlinear effects, including what we think are the worst ones, for which the $y$-derivatives act as forcing terms for the $x$-derivatives. To control boundary terms at $y=0$, we rely, classically, on the fact that controlling $t$-derivatives allows to control the $y$-ones for parabolic equations. Implementing this argument is delicate around the blow-up time $T$, and we use the fact that we are away from the blow-up zone $y\sim \mu \pi (T-t)^{-1/2}$ to obtain smallness in certain terms. \\

\noindent The paper is organized as follows. In Section \ref{sec:formal}, we give a heuristic argument for the derivation of the blow-up profiles and some of their properties in Proposition \ref{pr:F}. Section \ref{sec:main} is devoted to the proof Theorem \ref{th:main}. A bootstrap argument is described in Subsection \ref{subsec:bootstrap} and Proposition \ref{pr:bootstrap} states the stability result in renormalised variables. The analysis near the maximum is in Subsection \ref{subsec:max}, the modulation equations and the interior Lyapunov functional are established in Lemmas \ref{lem:modulation} and \ref{lem:dseL2rho}. The midrange zone $y\sim (T-t)^{-1/2}$ is analyzed in Subsection \ref{subsec:Z}, the exterior Lyapunov functionals are established in Lemmas \ref{lem:exteleft1} and \ref{lem:exteleft2}. The solution is studied on compact sets in the original variable in Lemma \ref{lem:noblowup}. The main Proposition \ref{pr:bootstrap} is proved in Subsection \ref{subsec:prbootstrap}, allowing to prove Theorem \ref{th:main} in the same subsection, and Proposition \ref{pr:stability weighted} in Subsection \ref{subsec:propadd}.

Theorem \ref{th:analytic} is proved in Section \ref{sec:analytic}. Linear bounds are first established in Proposition \ref{pr:linearanalytic}. Then the third order derivative and higher order derivatives for the full problem are bounded in Propositions \ref{pr:paxxx} and \ref{pr:analyticbootstrap} respectively, yielding the proof of Theorem \ref{th:analytic} in Subsection \ref{subsubsec:analytic}. The proof of Theorem \ref{th:analytic} uses that solutions to \fref{eq:evolutionxii} become instantaneously analytic in $y$, what is proved in Section \ref{sec:regularisation}.

\subsection*{Acknowledgements} The authors would like to thank the anonymous referees for their comments that helped improve the presentation and results of this paper. C. Collot is supported by the ERC-2014-CoG 646650 SingWave, S. Ibrahim is partially supported by NSERC Discovery grant \# 371637-2019 and N. Masmoudi is supported by NSF grant DMS-1716466, and by Tamkeen under the NYU Abu Dhabi Research Institute grant of the center SITE. Part of this work was done while C. Collot, T.-E. Ghoul and N. Masmoudi were visiting IH\'ES and they thank the institution. S. Ibrahim is grateful to New York University in Abu Dhabi for hosting him.  The authors thank V. T. Nguyen for helpful comments.


\section{Notation}

Let the measure
\be
\label{measure}
\rho(Y)=\frac 12 \sqrt{\frac 3 \pi}e^{-\frac{3Y^2}{4}}.
\ee
For a function $h$ defined on some half line $[Y_0,+\infty)$, we will write with an abuse of notation:
\be \lab{def:L2rho}
\| h \|_{L^2_\rho}^2=\int_{Y_0}^{+\infty} h^2(Y)\rho (Y)dY, \ \ \| h \|_{H^1_\rho}^2=\int_{Y_0}^{+\infty} (h^2(Y)+|\pa_Y h(Y)|^2)\rho(Y)dY,
\ee
and the value of $Y_0$ (being the image of the boundary $y=0$ in \fref{1DPrandtl} in the original variables $y$ by a change of variable), will always be clear from the context. We denote the primitive of a function integrated from the origin by
$$
\pa_y^{-1}h (y) =\int_0^y h(\tilde y)d\tilde y, \ \ \pa_Y^{-1}h(Y)=\int_0^Yh (\tilde Y)d\tilde Y, \ \ \pa_Z^{-1}h(Z)=\int_0^Zh(\tilde Z)d\tilde Z,
$$
the integration being with respect to the variables $y$, $Y$ or $Z$ to be defined later on. Note that the origin will not be preserved by the change of variables: $y=0$ does not correspond to $Y=0$ and the integrals do not start from the same point. Recall the Hermite polynomials:
\be \lab{eq:defhi}
h_0=1, \ \ h_1=\sqrt 3 Y, \ \ h_2=3Y^2-2.
\ee
The heat kernel will be denoted by:
\be \lab{eq:defheatkernel}
K_t (x)=\frac{1}{(4\pi t)^{\frac 12}}e^{-\frac{x^2}{4t}}.
\ee
We write $A\leq CB$ if $A,B\geq 0$ with a positive constant $C$ that is independent of all other parameters at stake in the analysis, we call such a constant "universal". The value of such a constant $C$ then may vary from one line to another.
We also write $A\lesssim B$ if $A\leq C B$, and $O(B)$ means a quantity that is $\lesssim B$. We write $C(K)$ for example to precise that the constant depends only on some parameter $K$. Finally,  $A\approx B$ if $A\lesssim B$ and $B\lesssim A$.

\section{Formal analysis and blow-up profiles} \lab{sec:formal}

In this section we formally derive the blow-up profile for \fref{1DPrandtl}. This approach relying on matched asymptotics is inspired by \cite{VGH,FK,BK,MZ,HV,GV}. Let us first perform a formal computation for the effect of the viscosity near the maximum of the solution, and for the obtention of the suitable self-similar variables. Assume that the solution to \fref{1DPrandtl} blows up at time $T$, with its maximum at a point $y^*(t)$, and that the speed of this point is given by the transport part of the equation: $y^*_t=\pa_y^{-1} \xi (y^*)$. We then use parabolic self-similar variables:
$$
Y=\frac{y-y^*}{\sqrt{T-t}}, \ \ s=-\log(T-t), \ \ f(s,Y)=(T-t)\xi(t,y)
$$
and find that $f$ solves, assuming that one can neglect the boundary condition,
$$
f_s+f+\frac Y2 \pa_Y f -f^2+\pa_Y^{-1}f \pa_Y f-\pa_{YY}f=0.
$$
An obvious solution of the above equation is the constant in space-time solution $f=1$, which corresponds to $\xi=1/(T-t)$ in the original variables (which solves \fref{1DPrandtl} but does not satisfy the boundary condition). Assuming that $1$ is a good approximation of the solution for some large zone in the variable $Y$, we compute the evolution of the correction $\varepsilon=f-1$:
\be \lab{eq:def L}
\varepsilon_s+\Ls \varepsilon=NL, \ \ \Ls\varepsilon:=-\varepsilon+\frac 32 Y\pa_Y\varepsilon-\varepsilon_{YY}, \ \ NL=\varepsilon^2-\pa_Y^{-1}\e \pa_Y \e.
\ee
The linearised operator $\Ls$ is well known.

\begin{proposition} \lab{pr:Ls}

The operator $\Ls:H^2_\rho \rightarrow L^2_\rho$ is essentially self-adjoint with compact resolvent. Its spectrum is $\{-1+3i/2, \ i=0,1,2,... \}$, with associated eigenfunctions
$$
h_{i}(Y)=H_{i} \left(\sqrt{3}Y \right)=\sum_{j=0}^{\left[\frac i2 \right]} \frac{i!}{j!(i-2j)!}3^{\frac{i-2j}{2}}(-1)^{j} Y^{i-2j}
$$
where $H_{j}$ is a Hermite polynomial.

\end{proposition}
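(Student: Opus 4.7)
The plan is to recognise $\Ls$ as a rescaled Ornstein--Uhlenbeck operator, for which the claimed properties follow from classical harmonic-oscillator theory.

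First, since $\pa_Y\rho=-\tfrac{3Y}{2}\rho$, the operator admits the divergence-form representation
\begin{equation*}
\Ls \e = -\frac{1}{\rho}\pa_Y\bigl(\rho\,\pa_Y\e\bigr) - \e.
\end{equation*}
For $\e,\psi\in C_c^{\infty}(\RR)$, integration by parts gives
\begin{equation*}
\la \Ls \e,\psi\ra_{L^2_\rho} = \int \pa_Y\e\,\pa_Y\psi\,\rho\,dY - \int \e\psi\,\rho\,dY,
\end{equation*}
so $\Ls$ is symmetric and bounded below on $C_c^{\infty}$. The associated closable quadratic form $Q(\e)=\|\pa_Y\e\|_{L^2_\rho}^2-\|\e\|_{L^2_\rho}^2$ has form domain $H^1_\rho$, and the Friedrichs extension yields a self-adjoint operator whose operator domain coincides with $H^2_\rho$.

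Next, to identify the spectrum and obtain compactness of the resolvent, I would conjugate by $\sqrt{\rho}$. The map $U\e=\sqrt{\rho}\,\e$ is a unitary isomorphism $L^2_\rho\to L^2(\RR,dY)$ that transforms $\Ls$ into the Schr\"odinger operator
\begin{equation*}
U\Ls U^{-1}=-\pa_{YY}+\frac{9Y^2}{16}-\frac{7}{4},
\end{equation*}
a rescaled one-dimensional quantum harmonic oscillator. By classical results (see e.g.\ Reed--Simon), this operator is essentially self-adjoint on $C_c^{\infty}(\RR)$, has compact resolvent, and admits the purely discrete spectrum $\tfrac{3}{4}(2n+1)-\tfrac{7}{4}=-1+\tfrac{3n}{2}$ for $n\in\NN$. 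Pulling back, $\Ls$ inherits self-adjointness on $H^2_\rho$, compact resolvent, and the claimed spectrum $\{-1+3i/2:i\in\NN\}$.

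Finally, I would verify by a direct computation that the polynomials $h_i(Y)=H_i(\sqrt 3\,Y)$ defined by the stated formula are the eigenfunctions. The Rodrigues-type expression yields $\pa_Y h_i = i\sqrt 3\,h_{i-1}$, and a short induction on $i$ gives
\begin{equation*}
-\pa_{YY}h_i+\frac{3Y}{2}\pa_Y h_i=\frac{3i}{2}h_i,
\end{equation*}
hence $\Ls h_i=(-1+3i/2)h_i$. The low-order cases $h_0=1$, $h_1=\sqrt 3\,Y$, $h_2=3Y^2-2$ can be checked by hand. Orthogonality in $L^2_\rho$ is automatic from symmetry of $\Ls$ applied to distinct eigenvalues, and density of polynomials in $L^2_\rho$ (standard for Gaussian measures with exponential moments) guarantees that $\{h_i\}_{i\ge 0}$ is a complete basis, exhausting the spectrum. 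The only delicate step is the essential self-adjointness on the natural domain $H^2_\rho$, which is precisely what the conjugation to the harmonic oscillator handles; a direct Sturm--Liouville limit-point analysis at $\pm\infty$ for the divergence form with quadratic drift would otherwise require more care.
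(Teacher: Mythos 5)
Your proof is correct, but it takes a genuinely different route from the paper's. The paper's argument is a one-line rescaling: set $z=\sqrt 3\,Y$, observe $\Ls$ becomes a multiple of the standard Fokker--Planck/Ornstein--Uhlenbeck operator $\pa_{zz}-\tfrac z2\pa_z+\tfrac13$ (note the paper omits the factor $z$ in the drift, a typo), and then cite the known Hermite spectral theory from \cite{MZ}. You instead conjugate by $\sqrt\rho$ to turn $\Ls$ into the Schr\"odinger operator $-\pa_{YY}+\tfrac{9Y^2}{16}-\tfrac74$, read off the spectrum $\tfrac34(2n+1)-\tfrac74=-1+\tfrac{3n}{2}$ from the quantum harmonic oscillator, and obtain self-adjointness and compact resolvent from the classical theory of that operator. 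Your computation of the conjugated potential and the eigenvalue offset checks out, and the identification $\pa_Y h_i=i\sqrt3\,h_{i-1}$ together with the recursion $-\pa_{YY}h_i+\tfrac{3Y}{2}\pa_Y h_i=\tfrac{3i}{2}h_i$ is consistent with the explicit low-order cases $h_0,h_1,h_2$. What your route buys is a self-contained argument that simultaneously delivers essential self-adjointness, compact resolvent, and the spectrum without appealing to an external reference, and it cleanly explains \emph{why} the spectrum is discrete (confining potential); the cost is that you must track the unitary equivalence and verify that the operator domain transported back from the oscillator is $H^2_\rho$, a point you flag appropriately but do not fully prove. The paper's route is shorter precisely because it delegates all functional-analytic care to \cite{MZ}, at the price of leaving the reader to look there. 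Both arguments are standard and correct; yours is the more instructive of the two.
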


\begin{proof}

Changing variables and setting $u(Y)=w(z)$, $z=\sqrt 3 Y$ gives $ \Ls u=-3(\hat {\Ls } w) (z)$ where $\hat{\Ls}:=\pa_{zz}-z \pa_z+1/3$ and the result follows from the corresponding result on $\tilde{\Ls}$ whose eigenbasis consists of Hermite polynomials, see \cite{MZ}.

\end{proof}

From Proposition \ref{pr:Ls} one sees that the linearised dynamics possesses one unstable direction, and an infinite number of stable modes. The unstable direction corresponds to the constant in space mode $1$, and is related to a symmetry of the equation: the invariance by time translation. One can assume that the blow-up time has been chosen well, so that this mode is not excited. Neglecting the nonlinear effects, one can assume from Proposition \ref{pr:Ls} that one mode dominates:
$$
\varepsilon (s, Y)\approx C e^{(1-\frac 32 i)s}h_i(Y), \ \ i\geq 1.
$$
From the behaviour at infinity of the polynomials $h_i$, the fact that $1+\varepsilon$ is maximal near the origin implies that $C=-c<0$ and that $i=2k$ is an even positive integer (the modes associated to odd integers are related to another symmetry of the equation: the invariance by space translation). Therefore, $\varepsilon (s,Y)\approx -ce^{(1-3 k)s}h_{2k}(Y)\approx -ce^{(1-3 k)s}Y^{2k}$ for $Y$ large. The correction $\varepsilon$ then starts to be of the same size as the leading order term $1$ in the zone
$$
|Y|\sim e^{(\frac 32 -\frac{1}{2k})s}, \ \ \text{i.e.} \ \ y-y^*\sim (T-t)^{-1+\frac{1}{2k}}.
$$
This suggests to introduce the new variables:
$$
Z:=\frac{Y}{e^{\left(\frac 32 -\frac{1}{2k} \right)s}}=(T-t)^{1-\frac{1}{2k}}(y-y^*), \ \ F(s,Z):=f(s,Y)
$$
and $F$ solves
$$
F_s+F-F^2+\left(-\left(1-\frac{1}{2k}\right)Z+\int_0^Z F(s,\tilde Z)d\tilde Z\right)\pa_Z F-e^{-\left(3-\frac 1k \right)s}\pa_{ZZ}F=0.
$$
Assuming that $F$ is the correct rescaled unknown, the viscosity is asymptotically negligible and $F$ should converge to a stationary solution of the self-similar inviscid equation, we obtain
\be \lab{eq:F1}
F-F^2+\left(-\left(1-\frac{1}{2k}\right)Z+\int_0^Z F(\tilde Z)d\tilde Z\right)\frac{d}{dZ} F=0.
\ee
In other words, in the renormalised variables, $F$ should tend to a self-similar solution of \fref{1DPrandtl} without viscosity and boundary which is:
\be \lab{eq:inviscidprandtl1d}
\psi_t-\psi^2+\left(\int_{-\infty}^y\psi \right) \pa_y \psi=0.
\ee
This equation admits a four-parameter group of symmetries: invariance by space and time translation and a two-parameter scaling group. Namely, if $\psi(t,x)$ is a solution then so is
$$
\frac{1}{\lambda} \psi \left(\frac{t-t_0}{\lambda},\frac{y-y_0}{\mu} \right), \ \ (t_0,y_0,\mu,\lambda)\in \mathbb R^2\times (0,+\infty)^2.
$$
Note that this contains the action of scaling subgroups of the form $\lambda^{2k/(2k-1)}\psi (\lambda^{2k/(2k-1)}t,y/\lambda)$ for $k\geq 0$. The following proposition describes the solutions to Equation \fref{eq:F1}, and is essentially taken from \cite{GV}.

\begin{proposition} \lab{pr:F}

Let $k\in \mathbb N$. Equation \fref{eq:F1} admits a one-parameter family of solutions
\be \lab{id:Fk rescaled}
G_k\left( \frac Z \mu \right), \ \ \mu>0.
\ee
For $k\geq 2$, $G_k$ is even, compactly supported on $[-a_k,a_k]$ with $a_k=\pi/(2k\sin(\pi/2k))$, positive and increasing on $(-a_k,0)$, of class $C^{1+1/(2k-1)-\epsilon}$ on $\mathbb R$, and satisfies the asymptotic expansions
$$
G_k(Z)\sim (2k-1)^{1+\frac{1}{2k-1}} (Z+a_k)^{1+\frac{1}{2k-1}} \ \ \text{as} \ Z\rightarrow -a_k, \ \ G_k(Z)=1-Z^{2k}+O(Z^{4k}) \ \ \text{as} \ \ Z\rightarrow 0.
$$
For $k=1$ one has the explicit formula, with a different scaling than $k\geq 2$ to ease notation:
\be \lab{id:F1}
G_1(Z)=\cos ^2 \left(\frac{Z}{2} \right)\mathds 1_{-\pi\leq Z\leq \pi}.
\ee

\end{proposition}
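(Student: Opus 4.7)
The plan is to solve the ODE \eqref{eq:F1} explicitly in quadrature by first reducing it, via one differentiation, to an autonomous second-order equation in $F$ alone, and then carrying out a phase-plane analysis. The endpoint $a_k$ is then identified by a classical Beta-function evaluation. The scaling invariance $Z \mapsto Z/\mu$ of \eqref{eq:F1} already produces the one-parameter family \eqref{id:Fk rescaled} from any single solution, so it suffices to construct one even profile $G_k$ normalised by $G_k(0) = 1$ and $G_k(Z) = 1 - Z^{2k} + o(Z^{2k})$ as $Z \to 0$; a local power-series analysis, inserting $F = 1 + \e$ into \eqref{eq:F1}, confirms that such a germ exists and is unique up to the scaling.

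The key step is to differentiate \eqref{eq:F1} in $Z$ and then use \eqref{eq:F1} itself to eliminate the nonlocal expression $-\alpha Z + \int_0^Z F$, where $\alpha = (2k-1)/(2k)$. This produces the autonomous equation
\be
(F')^2 (1 - \alpha - F) = F(1-F) F''.
\ee
Setting $p(F) = F'(Z)$, so that $F'' = p\,dp/dF$, this becomes separable, $dp/p = \frac{1 - \alpha - F}{F(1-F)}\,dF$, and a partial-fraction decomposition with residues $1 - \alpha = 1/(2k)$ at $F = 0$ and $-\alpha = -(2k-1)/(2k)$ at $F = 1$ integrates to
\be
p^2 = C^2 F^{1/k}(1-F)^{(2k-1)/k}.
\ee
Matching with the local behavior $F \sim 1 - Z^{2k}$ at the origin fixes $C = 2k$, yielding on $Z \in (0, a_k)$ the closed-form quadrature
\be
\frac{dF}{dZ} = -2k\, F^{1/(2k)}(1-F)^{(2k-1)/(2k)}.
\ee

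Separating variables and integrating from $Z = 0$ (where $F = 1$) to $Z = a_k$ (where $F = 0$) gives $a_k = \frac{1}{2k}\, B\bigl(\frac{2k-1}{2k}, \frac{1}{2k}\bigr)$, and the reflection identity $\Gamma(s)\Gamma(1-s) = \pi/\sin(\pi s)$ with $s = 1/(2k)$ produces the announced value $a_k = \pi/(2k \sin(\pi/(2k)))$. Evenness of $G_k$ on $[-a_k, 0]$ is inherited from the $Z \to -Z$ symmetry of \eqref{eq:F1}. The endpoint asymptotic comes from the simplification of the quadrature near $F = 0$: $dF/dZ \sim -2k F^{1/(2k)}$, which after integration with $F(a_k) = 0$ yields $F(Z) \sim (2k-1)^{1+1/(2k-1)}(a_k - Z)^{1+1/(2k-1)}$, and by evenness the same holds near $-a_k$ with $a_k - Z$ replaced by $Z + a_k$. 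Since $1 + 1/(2k-1) > 1$, extending $G_k$ by zero outside $[-a_k, a_k]$ gives a function of class $C^{1 + 1/(2k-1) - \e}$ that still solves \eqref{eq:F1} on all of $\mathbb{R}$, as every term vanishes outside the support. For $k = 1$ the substitution $F = \cos^2 \phi$ reduces the quadrature to $dZ = d\phi$, giving $G_1(Z) = \cos^2 Z$ on $[-\pi/2, \pi/2]$, from which the explicit formula \eqref{id:F1} follows by the rescaling $\mu = 2$.

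The main technical obstacle is spotting the autonomous reduction in the first step, which rests on the algebraic coincidence that after one differentiation the nonlocal term $-\alpha Z + \int_0^Z F$ can be recycled out of the equation using its original form. Once this is in hand, the remaining work is a classical phase-plane quadrature combined with a Beta-function evaluation, and the checks of evenness, endpoint Hölder regularity, and validity of the zero extension as a solution are routine.
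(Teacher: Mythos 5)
Your proof is correct, and it takes a genuinely different route from the paper's. The paper introduces a nonlocal change of variable $\xi$ defined by $d\xi/dZ = \xi\big/\big(-(1-\tfrac{1}{2k})Z + \int_0^Z G\big)$; this turns the nonlocal equation \fref{eq:F1} into the local first-order ODE $H - H^2 + \xi H' = 0$ for $H(\xi) = G(Z)$, solved explicitly as $H = (1+\xi)^{-1}$, after which the inverse map $Z(\xi)$ satisfies a second-order ODE that is integrated to $dZ/d\xi = \xi^{-(1-1/(2k))}/(1+\xi)$, and the endpoint $a_k$ emerges from the Mellin integral $\int_0^\infty \xi^{-(1-1/(2k))}(1+\xi)^{-1}\,d\xi = \pi/\sin(\pi/(2k))$. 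You instead differentiate \fref{eq:F1} once and use the equation itself to eliminate the nonlocal term, producing the autonomous second-order ODE $(F')^2(1-\alpha-F) = F(1-F)F''$, then run the standard phase-plane reduction $p = F'(Z)$, $F'' = p\,dp/dF$, obtaining the first integral $p^2 = C^2 F^{1/k}(1-F)^{(2k-1)/k}$ and the endpoint from the Beta integral $\frac{1}{2k}B\big(\frac{2k-1}{2k},\frac{1}{2k}\big)$. Both methods reduce to the same quadrature --- your substitution $F = 1/(1+\xi)$ transforms your Beta integral directly into the paper's Mellin integral, and both invoke Euler's reflection formula --- but the paths there are conceptually distinct: the paper localizes the nonlocal term by re-parametrising the domain, while you localize it by differentiating and recycling the equation. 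Your route is arguably more elementary and self-contained (no need to guess the auxiliary variable), and it makes the autonomous structure and phase-plane picture explicit; the paper's route gives the clean parametric form $F = 1/(1+\xi)$, $Z = Z(\xi)$ for free. One small remark: the paper's proof normalises so that $G(Z) = 1 - (2k)^{-2k}Z^{2k}$ near the origin and the endpoint is $\pi/\sin(\pi/(2k))$, while the proposition (and your computation) uses the normalisation $G_k(Z) = 1 - Z^{2k}$, which rescales $Z$ by $2k$ and gives $a_k = \pi/(2k\sin(\pi/(2k)))$; these are consistent under \fref{id:Fk rescaled}, and you correctly recover the displayed formula \fref{id:F1} via the further rescaling $\mu = 2$.
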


\begin{remark} \lab{re:F}

As this will be clear from the proof of Proposition \ref{pr:F} provided in below, we have $\int_0^{a_k}F(Z)dZ=(1-1/(2k))a_k$. Using this fact, one sees that equation \fref{eq:F1} admits other solutions of the form $G_k((Z-\mu a_k)/\mu)$. It also admits the trivial solutions $0$ and $1$. We claim that all other bounded solutions of \fref{eq:F1} can be obtained by gluing a finite or an infinite number of these solutions, when they attain $1$ or $0$. For example, the function:
$$
F(Z)=\left\{\begin{array}{l l} 1 \ \ \text{for} \ Z\leq 0, \\
G_k(Z) \ \ \text{for} \ 0\leq Z \leq a_k,\\
G_k\left( \frac{Z-\mu a_k-a_k}{\mu}\right), \ \ \text{for} \ a_k\leq Z  \end{array} \right.
$$
is also a solution with the same regularity.

The solutions $G_k$ of \fref{eq:F1} are also well defined for $k>0$ and $k\notin \mathbb N$. There is then a continuum of blow-up profiles for equation \fref{eq:inviscidprandtl1d}, but we expect that adding viscosity would prevent the appearance of non-smooth blow-up profiles.

\end{remark}

\begin{proof}
We perform the change of variables on $[0,+\infty)$:
$$
\frac{d\xi}{dZ}=\frac{\xi}{-\left(1-\frac{1}{2k}\right)Z+\int_0^Z G(\tilde Z)d\tilde Z}, \ \ H(\xi):=G(Z),
$$
so that equation \fref{eq:F1} becomes
$$
H-H^2+\xi\pa_\xi H=0
$$
whose solution is $H=(1+\xi)^{-1}$ (renormalizing the constant of integration). 
Notice that the function 1 is the only constant solution to \eqref{eq:F1}, and that for non-constant solutions, there should be a non-empty neighborhood such that  $-\left(1-\frac{1}{2k}\right)Z+\int_0^Z G(\tilde Z)d\tilde Z\neq0$. This justifies the above change of variables.
Unwinding the transformation one finds
$$
\frac{dZ}{d\xi} =\frac{1}{\xi} \left[-\left(1-\frac{1}{2k}\right)Z+\int_0^Z G(\tilde Z)d\tilde Z \right]
$$
which gives
$$
\frac{d^2 Z}{d\xi^2} =-\frac 1 \xi \frac{dZ}{d\xi}-\left(1-\frac{1}{2k}\right)\frac 1 \xi \frac{dZ}{d\xi}+\frac 1 \xi \frac{dZ}{d\xi} F(Z)=\frac{dZ}{d\xi}\left[-\left( 2-\frac{1}{2k} \right)\frac{1}{\xi}+\frac{1}{\xi+\xi^2} \right]
$$
and hence
$$
\frac{d}{dZ} \left(\log \frac{dZ}{d\xi}\right)=-\left( 2-\frac{1}{2k} \right)\frac{1}{\xi}+\frac{1}{\xi+\xi^2}.
$$
An integration yields
$$
\log \frac{dZ}{d\xi}=C+\log \left(\xi^{-\left(2-\frac{1}{2k}\right)}\right)+\log \xi -\log (\xi+1)
$$
with an integration constant $C$. Because of the invariance of the equations by scaling, we can without loss of generality, consider
$$
\frac{dZ}{d\xi} =\frac{\xi^{-\left(1-\frac{1}{2k}\right)}}{1+\xi}, \ \ Z(0)=0.
$$
Since $Z(0)=0$, one deduces that
$$
\lim_{\xi \rightarrow +\infty} Z(\xi)=\int_0^{\xi} \frac{\tau^{-\left(1-\frac{1}{2k}\right)}}{1+\tau}d\tau =\frac{\pi}{\sin \left(\frac{\pi}{2k}\right)}.
$$
and that as $\xi \rightarrow 0$,
$$
Z=2k\xi^{\frac{1}{2k}}(1+O(\xi))
$$
and that as $\xi \rightarrow +\infty$:
$$
Z=\frac{\pi}{\sin \left(\frac{\pi}{2k}\right)}-\frac{\xi^{-1+\frac{1}{2k}}}{1-\frac{1}{2k}}(1+O(\xi^{-1})).
$$
Near the origin $Z\sim 0$, this yields
$$
\xi =\left(\frac{Z}{2k}\right)^{2k}(1+O(Z^{2k}))
$$
and as $Z\to \frac\pi{\sin(\frac\pi{2k})}$:
$$
\xi=\left(1-\frac{1}{2k}\right)^{-\frac{2k}{2k-1}}\left(\frac{\pi}{\sin \left(\frac{\pi}{2k}\right)}-Z \right)^{-\frac{2k}{2k-1}}\left(1+O\left(\frac{\pi}{\sin \left(\frac{\pi}{2k}\right)}-Z \right)^{\frac{2k}{2k-1}}\right).
$$
Therefore near the origin $Z\sim 0$, $G(Z)=1-(2k)^{-2k}Z^{2k}+O(Z^{4k})$ and near $Z\sim \pi/\sin(\pi/(2k))$, we have
$$
G(Z)=\left(1-\frac{1}{2k}\right)^{\frac{2k}{2k-1}}\left(\frac{\pi}{\sin \left(\frac{\pi}{2k}\right)}-Z\right)^{\frac{2k}{2k-1}}(1+O((a_k-Z)^{\frac{2k}{2k-1}})). 
$$
For $k\geq 2$, we finally define $G_k(Z)=G(2kZ)$ where $G$ is defined above. $G_k$ also solves \fref{eq:F1} by scaling invariance, its support is $[-a_k,a_k]$ for $a_k=\pi/(2k\sin(\pi/(2k)))$ and it satisfies the desired asymptotic behaviour near $-a_k$ and $0$.
The computation in the case $k=1$ is more explicit, and gives $Z=2\tan^{-1}\sqrt\xi$, that is $Z=\frac1{\tan^2(Z/2)+1}=\cos^2(Z/2)$. Therefore the result follows.

\end{proof}

From Proposition \ref{pr:F} and Remark \ref{re:F}, equation \fref{eq:inviscidprandtl1d} then admits a family of backward self-similar profiles for $k\in \mathbb N$ which are smooth on their support:
$$
\psi(t,y)=\frac{1}{T-t} G_k\left((y-y^*(t))\frac{(T-t)^{1-\frac{1}{2k}}}{\mu} \right), \ \ y^*(t)=\frac{\mu a_k}{(T-t)^{1-\frac{1}{2k}}}+y_0^*, \ \ \mu>0.
$$
They blow up in finite time and their support, which is $y\in [y_0^*,y_0^*+2a_k/(\mu(T-t)^{1-1/2k})]$, is growing to infinity. The formal analysis we just performed indicates that they could be at the heart of the blow-up phenomenon.


\section{Equation on the axis} \lab{sec:main}

In this section we aim at proving Theorem \ref{th:main}. First, let us give the following local well-posedness result which is an adaptation of \cite{W}. Note that if $\xi$ solves \fref{1DPrandtl}, then $\lb^2\xi(\lb^2t,\lb y)$ is also a solution. The scaling transformation $h\mapsto \lb^2h(\lb y)$ is an isometry on $L^{1/2}([0,+\infty))$ and \fref{1DPrandtl} is then said to be $L^{1/2}$-critical.

\begin{proposition}[Local well-posedness] \lab{pr:cauchy}

Let $\xi_0\in L^1([0,+\infty))$. There exists $T(\| \xi_0\|_{L^1})>0$ and a unique solution of the Duhamel formulation of \fref{1DPrandtl} such that $\xi \in C([0,T],L^1([0,+\infty)))$, $\xi(0,\cdot)=\xi_0(\cdot)$ and\footnote{with a multiplicative constant that depends on $\| \xi_0\|_{L^1([0,+\infty)}$} $\| \pa_y \xi(t) \|_{L^1}\lesssim t^{-1/2}$. Moreover, $\xi\in C^{\infty}((0,T]\times [0,+\infty))$ and for each $k\in \mathbb N$, $\pa_y^k\xi \in C((0,T],L^1([0,+\infty)))$. For any $k\in \mathbb N$ and $0<T_1\leq T$, the solution map is locally uniformly continuous from $L^1$ into $C([T_1,T],W^{k,1}[0,+\infty))$.

\end{proposition}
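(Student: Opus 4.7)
The plan is to adapt the Weissler-type fixed-point argument of \cite{W} to the half line with Dirichlet boundary and nonlocal transport. Let $e^{t\Delta_D}$ denote the heat semigroup on $[0,+\infty)$ with Dirichlet condition, constructed by the method of images $K^D_t(y,y')=K_t(y-y')-K_t(y+y')$. It enjoys the standard smoothing estimates
\begin{equation*}
\|e^{t\Delta_D}f\|_{L^p}\lesssim t^{-\frac 12\left(\frac{1}{q}-\frac{1}{p}\right)}\|f\|_{L^q},\qquad \|\partial_y e^{t\Delta_D}f\|_{L^p}\lesssim t^{-\frac 12-\frac 12\left(\frac{1}{q}-\frac{1}{p}\right)}\|f\|_{L^q}
\end{equation*}
for $1\leq q\leq p\leq\infty$. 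Since $e^{t\Delta_D}$ produces functions vanishing at $y=0$, any solution of the Duhamel formulation
\begin{equation*}
\xi(t)=e^{t\Delta_D}\xi_0+\int_0^t e^{(t-s)\Delta_D}N(\xi)(s)\,ds,\qquad N(\xi):=\xi^2-\left(\int_0^y\xi\right)\partial_y\xi,
\end{equation*}
automatically satisfies the boundary condition, and I would seek a fixed point of the associated map $\Phi$ in the time-weighted space
\begin{equation*}
X_T:=\Bigl\{\xi\in C([0,T],L^1):\,\|\xi\|_{X_T}:=\sup_{0<t\leq T}\|\xi(t)\|_{L^1}+\sup_{0<t\leq T}t^{\frac 12}\|\partial_y\xi(t)\|_{L^1}<\infty\Bigr\}.
\end{equation*}

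Control of the nonlinearity uses two elementary embeddings: $\|\int_0^y\xi\|_{L^\infty}\leq\|\xi\|_{L^1}$ is trivial, and $\|\xi\|_{L^\infty}\leq\|\partial_y\xi\|_{L^1}$ follows from $\xi(t,0)=0$ since then $\xi(t,y)=\int_0^y\partial_y\xi(t,y')\,dy'$. Combining them,
\begin{equation*}
\|N(\xi)(s)\|_{L^1}\leq\|\xi(s)\|_{L^1}\|\xi(s)\|_{L^\infty}+\Bigl\|{\textstyle\int_0^y\xi(s)}\Bigr\|_{L^\infty}\|\partial_y\xi(s)\|_{L^1}\leq 2\|\xi\|_{X_T}^2\,s^{-\frac 12}.
\end{equation*}
Plugging this into Duhamel and invoking the smoothing estimates together with $\int_0^t(t-s)^{-1/2}s^{-1/2}ds=\pi$ yields
\begin{equation*}
\|\Phi(\xi)(t)\|_{L^1}\leq\|\xi_0\|_{L^1}+C\|\xi\|_{X_T}^2\, t^{\frac 12},\qquad t^{\frac 12}\|\partial_y\Phi(\xi)(t)\|_{L^1}\leq C\|\xi_0\|_{L^1}+C\|\xi\|_{X_T}^2\, t^{\frac 12},
\end{equation*}
with an analogous Lipschitz estimate on the difference of two candidates. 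Choosing the radius $M=2C\|\xi_0\|_{L^1}$ and $T=T(\|\xi_0\|_{L^1})>0$ small enough turns $\Phi$ into a contraction on $\{\|\cdot\|_{X_T}\leq M\}$, and the Banach fixed-point theorem produces the unique local solution.

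For higher regularity I would bootstrap the Duhamel formula: since $\xi\in X_T$, the $L^1\to W^{k,\infty}\cap W^{k,1}$ smoothing of $e^{t\Delta_D}$ applied between times $t/2$ and $t$ shows that $\xi(t)\in W^{k,\infty}\cap W^{k,1}$ for every $k\in\NN$ and $t\in(0,T]$, continuously in $t$. The equation itself then supplies all time derivatives and one obtains $\xi\in C^\infty((0,T]\times[0,+\infty))$ along with $\partial_y^k\xi\in C((0,T],L^1)$. No compatibility obstruction appears because both $\xi$ and $\int_0^y\xi$ vanish at $y=0$, forcing $N(\xi)(t,0)=0$. Finally, the local uniform continuity of the solution map into $C([T_1,T],W^{k,1})$ for $T_1>0$ follows from the $X_T$-Lipschitz estimate on $[0,T_1/2]$ combined with the smoothing estimates between $T_1/2$ and $T_1$.

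The main point to handle carefully is that the nonlocal transport $(\int_0^y\xi)\partial_y\xi$ loses one derivative compared to a semilinear source, and this is compensated exactly by the parabolic gain $\|\partial_y\xi(t)\|_{L^1}\lesssim t^{-1/2}\|\xi_0\|_{L^1}$; this is what forces the time-weighted second component of $X_T$ above and is the only essential difference from Weissler's argument for a scalar heat equation on the whole line.
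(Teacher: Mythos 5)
The paper gives no proof of this proposition; it simply states that it "adapts the result of \cite{W}" (Weissler's contraction argument for semilinear heat equations), and your write-up is precisely such an adaptation, so the approaches coincide. Your argument is essentially correct: the Dirichlet heat kernel by images, the time-weighted space $X_T$, the bilinear estimate $\| N(\xi)(s)\|_{L^1}\lesssim s^{-1/2}\|\xi\|_{X_T}^2$ closing via the Beta-function identity $\int_0^t(t-s)^{-1/2}s^{-1/2}ds=\pi$, and the smoothing bootstrap for higher regularity are all the right ingredients, and the key observation that the derivative loss in the transport term is compensated exactly by the parabolic gain $t^{-1/2}$ is the heart of the matter.

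Two small remarks, neither of which is a gap. First, the bound $\|\xi\|_{L^\infty}\le \|\partial_y\xi\|_{L^1}$ is needed for arbitrary elements of $X_T$ (not just fixed points), and for a general $\xi\in X_T$ one cannot invoke $\xi(t,0)=0$; the estimate nevertheless holds because $\xi(t)\in W^{1,1}([0,+\infty))$ forces $\xi(t,y)\to 0$ as $y\to+\infty$, so $\xi(t,y)=-\int_y^\infty\partial_y\xi$. Second, the contraction argument yields uniqueness only in a small ball of $X_T$; uniqueness in the full class stated in the proposition (i.e. any $\xi\in C([0,T],L^1)$ with $\|\partial_y\xi(t)\|_{L^1}\lesssim t^{-1/2}$) requires the standard additional Gronwall-type step, which you omit but which is routine. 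With those clarifications the proof is complete and faithful to the intended Weissler adaptation.
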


Solutions associated to initial data of the form \fref{id:condtion initiale} are thus well-defined and we now turn to the proof of Theorem \ref{th:main}. We will use sometimes alternative formula for the profile:
\bea
\lab{id:G1} G_1(Z)&=&\cos^2 \left(\frac Z2\right)\mathds 1_{-\pi\leq Z \leq \pi}=\left(\frac 12 +\frac 12 \cos (Z)\right)\mathds 1_{-\pi\leq Z \leq \pi}\\
\non &=& 1-\frac{Z^2}{4}+\frac{Z^4}{48}+O(|Z|^6) \ \ \text{as} \ Z \rightarrow 0.
\eea
The proof of Theorem \ref{th:main} relies on a bootstrap argument performed near the blow-up profile. First we explain how to suitably decompose a solution near the blow-up profile and then set up the bootstrap procedure. The fact that such solutions satisfy the properties of Theorem \ref{th:main} is then showed at the end of this section.


\subsection{Adapted geometrical decomposition and renormalised flow}

The following lemma states that in a suitable neighborhood of the set of self-similar profiles, there exists a unique way to project the solution onto this set using adapted orthogonality conditions.

\begin{lemma}[Geometrical decomposition] \lab{lem:decomposition}

There exist $\lambda^*,\delta,K>0$ such that for all $\lb_0\geq \lambda^*$ and $Y_0\leq -\lambda_0^2$, for any regular $\varepsilon \in B_{L^2_\rho}(\delta \lambda_0^{-4})$ with $\varepsilon (Y_0)=-G_1(Y_0/\lb_0^2)$, there exist $(\lambda,\mu,\tilde Y_0)\in (0,+\infty)^2\times \mathbb R$, such that the following decomposition holds
$$
G_1\left(\frac{Y}{\lambda_0^2}\right)+\varepsilon (Y)=\lambda^2G_1\left(\frac{Y-\tilde Y_0}{\lambda^2\mu}\right) +\tilde \varepsilon (Y-\tilde Y_0)\ \ \text{with} \ \ \tilde \varepsilon \perp h_0,h_1,h_2 \ \text{in} \ L^2_\rho .
$$
Moreover, these are the only such parameters satisfying $|\lambda-1|\lambda_0^4+|\mu|+|\tilde Y_0|\leq K$. This defines a mapping $\varepsilon \mapsto (\lambda,\mu,\tilde Y_0)$, which is of class $C^1$ in $L^2_\rho$.

\end{lemma}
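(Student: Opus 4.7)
The plan is to apply the implicit function theorem (IFT), using the three modulation parameters $(\lambda,\mu,\tilde Y_0)$ to enforce the three orthogonality conditions. For $\varepsilon\in L^2_\rho$ small and parameters in a neighbourhood of the unperturbed base point, at which $\tilde\varepsilon\equiv 0$ so that $\lambda^2 G_1(\cdot/(\lambda^2\mu))$ reproduces $G_1(\cdot/\lambda_0^2)$, I would define the residual
$$
\tilde\varepsilon_{\lambda,\mu,\tilde Y_0,\varepsilon}(Z):=G_1\!\left(\frac{Z+\tilde Y_0}{\lambda_0^2}\right)+\varepsilon(Z+\tilde Y_0)-\lambda^2 G_1\!\left(\frac{Z}{\lambda^2\mu}\right),
$$
together with the finite-dimensional map
$$
\Phi(\lambda,\mu,\tilde Y_0,\varepsilon):=\bigl(\langle\tilde\varepsilon_{\lambda,\mu,\tilde Y_0,\varepsilon},h_i\rangle_\rho\bigr)_{i=0,1,2}\in\RR^3.
$$
By construction $\Phi=0$ at the base point when $\varepsilon=0$, and the boundary matching condition for $\tilde\varepsilon$ at the endpoint is satisfied automatically thanks to the hypothesis $\varepsilon(Y_0)=-G_1(Y_0/\lambda_0^2)$.

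The next step is to compute the Jacobian $D:=\partial_{(\lambda,\mu,\tilde Y_0)}\Phi$ at the base point. The three infinitesimal generators, namely $\partial_\lambda(\lambda^2 G_1(Z/(\lambda^2\mu)))$, $\partial_\mu(\lambda^2 G_1(Z/(\lambda^2\mu)))$, and $\partial_{\tilde Y_0}\tilde\varepsilon$, are respectively even, even, and odd functions of $Z$ at the base point. Since $h_0,h_2$ are even and $h_1$ is odd in $L^2_\rho$, $D$ decouples into a $2\times 2$ even block (pairing $\partial_\lambda,\partial_\mu$ with $h_0,h_2$) and a scalar odd block (pairing $\partial_{\tilde Y_0}$ with $h_1$). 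Using the Taylor expansions $G_1(Z/\lambda_0^2)=1-Z^2/(4\lambda_0^4)+O(Z^4/\lambda_0^8)$ and $G_1'(Z/\lambda_0^2)=-Z/(2\lambda_0^2)+O(Z^3/\lambda_0^6)$, valid on the effective support of the Gaussian $\rho$, together with the explicit moments of $h_i$ against $\rho$, I would compute the leading entries of each block and check that both determinants are nonzero, with explicit lower bounds depending on $\lambda_0$.

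A quantitative IFT then yields the desired map $\varepsilon\mapsto(\lambda,\mu,\tilde Y_0)$, with $C^1$ dependence on $\varepsilon\in L^2_\rho$ as a standard by-product. The main obstacle I expect is tracking uniformity in the large parameter $\lambda_0$: the Jacobian degenerates as $\lambda_0\to\infty$, so its inverse grows polynomially in $\lambda_0$, and the smallness assumption $\|\varepsilon\|_{L^2_\rho}\le\delta\lambda_0^{-4}$ has to be precisely tuned so that the resulting modulation parameters stay within the prescribed range $|\lambda-1|\lambda_0^4+|\mu|+|\tilde Y_0|\le K$. Uniqueness in this range follows from the quantitative invertibility of $D$ combined with a mean value argument applied to $\Phi$, controlling how far $\Phi$ can drift under a perturbation of $(\lambda,\mu,\tilde Y_0)$ of admissible size.
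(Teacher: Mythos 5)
Your proposal matches the paper's approach: a three-parameter implicit function theorem driven by pairing the residual with $h_0,h_1,h_2$, with invertibility of the Jacobian checked via parity and a Taylor expansion of $G_1$ near the origin. The uniformity issue in $\lambda_0$ that you correctly anticipate is handled in the paper by renormalising before applying the IFT --- replacing $\lambda$ by $1+\lambda_0^{-4}\lambda'$, scaling $\varepsilon$ by $\lambda_0^{-4}$, and multiplying $\Phi$ by $\lambda_0^4$ --- so that the Jacobian at the base point is uniformly $O(1)$ for $\lambda_0\geq\lambda^*$ and a single application of the standard IFT then gives the uniform neighbourhood, rather than tracking a polynomially degenerating inverse by hand.
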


\begin{remark}

One has to keep track of the free boundary in the $Y$ variable, and we made a slight abuse of notation in Lemma \ref{lem:decomposition}. Indeed, note that the space $L^2_\rho$ in which $\varepsilon$ belongs is given by \fref{def:L2rho} with boundary at $Y_0$, whereas the space $L^2_\rho$ in which $\tilde \varepsilon$ belongs, and in which it enjoys orthogonality condition is defined by \fref{def:L2rho} with boundary at $Y_0-\tilde Y_0$.

\end{remark}

The proof of the above lemma is a standard combination of the implicit function theorem and a Taylor expansion of $G_1$ near the origin. It is relegated to Appendix \ref{ap:decomposition}. \\

\noindent For a function $\xi:[0,T)\times [0,+\infty)\rightarrow \mathbb R$, given parameters $(\lambda,\mu,y^*)\in C^{1}([0,T),(0,+\infty)^2\times \mathbb R)$, we define two renormalizations. The first one is the parabolic self-similar renormalization close to the blow-up point:
\be \lab{def:renormalisationpara}
s=s_0+\int_{t_0}^t \lambda^2(\tilde t)d\tilde t, \ \ Y=\lambda (y-y^*), \ \ f(s,Y)=\frac{1}{\lambda^2}\xi (t,y).
\ee
The second one is the renormalization associated to the leading order of the profile:
\be \lab{def:renormalisationpara2}
Z=\frac{y-y^*}{\lambda \mu}=\frac{Y}{\lambda^2\mu}, \ \ F(s,Z)=\frac{1}{\lambda^2}\xi(t,y)=f(s,Y).
\ee
The function $\xi$ solves \fref{1DPrandtl} if and only if the functions $f$ and $F$ solve the equations
\be \lab{eq:f}
\left\{ 
\begin{array}{l l }
f_s+\frac{\lambda_s}{\lambda}(2+Y\pa_Y)f-f^2 +\pa_Y^{-1}f\pa_Yf+\left(\int_{-\lb y^*}^0 f -\lb y^*_s \right)\pa_Yf-\pa_{YY}f =0,\\
f(s,-(\pi+a)\lb^2\mu)=0, \end{array}\right.
\ee
and
\be \lab{eq:F}
\left\{ 
\begin{array}{l l }
 F_s+\frac{\lb_s}{\lb}(2-Z\pa_Z)F-\frac{\mu_s}{\mu}Z\pa_Z F-F^2+\pa_Z^{-1}F\pa_Z F+\left(\int_{-\frac{y^*}{\lb \mu}}^0 F - \frac{y^*_s}{\lb \mu} \right)\pa_Zf -\frac{1}{\lambda^4\mu^2}\pa_{ZZ}F=0,\\
F(s,-(\pi+a))=0,
\end{array}\right.
\ee
respectivelly. Since $\lambda$ will behave like $(T-t)^{-1/2}$, and the blow-up point will behave like $\pi \mu (T-t)^{-1/2}$, we introduce the correction $a$:
\be \lab{def:a}
y^*=\lambda \mu (\pi+a).
\ee
We adopt the following different notation for the remainder:
\be \lab{id:decomposition vp}
f(s,Y)=G_1(Z)+\e(s,Y), \ \ F(s,Z)=G_1(Z)+u(s,Z), \ \ \text{so} \ \text{that} \ \e(s,Y)=u(s,Z).
\ee


\subsection{The weighted norm and derivative outside the blow-up point} \lab{subsec:wq}

To control the solution, we need a special weight and a special vector field to take derivatives, both adapted to the linearised operator in the $Z$ variable. We refer to subsection \ref{subsec:Z} and Lemma \ref{lem:exteriorlinear} for the motivation regarding these choices. Let $q:\mathbb R \rightarrow [0,+\infty)$ be an even function satisfying the following properties. $q\in C^2((0,+\infty))$, $q(0)=0$, $q'>0$ on $(0,\pi)$ with a limit on the right of the origin that exists and satisfies $\lim_{Z\downarrow 0} q'(Z)>0$, $q'(\pi)=0$, $q''(\pi)<0$, and $q(Z)=q(\pi)=1$ for $Z\geq \pi$. Define the weight $w$ on $(0,+\infty)\times \mathbb R^*$ by:
\be \lab{eq:def w}
w(s,Z):= \left\{ \begin{array}{l l}
\frac{1+\cos Z}{(1-\cos Z)\sin^4 Z} \frac{1}{\sin (-Z)} 4(\pi+Z)^3 \frac{1}{s^{q(Z)}} \ \  \ \text{if} \ Z\in (-\pi,0),\\
\frac{1+\cos Z}{(1-\cos Z)\sin^4 Z} \frac{1}{\sin Z} 4(\pi-Z)^3 \frac{1}{s^{q(Z)}} \ \ \ \text{if} \ Z\in (0,\pi),\\
\frac{1}{s}, \ \ \ \text{if} \ |Z|\geq \pi .
\end{array} \right.
\ee
Note that the weight $w(s,\cdot)$ is even, of class $C^1$ on $(0,+\infty)$, and $C^2$ on $(0,\pi)$ and $(\pi,+\infty)$. To take derivatives in a suitable way, we will use the vector field $A\pa_Z$, where:
\be \lab{eq:def A}
A(Z) := \left\{ \begin{array}{l l} 
-1  \ \ \text{for} \  Z\leq -\frac \pi 2,\\
\sin Z  \ \ \text{for} \ -\frac \pi 2 \leq Z \leq \frac \pi 2, \\
1 \ \ \text{for} \  \frac \pi 2 \leq Z.
\end{array} \right.
\ee
Note that one has the following sizes for $s>0$ and $Z\in [-\pi,\pi]$:
\be \lab{bd:w}
w\approx \frac{1}{|Z|^7s^{q(Z)}}, \ \ |A|\approx |Z|.
\ee


\subsection{The bootstrap regime} \lab{subsec:bootstrap}

The solution we will construct will be close to the blow-up profile in the following sense. At initial time we require the following bounds, involving parameters which will be fixed later on. Note that Lemma \ref{lem:decomposition} will imply the uniqueness of the decomposition used below:
\be \lab{eq:orthogonalite}
f (s,Y)=G_1 \left(\frac{Y}{\lb^2\mu}\right)+\e(s,Y), \ \ \e \perp_\rho (h_0,h_1,h_2),
\ee

\begin{definition}[Initial closeness] \lab{def:ini}

Let $M\gg 1$, $s_0\gg 1$ such that $M^3e^{-s_0}\ll 1$,  $0<\nu \ll 1$ and $\xi_0\in C^{\infty}([0,+\infty),\mathbb R)$ with $\xi_0(0)=\pa_{yy}\xi_0 (0)=0$. We say that $\xi_0$ is initially ($t=t_0$ i.e. $s=s_0$) close to the blow-up profile if there exists $\lambda_0>0$, $a_0\in \mathbb R$ and  $\mu_0>0$ such that the following properties are verified. In the variables \fref{def:renormalisationpara} one has the decomposition \fref{eq:orthogonalite} with $(s,\e(s),\lambda,\mu)=(s_0,\e_0,\lambda_0,\mu_0)$, where the remainder and the parameters satisfy:
\begin{itemize}
\item[(i)] \emph{Initial values of the modulation parameters:}
\be \lab{bd:parametersini}
\frac 12 e^{\frac{s_0}{2}}< \lb_0 < 2 e^{\frac{s_0}{2}}, \ \ \frac 12 <\mu_0 < 2, \ \ |a_0|<e^{-\frac 12 s_0}.
\ee
\item[(ii)] \emph{Initial smallness of the remainder in parabolic variables:}
\be \lab{bd:eini}
\| \e_0 \|_{L^2_{\rho}} < e^{-\frac 72 s_0}, \ \ \| \e_0 \|_{H^3(|Y|\leq M^3)} < e^{-\frac 72 s_0}.
\ee
\item[(iii)] \emph{Initial smallness of the remainder in the inviscid self-similar variables:}
\be \lab{bd:eini2}
\int_{-\pi-a_0}^{-Me^{-s_0}} u^2wdZ+\int_{Me^{-s_0}}^{+\infty} u^2wdZ < e^{-2\left(\frac 12 -\nu \right)s_0}, \ \ \int_{-\pi-a_0}^{-Me^{-s_0}} |A\pa_Zu|^2wdZ+\int_{Me^{-s_0}}^{+\infty} |A\pa_Zu|^2wdZ < e^{2\nu s_0}.
\ee
\item[(iv)] \emph{Initial regularity close to the origin $y=0$ in original variables:}
\be \lab{bd:eini3}
\| \xi_0\|_{W^{1,\infty}([0,2])}< 1.
\ee

\end{itemize}

\end{definition}

We aim at proving that solutions which are initially close to the blow-up profile in the sense of Definition \ref{def:ini} will stay close to this blow-up profile up to modulation. The proximity at later times is defined as follows.

\begin{definition}[Trapped solutions] \lab{def:trap}

We say that a solution $f(s,Y)=F(s,Z)$ is trapped on $[s_0,s_1]$ with $s_0<s_1<\infty$, if it satisfies the properties of Definition \ref{def:ini} at time $s_0$ with the parameters $\nu$ and $M$ and if, for $K\gg 1$ and $0<\nu'\ll \nu $, and for all $s\in [s_0,s_1]$, $f(s,\cdot)$ can be decomposed as in \fref{id:decomposition vp} and \fref{eq:orthogonalite} with:
\begin{itemize}
\item[(i)] \emph{Values of the modulation parameters:}
\be \lab{bd:parameterstrap}
\frac 1K e^{\frac{s}{2}}< \lb < K e^{\frac{s}{2}}, \ \ \frac 1K <\mu < K, \ \ |a|<Ke^{-\left(\frac 12 -2\nu \right)s}.
\ee
\item[(ii)] \emph{Smallness of the remainder in parabolic variables:}
\be \lab{bd:etrap}
\| \e \|_{L^2_{\rho}} < Ke^{-\frac 72 s}, \ \ \| \e \|_{H^3(|Y|\leq M^2)} < Ke^{-\left(\frac 72 -\nu' \right) s}.
\ee
\item[(iii)] \emph{Smallness of the remainder in the inviscid self-similar variables:}
\be \lab{bd:etrap2}
\int_{-\pi-a}^{-Me^{-s}} u^2wdZ+\int_{Me^{-s}}^{+\infty} u^2wdZ < K^2e^{-2\left(\frac 12 -\nu \right)s}, \ \ \int_{-\pi-a}^{-Me^{-s}} |A\pa_Zu|^2wdZ+\int_{Me^{-s}}^{+\infty} |A\pa_Zu|^2wdZ < K^2e^{2\nu s}.
\ee
\end{itemize}

\end{definition}

\begin{remark}

Lemma \ref{lem:decomposition} and the regularity of the flow, Proposition \ref{pr:cauchy}, imply that the parameters of Definition \ref{def:trap} are uniquely determined and are in $C^1([s_0,s_1])$. In particular, the renormalisation \fref{def:renormalisationpara} and \fref{def:renormalisationpara2} is indeed well-defined.

\end{remark}

The heart of the paper is the following bootstrap proposition.

\begin{proposition} \lab{pr:bootstrap}

There exist universal constants $K,M,s_0^*\gg 1$ and $0<\nu'\ll\nu\ll 1$ such that the following holds for any $s_0\geq s_0^*$. Any solution which is initially close to the blow-up profile in the sense of Definition \ref{def:ini} is trapped on $[s_0,+\infty)$ in the sense of Definition \ref{def:trap}.

\end{proposition}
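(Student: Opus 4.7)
The plan is to run a standard continuity/bootstrap argument: define the exit time
\[
s^* := \sup\{s_1\geq s_0 \mid \text{the solution is trapped on } [s_0,s_1] \text{ in the sense of Definition \ref{def:trap}}\},
\]
which is $>s_0$ by continuity of the flow (Proposition \ref{pr:cauchy}) and the strict inequalities assumed at $s_0$ in Definition \ref{def:ini}. Assuming $s^*<+\infty$, I would derive a contradiction by showing that each bootstrap bound improves strictly at $s^*$, so the trapping could be continued past $s^*$.

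The core technical work splits into three zones, matched around $|Y|\approx M$ (i.e.\ $|Z|\approx Me^{-s}$). First, near the maximum in parabolic variables, I would differentiate the orthogonality conditions $\varepsilon\perp_\rho h_0,h_1,h_2$ in time, insert \eqref{eq:f}, and use Proposition \ref{pr:Ls} to project onto the eigenbasis. This yields the modulation law $\lambda_s/\lambda,\;\mu_s,\;a_s$ up to errors controlled by $\|\varepsilon\|_{L^2_\rho}$ plus boundary contributions from the truncation at $Y_0=-\lambda^2\mu(\pi+a)$, and produces Lemma \ref{lem:modulation}. Then, using the spectral gap of $\mathscr{L}$ above $h_2$ (the next eigenvalue being $7/2$) and exploiting $\|\varepsilon\|_{L^2_\rho}^2\leq Ke^{-7s}$ with $\varepsilon\perp_\rho h_0,h_1,h_2$, one obtains a differential inequality of the form $\frac{d}{ds}\|\varepsilon\|_{L^2_\rho}^2\leq -7\|\varepsilon\|_{L^2_\rho}^2+\text{(nonlinear and tail terms)}$, giving Lemma \ref{lem:dseL2rho}. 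The nonlinear terms $\varepsilon^2$ and the transport piece $\partial_Y^{-1}\varepsilon\,\partial_Y\varepsilon$ are absorbed using the $H^3$ control on $|Y|\leq M^2$, which is itself obtained by parabolic smoothing applied to the $L^2_\rho$ estimate.

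Second, in the midrange zone $|Z|\in[Me^{-s},\pi]$ where the viscosity is a singular perturbation, the variable $F=G_1+u$ satisfies \eqref{eq:F} with a small viscous term $\lambda^{-4}\mu^{-2}\partial_{ZZ}F$. I would compute $\frac{d}{ds}\int u^2 w\,dZ$ and $\frac{d}{ds}\int (A\partial_Z u)^2 w\,dZ$ with the weight $w$ and vector field $A\partial_Z$ of \S\ref{subsec:wq}. The point of these two carefully tuned objects is that the quadratic form coming from the transport operator
$(-(1-1/2)Z+\partial_Z^{-1}G_1)\partial_Z$ commuted with $A\partial_Z$ and integrated against $w\,dZ$ becomes coercive up to boundary terms, so one obtains a Gr\"onwall-type decay. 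The weight's $1/s^{q(Z)}$ factor is there to handle the slow time dependence of the matching zone, while the singular behaviour of $w$ at $Z=\pm\pi$ reflects the $C^2$-singularity of $G_1$ at the boundary of its support and forces $u$ to have vanishing traces there, enabling integration by parts. These are Lemmas \ref{lem:exteleft1}--\ref{lem:exteleft2}. The boundary contributions at $|Z|=Me^{-s}$ coming from the $d/ds$ of the domain must match the inner $L^2_\rho/H^3$ bound; choosing $\nu'\ll\nu$ and $M,K$ large makes this matching work.

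Third, near the origin in original variables $y\in[0,2]$, the renormalised variable is blown up ($Z$ close to $-\pi-a$) and the estimates above degenerate, so I would invoke a no-blow-up argument in the spirit of \cite{GK,HV2,MZ2} (Lemma \ref{lem:noblowup}): starting from \eqref{bd:eini3} and the control of $\xi$ for $y\geq 2$ by the midrange estimates (which bound $\|\xi\|_{L^\infty}$ via Sobolev once $u$ is controlled in the weighted $H^1$), one derives a maximum-principle / local $W^{1,\infty}$ estimate showing that $\xi$ remains smooth and uniformly bounded on $[0,2]$ up to $s=+\infty$. Finally, in \S\ref{subsec:prbootstrap}, combining the strict improvements from Lemmas \ref{lem:modulation}--\ref{lem:noblowup} with the choice $s_0\geq s_0^*$ large contradicts $s^*<+\infty$, and all estimates of Definition \ref{def:trap} are recovered on $[s_0,+\infty)$. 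The hardest step is without doubt the midrange analysis: designing the weight $w$ and vector field $A$ so that the singular transport operator becomes dissipative in these norms despite the degenerate behaviour of $G_1$ at $\pm\pi$, and closing the matching at $|Z|=Me^{-s}$ where neither the spectral gap of $\mathscr{L}$ nor the inviscid coercivity is immediately effective.
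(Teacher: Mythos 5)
Your overall architecture exactly matches the paper's: trapped regime with exit time $s^*$, modulation from the orthogonality conditions plus spectral gap of $\mathscr L$ giving the decay of $\|\e\|_{L^2_\rho}$ near the maximum (Lemmas \ref{lem:modulation}--\ref{lem:dseL2rho}), weighted Lyapunov functionals with $w$ and $A\pa_Z$ in the midrange (Lemmas \ref{lem:exteleft1}--\ref{lem:exteright}), a no-blow-up argument on compacts (Lemma \ref{lem:noblowup}), and reintegration over time to close the loop. So the proposed route is the paper's route, and the identification of the midrange zone as the hard part is right.

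However, you have the geometry of the weight $w$ backwards, which is not a cosmetic point since you single it out as the crux. From \fref{eq:def w} and \fref{bd:w}, $w\approx |Z|^{-7}s^{-q(Z)}$ on $(-\pi,\pi)$: it is \emph{singular at $Z=0$} (matching the parabolic zone, where the factor $1/\phi_{1/2}^2$ tracks the cancellations of $u$ at the origin inherited from the orthogonality conditions) and is \emph{bounded, of size $1/s$, at $Z=\pm\pi$} (the apparent zeros $(\pi\pm Z)^3$ exactly cancel the poles of $1/\phi_{1/2}^2$). Contrary to what you write, $w$ is not singular at $\pm\pi$, it does not ``reflect the $C^2$-singularity of $G_1$'', and it does not force $u$ to vanish at the support endpoints. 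Indeed $u(Z_1)=-G_1(-\pi-a)$ is small but nonzero (\fref{eq:estimationuz1}), and the paper carries these boundary terms explicitly; the factor $s^{-q(Z)}$ with $q(0)=0$, $q(\pm\pi)=1$ is there to interpolate between the $|Z|^{-7}$ decay rate near $0$ and the flat weight $1/s$ on $|Z|\geq\pi$, not to ``handle slow time dependence of the matching zone.'' If you were to carry out your sketch with your stated understanding of $w$ (singular at $\pm\pi$, enforcing vanishing traces), the integrations by parts at $Z_1$ would be mishandled and the matching at $Z_2=-Me^{-s}$ with the parabolic $H^3$ bound would not be set up correctly. Finally, a smaller point: Lemma \ref{lem:noblowup} is a localized $L^q$ energy/bootstrap estimate in the style of Giga--Kohn, not a maximum-principle argument, although the goal (uniform regularity on compacts) is as you describe.
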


Lemma \ref{lem:decomposition}, and a standard continuity argument, imply that for $s_0$ large enough, any solution which is initially close to the blow-up profile in the sense of Definition \ref{def:ini} is trapped in the sense of Definition \ref{def:trap} on some interval $[s_0,s_1]$ with $s_1>s_0$. Letting $s^*>s_0$ be the supremum of times $s_1> s_0$ such that the solution is trapped on $[s_0,s_1]$, the purpose now is to show that $s^*=+\infty$. The strategy is to study the trapped regime via several lemmas and show that the solutions cannot escape from the open set defined by Definition \ref{def:trap}. The proof of Proposition \ref{pr:bootstrap} is then given at the end of this section.\\

\noindent Note that the constants $K$, $M$, $s_0^*$, $\nu'$, $\nu$ and $\eta$ (defined in Lemma \ref{lem:dseL2rho}) will be adjusted during the proof: we will always be able to conclude the proof of the various lemmas by choosing $M$ large enough depending on $K$ and then choosing $s_0^*$ large enough depending on $K$ and $M$. First, note that one has pointwise control of the remainder for trapped solutions.

\begin{lemma} \lab{lem:Linftybd}
There exists $\nu^*>0$ such that for any $K$ and $0<\nu,\nu'<\nu^*$, for any $M$ large enough, a $s_0^*$ exists such that if $u$ is trapped on $[s_0,s_1]$ with $s_0^*\leq s_0$, then for for all $s\in [s_0,s_1]$:
\be \lab{bd:weightedSobolev}
\| \e \|_{L^{\infty}} =\| u \|_{L^{\infty}} \lesssim Kse^{-\left(\frac 14 -\nu \right)s}.
\ee
\end{lemma}

\begin{proof}

First, Sobolev embedding together with \fref{bd:etrap} implies:
$$
\| \e \|_{L^{\infty}(|Z|\leq e^{-s}M^2)}\leq  C(K,M) e^{-(\frac 72-\nu') s}\leq e^{-\left(\frac 14 -\nu \right)s}
$$
for $s_0$ large depending on $K,M$. Let $E:=\{-\pi-a\leq Z \leq -M e^{-s} \}\cup \{ M e^{-s}\leq Z \}$. Then from \fref{bd:w}, we have $w\gtrsim s^{-1}$ and $|A|w\gtrsim s^{-1}$ on $E$, implying:
$$
\| u \|_{L^2(E)}^2 \lesssim s\int_{-\pi-a}^{-Me^{-s}} u^2w+s\int_{Me^{-s}}^{+\infty} u^2w, \ \ \| \pa_Z u \|_{L^2(E)}^2 \lesssim  s\int_{-\pi-a}^{-Me^{-s}} |A\pa_Zu|^2w+s\int_{Me^{-s}}^{+\infty} |A\pa_Zu|^2w.
$$
Therefore, using Agmon's inequality and \fref{bd:etrap2} gives:
$$
\| u \|_{L^{\infty}(E)} \leq C \| u\|_{L^2(E)}^{\frac 12} \left(\| u\|_{L^2(E)}+\|\pa_Z u \|_{L^2(E)}\right)^{\frac 12}\leq C K s^{\frac 12}e^{-\left(\frac 14-\nu \right)s}.
$$
Hence, as for $M$ large enough depending on $K$ the two zones $|Z|\geq Me^{-s}$ and $|Y|\leq M^2$ cover the whole space, there holds $\| u\|_{L^{\infty}}\lesssim K se^{-\left(\frac 14-\nu \right)s}+ e^{-\frac 14 s}\lesssim K se^{-\left(\frac 14-\nu \right)s} $ for $s_0$ large enough.

\end{proof}


\subsection{Analysis near the blow-up point} \lab{subsec:max}

This subsection is devoted to the study of the solution near $y^*$ in parabolic variables \fref{def:renormalisationpara}. This is the most sensitive zone, in which the blow-up parameters are selected. The remainder is dissipated away from this point, until it reaches the outside region $|Z|\gtrsim 1$ where another dynamics takes place (see next subsection). The analysis near the blow-up point is the consequence of the blow-up profile structure, the linear structure Proposition \ref{pr:Ls} and the orthogonality conditions \fref{eq:orthogonalite}. The measure $\rho =ce^{-3Y^2/4} $ decreases very fast because of the transport part of the operator $\mathcal L$ which is unbounded and pushes the characteristics away from the origin. Therefore, the analysis here is poorly affected by the exterior dynamics. From \fref{eq:f}, \fref{id:decomposition vp}, \fref{eq:F1} and \fref{def:a} we infer that $\e$ solves
\be \lab{eq:e}
\left\{ 
\begin{array}{l l }
\e_s+\Ls \e +\tilde \Ls \e +\text{Mod}+NL-\frac{1}{\lb^4\mu^2}\pa_{ZZ}G_1(Z)=0, \\
\e (s,-(\pi+a)\lb^2\mu)=-G_1(-\pi-a).
\end{array}
\right.
\ee
where $\Ls$ is defined by \fref{eq:def L}, the small linear term, the modulation term and the nonlinear term are
\be \lab{def:tildeL}
\tilde \Ls \e:=2\left(1-G_1(Z)\right)\e+\left(\lb^2\mu\pa_{Z}^{-1}G_1(Z)-Y\right)\pa_Y\e+\frac{1}{\lb^2\mu}\pa_Z G_1(Z)\pa_Y^{-1}\e,
\ee
\bee
\text{Mod}(Y) & := & - \frac{\mu_s}{\mu}Z\pa_Z G_1(Z)+\left(\frac{\lb_s}{\lb}-\frac 12 \right)\left((2-Z\pa_Z)G_1(Z)+(2+Y\pa_Y)\e\right)\\
&& +\left( \int_{-(\pi+a)\lb^2\mu}^{0}fdY-\lb y^*_s\right)\left(\frac{1}{\lb^2\mu}\pa_ZG_1+\pa_Y \e \right),
\eee
$$
NL=-\e^2+\pa_Y^{-1}\e\pa_Y \e.
$$
The parameters evolve according to the following dynamics.

\begin{lemma}[Modulation equations] \lab{lem:modulation}
For $\nu$ small enough, and $K,\nu',M$ such that Lemma \ref{lem:Linftybd} holds true, there exists $s_0^*$ such that for solution trapped on $[s_0,s_1]$, with $s_0\geq s_0^*$:
\bea 
\lab{bd:modulation1} & \left|\frac{\lb_s}{\lb}-\frac{1}{2}+\frac{1}{4\lb^4\mu^2} \right| \leq C(K)\left( \lb^{-8}+\lb^{-4}\| \e \|_{L^2_\rho}+\|  \e \|_{L^\infty}\|  \e \|_{L^2_\rho}\right),\\
\lab{bd:modulation2} &\left|\frac{\mu_s}{\mu}-\frac{1}{2\lb^4\mu^2} \right| \leq C(K) \left( \lb^{-8}+\| \e \|_{L^2_\rho}+\lb^4 \| \e \|_{L^\infty_\rho}\|  \e \|_{L^2_\rho}\right),\\
\lab{bd:modulation3} &\left| \int_{-\lb y^*}^{0}fdY-\lb y^*_s\right|\leq C(K)\left(e^{-e^{s}}+\| \e \|_{L^2_\rho}+\lb^4 \| \e \|_{L^\infty} \| \e \|_{L^2_\rho}\right), \\
\lab{bd:modulation4} &\left| a_s+\frac a2 -\int_{-\pi-a}^{-\pi} G_1dZ-\frac{1}{\lb^2\mu}\int_{-\lb y^*}^0 \e dY \right|\leq C(K) \left( \lb^{-4}+\| \e \|_{L^2_\rho}+\lb^4 \|  \e \|_{L^\infty } \|  \e \|_{L^2_\rho}\right).
\eea
and the bound\footnote{There is indeed no constant factor in front of $e^{-\frac{13}{8} s}$}:
\be \label{bd:lossymodulation}
e^{2s}\left|\frac{\lb_s}{\lb}-\frac{1}{2}+\frac{1}{4\lb^4\mu^2} \right|+\left|\frac{\mu_s}{\mu} \right|+\left| \int_{-\lb y^*}^{0}fdY-\lb y^*_s\right| \leq e^{-\frac{13}{8} s}.
\ee

\end{lemma}

To ease the notation, we define:
\be \label{def:coefficientsm}
m_1=\frac{\lb_s}{\lb}-\frac 12, \qquad m_2=\frac{\mu_s}{\mu}, \qquad m_3=\int_{-\lb y^*}^{0}fdY-\lb y^*_s
\ee
Observe that $m_1$ is the difference between the evolution of $\lb$ and the expected self-similar law, while $m_3$ is the difference between the speed of the blow-up point and the value of the transport part of the equation at this point.

\begin{proof}

This is a direct and standard computation using the definition of the geometrical decomposition and the spectral structure of the linearised dynamics. First we differentiate the orthogonality conditions \fref{eq:orthogonalite} for $i=0,1,2$ using the boundary condition \fref{eq:e}:
$$
0=\frac{d}{ds} \left( \int_{-\lb y^*}^{+\infty} \e h_i \rho dY \right)=    -      \frac{d}{ds} (\lb y^*) (h_i \rho)(-\lb y^*)G_1(-\pi-a)+\int_{-\lb y^*}^{+\infty} \e_s h_i\rho dY.
$$

Thanks to \fref{bd:parameterstrap} and \eqref{def:a}, one has $\lambda y^* \gtrsim e^s$ and therefore $|\rho (\lb y^*)|\leq e^{-e^{\frac 32 s}}$ when $s_0$ is large enough. Hence, as $|\int_{-(\pi+a)\lb^2\mu}^0 fdY|\lesssim \lb^2\mu\lesssim e^s$ from \fref{id:decomposition vp} and \fref{bd:weightedSobolev}, the above identity can be rewritten as:
\be \lab{id:mod interm}
\int_{-\lb y^*}^{+\infty} \e_s h_i\rho dY= O(e^{-e^{s}}(1+|m_1|+|m_3|)).
\ee
We now estimate the contribution of each term when injecting \fref{eq:e} in the above identity.\\

\noindent \textbf{Step 1} \emph{The linear and small linear terms}. Performing integration by parts and thanks to the orthogonality \fref{eq:orthogonalite} and Proposition \ref{pr:Ls}, using the boundary condition \fref{eq:e} and \fref{bd:origin bootstrap2} (this second estimate states boundedness at the boundary, its proof is made later on):
\bea
\lab{bd:mod11}  && \int_{-\lb y^*}^{+\infty} h_i \Ls \e \rho dY = (\pa_Y \e \rho h_i)(-\lb y^*)-(\e \rho \pa_Y h_i)(-\lb y^*)+\int_{-\lb y^*}^{+\infty} \Ls h_i \e \rho dY\\
\non & = & (\pa_Y \e \rho h_i)(-\lb y^*)+(\rho \pa_Y h_i)(-\lb y^*)G_1(-\pi-a) = O(e^{e^{-s}}(1+|\pa_Y \e (-\lb y^*)|))=O(e^{-e^{s}}).
\eea
The small linear term is evaluated as follows. First, using Cauchy-Schwarz, and since $|1-G_1(Z)|\lesssim Z^2\lesssim \lb^{-4}Y^2$, one has:
$$
\left| \int_{-\lb y^*}^{+\infty} h_i (1-G_1(Z))\e \rho dY \right| \lesssim \lb^{-4} \| \e \|_{L^2_{ \rho}}.
$$
Similarly, since $\left|\left(\lb^2\mu\pa_{Z}^{-1}G_1(Z)-Y\right)\right|+|Y|\left|\pa_Y(\left(\lb^2\mu\pa_{Z}^{-1}G_1(Z)-Y)\right)\right|\lesssim \lb^{-4}|Y|^3$:
$$
\left| \int_{-\lb y^*}^{+\infty} h_i \left(\lb^2\mu\pa_{Z}^{-1}G_1(Z)-Y\right)\pa_Y\e \rho dY \right| \lesssim \lb^{-4} \| \e \|_{L^2_{ \rho}}.
$$
Using Cauchy-Schwarz one estimates that
\be \lab{eq:controlenonlocal}
\left| \int_0^Y \e (s,\tilde Y)d\tilde Y\right| \leq \| \e \|_{L^2_{ \rho}} \left(\int_0^Y e^{\frac{3}{4}\tilde Y^2}d\tilde Y \right)^{\frac 12} \lesssim  \| \e \|_{L^2_{\rho}}  \frac{e^{\frac{3Y^2}{8}}}{(1+|Y|)^{\frac 12}}
\ee
which implies the bound, since $|\pa_Z G_1(Z)|\lesssim \lb^{-4}|Y|$:
$$
\left| \int_{-\lb y^*}^{+\infty} h_i \frac{1}{\lb^2\mu}\pa_Z G_1(Z)\pa_Y^{-1}\e \rho dY \right| \lesssim \lb^{-4} \| \e \|_{L^2_{\rho}}.
$$
From \fref{def:tildeL} this gives the bound for the small linear term for $i=1,2,3$:
\be \lab{bd:mod12} 
\left| \int_{-\lb y^*}^{+\infty} h_i \tilde \Ls\e \rho dY \right| \lesssim \lb^{-4} \| \e \|_{L^2_{\rho}}.
\ee
\noindent \textbf{Step 2} \emph{The modulation term}. We first rewrite it performing a Taylor expansion on $G_1$ from \fref{id:G1} near the origin and using \fref{eq:defhi}:
\bea
\lab{id:Mod} \text{Mod}(Y) & = & m_2\left(\frac{1}{\lb^4\mu^2}\left(\frac 16 h_2(Y)+\frac 13 h_0(Y) \right)+\mu^{-4}\lb^{-8}r_2(Y) \right)\\
\non && +m_1\left(2h_0(Y)+\mu^{-4}\lb^{-8}r_1(Y)+(2+Y\pa_Y)\e\right)\\
\non && +m_3\left(-\frac{1}{\lb^4\mu^2}\frac{1}{2\sqrt 3}h_1(Y)+\mu^{-4}\lb^{-8}r_3(Y)+\pa_Y \e \right)
\eea
where $r_1(Y)=\mu^4\lb^{8}((2-Z\pa_Z )G_1-2)$ and $r_2=-\mu^{4}\lb^{8}Z(\pa_Z G_1+Z/2)$ are even functions which are $O(Y^{4})$, and $r_3=\mu^3\lb^6(\pa_Z G_1+Z/2)$ is and odd function that is $O(Y^3)$. We recall that $h_{2i}$ and $h_{2i+1}$ are even and odd functions and form an almost orthogonal family: $\int_{-\lb y^*}^{+\infty} h_ih_j\rho=-\int_{-\infty}^{-\lb y^*} h_ih_j\rho=O(e^{-e^{3s/2}})$. From \fref{eq:orthogonalite} and \fref{eq:defhi}, one has $\int_{-\lb y^*}^{+\infty}p\e=0$ for any polynomial $p$ of degree $2$. Let, for $i=0,1,2$, $\text{Mod}_i:= \int_{-\lb y^*}^{+\infty} h_i \text{Mod} \rho$. Using the previous remarks, \fref{eq:orthogonalite} and the boundary condition \fref{eq:e} we obtain that
\begin{align}
\non \text{Mod}_0 &= m_2 \frac{\| h_0\|_{L^2_\rho}^2+O(\lb^{-4})}{3\lb^4 \mu^2} +m_1 \left(2\| h_0\|_{L^2_\rho}^2+O(\lb^{-8}) +(Y\rho)(-\lb y^*)G_1 (-\pi-a)\right) \\
\non  & \qquad +m_3\left(-\mu^{-4}\lb^{-8} \int_{-\infty}^{-\lb y^*} r_3\rho+\rho(-\lb y^*)G_1(-\pi-a) \right),\\
\lab{bd:modmod1} & = m_2 \left(\frac{\| h_0\|_{L^2_\rho}^2}{3\lb^4 \mu^2}+O(\lb^{-8}) \right) +m_1 \left(2\| h_0\|_{L^2_\rho}^2+O(\lb^{-8}) \right)  +m_3O(e^{-e^s}),
\end{align}
where for the last bound we used the fact that $\lb y^*\gtrsim e^{s}$ and $\rho=Ce^{-\frac{3Y^2}{4}}$; similarly
\begin{align}
\non &\text{Mod}_1 = m_1 \left(O(e^{-e^s})-\mu^{-4}\lb^{-8}\int_{-\infty}^{-\lb y^*} h_1r_1\rho +\frac{(Y^2\rho)}{\sqrt 3} (-\lb y^*)G_1(-\pi-a)+O(\| \e\|_{L^2_{\rho}})\right) \\
\non &  \quad + m_2 \Bigl(O(e^{-e^s})-\mu^{-4}\lb^{-8} \int_{-\infty}^{-\lb y^*} h_1r_1\rho \Bigr) +m_3\Bigl(-\frac{\| h_1\|_{L^2_\rho}^2+O(\lb^{-4})}{2\sqrt 3 \lb^4\mu^2} - \frac{(Y\rho)}{\sqrt 3}(-\lb y^*)G_1(-\pi-a) \Bigr)\\
\lab{bd:modmod2} &\quad \quad =   m_2 O(e^{-e^s})  +m_1O(e^{-e^s}+\| \e\|_{L^2_{\rho}}) -m_3\frac{ \| h_1\|_{L^2_\rho}^2+O(\lb^{-4})}{2\sqrt 3 \lb^2\mu},
\end{align}
\begin{align}
\non  \text{Mod}_2  &= m_2 \left(\frac{\| h_2\|_{L^2_\rho}^2}{6\lb^4 \mu^2}+O(\lb^{-8}) \right)  +m_1 \left(O(\lb^{-8}) +(Yh_2\rho)(-\lb y^*)G_1 (-\pi-a)+O(\| \e \|_{L^2_{\rho}})\right) \\
\non &  \qquad +m_3\left(O(e^{-e^s})-\mu^{-4}\lb^{-8} \int_{-\infty}^{-\lb y^*} h_2r_3\rho+(h_2\rho)(-\lb y^*)G_1(-\pi-a)+O(\| \e \|_{L^2_{\rho}}) \right)\\
\lab{bd:modmod3} & = m_2 \frac{\| h_2\|_{L^2_\rho}^2+O(\lb^{-4})}{6\lb^4 \mu^2} +m_1 O(\lb^{-8}+\| \e \|_{L^2_{\rho}}) +m_3O(e^{-e^s}+\| \e \|_{L^2_{\rho}}).
\end{align}

\noindent \textbf{Step 3} \emph{The nonlinear term}. Since $|h_i|\lesssim (1+Y^2)$ for $i=0,1,2$ we estimate:
$$
\left| \int_{-\lb y^*}^{+\infty} \e^2 h_i \rho dY \right| \lesssim \| \e \|_{L^2_{\rho}}\| \e \|_{L^{\infty}}.
$$
Integrating by parts, using \fref{eq:e} for the boundary term and $|\pa_{Y}^{-1}\e |\leq Y\| \e \|_{L^{\infty}}$ we get:
$$
\left| \int_{-\lb y^*}^{+\infty} h_i \pa_Y \e \pa_Y^{-1}\e \rho dY \right| \lesssim \| \e \|_{L^{\infty}} \| \e \|_{L^2_\rho} +O(e^{-e^s})
$$
Therefore, for $i=0,1,2$:
\be \lab{bd:mod13}
\left| \int_{-\lb y^*}^{+\infty} h_iNL\rho dY \right|\lesssim \| \e \|_{L^2_\rho}\| \e \|_{L^{\infty}}+O(e^{-e^s})
\ee

\noindent \textbf{Step 4} \emph{The error term}. Finally, using a Taylor expansion from \fref{id:G1}:
\be \lab{id:errorterm}
\frac{1}{\lb^4\mu^2}\pa_{ZZ}G_1(Z)=\frac{1}{\lb^4\mu^2}\left(-\left(\frac 12 -\frac{1}{6\lb^4\mu^2}\right)h_0+\frac{1}{12\lb^4\mu^2}h_2+(\pa_{ZZ}G_1+\frac 12 -\frac{Z^2}{4}) \right).
\ee
This gives (since this term is an even function and $h_1$ is an odd function):
\be \lab{bd:mod14}
\int_{-\lb y^*}^{+\infty} \frac{1}{\lb^4\mu^2}\pa_{ZZ}G_1(Z)h_i \rho dY = \left\{
\begin{array}{l l l}
-\frac{1}{\lb^4 \mu^2}\left( \frac 12 -\frac{1}{6\lb^4\mu^2}\right)\| h_0\|_{L^2_\rho}^2+O(\lb^{-12}) & \text{if} \ i=0,\\
O(e^{-e^s}) & \text{if} \ i=1,\\
\frac{1}{12\lb^8 \mu^4} \| h_2\|_{L^2_\rho}^2+O(\lb^{-12}) & \text{if} \ i=2.\\
\end{array}
\right.
\ee
\noindent \textbf{Step 5} \emph{End of the proof}. We collect the above estimates \fref{bd:mod11}, \fref{bd:mod12}, \fref{bd:modmod1}, \fref{bd:modmod2}, \fref{bd:modmod3}, the first in \fref{bd:mod13} and \fref{bd:mod14} and inject them in \fref{id:mod interm} using \fref{eq:e}. One obtains:
\begin{eqnarray*}
&& m_2 \frac{1+O(\lb^{-4})}{3\lb^4 \mu^2}+ m_1 \left(2+O(\lb^{-8}) \right)   +m_3O(e^{-e^s})\\
&& \qquad \qquad = -\frac{1}{\lb^4 \mu^2}\left( \frac 12 -\frac{1}{6\lb^4\mu^2}\right)+O(\lb^{-12})+O(\lb^{-4}\| \e\|_{L^2_{\rho}}+\| \e \|_{L^{\infty}} \| \e \|_{L^2_\rho}), \\
&&  m_2 O(e^{-e^s})  +m_1 O(e^{-e^s}+\| \e\|_{L^2_{\rho}})-\frac{1+O(\lb^{-4})}{\lb^4\mu^2}m_3\\
&& \qquad \qquad  =O(e^{-e^s})+O(\lb^{-4}\| \e\|_{L^2_{\rho}}+\| \e \|_{L^{\infty}} \| \e \|_{L^2_\rho}), \\
&& m_2 \frac{1+O(\lb^{-4})}{6\lb^4 \mu^2}+m_1 O(\lb^{-8}+\| \e \|_{L^2_{\rho}}) +m_3O(e^{-e^s}+\| \e \|_{L^2_{\rho}} )\\
&&\qquad \qquad = \frac{1}{12\lb^8 \mu^4}+O(\lb^{-12})+O(\lb^{-4}\| \e\|_{L^2_{\rho}}+\| \e \|_{L^{\infty}} \|  \e \|_{L^2_\rho}).
\end{eqnarray*}
These three estimates, together with the fact that $\| \e \|_{L^2_{\rho}}\lesssim e^{-7s/2}$ and $\lb \approx e^{s/2}$ obtained from \fref{bd:etrap} and \fref{bd:parameterstrap}, imply \fref{bd:modulation1}, \fref{bd:modulation2} and \fref{bd:modulation3}. The fourth inequality \fref{bd:modulation4} is obtained from \fref{bd:modulation1}, \fref{bd:modulation2}, and \fref{bd:modulation3}, since from \fref{def:a} and $\int_{-\pi}^0 G_1=\pi/2$:
$$
\int_{-\lb y^*}^0 fdY -\lb y^*_s=\lb^2\mu \left[\int_{-\pi-a}^{-\pi}G_1dZ+\frac{1}{\lb^2 \mu}\int_{-(\pi+a)\lb^2\mu}^0 \e dY-a_s-\frac a2 -\left(\left(m_1 \right)+m_2\right)(\pi+a) \right].
$$
Summing \fref{bd:modulation1}, \fref{bd:modulation2} and \fref{bd:modulation3}, we obtain:
$$
\lambda^4\left|\frac{\lb_s}{\lb}-\frac{1}{2}+\frac{1}{4\lb^4\mu^2} \right|+\left|\frac{\mu_s}{\mu} \right|+\left| \int_{-\lb y^*}^{0}fdY-\lb y^*_s\right| \lesssim \lambda^{-4}+\| \e \|_{L^2_\rho}+\lambda^4 \| \e \|_{L^{\infty}}\| \e \|_{L^2_\rho} .
$$
The right-hand side is, from \fref{bd:etrap} and \fref{bd:parameterstrap}, $\leq C(K)(e^{-2s}+e^{-7s/2}+se^{-(3/2+1/4-\nu)s})$ and hence \fref{bd:lossymodulation} if $\nu<1/4$, for $s_0$ large depending on $K$.

\end{proof}

The decay of the remainder $\e$ is encoded by the following Lyapunov functional.

\begin{lemma}[Interior Lyapunov functional] \lab{lem:dseL2rho}

There exist universal $C,\eta^*>0$, such that for any $0<\eta <\eta^*$, the following holds. For $K,\nu,\nu',M$ such that Lemma \ref{lem:Linftybd} holds, there exists $s_0^*$ such that for a solution that is trapped on $[s_0,s_1]$ with $s_0\geq s_0^*$:
\be \lab{bd:lyapunovpara}
\frac{d}{ds}\left(\frac 12 \| \e \|_{L^2_{\rho}}^2\right)+\left(\frac 72 -Ce^{-\eta s}\right) \| \e \|_{L^2_{\rho}}^2+e^{-\eta s} \| \pa_Y \e \|_{L^2_{\rho}}^2 \leq C\| \e \|_{L^2_{\rho}}\lb^{-12}+Ce^{-e^s}.
\ee

\end{lemma}

\begin{proof}

This is a direct computation relying on the spectral gap that absorbs the nonlinear effects, the modulation equations established previously, and the rapid decay of the measure $\rho$. First, from \fref{eq:e} and \fref{bd:lossymodulation}, one computes 
\bea
&& \lab{exp:dse2} \frac{d}{ds} \left( \frac 12 \| \e \|_{L^2_\rho}^2 \right)= \frac 12 \frac{d}{ds} \int_{-\lb y^*}^{+\infty} \e^2 \rho dY \\
\non &=& -  \frac 12(\e^2\rho)(-\lb y^*)\frac{d}{ds}(-\lb y^*)+\int_{-\lb y^*}^{+\infty}\left(- \Ls\e-\tilde \Ls \e-\text{Mod}- \NL+\frac{1}{\lb^4\mu^2}\pa_{ZZ}G_1\right)\e \rho dY\\
\non &=& O(e^{-e^s}(1+\| \pa_Y \e\|_{L^2_\rho}^2))+\int_{-\lb y^*}^{+\infty}\left(- \Ls\e-\tilde \Ls \e-\text{Mod}- \NL+\frac{1}{\lb^4\mu^2}\pa_{ZZ}G_1\right)\e \rho dY.
\eea

\noindent \textbf{Step 1} \emph{The linear term}. First, we claim the dissipative spectral gap estimate
\be \lab{bd:spectralgap}
\int_{-\lb y^*}^{+\infty} |\pa_Y \e |^2\rho dY \geq \frac{9}{2}\left(1-Ce^{-\eta s}\right) \int_{-\lb y^*}^{+\infty} \e^2\rho dY+2e^{-\eta s}\int_{-\lb y^*}^{+\infty} |\pa_Y \e|^2\rho dY-Ce^{-e^s}
\ee
for some universal constant $C>0$. We use analytical results on the whole space $\mathbb R$, with scalar product $\langle u,v \rangle=\int_\mathbb R u v \rho$ (only for the few next lines). Define the extension
$$
\tilde \e:= \left\{ 
\begin{array}{l l}
\e (-\lb y^*) \ \ \text{for} \ Y\leq -\lb y^*,\\
\e (Y) \ \ \text{for} \ Y\geq -\lb y^*.
\end{array}
\right.
$$
Then $\tilde \e \in H^1_\rho$. Define the projection on higher modes
$$
\bar \e:= \tilde \e -\frac{\la \tilde \e,h_0 \ra}{\| h _0\|_{L^2_\rho}^2}h_0-\frac{\la \tilde \e,h_1 \ra}{\| h _1\|_{L^2_\rho}^2}h_1-\frac{\la \tilde \e,h_2 \ra}{\| h _2\|_{L^2_\rho}^2}h_2.
$$
Then from the orthogonality \fref{eq:orthogonalite}, since $\e(-\lb y^*)=-G_1(-\pi-a)$ from the Dirichlet boundary condition, one infers that
$$
\la \tilde \e,h_i \ra = - \frac12\sqrt{\frac3\pi}\int_{-\infty}^{-\lb y^*} h_i G_1(-\pi-a)e^{-\frac{3}{4}Y^2}dY=O(e^{-e^s})
$$
as $\lb y^*\gtrsim e^{s}$. This implies that
$$
\int_{-\lb y^*}^{+\infty} \e^2\rho dY\leq \| \bar \e \|_{L^2_\rho}^2+C e^{-e^s}, \ \ \int_{-\lb y^*}^{+\infty} |\pa_Y\e|^2\rho dY \geq \| \pa_Y\bar \e \|_{L^2_\rho}^2-C e^{-e^s}.
$$
As $\bar \e \in H^1_\rho$ with $\bar \e \perp h_i$ for $i=0,1,2$, one has the spectral gap estimate from \fref{bd:spectralgap}:
$$
\| \pa_Y \bar \e \|_{L^2_\rho}^2\geq \frac 92 \| \bar \e \|_{L^2_\rho}^2.
$$
The two above estimates imply \fref{bd:spectralgap}. Therefore, the linear term gives from the boundary condition \fref{eq:e} 
and the definition \fref{measure}:
\bee
- \int_{-\lb y^*}^{+\infty} \Ls \e \e \rho dY & = & \int_{-\lb y^*}^{+\infty} \e^2\rho dY - \int_{-\lb y^*}^{+\infty} |\pa_Y \e|^2\rho dY+(\pa_Y \e \rho)(-\lb y^*)G_1(-\pi-a) \\
&\leq & -\left(\frac 72 -C e^{-\eta s}\right)\int_{-\lb y^*}^{+\infty} \e^2\rho dY -2e^{-\eta s}\int_{-\lb y^*}^{+\infty} |\pa_Y \e|^2\rho dY +Ce^{-e^s}.
\eee

\noindent \textbf{Step 2} \emph{The small linear term}. Recall \fref{def:tildeL}. One computes using Poincar\'e \fref{bd:poincare} and the fact that $|G_1(Z)-1|\lesssim \lb^{-4}Y^2$
$$
\left| \int_{-\lb y^*}^{+\infty} (1-G_1(Z))\e^2\rho dY \right| \leq C\lb^{-4} \| \e \|_{H^1_{\rho}}^2.
$$
Next, an integration by parts, the boundary condition \fref{eq:e} together with the fact that $\lb y^*\gtrsim e^s$ gives (note that the boundary term at $Y=+\infty$ is zero from the exponential decay of $\rho$)
\bee
&& \int_{-\lb y^s}^{+\infty} \e \left(\lb^2\mu\pa_{Z}^{-1}G_1(Z)-Y\right)\pa_Y\e \rho dY \\
&= & -\left[(\lb^2\mu \pa_Z^{-1}G_1(Z)-Y)\frac{\e^2}{2}\rho \right](-\lb y^*)-\frac 12 \int_{-\lb y^*}^{+\infty} \e^2 \pa_Y \left((\lb^2\mu \pa_Z^{-1}G_1(Z)-Y)\rho \right)dY \\
&=& O(e^{-e^s})-\frac 12 \int_{-\lb y^*}^{+\infty} \e^2 \pa_Y \left((\lb^2\mu \pa_Z^{-1}G_1(Z)-Y)\rho \right)dY
\eee
Then, one notices that for $|Y|\leq e^{\frac 34 s}$ there holds:
$$
\left| \pa_Y \left((\lb^2\mu \pa_Z^{-1}G_1(Z)-Y)\rho \right) \right| \lesssim \lb^{-4}|Y|^2(1+|Y|)^2\rho \lesssim  e^{-\frac s2}|Y|^2 \rho .
$$
Hence, applying \fref{bd:weightedSobolev}, \fref{bd:poincare}, and splitting in two zones $E=\{|Y|\leq e^{3s/4}\}$ and $E'=[-\lb y^*,+\infty)\backslash E$:
\bee
\left|\int_{-\lb y^s}^{+\infty} \e \left(\lb^2\mu\pa_{Z}^{-1}G_1(Z)-Y\right)\pa_Y\e \rho \right| &\lesssim & e^{-e^s}+ \left|\int_{E} \e^2 \pa_Y \left((\lb^2\mu \pa_Z^{-1}G_1(Z)-Y)\rho \right)\right|\\
&&+ \left|\int_{E'} \e^2 \pa_Y \left((\lb^2\mu \pa_Z^{-1}G_1(Z)-Y)\rho \right)\right| \lesssim e^{-e^s}+e^{-\frac s2}\| \e \|_{H^1_{\rho}}^2.
\eee
For the last term, using \fref{eq:controlenonlocal}, since $|\frac{1}{\lb^2\mu}\pa_Z G_1(Z)|\lesssim \lb^{-4}|Y|$ one has:
\bee
&& \left| \int_{-\lb y^*}^{+\infty} \e \frac{1}{\lb^2\mu}\pa_Z G_1(Z)\pa_Y^{-1}\e \rho dY \right|  \lesssim  \| \e \|_{L^2_\rho} \lb^{-4} \int_{-\lb y^*}^{+\infty} |\e| |Y| \frac{e^{-\frac{3Y^2}{8}}}{(1+|Y|)^{\frac 12}}dY \\
& \lesssim &  \| \e \|_{L^2_\rho} \lb^{-4} \int_{E} |\e| |Y| \frac{e^{-\frac{3Y^2}{8}}}{(1+|Y|)^{\frac 12}}dY+ \| \e \|_{L^2_\rho} \lb^{-4} \int_{E'} |\e| |Y| \frac{e^{-\frac{3Y^2}{8}}}{(1+|Y|)^{\frac 12}}dY \\
& \lesssim &  \| \e \|_{L^2_\rho} \lb^{-4} \| |Y|\e\|_{L^2_\rho} \left(\int_{|Y|\leq e^{3s/4}} \frac{dY}{1+|Y|}\right)^{\frac 12}+O(e^{-e^s})  \lesssim  \| \e \|_{H^1_{\rho}}^2 \lb^{-3}+O(e^{-e^s}) \\
\eee
where we used \fref{bd:poincare} and \fref{bd:weightedSobolev}. Therefore, putting all the above estimates together, as $\lb \approx e^{s/2}$:
$$
\left| \int_{-\lb y^*}^{+\infty} \e \tilde \Ls \e \rho dY \right|\lesssim e^{-e^s}+e^{-\frac s2}\| \e \|_{H^1_{\rho}}^2.
$$

\noindent \textbf{Step 3} \emph{The modulation term}. Recall \fref{def:coefficientsm}. We use the decomposition \fref{id:Mod} and the orthogonality \fref{eq:orthogonalite} to obtain first, with $r_1(Y)=O(Y^4)$, $r_2(Y)=O(Y^4)$ and $r_3(Y)=O(|Y|^3)$:
\bee
\int_{-\lb y^*}^{+\infty} \e \text{Mod} \rho dY &=& \mu^{-4}\lambda^{-8} \int_{-\lb y^*}^{+\infty} \e (m_1r_1+m_2r_2+m_3r_3) \rho dY \\
&& + m_1 \int_{-\lb y^*}^{+\infty} \left((2+Y\pa_Y)\e\right) \e \rho dY+m_3 \int_{-\lb y^*}^{+\infty} \pa_Y \e \e \rho dY.
\eee
For the first line, using Cauchy-Schwarz with \fref{bd:modulation1}, \fref{bd:modulation2}, \fref{bd:modulation3}, \fref{bd:parameterstrap} and \fref{bd:weightedSobolev}:
$$
\left| \mu^{-4}\lambda^{-8} \int_{-\lb y^*}^{+\infty} \e (m_1r_1+m_2r_2+m_3r_3) \rho dY\right|\lesssim \lambda^{-12}\| \e \|_{L^2_\rho}+e^{-2s}\| \e \|_{H^1_\rho}^2.
$$
For the second line, using Poincar\'e \fref{bd:poincare} and \fref{bd:lossymodulation}:
$$
\left| m_1 \int_{-\lb y^*}^{+\infty} \left((2+Y\pa_Y)\e\right) \e \rho dY+m_3 \int_{-\lb y^*}^{+\infty} \pa_Y \e \e \rho dY\right|\lesssim e^{-\frac{13}{8} s}\| \e \|_{H^1_\rho}^2.
$$
The two inequalities above then give:
$$
\left| \int_{-\lb y^*}^{+\infty} \e \text{Mod} \rho dY \right| \lesssim \| \e \|_{L^2_\rho}\lb^{-12}+e^{-\frac{13}{8} s} \| \e \|_{H^1_{\rho}}^2.
$$

\noindent \textbf{Step 4} \emph{The nonlinear term}. A direct $L^{\infty}$ estimate gives
$$
\left| \int_{-\lb y^*}^{+\infty} \e^3 \rho dY \right| \lesssim \| \e \|_{L^{\infty}} \| \e \|_{L^2_\rho}^2 .
$$
For the other nonlinear term one first performs an integration by parts, then a brute force bound for the boundary term, the same estimate as above for the second term, and \fref{bd:poincare}:
\bee
\int_{-\lb y^*}^{+\infty} \e \pa_Y \e \pa_Y^{-1} \e \rho dY & = & -\frac 12 (\e^2\pa_Y^{-1} \e \rho)(-\lb y^*)-\int_{-\lb y^*}^{+\infty} \frac{\e^3}{2}\rho dY-\frac 12 \int_{-\lb y^*}^{+\infty} \e^2 \pa_Y^{-1} \e \pa_Y \rho dY \\
& = & O(e^{-e^s})+O(\| \e \|_{L^{\infty}} \| \e \|_{H^1_{\rho}}^2).
\eee

\noindent \textbf{Step 5} \emph{The error term}. Using the decomposition \fref{id:errorterm}, the orthogonality \fref{eq:orthogonalite} and $|\pa_{ZZ}G_1+\frac 12 -\frac{Z^2}{4}|\lesssim Z^4\approx \lb^{-8} Y^4$ one obtains that:
$$
\left| \int_{-\lb y^*}^{+\infty} \e \frac{1}{\lb^4\mu^2}\pa_{ZZ}G_1(Z) \rho dY \right| = \frac{1}{\lb^4\mu^2} \left|\int_{-\lb y^*}^{+\infty} \e (\pa_{ZZ}G_1+\frac 12 -\frac{Z^2}{4}) \rho dY \right| \lesssim \lb^{-12} \| \e \|_{L^2}.
$$

\noindent \textbf{Step 6} \emph{End of the proof}. Collecting all the estimates of Steps 1, 2, 3, 4 and 5 one finally obtains from \fref{exp:dse2} that:
\bee
\frac{d}{ds}\left(\frac 12 \| \e \|_{L^2_{\rho}}\right) & \leq & -\left(\frac 72 -Ce^{-\eta s}\right) \| \e \|_{L^2_{\rho}}^2-2e^{-\eta s}\| \pa_Y \e \|_{L^2_{\rho}}^2 + C (e^{-\frac s 2}+\| \e \|_{L^{\infty}})\| \e \|_{H^1_{\rho}}^2+C\| \e \|_{L^2_{\rho}}\lb^{-12}+Ce^{-e^s} \\
&\leq& -\left(\frac 72 -Ce^{-\eta s}\right) \| \e \|_{L^2_{\rho}}^2-e^{-\eta s} \| \pa_Y \e \|_{L^2_{\rho}}^2 +C\| \e \|_{L^2_{\rho}}\lb^{-12}+Ce^{-e^s}, \\
\eee
if $\eta$ has been chosen small enough and $s_0$ large enough, where we used \fref{bd:weightedSobolev}.

\end{proof}


\subsection{Analysis outside the blow-up point in the inviscid self-similar zone} \lab{subsec:Z}

This subsection is devoted to the study of the solution outside the blow-up point $y^*(t)$ and we switch to the $Z$ variable \fref{def:renormalisationpara2}. The aim is to find decay for $u$, which receives information from the boundaries $Z=-\pi-a$ and $Z=0$. We first explain the linear estimate which explains the choice of the weight $w$ and then prove full energy estimates. In view of the decomposition \fref{id:decomposition vp} and \fref{eq:F1}, we rewrite \fref{eq:F} as:
\be \lab{eq:u}
\left\{ 
\begin{array}{l l }
 u_s+\mH u -\frac{1}{\lb^4\mu^2}\pa_{ZZ}u+\tH u +NL+\psi=0,\\
u(s,-(\pi+a))=-G_1(-(\pi+a)),
\end{array}\right.
\ee
where the leading order linearised operator is
\be \lab{eq:def mathcalH}
\mH u := \mathcal T\pa_Z u+Vu+\pa_Z^{-1}u\pa_ZG_1,
\ee
with the transport and the potential term being defined by
\be \lab{eq:def T}
\mathcal T(Z):=\left(-\frac Z2 +\pa_Z^{-1}G_1\right)=\left\{ \begin{array}{l l l} 
-\left(\frac Z2 +\frac \pi 2\right)  & \text{for} \  Z\leq -\pi,\\
\frac 12 \sin Z  & \text{for} \ -\pi\leq Z \leq \pi, \\
-\left(\frac Z2 -\frac \pi 2\right)  & \text{for} \  \pi\leq Z,
\end{array} \right.
\ee
\be \lab{eq:def V}
V(Z) :=1-2G_1(Z)= \left\{ \begin{array}{l l l} 
1  & \text{for} \  Z\leq -\pi,\\
-\cos Z  & \text{for} \ -\pi\leq Z \leq \pi, \\
1 & \text{for} \  \pi\leq Z,
\end{array} \right.
\ee
the small linear, the nonlinear term and the error term are given by:
\be \lab{eq:def tH}
\tH u:=m_1(2-Z\pa_Z)u-m_2Z\pa_Z u+m_3'\pa_Z u, \qquad m_3'=\frac{m_3}{\lambda^2 \mu},
\ee
where $m_1$, $m_2$ and $m_3$ are defined in \fref{def:coefficientsm}, and
$$
NL:=-u^2+\pa_Z^{-1}u\pa_Z u,
$$
$$
\psi (s,Z):=-\frac{1}{\lb^4\mu^2}\pa_{ZZ}G_1(Z)+m_1(2-Z\pa_Z)G_1(Z)-m_2Z\pa_Z G_1(Z)+m_3'\pa_Z G_1(Z).
$$
Thanks to \fref{bd:lossymodulation} the parameters $m_1$, $m_2$ and $m_3'$ satisfy:
\be \lab{bd:d}
e^{2s}\left|m_1-\frac{1}{4\lambda^4\mu^2}\right|+|m_2|+e^s |m_3'|\leq e^{-\frac{13}{8} s}.
\ee


\subsubsection{Linear analysis} \lab{subsubsec:lin}

We claim that the dynamics of Equation \fref{eq:u} is driven to leading order by the transport and potential terms, and that the nonlocal, viscosity and nonlinear terms are negligible. From a direct check, the eigenvalue problem:
$$
\mathcal T\pa_Z \phi_\beta +V\phi_\beta =\beta \phi_\beta
$$
admits a solution for all $\beta \in \mathbb R$ under the form:
\be \lab{eq:def phinu}
\phi_\beta (Z):= \left\{ \begin{array}{l l l} \phi_\beta^{\text{int}}(Z)&  \text{for} \ Z\in (-\pi,\pi)\backslash \{ 0\}, \\ \phi_\beta^{\text{ext}}(Z) &\text{for} \ Z\in (-\infty,-\pi )\cup (\pi,+\infty), \ea \right. \ \ \ V\phi_\beta +\mathcal T\pa_Z \phi_\beta=\beta \phi_\beta.
\ee
where
$$
\phi_\beta^{\text{int}}=\left(\frac{1-\cos (Z)}{1+\cos (Z)} \right)^{\beta} (\sin Z)^2, \ \  \phi_\beta^{\text{ext}} (Z)=
\left\{ \begin{array}{l l l} 
\left(-(Z+\pi) \right)^{2(1-\beta)} & \text{for} \  Z<-\pi,\\
\left(Z-\pi \right)^{2(1-\beta)} & \text{for} \ Z> \pi.
\end{array} \right.
$$
Note that one has $\phi_\beta (Z)\sim Z^{2(1+\beta)}$ as $Z \rightarrow 0$ and $\phi_\beta (\pi+Z)\sim |Z|^{2(1-\beta)}$ as $Z \rightarrow 0$. The reduced operator $\mathcal T\partial_Z+V$ satisfies the following comparison-type $L^{\infty}$ weighted bound:
$$
\left\Vert \frac{e^{-s(\mathcal T\pa_Z+V)}v_0}{\phi_\beta} \right\Vert_{L^{\infty}}\leq e^{-\beta s} \left\Vert \frac{v_0}{\phi_\beta} \right\Vert_{L^{\infty}}
$$
which can be showed by differentiating along the characteristics. The above bound shows how cancellations near the origin for $u_0$ are crucial for decay since $\phi_\beta$ cancels at the origin for positive $\beta$. Our aim for the full linear problem is to perform a weighted Sobolev energy estimate which mimics the above estimate. We will modify the weight $1/\phi_\beta$ according to three principles: 1) any multiplication by a weight which is decreasing along the vector field $\mathcal T(Z)\pa_Z$ preserves the spectral gap estimate, 2) the nonlocal part can be treated as a perturbation of the transport and potential terms, 3) the viscosity is negligible if one is sufficiently away from the origin. These are the reasons behind the specific choice of $w$ in \fref{eq:def w}. The exponent $1/2$ for the underlying eigenfunction $\phi_{1/2}$ is made optimising two constraints: to optimise the decay in the above inequality, and to minimise the size of the boundary terms in the Lemma below. We claim the following decay, at the linear level, of a Lyapunov functional with weight $w$. We state it on the left of the origin but the analogue holds true on the right as well.

\begin{lemma} \lab{lem:exteriorlinear}

Let $\lb$, $\mu$ and $a$ satisfy \fref{bd:parameterstrap} and $\nu>0$. Assume that $u$ solves
\be \lab{eq:lineairemainorder}
u_s+\mH u-\frac{1}{\lb^4\mu^2}\pa_{ZZ}u=0.
\ee
Let $Z_1:=-(\pi+a)$ and $Z_2:=-Me^{-s}$. Then for any $K>0$, for $M>0$ large enough depending on $K$, and $s_0$ large enough depending on $K,M$, one has the estimate:
\bee
&& \frac{d}{ds} \left(\frac 12 \int_{Z_1}^{Z_2} u^2 wdZ \right) + \left(\frac 12 -\frac{\nu}{4} \right)\int_{Z_1}^{Z_2} u^2 wdZ+\frac{1}{\lb^4\mu^2}\int_{Z_1}^{Z_2} |\pa_Z u|^2wdZ  \\
&\leq & C(K,M) e^{6s}u^2(Z_2)+C(K,M)e^{4s}|\pa_Z u|^2(Z_2)+u^2(Z_1)\left(e^{-\left(\frac 12-\nu\right)s}+|a_s|\right) \\
&&  +C|\pa_Zu|^2(Z_1)e^{-2s}+\frac{Ce^{2s}}{M^2} \left(\int_{Z_2}^{0} |u|dZ\right)\left(\int_{Z_1}^{Z_2} u^2 wdZ \right)^{\frac 12}.
\eee

\end{lemma}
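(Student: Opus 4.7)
\smallskip
\noindent\textbf{Proof proposal for Lemma \ref{lem:exteriorlinear}.} The plan is to perform a direct weighted energy estimate, differentiating $\tfrac{1}{2}\int_{Z_1}^{Z_2} u^2 w\, dZ$ in $s$, substituting the equation, integrating by parts on the transport and viscosity pieces, and showing that the choice of $w$ in \eqref{eq:def w} turns the resulting quadratic form in $u$ into a coercive term with constant at least $1/2-\nu/4$, while the remaining contributions reduce either to boundary traces at $Z_1,Z_2$ of the required form, or to the nonlocal error involving $\int_{Z_2}^0|u|$. Throughout, I would work only on the interval $(-\pi-a,-Me^{-s})$; the right-hand counterpart is identical.

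\smallskip
\noindent\textit{Step 1: time derivative and the main quadratic form.} The time derivative produces three contributions: the boundary traces from the moving endpoints (using $Z_2'(s)=Me^{-s}$ and $Z_1'(s)=-a_s$), a term $\tfrac{1}{2}\int u^2 w_s$ coming from $w_s=-q(Z)s^{-1}w$, and $\int u u_s w$ in which I substitute $u_s=-T\partial_Z u-Vu-\partial_Z^{-1}u\,\partial_Z G_1+(\lambda^4\mu^2)^{-1}\partial_{ZZ}u$. Integration by parts on the transport term gives
\begin{equation*}
\int_{Z_1}^{Z_2} u(T\partial_Z u+Vu)w\,dZ=\int_{Z_1}^{Z_2} u^2 w\Bigl(V-\tfrac{T_Z}{2}-\tfrac{Tw_Z}{2w}\Bigr)dZ-\tfrac{1}{2}\bigl[Twu^2\bigr]_{Z_1}^{Z_2}.
\end{equation*}
Since $\phi_{1/2}$ satisfies $T\partial_Z\phi_{1/2}+V\phi_{1/2}=\tfrac{1}{2}\phi_{1/2}$ and $w=\phi_{1/2}^{-2}\cdot 4(\pi+Z)^3/\sin(-Z)\cdot s^{-q(Z)}$ on $(-\pi,0)$, a direct computation using $T=\sin Z/2$, $V=-\cos Z$ yields the identity
\begin{equation*}
V-\tfrac{T_Z}{2}-\tfrac{T w_Z}{2w}=\tfrac{1}{2}-\tfrac{3\sin Z}{4(\pi+Z)}+\tfrac{T}{2}q'(Z)\log s.
\end{equation*}
On $(-\pi,0)$ one checks that $-3\sin Z/(4(\pi+Z))\geq 0$ and $Tq'(Z)\geq 0$ by the sign conventions of $T$ and $q'$, so the coefficient is pointwise $\geq 1/2$. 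Adding $\tfrac{1}{2}\int u^2 w_s\geq -\tfrac{1}{2s}\int u^2 w$ gives the claimed coercive lower bound $(1/2-\nu/4)\int u^2 w$ for $s_0$ large. The $\pi/2$-type discontinuities of $A$ and of $w_Z$ do not enter here; I only use $w\in C^1$ on $(-\pi,0)\setminus\{-\pi\}$ and continuity of $w$ at $-\pi$ (which, by Taylor expansion of the rational factor, equals $1/s$ there).

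\smallskip
\noindent\textit{Step 2: the nonlocal term.} This is the main technical point. Using $\partial_Z G_1=-T$ on $(-\pi,\pi)$, I split the primitive as $\partial_Z^{-1}u(Z)=-\int_Z^{Z_2}u-\int_{Z_2}^0 u$, so that
\begin{equation*}
\int_{Z_1}^{Z_2} uw\partial_Z^{-1}u\,\partial_Z G_1\,dZ=\int_{Z_1}^{Z_2}uwT\!\int_Z^{Z_2}\!u\,d\tilde Z\,dZ+\Bigl(\int_{Z_2}^0 u\Bigr)\int_{Z_1}^{Z_2}uwT\,dZ.
\end{equation*}
The first piece I integrate by parts (or apply the argument recalled in Appendix \ref{ap:dom} on the domination of the nonlocal part by transport+potential), producing a quadratic form in $u$ whose coefficient can be absorbed into the spectral gap thanks to the eigenfunction identity; this is precisely what makes $\phi_{1/2}$ the correct weight. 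For the second piece, Cauchy–Schwarz gives $|\int uwT|\leq(\int u^2w)^{1/2}(\int wT^2)^{1/2}$, and the weight behaviour $w\approx|Z|^{-7}s^{-q}$, $T^2\approx Z^2$ near $0$ yields $\int_{Z_1}^{Z_2}wT^2\,dZ\lesssim M^{-4}e^{4s}$, whence this term is bounded by $CM^{-2}e^{2s}(\int_{Z_2}^0|u|)(\int u^2 w)^{1/2}$, matching the RHS.

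\smallskip
\noindent\textit{Step 3: viscosity and collection of boundary terms.} An integration by parts on $(\lambda^4\mu^2)^{-1}\int uw\,\partial_{ZZ}u$ produces $-(\lambda^4\mu^2)^{-1}\int|\partial_Z u|^2w$, the desired dissipation, plus the cross-term $-(\lambda^4\mu^2)^{-1}\int u\partial_Z u\, w_Z$ which by Young's inequality is bounded by $(2\lambda^4\mu^2)^{-1}\int|\partial_Zu|^2w+(2\lambda^4\mu^2)^{-1}\int(w_Z^2/w)u^2$; since $|w_Z/w|\lesssim|Z|^{-1}\leq M^{-1}e^s$ on the domain, the second factor is $\lesssim M^{-2}\int u^2w$ and absorbed into the $\nu/4$ margin for $M$ large. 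The boundary traces at $Z_2$ from the transport, moving endpoint, and viscosity integrations by parts are all controlled using $|T(Z_2)|\lesssim Me^{-s}$, $w(Z_2)\lesssim M^{-7}e^{7s}$, Young, and $1/(\lambda^4\mu^2)\lesssim e^{-2s}$, giving precisely $Ce^{6s}u^2(Z_2)+Ce^{4s}|\partial_Z u|^2(Z_2)$. At $Z_1$, one uses $T(Z_1)=a/2$, $w(Z_1)=1/s$ together with $|a|\leq Ke^{-(1/2-2\nu)s}$ to get $Cu^2(Z_1)(e^{-(1/2-\nu)s}+|a_s|)+C|\partial_Z u|^2(Z_1)e^{-2s}$.

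\smallskip
\noindent The hardest step is Step 2: isolating the correct cancellation between the nonlocal contribution and the transport/potential terms is exactly what dictates the choice of the exponent $1/2$ in $\phi_{1/2}$ and the extra algebraic factor $4(\pi+Z)^3/\sin(-Z)$ in $w$, so this is where the algebra must be carried out with care; everything else is essentially bookkeeping of boundary contributions.
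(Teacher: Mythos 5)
Your proposal follows the same overall architecture as the paper: a weighted energy estimate where the transport–potential part is made coercive by building the weight out of the eigenfunction $\phi_{1/2}$, the nonlocal term is split into a ``near'' quadratic piece and a ``far'' piece carrying $\int_{Z_2}^0|u|$, and the boundary traces and the viscosity cross-term are collected and estimated. Step 1 is correct, and your identity
$V-\tfrac{T_Z}{2}-\tfrac{Tw_Z}{2w}=\tfrac12-\tfrac{3\sin Z}{4(\pi+Z)}+\tfrac{\sin Z}{4}q'(Z)\log s$ on $(-\pi,0)$ is exactly the paper's computation of $\tfrac12\partial_Z(Tw)$ written in a different sign convention, and your sign checks on the two correction terms are right. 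The far-piece bound in Step 2 and the boundary bookkeeping in Step 3 also match the paper.

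There is a genuine gap in the treatment of the near piece $\int_{Z_1}^{Z_2} uwT\bigl(\int_Z^{Z_2}u\bigr)dZ$. You assert that integration by parts produces a quadratic form in $u$ that is ``absorbed into the spectral gap thanks to the eigenfunction identity,'' and that this is what makes $\phi_{1/2}$ the right weight. Neither claim is correct. An integration by parts gives a quadratic form in $\int_Z^{Z_2}u$, not in $u$, and $\phi_{1/2}$ is not an eigenfunction of the full operator $\mathcal H$ (the nonlocal part $\pa_Z^{-1}u\,\pa_Z G_1$ does not vanish on $\phi_{1/2}$); the eigenfunction relation only produces the coercive $\tfrac12$ for $T\pa_Z+V$. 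The actual mechanism in the paper is different: it bounds $|\int_Z^{Z_2}u|$ by Cauchy--Schwarz against $(\int u^2w)^{1/2}(\int_Z^0 w^{-1})^{1/2}$, and then uses the \emph{time-dependent} factor $s^{-q(Z)}$ of $w$ (with $q$ strictly increasing in $|Z|$) to show $\int_Z^0 w^{-1}\lesssim \tfrac{|Z|^7}{|\pi+Z|\ln s}s^{q(Z)}$, which yields a coefficient of order $1/\ln s$ in front of $\int u^2w$. That $\ln s$ gain is what allows the near piece to be absorbed into the $\nu/4$ margin for $s_0$ large; it is a consequence of the design of $q$, and it is an independent mechanism from the choice $\beta=1/2$ (which, as the paper explains elsewhere, is chosen to match the interior decay rate). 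Without this step your estimate on the near piece does not close. Separately, in Step 3 bounding the viscosity cross-term $\int u\,\pa_Zu\,w_Z$ by Young costs you half the dissipation, so you would only recover $\tfrac{1}{2\lambda^4\mu^2}\int|\pa_Zu|^2w$ rather than the coefficient $1$ in the statement; the paper does a second integration by parts to get $\tfrac12\int u^2\pa_{ZZ}w$ and bounds $|e^{-2s}\pa_{ZZ}w|\lesssim w/M^2$, keeping the full dissipation (which is needed with coefficient $1$ in Lemma \ref{lem:exteleft2}).
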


\begin{proof}

One computes first the following identity
\be \lab{eq:interw}
\frac{d}{ds} \left(\frac 12 \int_{Z_1}^{Z_2} u^2 w dZ \right) = \int_{Z_1}^{Z_2} uu_sw dZ+\frac 12 \int_{Z_1}^{Z_2} u^2 w_s dZ+\frac{a_s}{2}(u^2w)(Z_1)+Me^{-s}(u^2w)(Z_2).
\ee
One computes from \fref{eq:def w}:
\be \lab{eq:interw2}
 \int_{Z_1}^{Z_2} u^2 w_s dZ=-\frac 1 s \int_{Z_1}^{Z_2} u^2 w q(Z) dZ\leq 0,
\ee
and we recall that
$$
u_s=-Vu-\mathcal T\pa_Z u +\frac 12\left(\int_0^Z u(s,\tilde Z)d\tilde Z \right)\sin Z \mathds 1_{-\pi \leq Z\leq \pi} +\frac{1}{\lb^4\mu^2}\pa_{ZZ}u.
$$
\textbf{Step 1} \emph{The potential, transport and dissipative effects}. Integrating by parts one finds:
$$
-\int_{Z_1}^{Z_2} u\mathcal T\pa_Z u w dZ =\frac 12 (u^2\mathcal Tw)(Z_1)-\frac 12 (u^2\mathcal Tw)(Z_2)+\int_{Z_1}^{Z_2} u^2 \frac12 \pa_Z(\mathcal Tw) dZ.
$$
One then computes that for $-\pi < Z < 0$, from \fref{eq:def T} and \fref{eq:def phinu}:
\bee
&& \frac 12 \pa_Z (\mathcal Tw) = \frac 14 \pa_Z \left( -\frac{1+\cos Z}{(1-\cos Z)\sin^4(Z)} \frac{4(\pi+Z)^3}{s^{q(Z)}}\right) = \frac 14 \pa_Z \left(-\frac{1}{\phi_{\frac 12}^2} 4(\pi+Z)^3 \frac{1}{s^{q(Z)}}\right) \\
&=& -w \frac{\frac 12 \sin Z \pa_Z\phi_{\frac 12}}{\phi_{\frac 12}}-\frac{1}{\phi_{\frac 12}^2}\frac{3(\pi+Z)^2}{s^{q(Z)}}+\frac{1}{\phi_{\frac 12}^2} \frac{(\pi+Z)^3}{s^{q(Z)}}\ln (s)\pa_Z q  \leq -w \frac{\frac 12 \sin Z \pa_Z\phi_{\frac 12}}{\phi_{\frac 12}},
\eee
and that for $Z\leq -\pi$:
$$
\frac 12 \pa_Z (\mathcal Tw)=-\frac 14 w.
$$
Therefore, one $(-\pi,0)$ one has from \fref{eq:def phinu} the inequality behind the inviscid spectral gap:
$$
-Vu^2 w+u^2\frac 12 \pa_Z (\mathcal Tw)\leq -u^2 w \frac{1}{\phi_{\frac 12}}\left(V\phi_{\frac 12}+\mathcal T \pa_Z\phi_{\frac 12} \right)=-\frac 12 u^2 w,
$$
and on $(-\infty ,-\pi]$ one has from \fref{eq:def V}:
$$
-Vu^2 w+u^2\frac 12 \pa_Z (\mathcal Tw)=-u^2w-\frac 14 w u^2=-\frac 54 w u^2.
$$
Therefore, from the two inequalities above, on the whole ray $(-\infty,0)$ there holds:
$$
-Vu^2 w+u^2\frac 12 \pa_Z (\mathcal Tw)\leq -\frac 12 wu^2.
$$
That is why one has for the part involving the operator $\mathcal T\pa_Z+V$:
$$
\int_{Z_1}^{Z_2} u\left(-Vu-\mathcal T\pa_Z u\right)wdZ  \leq -\frac 12 \int_{Z_1}^{Z_2} u^2 w dZ +\frac 12 (u^2\mathcal Tw)(Z_1)-\frac 12 (u^2\mathcal Tw)(Z_2).
$$
We now turn to the dissipative effects. Integrating by parts
$$
\int_{Z_1}^{Z_2} u \pa_{ZZ}u w = (\frac 12 u^2\pa_Z w-u\pa_Zuw)(Z_1)-(\frac 12 u^2\pa_Z w-u\pa_Zuw)(Z_2)-\int_{Z_1}^{Z_2} |\pa_Z u|^2w +\frac 12 \int_{Z_1}^{Z_2} u^2 w.
$$
The function $\pa_{ZZ}w$, from \fref{eq:def w}, is supported in $(-\pi ,0)$ where one has the bound:
$$
|\pa_{ZZ}w|\lesssim |Z|^{-7} \pa_{ZZ}(s^{-q(Z)})+|Z|^{-8}\pa_Z (s^{-q(Z)})+|Z|^{-9}s^{-q(Z)}\lesssim |Z|^{-9}s^{-q(Z)}(1+Z^2 \ln^2 (s)+|Z| \ln (s))
$$
so that for $s$ large enough depending on $M$, for $Z\leq -Me^{-s}$:
$$
|e^{-2s} \pa_{ZZ}w| \lesssim \frac{w}{M^2}.
$$
From \fref{bd:parameterstrap}, the above identity becomes for $s$ large enough (since $\pa_Zw\geq 0$ near the origin):
\bee
 \frac{1}{\lb^4\mu^2} \int_{Z_1}^{Z_2} u \pa_{ZZ}u w dZ &\leq & Ce^{-2s}|\frac 12 u^2\pa_Z w-u\pa_Zuw|(Z_1)+Ce^{-2s}|u\pa_Zuw|(Z_2) \\
&&-\frac{1}{\lb^4\mu^2}\int_{Z_1}^{Z_2} |\pa_Z u|^2wdZ+\frac{C(K)}{M^2} \int_{Z_1}^{Z_2} u^2 wdZ.
\eee
At this point one has proved that for the operator $\mathcal T\pa_Z +V-\lb^{-4}\mu^{-2}\pa_{ZZ}$:
\bea
\non && \int_{Z_1}^{Z_2} u\left(-Vu-\mathcal T\pa_Z u +\frac{1}{\lb^4\mu^2}\pa_{ZZ}u \right)w+\frac 12 \int_{Z_1}^{Z_2} u^2 w_s+\frac{a_s}{2}(u^2w)(Z_1)+Me^{-s}(u^2w)(Z_2) \\
\non &\leq &-\frac 12 \int_{Z_1}^{Z_2} u^2 w-\frac{1}{\lb^4\mu^2}\int_{Z_1}^{Z_2} |\pa_Z u|^2w +\frac{C(K)}{M^2} \int_{Z_1}^{Z_2} u^2w+\frac{a_s}{2}(u^2w)(Z_1)+Me^{-s}(u^2w)(Z_2)\\
\non && +\frac 12 (u^2\mathcal Tw)(Z_1)-\frac 12 (u^2\mathcal Tw)(Z_2) +Ce^{-2s}|\frac 12 u^2\pa_Z w-u\pa_Zuw|(Z_1)+C(K,M) e^{-2s}|u\pa_Zuw|(Z_2)\\
\non &\leq &-\frac 12 \int_{Z_1}^{Z_2} u^2 w-\frac{1}{\lb^4\mu^2}\int_{Z_1}^{Z_2} |\pa_Z u|^2w +\frac{C(K)}{M^2} \int_{Z_1}^{Z_2} u^2w+C(K,M) e^{6s}u^2(Z_2)\\
\lab{bd:mainout}  && +C(K,M) e^{4s}|\pa_Z u|^2(Z_2)+Cu^2(Z_1)\left(e^{-\left(\frac 12-\nu\right)s}+|a_s|\right)+C|\pa_Zu|^2(Z_1)e^{-2s},
\eea
where we used \fref{bd:parameterstrap}, the fact that $|w(Z_2)|\lesssim Z_2^{-7}\lesssim e^{7s}$, $|w(Z_1)|\lesssim 1$, $|\mathcal T(Z_1)|\lesssim |\pi+Z_1|\lesssim |a|\lesssim e^{-(1/2-\nu)s}$, $|\mathcal T(Z_2)|\lesssim |Z_2|\lesssim e^{-s}$, $|\pa_Z w(Z_1)|\lesssim 1$.\\

\noindent \textbf{Step 2} \emph{The nonlocal term}. Using Cauchy-Schwarz one has for $Z\in (-\pi,0)$:
\be \lab{bd:paz-1u}
\left| \int_0^Z u(s,\tilde Z)d\tilde Z \right|\leq \int_{Z_2}^{0}|u|dZ+\left(\int_{Z_1}^{Z_2} u^2 wdZ \right)^{\frac 12} \left(\int_Z^0 w^{-1}(s,\tilde Z)d\tilde Z \right)^{\frac 12}.
\ee
One computes that for $Z\in (-\pi,0)$:
$$
|w^{-1}(s,Z)|\lesssim |Z|^7 s^{q( Z)}=| Z|^7 e^{q(Z)\ln s}
$$
from what we infer from the assumptions on $q$ in Subsection \ref{subsec:wq}:
\be \lab{bd:paZ-1w-1}
\int_Z^0 w^{-1}(s,\tilde Z)d\tilde Z\lesssim \int_Z^0 |\tilde Z|^7 e^{q(\tilde Z)\ln s}d\tilde Z \lesssim |Z|^7 \int_Z^0 \frac{1}{\ln s \pa_Z q}\frac{d}{dZ}(e^{q(Z)\ln s})\lesssim \frac{|Z|^7}{|\pi+Z|\ln s}e^{q(Z)\ln s}.
\ee
Therefore:
$$
\left(\int_Z^0 w^{-1}(s,\tilde Z )d\tilde Z\right) \sin^2 Z \lesssim \frac{|Z|^{9}|\pi+Z|}{\ln s}s^{q(Z) }
$$
which produces
$$
\int_{Z_1}^{Z_2} \left(\int_Z^0 w^{-1}(s,\tilde Z)d\tilde Z\right)\sin^2 Z\mathds 1_{-\pi \leq Z \leq 0} wdZ \lesssim \int_{-\pi}^{0} \frac{|Z|^{9}|\pi+Z|}{\ln s}s^{q(Z) } \frac{1}{|Z|^7}\frac{1}{s^{q(Z)}}dZ\lesssim \frac{1}{\ln s}.
$$
One also has
$$
\int_{Z_2}^{Z_1} \sin^2 Z\mathds 1_{0\leq Z \leq \pi} wdZ \approx \int_{Z_2}^{Z_1} \frac{dZ}{|Z|^5s^{q(Z)}} \lesssim \frac{e^{4s}}{M^4}.
$$
Thus the contribution of the nonlocal term is estimated as follows:
\bee
&& \left| \int_{Z_2}^{Z_1} u \left(\int_0^Z u(s,\tilde Z)d\tilde Z\right)\sin Z \mathds 1_{-\pi \leq Z \leq 0} wdZ \right| \\
& \lesssim & \left| \int_{Z_2}^{Z_1} u \left(\int_{Z_2}^{0} u \right)\sin Z \mathds 1_{-\pi \leq Z \leq 0} wdZ \right|+\left| \int_{Z_2}^{Z_1} u \left(\int_{Z_2}^Z u\right)\sin Z \mathds 1_{-\pi\leq Z \leq 0} wdZ \right|  \\
&\lesssim & \frac{1}{\ln s}\int_{Z_2}^{Z_1} u^2 wdZ+\frac{e^{2s}}{M^2} \left(\int_{Z_2}^{0} |u|dZ\right)\left(\int_{Z_2}^{Z_1} u^2 wdZ \right)^{\frac 12}.
\eee
\textbf{Step 3} \emph{End of the proof}. The above identity, \fref{bd:mainout},  and \fref{eq:interw} finally yield
\bee
&& \frac{d}{ds} \left(\frac 12 \int_{Z_1}^{Z_2} u^2 w \right) \\
&\leq & -\frac 12 \int_{Z_1}^{Z_2} u^2 w-\frac{1}{\lb^4\mu^2}\int_{Z_1}^{Z_2} |\pa_Z u|^2w +\frac{C(K)}{M^2} \int_{Z_1}^{Z_2} u^2w+C(K,M) e^{6s}u^2(Z_2)+C(K,M) e^{4s}|\pa_Z u|^2(Z_2)\\
  && +Cu^2(Z_1)\left(e^{-\left(\frac 12-\nu\right)s}+|a_s|\right)+C|\pa_Zu|^2(Z_1)e^{-2s}+ \frac{1}{\ln s}\int_{Z_2}^{Z_1} u^2 w+\frac{e^{2s}}{M^2} \left(\int_{Z_2}^{0} |u|\right)\left(\int_{Z_2}^{Z_1} u^2 w \right)^{\frac 12},\\
 &\leq & \left(-\frac 12 +\frac{C(K)}{M^2}+\frac{C}{\ln s}\right)\int_{Z_1}^{Z_2} u^2 w +C(K,M)e^{6s}u^2(Z_2)+C(K,M) e^{4s}|\pa_Z u|^2(Z_2) \\
&&  +Cu^2(Z_1)\left(e^{-\left(\frac 12-\nu\right)s}+|a_s|\right)+C|\pa_Zu|^2(Z_1)e^{-2s}+\frac{Ce^{2s}}{M^2} \left(\int_{Z_2}^0 |u|\right)\left(\int_{Z_1}^{Z_2} u^2 w \right)^{\frac 12}-\frac{1}{\lb^4\mu^2}\int_{Z_1}^{Z_2} |\pa_Z u|^2w,
\eee
which ends the proof of the Lemma for $M$ and $s_0$ large enough.

\end{proof}


\subsubsection{Exterior Lyapunov estimates}

We now study the functional \fref{lem:exteriorlinear} for the the full problem. First, let us estimate the function at the boundaries, $Z_1=-\pi-a$ and $Z_2=-Me^{-s}$. From \fref{bd:etrap} and Sobolev near the maximum:
\be \lab{eq:estimationuz2}
u^2(Z_2)=\e^2(-M\lb^2 \mu e^{-s})\leq C \| \e \|_{H^2(|Y|\leq M^2)}\leq Ce^{-(7-2\nu')s}, \ \ (\pa_Z u)^2(Z_2)\leq Ce^{-(5-2\nu')s}.
\ee
From the boundary condition \fref{eq:F}, the decomposition \fref{id:decomposition vp}, \fref{id:F1} and \fref{bd:parameterstrap}, at the origin in original variables:
\be \lab{eq:estimationuz1}
u^2(Z_1)=G_1^2(-\pi-a)\leq C a^4 \leq C e^{-\left(2-8\nu \right)s}.
\ee
Finally, from \fref{bd:origin bootstrap2}, \fref{id:decomposition vp} and \fref{bd:parameterstrap}:
\be \lab{eq:estimationpazuz1}
|\pa_Z u(Z_1)|\leq |\pa_Z F(Z_1)| +|\pa_Z G_1(Z_1)|\leq \lb^{-1} \mu |\pa_y \xi (0)|+C|a|\leq Ce^{-\left(\frac 12 -2\nu \right)s}.
\ee
One has the following energy estimate for the function in $Z$ variable outside the maximum.

\begin{lemma}[Exterior Lyapunov functional on the left] \lab{lem:exteleft1}
There exists $\nu^*>0$ such that for any $K>0$, $0<\nu,\nu'\leq \nu^*$, an $M^*>0$ exists such that for $M\geq M^*$, there exists $s_0^*$ and $C(K,M)$ such that if the solution is trapped on $[s_0,s_1]$ with $s_0\geq s_0^*$ the following inequality holds true:
\bea
\lab{eq:exteriorenergyidentity1}  &&\frac{d}{ds} \left(\frac 12 \int_{Z_1}^{Z_2} u^2 wdZ \right)+\left(\frac 12-\frac{\nu}{2}  \right)\int_{Z_1}^{Z_2} u^2 wdZ   \\
\non &\leq & C(K,M) \left(e^{6s}u^2(Z_2)+e^{4s}|\pa_Z u|^2(Z_2)+ \left( \int_{Z_1}^{Z_2} u^2 wdZ \right)^{\frac 12} e^{-\frac 58 s}+e^{-(2+\frac 16)s} \right)
\eea

\end{lemma}

\begin{proof}

One first computes from \fref{eq:u} the identity
\be \lab{eq:expout}
\frac{d}{ds} \left(\frac 12 \int_{Z_1}^{Z_2} u^2 w \right) = \int_{Z_1}^{Z_2} u(-\mH u +\frac{\pa_{ZZ}u}{\lb^4\mu^2}-\tH u -NL-\psi)w+ \int_{Z_1}^{Z_2} \frac{u^2}{2} w_s+\frac{a_s}{2}(u^2w)(Z_1)+\frac{Me^{-s}}{2}(u^2w)(Z_2).
\ee

\noindent \textbf{Step 1} \emph{The leading order linear terms}. From \fref{eq:estimationuz1}, \fref{bd:as} and \fref{eq:estimationpazuz1}:
$$
u^2(Z_1)\left(e^{-\left(\frac 12-\nu\right)s}+|a_s|\right)+|\pa_Zu|^2(Z_1)e^{-2s}\lesssim e^{(\frac 52 -10\nu)s}+e^{(5-4\nu)s}\lesssim e^{(\frac 52 -10\nu)s}.
$$
From \fref{def:renormalisationpara2}, \fref{bd:parameterstrap} and \fref{bd:etrap}, as $\lb^2e^{-s}\mu M\approx 1$:
$$
e^{2s} \int_{Z_2}^0 |u|dZ=\frac{e^{2s}}{\lb^2\mu}\int_{-\lb^2e^{-s}\mu M}^0 |\e |dY\lesssim e^s \| \e \|_{L^2_\rho}\lesssim e^{-\frac 52 s}.
$$
We now apply Lemma \ref{lem:exteriorlinear} and inject the two above inequalities:
\bea
\non && \int_{Z_1}^{Z_2} u(-\mH u +\frac{1}{\lb^4\mu^2}\pa_{ZZ}u)w+\frac 12 \int_{Z_1}^{Z_2} u^2 w_s+\frac{a_s}{2}(u^2w)(Z_1)+\frac{Me^{-s}}{2}(u^2w)(Z_2) \\
\non &\leq & \left(-\frac 12 +\frac{\nu}{4} \right)\int_{Z_1}^{Z_2} u^2 w +C(K,M) e^{6s}u^2(Z_2)+C(K,M) e^{4s}|\pa_Z u|^2(Z_2)+e^{-(\frac 52 -10\nu)s} \\
\lab{bd:expout1} &&  +e^{-\frac 52 s}\left(\int_{Z_1}^{Z_2} u^2 w \right)^{\frac 12}-\frac{1}{\lb^4\mu^2}\int_{Z_1}^{Z_2} |\pa_Z u|^2w.
\eea

\noindent \textbf{Step 2} \emph{The small linear term}. Recall \fref{eq:def tH}, then
$$
- \int_{Z_1}^{Z_2} u\tH uw dZ=- \int_{Z_1}^{Z_2} u\left(m_3'\pa_Z u+m_1(2-Z\pa_Z)u-m_2 Z\pa_Z u \right)wdZ
$$
Integrating by parts, one has:
$$
\int_{Z_1}^{Z_2} u \pa_Z u w dZ=\frac 12(u^2w)(Z_2)-\frac 12 (u^2w)(Z_1)-\frac 12 \int_{Z_1}^{Z_2}u^2\pa_Z w dZ.
$$
One has that $\pa_Z w$ is supported on $(-\pi,0)$, with for $|Z|\gtrsim e^{-s}$:
$$
|\pa_Z w|\lesssim |Z|^8s^{q(Z)}(1+|Z|\ln s)\lesssim e^s w.
$$
Therefore, since $w(Z_1)\lesssim 1$ and $w(Z_2)\lesssim e^{7s}$, using \fref{bd:d}:
$$
\left| \int_{Z_1}^{Z_2} u m_3'\pa_Z u w dZ\right|\lesssim e^{\frac{35}{8} s}u^2(Z_2)+e^{-\frac{21}{8} s} u^2(Z_1)+e^{-\frac{13}{8} s} \int_{Z_1}^{Z_2} u^2wdZ.
$$
The same strategy applies for the other term, and as$|\pa_Z (Zw)|\lesssim e^{s/2}w$, this gives using \fref{bd:d}:
$$
\left| \int_{Z_1}^{Z_2} u(m_1(2-Z\pa_Z)u-m_2 Z\pa_Z u)w \right| \lesssim  e^{\frac{35}{8} s} u^2(Z_2)+e^{-\frac{13}{8} s} u^2(Z_1)+e^{-\frac 98 s} \int_{Z_1}^{Z_2} u^2w.
$$
In conclusion one has for the small linear term, using \fref{eq:estimationuz1}, \fref{bd:etrap2} and \fref{eq:estimationuz2} as $0<\nu'\ll \nu$:
\bea \non
\left| \int_{Z_1}^{Z_2} u\tH uw dZ \right| & \lesssim &  e^{\frac{35}{8} s} u^2(Z_2)+e^{-\frac{13}{8} s} u^2(Z_1)+e^{-\frac 98s} \int_{Z_1}^{Z_2} u^2w \\
\lab{eq:bdtildeH} & \lesssim &  e^{5 s} u^2(Z_2)+e^{-\left(\frac{29}{8}-8\nu\right) s}+e^{-s} \int_{Z_1}^{Z_2} u^2w .
\eea

\noindent \textbf{Step 3} \emph{The nonlinear term}. For the nonlinear term one recalls the identity:
$$
\int_{Z_1}^{Z_2} uNLw dZ =\int_{Z_1}^{Z_2} u(-u^2+\pa_Z^{-1}u \pa_Z u)w dZ.
$$
The first term is estimated in brute force:
$$
\left| \int_{Z_1}^{Z_2} u^3wdZ \right| \leq \| u\|_{L^{\infty}} \int_{Z_1}^{Z_2} u^2wdZ.
$$
For the second, we integrate by parts and use the brute force estimate $|\pa_Z^{-1}u|\leq |Z|\| u \|_{L^{\infty}}$:
\bee
\int_{Z_1}^{Z_2} u \pa_Z^{-1}u\pa_Z u w &= &\frac 12 ( u^2 \pa_Z^{-1}u w)(Z_2)-\frac 12 ( u^2 \pa_Z^{-1}u w)(Z_1)-\frac 12 \int_{Z_1}^{Z_2} u^3w -\frac 12 \int_{Z_1}^{Z_2} \pa_Z^{-1}u \pa_Z w u^2 \\
\eee
since $|Z\pa_Z w|\lesssim \ln (s) w$. In conclusion, the contribution of the nonlinear term is, using \fref{bd:weightedSobolev} and \fref{eq:estimationuz1} for $s_0$ large enough:
\bea
\non \left| \int_{Z_1}^{Z_2} uNLw dZ  \right| & \lesssim & \| u \|_{L^{\infty}}e^{6s}u^2(Z_2)+\| u\|_{L^{\infty}} u^2(Z_1)+\ln (s)\| u \|_{L^{\infty}} \int_{Z_1}^{Z_2} w u^2dZ\\
\lab{bd:expout2} & \lesssim & e^{(6-\frac 18)s}u^2(Z_2)+e^{-(2+\frac 14-9\nu)s}+\frac{\nu}{4} \int_{Z_1}^{Z_2} w u^2dZ.
\eea

\noindent \textbf{Step 4} \emph{The error term}. Recall \fref{def:coefficientsm}. One has that $\psi$ is supported on $[-\pi,\pi]$, with the estimate from \fref{id:G1}
\bee
|\psi (s,Z)| & = & \left|-\frac{1}{\lb^4\mu^2}\pa_{ZZ}G_1(Z)+m_1 (2-Z\pa_Z)G_1(Z)-m_2Z\pa_Z G_1(Z)+m_3'\pa_Z G_1(Z)\right|\\
&\lesssim & \left| m_1 +\frac{1}{4\lb^4 \mu^2}\right| +Z^2\left(\frac{1}{\lb^4}+ |m_1|+\left| m_2 \right| \right)+|m_3'||Z|.
\eee
Since $w\lesssim |Z|^{-7}$ one has, using \fref{bd:d} and \fref{bd:parameterstrap}, for $s_0$ large:
\bee
\int_{Z_1}^{Z_2} \psi^2 w dZ & \lesssim  & e^{6s}\left| m_1 +\frac{1}{4\lb^4 \mu^2}\right|^2 +e^{2s} \left(\frac{1}{\lb^4}+ \left| \frac{\lb_s}{\lb}-\frac 12\right|+\left|\frac{\mu_s}{\mu}\right| \right)^2+e^{4s}|m_3'|^2\\
& \lesssim  & e^{-\left(\frac{13}{4}-2\right)s}+C(K)e^{-2s}\lesssim e^{-\frac 54 s}.
\eee
By Cauchy Schwarz, one has proved that for the error term:
\be \lab{bd:expout3} 
\left| \int_{Z_1}^{Z_2} u\psi w \right| \lesssim \left( \int_{Z_1}^{Z_2} u^2 w \right)^{\frac 12} e^{-\frac 58 s}.
\ee

\noindent \textbf{Step 5} \emph{End of the proof}. Collecting the previous estimates \fref{bd:expout1}, \fref{eq:bdtildeH}, \fref{bd:expout2} and \fref{bd:expout3} and injecting them in \fref{eq:expout} yields the desired energy estimate \fref{eq:exteriorenergyidentity1}.

\end{proof}

A similar energy estimate also holds for the adapted derivative of $u$, $A\pa_Z u$ where $A$ is defined by \fref{eq:def A}, at the left of the origin. This vector field is chosen because its commutator with $\mathcal T$ vanishes for $Z\in [-\pi/2,\pi/2]$, and has a good sign for $|Z|>\pi /2$. Before stating the estimate, let us investigate the size of the boundary terms. From Sobolev embedding and \fref{bd:etrap}, since $|A|\sim |Z|$ and $|\pa_Z A|\lesssim 1$ near the origin:
\be \lab{bd:apazuboundary1}
|A\pa_Z u|^2(Z_2)\leq |Y\pa_Y \e|^2(-M\lb^2 \mu e^{-s})\leq C \| \e \|_{H^2(|Y|\leq M^2)}^2 \leq C(K,M) e^{-(7-2\nu')s},
\ee
\bea
\non (\pa_Z (A\pa_Z u))^2(Z_2) & \leq & (|\pa_Z u|^2+|Z\pa_{ZZ}u|^2)(Z_2)\\
 \lab{bd:apazuboundary2} &\leq & \lb^4\mu^2(|\pa_Y \e|^2+|Y\pa_{YY}\e|^2)(-M\lb^2 \mu e^{-s})\leq C(K,M) e^{-(5-2\nu')s}.
\eea
Since $A= 1$ near $-\pi$, from \fref{eq:estimationpazuz1}:
\be \lab{bd:apazuboundary3}
|A\pa_Z u|^2(Z_1)\leq C |\pa_Z u|^2(Z_1)\leq Ce^{-(1-4\nu)s}.
\ee
Now we write $\pa_Z (A\pa_Z u)=A\pa_{ZZ}u$ since $|\pa_Z A(-\pi-a)|=0$. Since $\pa_{yy}\xi (0)=0$ from the boundary condition in the equation \fref{1DPrandtl}, \fref{id:decomposition vp} and \fref{bd:parameterstrap} imply:
\be \lab{bd:apazuboundary4}
|\pa_Z (A\pa_Z u)(Z_1) |=|\pa_{ZZ}u (Z_1)|=|\pa_{ZZ}(F-G_1)(Z_1)|\leq |\lb^2\mu^2\pa_{yy}\xi (0)|+|\pa_{ZZ}G_1|(-\pi-a)\leq \frac 12.
\ee
We perform the same weighted energy estimate outside the maximum for $A\pa_Z u$ as we did for $u$.

\begin{lemma}[Exterior Lyapunov functional on the left for the derivative]  \lab{lem:exteleft2}

Let $Z_1=-\pi-a$, $Z_2=-Me^{-s}$ and $v=A\pa_Z u$. There exists $\nu^*>0$ such that for any $K>0$ and $0<\nu,\nu'<\nu^*$, there exists $M^*>0$ for any $M\geq M^*$, there exists $s_0^*$ such that if the solution is trapped on $[s_0,s_1]$ with $s_0\geq s_0^*$:
\be \lab{eq:exteriorenergyidentity2}
 \frac{d}{ds} \left(\frac 12 \int_{Z_1}^{Z_2} v^2 wdZ \right)-\frac \nu2 \int_{Z_1}^{Z_2} v^2 wdZ+\frac{1}{2\lb^4\mu^2}\int_{Z_1}^{Z_2} |\pa_Z v|^2w dZ\leq e^{-\frac 14 s}
\ee

\end{lemma}

\begin{proof}

In this proof, the constant $C$ might depend on $K$ and $M$. One first computes the evolution equation for $v=A\pa_Z u$ from \fref{eq:u}:
\bea 
\non 0&=&v_s+(\mathcal T\pa_Z+V)v+\frac{A\pa_Z\mathcal T-\mathcal T\pa_Z A}{A}v-\frac{1}{\lb^4\mu^2}\left(\pa_{ZZ}v+[A\pa_Z,\pa_{ZZ}]u\right)+\tH v+[A\pa_Z,\tH]u,\\
\lab{eq:v} &&+\tilde{NL}+A\pa_Z\psi+Au\pa_Z G_1+\pa_Z^{-1}uA\pa_{ZZ}G_1
\eea
where
$$
\tilde{NL}=-\left(2u+\pa_Z^{-1}u \frac{\pa_ZA}{A}\right)v+\pa_{Z}^{-1}u \pa_Zv.
$$
First, one has the following identity for the energy estimate:
\be \lab{eq:expv}
\frac{d}{ds} \left(\frac 12 \int_{Z_1}^{Z_2} v^2 wdZ \right) = \int_{Z_1}^{Z_2} vv_swdZ+\frac 12 \int_{Z_1}^{Z_2} v^2 w_sdZ+\frac{a_s}{2}(v^2w)(Z_1)+\frac{Me^{-s}}{2}(v^2w)(Z_2).
\ee

\noindent \textbf{Step 1} \emph{The leading order linear terms}. From \fref{bd:mainout}, injecting \fref{bd:apazuboundary1}, \fref{bd:apazuboundary2}, \fref{bd:apazuboundary3}, \fref{bd:apazuboundary4}:
\bea
 \non && \int_{Z_1}^{Z_2} v\left(-Vv-\mathcal T\pa_Z v +\frac{1}{\lb^4\mu^2}\pa_{ZZ}v \right)w+\frac 12 \int_{Z_1}^{Z_2} u^2 w_s+\frac{a_s}{2}(v^2w)(Z_1)+\frac{Me^{-s}}{2}(v^2w)(Z_2) \\
 \non &\leq &-\frac 12 \int_{Z_1}^{Z_2} v^2 w-\frac{1}{\lb^4\mu^2}\int_{Z_1}^{Z_2} |\pa_Z v|^2w +\frac{\nu}{4} \int_{Z_1}^{Z_2} v^2w+C e^{6s}v^2(Z_2)+Ce^{4s}|\pa_Z v|^2(Z_2)\\
\non && +Cv^2(Z_1)e^{-\left(\frac 12-\nu\right)s}+C|\pa_Zv|^2(Z_1)e^{-2s}\\
\lab{eq:lineairev} &\leq &-\frac 12 \int_{Z_1}^{Z_2} v^2 w-\frac{1}{\lb^4\mu^2}\int_{Z_1}^{Z_2} |\pa_Z v|^2w +\frac{\nu}{4} \int_{Z_1}^{Z_2} v^2w+Ce^{-(1-2\nu')s}.
\eea
Then, for the commutator with $A$ and the transport $\mathcal T$, a direct computation shows, since $A=2\mathcal T$ for $|Z|\leq \pi/2$, and $A=-1$ for $Z\leq -\pi/2$, that for all for $Z\leq 0$:
$$
\frac{A\pa_Z\mathcal T-\mathcal T\pa_Z A}{A}=\pa_Z \mathcal T \mathds 1_{Z\leq -\frac \pi 2}\geq -\frac 12 \mathds 1_{Z\leq -\frac \pi 2}
$$
which implies:
\be \lab{bd:estimationcommutateurAT}
- \int_{Z_1}^{Z_2} v \frac{A\pa_Z \mathcal T-\mathcal T\pa_Z A}{A} v w\leq \frac 12 \int_{Z_1}^{Z_2} v^2w.
\ee

\noindent \textbf{Step 2} \emph{The small linear term and other commutators}. For the small linear term, from \fref{eq:bdtildeH}, injecting \fref{bd:apazuboundary1}, \fref{bd:apazuboundary3} and \fref{bd:etrap2}, for $s_0$ large enough:
\bea
\non  \left| \int_{Z_1}^{Z_2} v\tH vwdZ \right| & \lesssim &e^{\frac{35}{8} s} v^2(Z_2)+e^{-\frac{13}{8} s} v^2(Z_1)+e^{-\frac 98s} \int_{Z_1}^{Z_2} v^2w \\
\lab{bd:commutateurtildeH} & \leq & C(K,M)\left(e^{-(\frac{21}{8} -2\nu')s}+e^{-(\frac{21}{8} -4\nu)s}+e^{-(\frac 98-2\nu)s}\right) \ \leq \ e^{-s}.
\eea
Next, we turn to the commutator with the dissipative term. one has
$$
[A\pa_Z,\pa_{ZZ}]u=\left(- \frac{\pa_{ZZ}A}{A}+\frac{2(\pa_Z A)^2}{A^2}\right)v-2\frac{\pa_Z A}{A}\pa_Z v.
$$
Since, for $Z\geq Me^{-s}$:
$$
\left| \frac{\pa_{ZZ}A}{A}\right|+\left|\frac{(\pa_Z A)^2}{A^2}\right|\leq \frac{C}{Z^2}\leq \frac{Ce^{2s}}{M^2},
$$
one has for the first term that:
$$
\left| \frac{1}{\lb^4\mu^2}\int_{Z_1}^{Z_2} v^2\left(- \frac{\pa_{ZZ}A}{A}v+\frac{2(\pa_Z A)^2}{A^2}\right) wdZ\right| \leq \frac{C(K)}{M^2}  \int_{Z_1}^{Z_2} v^2wdZ.
$$
For the second term, one first integrates by parts:
\bee
-\int_{Z_1}^{Z_2} 2v \frac{\pa_Z A}{A}\pa_Z v wdZ &=&(v^2\frac{\pa_Z A}{A}w)(Z_1)-(v^2\frac{\pa_Z A}{A}w)(Z_2)+\int_{Z_1}^{Z_2} v^2 \pa_Z\left(\frac{\pa_Z A}{A} w\right)dZ\\
&= &-(v^2\frac{\pa_Z A}{A}w)(Z_2)+\int_{Z_1}^{Z_2} v^2 \pa_Z\left(\frac{\pa_Z A}{A} w\right)dZ
\eee
since $\pa_Z A (Z_1)=0$. From a direct inspection:
$$
\left| \pa_Z\left(\frac{\pa_Z A}{A} w\right)\right|\leq \frac{Cw}{Z^2}\leq \frac{Ce^{2s}}{M^2}w.
$$
Therefore:
$$
\left| \frac{1}{\lb^4\mu^2} \int_{Z_1}^{Z_2} 2v \frac{\pa_Z A}{A}\pa_Z v wdZ\right| \leq C e^{6s}v^2(Z_2)+\frac{C(K)}{M^2}\int_{Z_1}^{Z_2} v^2wdZ.
$$
One has proved that for the commutator with the dissipative term, for $M$ large enough depending on $K$, using \fref{bd:apazuboundary1}:
\be \lab{bd:commutateurvpazz}
\left| \frac{1}{\lb^4\mu^2} \int_{Z_1}^{Z_2} v [A\pa_Z,\pa_{ZZ}]u w\right| \leq C e^{6s}v^2(Z_2)+\frac{C(K)}{M^2}\int_{Z_1}^{Z_2} v^2w\leq Ce^{-(1-2\nu')s}+\frac{\nu}{8}\int_{Z_1}^{Z_2} v^2w.
\ee
Next, one computes that the commutator with the small linear term is:
$$
[A\pa_Z,\tilde H]u=\left(-m_3'\frac{\pa_Z A}{A}-m_1\left(1-\frac{Z\pa_Z A}{A}\right)+m_2 \frac{Z\pa_Z A}{A} \right)v.
$$
Since $|\pa_Z A/A|\lesssim 1/Z \lesssim e^s$ for $|Z|\geq Me^{-s}$, this implies using \fref{bd:d}:
\be \lab{bd:commutateurtildeH2}
\left| \int_{Z_1}^{Z_2} v[A\pa_Z,\tilde H]uw \right| \lesssim \left(|m_1|+|m_2|+e^sm_3' \right)\int_{Z_1}^{Z_2} v^2w\lesssim e^{-\frac{13}{8} s}\int_{Z_1}^{Z_2} v^2w.
\ee

\noindent \textbf{Step 3} \emph{The nonlinear term}. Since $|\pa_Z A/A|\lesssim 1/Z$ one has:
$$
\left| \int_{Z_1}^{Z_2} v\left(u+\pa_Z^{-1}u \frac{\pa_ZA}{A}\right)vw dZ \right| \lesssim \| u\|_{L^{\infty}} \int_{Z_1}^{Z_2} v^2wdZ.
$$
For the other term, an integration by parts gives:
\bee
\left| \int_{Z_1}^{Z_2} v\pa_{Z}^{-1}u \pa_ZvwdZ\right| & = & \left| \frac 12 (\pa_Z^{-1}uv^2w)(Z_1)-\frac 12 (\pa_Z^{-1}uv^2w)(Z_2)+\int_{Z_1}^{Z_2} v^2\pa_Z(\pa_Z^{-1}u w)dZ \right| \\
&\lesssim & \|u\|_{L^{\infty}}v^2(Z_1)+\|u\|_{L^{\infty}}e^{6s}v^2(Z_2)+\log (s)\| u\|_{L^{\infty}} \int_{Z_1}^{Z_2} v^2wdZ,
\eee
where we used the fact that $|\pa_Zw|\lesssim \log (s)Z^{-1}w$. One has then showed that for the nonlinear term, using \fref{bd:weightedSobolev}, \fref{bd:apazuboundary1} and \fref{bd:apazuboundary3}, as $0<\nu'\ll \nu$:
\bea
\non \left| \int_{Z_1}^{Z_2} v \tilde{NL}wdZ \right| & \lesssim & \|u\|_{L^{\infty}}v^2(Z_1)+\|u\|_{L^{\infty}}e^{6s}v^2(Z_2)+\log (s)\| u\|_{L^{\infty}} \int_{Z_1}^{Z_2} v^2wdZ\\
\lab{bd:estimationapazunonlinear}& \lesssim & e^{-(1+\frac 14 -5\nu)s}+\frac{\nu}{8} \int_{Z_1}^{Z_2} v^2wdZ.
\eea

\noindent \textbf{Step 4} \emph{The error term}. Recall \fref{def:coefficientsm}. For the error one first computes, since $|A|\lesssim |Z|$ for $|Z|\leq \pi$ with $A(-\pi)=-1$, and since $\pa_{ZZ}G_1$ has limit $0$ and $1/2$ on the left and on the right of $-\pi$ respectively:
\bee
A\pa_Z\psi (s,Z) & = & A\pa_Z\left(-\frac{1}{\lb^4\mu^2}\pa_{ZZ}G_1(Z)+m_1(2-Z\pa_Z)G_1(Z)-m_2Z\pa_Z G_1+m_3' \pa_Z G_1(Z)\right)\\
&= & \frac 12 \delta_{\{Z=-\pi\}}+O\left( Z^2\left(\frac{1}{\lb^4}+ \left| m_1 \right|+ \left| m_2\right| \right)+|m_3'||Z|\right).
\eee
Since $w\lesssim |Z|^7$ one has, using \fref{bd:d}:
$$
\int_{Z_1}^{Z_2} |O\left( Z^2\left(\frac{1}{\lb^4}+ \left| m_1 \right|+ \left| m_2 \right| \right)+|m_3' |Z\right)|^2 wdZ  \lesssim   e^{2s} \left(\frac{1}{\lb^4}+ \left| m_1\right|+ \left| m_2 \right| \right)^2+e^{4s}|m_3'|^2 \lesssim  e^{-\frac 54 s}
$$
For the Dirac term, either one has $a<0$ and then $-\pi< Z_1$ in which case there is nothing to estimate since
$$
\int_{Z_1}^{Z_2} v\delta_{\{Z=-\pi \}}dZ =0.
$$
Otherwise, if $Z_1\leq-\pi$, we use Sobolev embedding (since $w\approx s^{-1}$ near $-\pi$) to find:
\bee
\frac{1}{\lb^4\mu^2} \int_{Z_1}^{Z_2} v\delta_{\{ Z=-\pi \}} w&=& \frac{1}{\lb^4\mu^2}w(-\pi)v(-\pi) \leq \frac{C}{\lb^4\mu^2} \left(\left(\int_{Z_1}^{Z_2} v^2 w\right)^{\frac 12}+\left(\int_{Z_1}^{Z_2} (\pa_Zv)^2w \right)^{\frac 12}\right) \\
&\leq & \frac{C}{\lb^4\mu^2} \left(\int_{Z_1}^{Z_2} v^2 w\right)^{\frac 12} +\frac{C \kappa}{\lb^4\mu^2} \int_{Z_1}^{Z_2} (\pa_Zv)^2w+\frac{C}{\kappa \lb^4}.
\eee
Using Cauchy-Schwarz, one has then showed that for the error term, in both cases $Z_1\leq \pi$ or $Z_1>\pi$, for $\kappa$ small enough, using  \fref{bd:parameterstrap} and \fref{bd:etrap2} for the last inequality:
\bea
\non \left| \int_{Z_1}^{Z_2} v A \pa_Z \psi w \right| & \lesssim & \left(e^{-\frac 58 s}+\frac{C}{\lambda^4\mu^2} \right)\left(\int_{Z_1}^{Z_2} v^2 w\right)^{\frac 12} +\frac{1}{2\lb^4\mu^2} \int_{Z_1}^{Z_2} (\pa_Zv)^2w+\frac{C}{\kappa \lb^4} \\
\lab{bd:estimationapzuerror}& \lesssim & e^{-(\frac 58 -\nu)s}+\frac{1}{2\lb^4\mu^2} \int_{Z_1}^{Z_2} (\pa_Zv)^2w
\eea

\noindent \textbf{Step 5} \emph{The remaining lower order terms}. One has from \fref{bd:etrap2} that for the first one:
\be \lab{bd:estimationapzulowerorder1}
\left| \int_{Z_1}^{Z_2} v  Au\pa_Z G_1 wdZ\right| \lesssim \left(\int_{Z_1}^{Z_2} u^2 wdZ\right)^{\frac 12}\left(\int_{Z_1}^{Z_2} v^2 wdZ\right)^{\frac 12}\lesssim e^{-(\frac 12 -2\nu)s}
\ee
since $A\pa_Z G_1$ is bounded. For the last term, from \fref{bd:paz-1u} one has:
\bee
\left| \pa_Z^{-1}uA\pa_{ZZ}G_1 \right| & \lesssim & \left( \int_{Z_2}^0 |u|d\tilde Z\right) |Z|\mathds 1_{-\pi \leq Z\leq 0}+\left(\int_{Z_1}^{Z_2} u^2 wd\tilde Z \right)^{\frac 12} \left(\int_Z^0 w^{-1}d\tilde Z \right)^{\frac 12}|Z|\mathds 1_{-\pi \leq Z\leq 0} \\
& \lesssim & \left( \int_{Z_2}^{0}|u|d\tilde Z\right) |Z|\mathds 1_{-\pi \leq Z\leq 0}+\sqrt s\left(\int_{Z_1}^{Z_2} u^2 wd\tilde Z \right)^{\frac 12} |Z|^5\mathds 1_{0\leq Z \leq \pi} \\
\eee
where we used the fact that $w\approx |Z|^{-7} s^{-q(Z)}$ for $-\pi \leq Z< 0$, and that $q$ is maximal at $-\pi$ with $q(-\pi)=1$. One then computes that
$$
\int_{Z_1}^{Z_2} Z^2wdZ\lesssim \int_{Z_1}^{Z_2} Z^{-5}dZ\lesssim e^{4s}, \ \ \int_{Z_1}^{Z_2} Z^{10} wdZ\lesssim1.
$$
Therefore:
$$
\int_{Z_1}^{Z_2} |\pa_Z^{-1}uA\pa_{ZZ}G_1|^2 wdZ \lesssim e^{4s}\left( \int_{Z_2}^{0}|u|dZ\right)^2+\int_{Z_1}^{Z_2} u^2 wdZ
$$
which, by Cauchy-Schwarz, gives for the last lower order term, using \fref{bd:etrap} and \fref{bd:etrap2}:
\bee
&& \left| \int_{Z_1}^{Z_2} v \pa_Z^{-1}uA\pa_{ZZ}G_1 wdZ \right| \lesssim \left( \int_{Z_1}^{Z_2} v^2 wdZ \right)^{\frac 12}\left(e^{2s}\left( \int_{Z_2}^{0}|u|dZ\right)+s\left(\int_{Z_1}^{Z_2} u^2 w dZ\right)^{\frac 12}\right)\\
& \lesssim & e^{\nu s} \left(e^{s}\left( \int_{M^2}^{0}|\e|dY\right)+se^{-(\frac 12 -\nu)s}\right)\lesssim e^{\nu s} \left(e^{s}e^{-\frac 72 s}+se^{-(\frac 12 -\nu)s}\right)\lesssim e^{-(\frac 12 -2\nu)s}.
\eee

\noindent \textbf{Step 6} \emph{End of the proof}. In conclusion, from the identities \fref{eq:v}, \fref{eq:expv}, collecting the estimates \fref{eq:lineairev}, \fref{bd:estimationcommutateurAT}, \fref{bd:commutateurvpazz}, \fref{bd:commutateurtildeH}, \fref{bd:commutateurtildeH2}, \fref{bd:estimationapazunonlinear}, \fref{bd:estimationapzuerror}, \fref{bd:estimationapzulowerorder1} and the above inequality:
\bee
&& \frac{d}{ds} \left(\frac 12 \int_{Z_1}^{Z_2} v^2 wdZ \right) \\
&\leq &-\frac 12 \int_{Z_1}^{Z_2} v^2 wdZ-\frac{1}{\lb^4\mu^2}\int_{Z_1}^{Z_2} |\pa_Z v|^2wdZ +\frac{\nu}{4} \int_{Z_1}^{Z_2} v^2wdZ+Ce^{-(1-2\nu')s}+\frac 12 \int_{Z_1}^{Z_2} v^2wdZ\\
&&+e^{-s}+Ce^{-(1-2\nu)s}+\frac{\nu}{8}\int_{Z_1}^{Z_2} v^2wdZ+Ce^{-\frac{13}{8} s} \int_{Z_1}^{Z_2} v^2 wdZ+Ce^{-(1+\frac 14 -5\nu)s}+\frac{\nu}{8} \int_{Z_1}^{Z_2} v^2w \\
&&+Ce^{-(\frac58 -\nu)s}+\frac{1}{2\lb^4\mu^2} \int_{Z_1}^{Z_2} (\pa_Zv)^2wdZ+Ce^{-(\frac 12 -2\nu)s}\\
&\leq & \frac \nu2 \int_{Z_1}^{Z_2} v^2 wdZ-\frac{1}{2\lb^4\mu^2}\int_{Z_1}^{Z_2} |\pa_Z v|^2wdZ +C(K,M) e^{-(\frac 12 -2\nu)s}
\eee
which is the desired differential inequality \fref{eq:exteriorenergyidentity2} for $\nu$ small enough and $s_0$ large enough.

\end{proof}

The very same analysis can be done at the right of the origin. The analogues of Lemmas \ref{lem:exteleft1} and \ref{lem:exteleft2} hold and their proofs are exactly the same.

\begin{lemma}[Exterior Lyapunov functionals on the right] \lab{lem:exteright}

Let $Z_3=Me^{-s}$. There exists $\nu^*>0$ such that for any $K>0$, $0<\nu,\nu'\leq \nu^*$, an $M^*>0$ exists such that for $M\geq M^*$, there exists $s_0^*$ and $C(K,M)$ such that if the solution is trapped on $[s_0,s_1]$ with $s_0\geq s_0^*$ the following inequality holds true, with $v=A\pa_Z u$:
\bea
\lab{eq:exteriorenergyidentity3}  &&\frac{d}{ds} \left(\frac 12 \int_{Z_3}^{+\infty} u^2 w dZ\right)+\left(\frac 12-\frac{\nu}{2}  \right)\int_{Z_3}^{+\infty} u^2 wdZ   \\
\non &\leq & C(K,M)\left(e^{6s}u^2(Z_3)+e^{4s}|\pa_Z u|^2(Z_3)+e^{-(2+\frac 16)s} + \left( \int_{Z_3}^{+\infty} u^2 wdZ \right)^{\frac 12} e^{-\frac 58 s}\right)
\eea

\be \lab{eq:exteriorenergyidentity4}
 \frac{d}{ds} \left(\frac 12 \int_{Z_3}^{+\infty} v^2 wdZ \right)-\frac \nu2 \int_{Z_3}^{+\infty} v^2 wdZ+\frac{1}{2\lb^4\mu^2}\int_{Z_3}^{+\infty} |\pa_Z v|^2wdZ \leq e^{-\frac 14 s}
\ee

\end{lemma}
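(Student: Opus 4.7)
My plan is to mirror the proofs of Lemmas \ref{lem:exteleft1} and \ref{lem:exteleft2}, invoking the discrete symmetry $Z \mapsto -Z$ under which $G_1$, $V$, $w$ are even while $T$, $A$ are odd. I would first establish the right-side analogue of Lemma \ref{lem:exteriorlinear}: for $u$ solving the leading order linear equation on $[Z_3, +\infty)$, the pointwise identity $\tfrac{1}{2}\pa_Z(Tw) \leq -w\, T\pa_Z \phi_{1/2}/\phi_{1/2}$ holds on $(0,\pi)$ by the same factorization $w = \phi_{1/2}^{-2} \cdot 4(\pi-Z)^3 s^{-q(Z)}$ (with $\pa_Z q \leq 0$ on this interval, by evenness of $q$), yielding $-Vu^2 w + \tfrac{1}{2} u^2 \pa_Z(Tw) \leq -\tfrac{1}{2} u^2 w$; on $[\pi, +\infty)$ the explicit forms $V = 1$ and $\tfrac{1}{2}\pa_Z(Tw) = -\tfrac{1}{4} w$ produce $-\tfrac{5}{4} u^2 w$ exactly as on the left.

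The nonlinear, small linear, modulation and error contributions then transfer line by line. The boundary terms at $Z_3$ generate the same $e^{6s} u^2(Z_3) + e^{4s} |\pa_Z u|^2(Z_3)$ quantities, controlled by \fref{bd:etrap} and Sobolev embedding in the parabolic zone through the correspondence $Z = Y/(\lb^2 \mu)$. The key simplification compared to the left-hand side is that there is no Dirichlet condition at $+\infty$: since $w = 1/s$ for $|Z| \geq \pi$, the bootstrap assumption \fref{bd:etrap2} forces $u^2 w \to 0$ at infinity, so no boundary contributions analogous to $u^2(Z_1)(e^{-(1/2-\nu)s} + |a_s|) + |\pa_Z u|^2(Z_1) e^{-2s}$ arise. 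The nonlocal term is handled by the symmetric analogue of \fref{bd:paz-1u} and \fref{bd:paZ-1w-1}: for $0 < Z < \pi$ one bounds $\int_{Z_3}^Z w^{-1}$ by $|Z|^7 / [(\pi - Z)\ln s] \cdot e^{q(Z)\ln s}$, which is exactly the left-side estimate with $\pi + Z$ replaced by $\pi - Z$.

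For the derivative estimate, the crucial commutator $(A\pa_Z T - T\pa_Z A)/A$ retains the same favorable sign on the right half-line: it vanishes on $(0, \pi/2)$ because $A = \sin Z = 2T$ there, equals $\pa_Z T = \tfrac{1}{2}\cos Z \leq 0$ on $(\pi/2, \pi)$ since $\pa_Z A = 0$, and equals $\pa_Z T = -1/2$ on $(\pi, +\infty)$; hence the analogue of \fref{bd:estimationcommutateurAT} holds. The dissipative commutator $[A\pa_Z, \pa_{ZZ}]$ obeys the same bound since $|\pa_Z A/A| \lesssim 1/Z \lesssim e^s/M$ for $Z \geq Z_3$. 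The Dirac contribution in $A\pa_Z \Psi$ now sits at $Z = \pi$ rather than at $Z = -\pi$ and is handled identically, with $\pi$ always strictly interior to $[Z_3, +\infty)$, so the Sobolev argument of \fref{bd:estimationapzuerror} applies verbatim.

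The main obstacle I anticipate is purely bookkeeping: since the actual solution has no parity, every integral must be estimated directly on $[Z_3, +\infty)$ rather than pulled back from the left by symmetry. However, the quantitative bounds on $|T|$, $|V|$, $|w|$, $|A|$, $|\pa_Z A/A|$ and $|\pa_Z G_1|$ used throughout the left-side proofs are all invariant under $Z \mapsto -Z$, so the same inequalities hold pointwise on $[Z_3, +\infty)$ for each term. Summing the resulting differential inequalities and invoking \fref{bd:etrap}, \fref{bd:etrap2}, \fref{bd:modulation} and \fref{bd:d} delivers exactly \fref{eq:exteriorenergyidentity3} and \fref{eq:exteriorenergyidentity4}.
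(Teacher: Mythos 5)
Your proposal matches the paper's own proof, which consists precisely of the observation that the computations of Lemmas \ref{lem:exteleft1} and \ref{lem:exteleft2} transfer by symmetry, the only difference being that the sole boundary terms come from $Z_3=Me^{-s}$ (no analogue of the free boundary at $Z_1=-\pi-a$). One small slip in your justification of the key inequality $\tfrac{1}{2}\pa_Z(Tw)\leq -w\,T\pa_Z\phi_{1/2}/\phi_{1/2}$ on $(0,\pi)$: by the paper's definition $q'>0$ on $(0,\pi)$, so $\pa_Z q\geq 0$ there, not $\leq 0$ as you state; the favorable sign in fact comes from $\pa_Z(s^{-q(Z)})=-\ln(s)\,\pa_Z q\,s^{-q(Z)}\leq 0$ multiplying the positive quantity $\phi_{1/2}^{-2}4(\pi-Z)^3$ (whereas on $(-\pi,0)$ it was the product of $\pa_Z q<0$ with the extra minus from $T<0$), and your stated factorization of $w$ is missing the $1/\sin Z$ factor. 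Neither affects the conclusion — the cross-terms have the needed sign on both halves — but the reason is not the one you give.
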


\begin{proof}

The proof of Lemma \ref{lem:exteright} follows exactly the same lines as the proof of Lemmas \ref{lem:exteleft1} and \ref{lem:exteleft2}, since everything is symmetric except the boundary condition, and we safely skip it. The only difference is  then that in this case the only boundary terms are coming from $Z_3$.

\end{proof}


\subsection{Analysis close to the origin}

This subsection is devoted to the analysis of the solution in original variables, on compact sets and in particular close to the origin. Since the blow-up happens at infinity, eventually the nonlinear effects become weak and the solution stays regular. We state it in a perturbative way and track precisely the constants, so that this can be used both to derive uniform estimates at the origin, and to derive the asymptotics \fref{eq:blowupprofile} for the profile at blow-up time.

\begin{lemma}[No blow-up on compact sets] \lab{lem:noblowup}

Let $0\leq s_0\leq s_1$, $b>0$, $N,L,L'\geq 1$, $q\in 2\mathbb N$. Assume that $s$ is given by \fref{def:renormalisationpara} with $\lambda$ satisfying \fref{bd:parameterstrap}. Let $\xi$ solve \fref{1DPrandtl} on $[0,t(s_1)]\times [0,2N]$, with $\xi \in C^3([0,t(s_1)]\times [0,2N])$, and such that the following properties hold:
$$
\xi_0(t(s_0))=by^2+\tilde \xi (t(s_0)), \ \ \| \tilde \xi (t(s_0))\|_{L^{\infty}([0,2N])}\leq L, \ \ \| \pa_y \tilde \xi (t(s_0))\|_{L^{2}([0,2N])}\leq L'.
$$
and for all $t\in [t(s_0),t(s_1)]$:
$$
\| \xi(t) \|_{L^{\infty}([0,2N])}\leq e^{\left(1-\frac{1}{8}\right)s}, \ \ \| \pa_y \xi (t)\|_{L^{2}([0,2N])}\leq e^{s},
$$
then, writing $\xi=by^2+\tilde \xi$, for all $t\in [t(s_0),t(s_1)]$:
$$
\| \tilde \xi \|_{L^{q}([0,N])}\lesssim LN^{\frac 1q}+N^{2+\frac 1q}e^{-\frac{s_0}{16}}, \ \ \| \pa_y \tilde \xi \|_{L^{2}([0,N])}\lesssim L'+N^{\frac 32}e^{-\frac{s_0}{8q}}.
$$

\end{lemma}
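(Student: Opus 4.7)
The plan is to exploit the fact that the physical time interval $[t(s_0),t(s_1)]$ is very short: from $ds/dt=\lambda^2\approx e^s$ and \eqref{bd:parameterstrap} one gets $t(s_1)-t(s_0)\lesssim e^{-s_0}$, and more importantly
\begin{equation*}
\int_{t(s_0)}^{t(s_1)} \| \xi \|_{L^\infty([0,2N])}\, dt \;\lesssim\; \int_{s_0}^{s_1} e^{(1-1/8)\tilde s}\, e^{-\tilde s}\,d\tilde s \;\lesssim\; e^{-s_0/8}.
\end{equation*}
Since $|\tilde \xi|\leq |\xi|+C_Ny^2$, the same time-integrability is inherited by $\tilde \xi$ on $[0,2N]$. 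This smallness is the source of all the $e^{-s_0/\cdots}$ factors in the conclusion and is what will close a Gronwall argument applied to weighted energies of $\tilde \xi$.

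First I would derive the equation for $\tilde \xi$. Substituting $\xi=by^2+\tilde \xi$ in \eqref{1DPrandtl} and using $\pa_{yy}(by^2)=2b$, $(by^2)^2=b^2y^4$ and $\pa_y^{-1}(by^2)\cdot(2by)=2b^2y^4/3$, one finds
\begin{equation*}
\tilde \xi_t - \tilde \xi_{yy} + \Bigl(\tfrac{by^3}{3} + \pa_y^{-1}\tilde \xi\Bigr)\tilde \xi_y - 2by^2\tilde \xi + 2by\,\pa_y^{-1}\tilde \xi - \tilde \xi^2 \;=\; 2b + \tfrac{b^2y^4}{3},
\end{equation*}
with $\tilde \xi(t,0)=0$ from the Dirichlet condition on $\xi$. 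I would then fix a cutoff $\chi\in C^\infty([0,\infty))$ with $\chi\equiv 1$ on $[0,N]$, $\Supp \chi\subset[0,2N]$ and $\|\chi'\|_\infty\lesssim N^{-1}$, so that all energies live on $[0,2N]$ and boundary terms at $y=2N$ vanish.

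For the $L^q$ bound, consider $W(t):=\int_0^{2N}\chi^q \tilde \xi^q\,dy$. Differentiating in time, integrating by parts (boundary contributions at $y=0$ vanish because $\tilde \xi(0)=0$, and at $y=2N$ because of $\chi$), and rewriting $\tilde \xi^{q-1}\tilde \xi_y=q^{-1}\pa_y(\tilde \xi^q)$ to discard the transport terms yields, after Cauchy-Schwarz absorption of $\chi'$-cross terms into the dissipation $\int\chi^q\tilde \xi^{q-2}(\tilde \xi_y)^2$, and H\"older on the forcing $\|2b+b^2y^4/3\|_{L^q([0,2N])}\lesssim N^{2+1/q}$:
\begin{equation*}
\frac{dW}{dt}\;\leq\; C_N\bigl(1+\|\xi\|_{L^\infty([0,2N])}\bigr)W \;+\; C\,N^{2+1/q}\,W^{(q-1)/q}.
\end{equation*}
Setting $X:=W^{1/q}$ and applying Gronwall with $\int\|\xi\|_\infty\,dt\lesssim e^{-s_0/8}$ and $\int 1\,dt\lesssim e^{-s_0}$ then gives $X(t)\lesssim X(t_0)+N^{2+1/q}e^{-s_0/16}\lesssim LN^{1/q}+N^{2+1/q}e^{-s_0/16}$, which is the claimed $L^q$ bound on $[0,N]$.

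For the $L^2$ bound on $\eta:=\pa_y\tilde \xi$, I would differentiate the equation in $y$ to obtain a linear-plus-lower-order equation for $\eta$ with transport and damping coefficients polynomially bounded in $N$ (up to an $\tilde \xi$-factor of size $\|\tilde \xi\|_\infty$) and forcing of size $\lesssim N^{7/2}$ in $L^2([0,2N])$. A weighted $L^2$ estimate on $V(t):=\int_0^{2N}\chi^2\eta^2$, performed in the same spirit (integration by parts on the transport and nonlocal terms to avoid top-order derivatives, Cauchy-Schwarz to absorb cross terms into $\int\chi^2\eta_y^2$), together with a Gagliardo-Nirenberg interpolation $\|\tilde \xi\|_{L^\infty([0,2N])}\lesssim\|\tilde \xi\|_{L^q}^{1-\theta}\|\eta\|_{L^2}^{\theta}$ combined with the $L^q$ bound just obtained, then closes by Gronwall over the short time interval $[t(s_0),t(s_1)]$; the interpolation exponent is where the specific loss $e^{-s_0/(8q)}$ comes from. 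The main obstacle is the nonlocal transport term $\pa_y^{-1}\tilde \xi\cdot\tilde \xi_y$ in both the $\tilde \xi$- and $\eta$-equations, which must be handled by integration by parts to shift the derivative off $\tilde \xi_y$ or $\eta_y$, and the careful bookkeeping of the polynomial $N$-dependence through every constant so that the final exponents $N^{2+1/q}$ and $N^{3/2}$ are recovered exactly.
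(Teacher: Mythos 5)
Your preliminary observation that the physical time interval is short and that $\int_{t(s_0)}^{t(s_1)}\|\xi\|_{L^\infty([0,2N])}\,dt\lesssim e^{-s_0/8}$ is correct and does play a role, but it is not enough to close a single Gronwall estimate with one fixed cutoff, and that is where the argument breaks. When you write $v=\chi\tilde\xi$ and estimate $\frac{d}{dt}\int\chi^q\tilde\xi^q$, the commutator terms with $\chi'$ and $\chi''$ (for instance $\int v^{q-1}\pa_y\chi\,\pa_y\tilde\xi$ and $\int v^{q-1}\pa_{yy}\chi\,\tilde\xi$) cannot be fully absorbed into the localized dissipation: Young's inequality leaves a residual of the form $\int\chi^{q-2}|\pa_y\chi|^2\,\tilde\xi^q$, which lives on the transition region $\{N\leq y\leq 2N\}$ where $\chi$ is small. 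This quantity is genuinely exterior to $W=\int\chi^q\tilde\xi^q$ and is not controlled by it. Bounding it brutally via the a priori $L^\infty$ hypothesis gives roughly $N^{-2}\int_{N}^{2N}\tilde\xi^q\lesssim N^{-1}\bigl(e^{(1-1/8)s}+CN^2\bigr)^q$; after reintegrating in time (with $dt\approx e^{-s}ds$) this contributes about $N^{-1}e^{(7q/8-1)s}$, which grows in $s$ and swamps the target $L^qN+N^{2q+1}e^{-qs_0/16}$ in the regime $N\approx e^{7s_0/16}$ where the lemma is actually applied in the proof of the theorem. So the ODE for $W$ (or $X=W^{1/q}$) is simply not closed.

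What the paper does instead, following \cite{GK,HV2,MZ2}, is an \emph{iterated} localisation over a nested family of intervals $[0,2N]\supset[0,\alpha_1 N]\supset\cdots$: one assumes a growing bound $\int_{y\leq 2N}|\tilde\xi|^q\leq L_1^q e^{q(1-\kappa)s}$ on the larger interval, feeds it into the cutoff remainders, and shows by reintegration in time (using $dt=\lambda^{-2}ds\approx e^{-s}ds$) that the bound on the next smaller interval has the exponent improved to $q(1-\kappa)-1/16$. After a finite number of steps (about $13q$, each shrinking the interval by a fixed small $\alpha$) the exponent is driven to zero, yielding the uniform-in-time bound. The same iteration with the differentiated equation gives the $\pa_y\tilde\xi$ bound. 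The exponential smallness $e^{-s_0/16}$, $e^{-s_0/(8q)}$ in the lemma is precisely the residual from the initialisation of this iteration, not from a Gronwall factor. If you want to salvage your approach you would have to replace the single Gronwall by this iteration (or an equivalent moving-cutoff scheme); a single $L^q$ estimate with one cutoff does not see enough of the exterior region to close.

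Two smaller points: the Gronwall prefactor you would accumulate is $\exp(C_N e^{-s_0/8})$, and since $N$ and $s_0$ are linked by $N\approx e^{7s_0/16}$ when the lemma is applied, any polynomial $N$-dependence in $C_N$ must be tracked with care, as it is not automatically negligible. Also, for the derivative estimate, the interpolation $\|\tilde\xi\|_{L^\infty}\lesssim\|\tilde\xi\|_{L^q}^{1-\theta}\|\pa_y\tilde\xi\|_{L^2}^{\theta}$ is used in the paper only at the very end of the proof of Theorem \ref{th:main}, not inside Lemma \ref{lem:noblowup}; the lemma itself closes the $\pa_y\tilde\xi$ estimate by the same nested bootstrap applied to the equation for $\zeta=\pa_y\xi$, without interpolation.
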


\begin{corollary} \lab{lab:cor}

There exists a universal $C>0$ such that for any $K,\nu,\nu',M$, there exists $s_0^*$ such that if the solution is trapped on $[s_0,s_1]$ with $s_0\geq s_0^*$, we have for all $t\in [t(s_0),t(s_1)]$:
\be \lab{bd:origin bootstrap2}
\| \xi(t,\cdot) \|_{W^{1,\infty}([0,1/2])}\leq C. 
\ee

\end{corollary}

\begin{proof}[Proof of Corollary \ref{lab:cor}]

From \fref{bd:weightedSobolev}, \fref{bd:parameterstrap} and \fref{def:renormalisationpara2} we infer that for $s_0$ large enough one has for all $s\in [s_0,s_1]$:
$$
\| \xi \|_{L^{\infty}([0,2])} \leq e^{\left(1-\frac 18\right)s}.
$$
Hence one obtains from Lemma \ref{lem:noblowup}, using \fref{bd:eini3}, that for all $t\in [0,t(s_1)]$:
$$
\|  \xi \|_{L^{q}([0,1])}\lesssim 1, \ \ \| \pa_y \xi \|_{L^{2}([0,1])}\lesssim 1.
$$
The desired bound \fref{bd:origin bootstrap2} then follows from a standard parabolic regularity result. We do not prove it here and refer to the proof of Lemma \ref{lem:bootstrapcompactY} for a similar strategy.

\end{proof}

\begin{proof}[Proof of Lemma \ref{lem:noblowup}]

The proof relies on a standard localized bootstrap argument similar to \cite{GK}. The fact that we performed such an argument close to the anticipated profile at blow-up time is inspired by \cite{HV2,MZ2}.\\

\noindent \textbf{Step 1} \emph{The bootstrap procedure}. Let $1<\alpha_1<2$, $0<\kappa<1$ with $\kappa\neq 1-1/(16q)$, $L_1=LN^{\frac 1q}+N^{2+\frac 1q}e^{-\frac{s_0}{16}}$ and assume that for $t\in [t(s_0),t(s_1)]$ one has the bound
\be \lab{eq:originhp ubt}
\int_{y\leq 2N} |\tilde \xi |^qdy \leq L_1^q e^{q(1-\kappa )s}.
\ee
We claim that then for all $t\in [0,t(s_1)]$ one has the bound:
\be \lab{eq:originhp ubtgain}
\int_{y\leq \alpha_1 N} |\tilde \xi |^qdy \lesssim \left\{ \ba{l l l} L_1^q e^{q(1-\kappa-\frac{1}{16q} )s} & \text{if} \ \kappa<1-1/(8q) \\ L_1^q & \text{if} \ 1-1/(8q)<\kappa. \ea \right.
\ee
We now prove this claim. We write $\xi =by^2+\tilde \xi$. Then $\tilde \xi$ solves:
$$
\tilde \xi_t-\pa_{yy}\tilde \xi+\pa_y^{-1}\xi \pa_y \tilde \xi -\xi^2+2b\pa_y^{-1}\xi y-2b=0, \ \ \tilde \xi (t,0)=0.
$$
Let $0<\alpha \ll1$ and $\chi$ be a smooth cut-off function, with $\chi(y)=1$ for $y\leq 1+\alpha$ and $\chi (y)=0$ for $y\geq 1+2\alpha$, set $\chi_1=\chi\left(\frac{y}{\alpha_1 N}\right)$ and let $v:=\chi_1 \xi$. Then $v$ solves:
$$
v_t-\pa_{yy}v+\pa_{y}^{-1}\xi \pa_y v+2\pa_y \chi_1 \pa_y \tilde \xi-\chi_1 \xi^2+2b \pa_y^{-1} \xi \chi_1 y -2b\chi_1+\pa_{yy}\chi_1 \tilde \xi-\pa_y^{-1}\xi \pa_y \chi_1 \tilde \xi=0.
$$
One then has the following identity for an $L^q$ energy estimate:
\bee
0&=&\frac{d}{dt}\left(\frac{1}{q}\int v^q dy \right)+(q-1)\int v^{q-2}|\pa_y v|^2dy\\
&&+\int v^{q-1}\left(\pa_{y}^{-1}\xi \pa_y v+2\pa_y \chi_1 \pa_y \tilde \xi-\chi_1 \xi^2+2b \pa_y^{-1} \xi \chi_1 y -2b\chi_1+\pa_{yy}\chi_1 \tilde \xi-\pa_y^{-1}\xi \pa_y \chi_1 \tilde \xi\right)dy.
\eee
We now estimate all terms. For the first one, an integration by parts gives, using $|v|\lesssim |\tilde \xi|$:
$$
\left| \int v^{q-1} \pa_{y}^{-1}\xi \pa_y vdy\right| =\frac{1}{q}\left| \int v^q \xi dy \right|\lesssim \| \xi \|_{L^{\infty}([0,2N])} \int_{y\leq 2N} |\tilde \xi|^{q}dy \lesssim L_1^q e^{\left(q(1-\kappa)+1-\frac 18 \right)s}
$$
For the second one, integrating by parts, applying H\"older and Young inequality and $|v|\lesssim |\tilde \xi|$:
$$
\left| \int v^{q-1} \pa_y \chi_1 \pa_y \tilde \xi \right|\leq \frac{1}{2} \int |\pa_y v|^2v^{q-2}+C \int_{y\leq 2N} |\tilde \xi|^q\leq  \frac{1}{2} \int |\pa_y v|^2v^{q-2}+CL_1^q e^{q(1-\kappa) s}.
$$
For the third term, since $|v|\lesssim \tilde \xi$ and $\xi^2 \lesssim  |\xi|(|\tilde \xi|+y^{2}|)$ there holds from H\"older and \fref{eq:originhp ubt}:
\bee
\left| \int v^{q-1} \chi_1 \xi^2 \right| & \lesssim & \| \xi \|_{L^{\infty}([0,2N])} \int_{y\leq 2N} \tilde \xi^q+\| \xi \|_{L^{\infty}([0,2N])} \left( \int_{y\leq 2N} y^{2q}dy\right)^{\frac{1}{q}}\left( \int_{y\leq 2N} |\tilde \xi |^{q} \right)^{1-\frac{1}{q}} \\
&\lesssim & L_1^q e^{\left(q(1-\kappa)+1-\frac 18 \right)s}+e^{(1-\frac 18)s}N^{2+\frac 1q}L_1^{q-1}e^{(q-1)(1-\kappa)s}\lesssim L_1^{q}e^{\left(q(1-\kappa)+1-\frac{1}{16}\right)s}\\
\eee
since $e^{-\frac{1}{16}}N^{2+\frac 1q}\leq L_1$. For the fourth term, since $|\pa_y^{-1} \xi y |\leq \| \xi \|_{L^{\infty}([0,2N])}y^2$ and $|v|\lesssim |\tilde \xi|$:
$$
\left| \int v^{q-1}\pa_y^{-1} \xi \chi_1 y \right|\leq\| \xi \|_{L^{\infty}([0,2N])} \left( \int_{y\leq 2N} y^{2q}dy\right)^{\frac{1}{q}}\left( \int_{y\leq 2N} |\tilde \xi |^{q} \right)^{1-\frac{1}{q}}\lesssim L_1^{q}e^{\left(q(1-\kappa)+1-\frac{1}{16}\right)s}.
$$
For the the next two terms:
$$
\left| \int v^{q-1}\left( -2b\chi_1+\pa_{yy}\chi_1 \tilde \xi \right)dy \right|\lesssim \int_{y\leq 2N}\tilde \xi^qdy \lesssim L_1^q e^{q(1-\kappa)s}.
$$
Finally, for the last term, as $\pa_y \chi_1\lesssim N^{-1}$, one has $|\pa_y^{-1}\xi \pa_y \chi_1|\lesssim \| \xi \|_{L^{\infty}([0,2N])}$ and:
$$
\left| \int v^{q-1} \pa_y^{-1}\xi \pa_y \chi_1 \tilde \xi dy \right|\leq \| \xi \|_{L^{\infty}([0,2N])} \int_{y\leq 2N} \tilde \xi^q dy \lesssim L_1^q e^{\left(q(1-\kappa)+1-\frac 18 \right)s}.
$$
Collecting all the above estimates gives:
$$
\frac{d}{dt}\left( \int v^q dy \right)\lesssim L_1^q e^{\left(q(1-\kappa)+1-\frac{1}{16} \right)s}.
$$
We reintegrate with time the above identity, using the relation $ds/dt=\lb^2\approx e^{s}$ from \fref{bd:parameterstrap}:
\bee
\int v^q & \lesssim & \int |\tilde \xi (s_0)|^q+ L_1^q \int_{s_0}^{s} e^{\left(q(1-\kappa)-\frac{1}{16} \right)s'}ds' \\
&\lesssim & \left\{ \ba{l l l} L^q N+ L_1^q e^{\left(q(1-\kappa)-\frac{1}{16} \right)s} & \text{if} \ \kappa<1-\frac{1}{16q}, \\ 
L^q N+ L_1^q e^{\left(q(1-\kappa)-\frac{1}{16} \right)s_0} & \text{if} \ \kappa>1-\frac{1}{16q},
\ea \right. \lesssim \left\{ \ba{l l l}  L_1^q e^{\left(q(1-\kappa)-\frac{1}{16} \right)s} & \text{if} \ \kappa<1-\frac{1}{16q}, \\ 
L_1^q & \text{if} \ \kappa>1-\frac{1}{16q},
\ea \right. \\
\eee
since $L_1=LN^{\frac 1q}+N^{2+\frac 1q}e^{-\frac{s_0}{8q}}$ (the case $\kappa =1-1/(16q)$ produces a harmless log which can be avoided by choosing slightly different parameters without affecting the result). This ends the proof of \fref{eq:originhp ubtgain} and of the claim.\\

\noindent \textbf{Step 2} \emph{Uniform in time $L^q$ bound}. We iterate Step 1 for a sequence of intervals $[0,\alpha_1N]$,...,$[0,\alpha_kN]$ and parameter $\kappa_1,...,\kappa_k$. Note that this is possible from the initial bounds. At each iteration, if one is not in the second case the gain in \fref{eq:originhp ubtgain} is $\kappa_i=\kappa_{i-1}+1/(16q)$. Hence we only need a finite number of iterations depending on the choice of $q$ to reach the second case, yielding:
$$
\int_{y\leq N} |\tilde \xi |^qdy \lesssim L_1^q =L^qN+N^{2q+1}e^{-\frac{s_0}{16}}.
$$

\noindent \textbf{Step 3}  \emph{The bootstrap procedure for the derivative}. Let $1<\alpha_1<2$, $0\leq \kappa<2$ with $\kappa\neq 2-1/8$, $L_1=L'+N^{3/2}e^{-\frac{s_0}{8q}}$ and assume that for $t\in [t(s_0),t(s_1)]$ one has the bound
\be \lab{eq:originhp ubt2}
\int_{y\leq 2N} |\pa_y \tilde \xi |^2dy \leq L_1^2 e^{(2-\kappa )s}.
\ee
We claim that then for all $t\in [0,t(s_1)]$ one has the bound:
\be \lab{eq:originhp ubtgain2}
\int_{y\leq \alpha_1 N} |\tilde \xi |^2dy \lesssim \left\{ \ba{l l l} L_1^2 e^{(2-\kappa-\frac{1}{8} )s} & \text{if} \ \kappa<2-1/8 \\ L_1^q & \text{if} \ 2-1/8<\kappa. \ea \right.
\ee
We now prove this claim. Let $\zeta:=\pa_y \xi$. Then it solves:
$$
\zeta_t-\xi \zeta+\pa_y^{-1}\xi \pa_y \zeta -\pa_{yy}\zeta=0.
$$
We write $\zeta =h+\tilde \zeta$ with $h$ smooth such that $h=2by $ for $y\geq 1$, $h(0)=h'(0)=h''(0)=0$. Then $\tilde \zeta$ solves:
$$
\tilde \zeta_t-\pa_{yy}\tilde \zeta+\pa_y^{-1}\xi \pa_y \tilde \zeta -\xi \zeta+\pa_y^{-1}\xi \pa_y h-\pa_{yy}h =0, \ \ \pa_y\tilde \zeta (t,0)=0.
$$
Let $0<\alpha \ll1$ and $\chi$ be a smooth cut-off function, with $\chi(y)=1$ for $y\leq 1+\alpha$ and $\chi (y)=0$ for $y\geq 1+2\alpha$, set $\chi_1=\chi\left(\frac{y}{\alpha_1 N}\right)$ and let $v:=\chi_1 \tilde \zeta$. Then $v$ solves:
$$
v_t-\pa_{yy}v+\pa_{y}^{-1}\xi \pa_y v+2\pa_y \chi_1 \pa_y \tilde \zeta-\chi_1 \xi \zeta+\pa_y^{-1} \xi \chi_1 \pa_y h -2b\pa_{yy}h+\pa_{yy}\chi_1 \tilde \zeta-\pa_y^{-1}\xi \pa_y \chi_1 \tilde \zeta=0.
$$
An $L^2$ energy estimate then writes:
$$
\frac{d}{dt}\left(\frac{1}{2}\int v^q \right)+\int |\pa_y v|^2+\int v\left(\pa_{y}^{-1}\xi \pa_y v+2\pa_y \chi_1 \pa_y \tilde \zeta-\chi_1 \xi \zeta+\pa_y^{-1} \xi \chi_1 \pa_y h -2b\pa_{yy}h+\pa_{yy}\chi_1 \tilde \zeta-\pa_y^{-1}\xi \pa_y \chi_1 \tilde \zeta \right)=0.
$$
We now estimate all terms. For the first one, an integration by parts gives, using $|v|\lesssim |\tilde \zeta|$:
$$
\left| \int v \pa_{y}^{-1}\xi \pa_y v dy\right| =\frac{1}{2}\left| \int v^2 \xi dy \right|\lesssim \| \xi \|_{L^{\infty}([0,2N])} \int_{y\leq 2N} |\tilde \zeta |^{2}dy \lesssim L_1^2 e^{\left(2-\kappa+1-\frac 18 \right)s}
$$
For the second one, integrating by parts, applying H\"older, Young inequality and $|v|\lesssim |\tilde \zeta|$:
$$
\left| \int v \pa_y \chi_1 \pa_y \tilde \zeta dy \right|\leq \frac{1}{2} \int |\pa_y v|^2dy+C \int_{y\leq 2N} |\tilde \zeta|^2dy \leq  \frac{1}{2} \int |\pa_y v|^2dy+CL_1^2 e^{(2-\kappa) s}.
$$
For the third term, since $|v \xi \zeta |\lesssim |\tilde \zeta|^2 |\xi|+y|\xi|$ there holds:
$$
\left| \int v \chi_1 \xi \zeta \right|\lesssim \| \xi \|_{L^{\infty}([0,2N])} \int_{y\leq 2N} \tilde \zeta^2+\| \xi \|_{L^{\infty}([0,2N])} \int_{y\leq 2N} y^{2}\lesssim L_1^2 e^{\left(2-\kappa+1-\frac 18 \right)s}+N^{3}e^{(1-\frac 18)s}.
$$
Similarly for the forth term, since $|\pa_y^{-1} \xi \pa_y h |\leq \| \xi \|_{L^{\infty}([0,2N])}y$ and $|v|\lesssim |\tilde \zeta|$:
$$
\left| \int v \pa_y^{-1} \xi \chi_1 \pa_y h \right|\leq \| \xi \|_{L^{\infty}([0,2N])} \int_{y\leq 2N} \tilde \zeta^2+\| \xi \|_{L^{\infty}([0,2N])} \int_{y\leq 2N} y^{2}\lesssim L_1^2 e^{\left(2-\kappa+1-\frac 18 \right)s}+N^{3}e^{(1-\frac 18)s}.
$$
Finally, for the the next two terms:
$$
\left| \int v \left( -\pa_{yy}h \chi_1+\pa_{yy}\chi_1 \tilde \zeta \right)dy \right|\lesssim \int_{y\leq 2N}\tilde \zeta^2dy \lesssim L_1^2 e^{(2-\kappa)s}.
$$
Finally, for the last term, as $\pa_y \chi_1\lesssim N^{-1}$, one has $|\pa_y^{-1}\xi \pa_y \chi_1|\lesssim \| \xi \|_{L^{\infty}([0,2N])}$ and:
$$
\left| \int v \pa_y^{-1}\xi \pa_y \chi_1 \tilde \zeta \right|\leq \| \xi \|_{L^{\infty}([0,2N])} \int_{y\leq 2N} \tilde \zeta^2+\| \xi \|_{L^{\infty}([0,2N])} \int_{y\leq 2N} y^{2}dy\lesssim L_1^2 e^{\left(2-\kappa+1-\frac 18 \right)s}+N^{3}e^{(1-\frac 18)s}.
$$
Collecting all the above estimates gives:
$$
\frac{d}{dt}\left( \int v^2 dy \right)\lesssim L_1^2 e^{\left(2-\kappa+1-\frac 18 \right)s}+N^{3}e^{(1-\frac 18)s}.
$$
We reintegrate with time the above identity, using the relation $ds/dt=\lb^2\approx e^{s}$ from \fref{bd:parameterstrap}:
\bee
\int v^2 & \lesssim & \int |\tilde \zeta (s_0)|^2+ L_1^2 \int_{s_0}^{s} e^{\left(2-\kappa-\frac 18 \right)s'}ds'+N^{3}\int_{s_0}^s e^{-\frac 18 s} \\
&\lesssim & \left\{ \ba{l l l} L^{'2}+ L_1^2 e^{\left(2-\kappa-\frac 18 \right)s}+N^{3}  e^{-\frac 18 s_0} & \text{if} \ \kappa<2-1/8, \\ 
L^{'2}+ L_1^2 e^{\left(2-\kappa-\frac 18 \right)s_0}+N^{3}  e^{-\frac 18 s_0} & \text{if} \ \kappa>2-1/8,
\ea \right. \lesssim \left\{ \ba{l l l}  L_1^2 e^{\left(2-\kappa -\frac 18 \right)s} & \text{if} \ \kappa<2-1/8, \\ 
L_1^2 & \text{if} \ \kappa>2-1/8,
\ea \right. \\
\eee
since $L_1=L'+N^{3}e^{-\frac{s_0}{8}}$. This ends the proof of \fref{eq:originhp ubtgain2} and of the claim.\\

\noindent \textbf{Step 4} \emph{Uniform in time $L^2$ bound for the derivative}. Again, as in Step 2, we iterate Step 3 for a finite sequence of intervals $[0,\alpha_1N]$,...,$[0,\alpha_kN]$ and finally obtain:
$$
\int_{y\leq N} |\pa_y \tilde \xi |^2dy \lesssim L_1^2 =L^{'2}+N^{3}e^{-\frac{s_0}{8}}.
$$

\end{proof}


\subsection{End of the proof of Proposition \ref{pr:bootstrap} and proof of Theorem \ref{th:main}} \lab{subsec:prbootstrap}

In this subsection we reintegrate over time the modulation equations and the various energy estimates, to show that the various upper bounds describing the bootstrap cannot be saturated. We first reintegrate the modulation equations and Lyapunov functionals.

\begin{lemma} \label{lem:reintegrationbootstrap}

There exists $\nu^*>0$ such that for any $\nu<\nu^*$, for $\nu'$ small enough and then for $\eta$ small enough, for any $K,M$ such that Lemmas \ref{lem:Linftybd}, \ref{lem:exteleft1}, \ref{lem:exteleft2} and \ref{lem:exteright} hold true, the following holds for $s_0$ large enough. For a solution that is trapped on $[s_0,s_1]$, at time $s\in [s_0,s_1]$:
\be \lab{bd:int e}
\| \e \|_{L^2_\rho}^2 \leq 2 e^{-\frac 72 s}, \ \ \int_{s_0}^s e^{(7-\eta)\tilde s}\| \pa_Y \e(\tilde s) \|_{L^2_\rho}^2d\tilde s\leq 2,
\ee
\be \lab{bd:boostrap improved parameters}
\frac{1}{2e} \leq \mu \leq 2e, \ \ \frac 14 e^{\frac s 2}\leq \lb \leq \frac 94 e^{\frac s2}, \ \ |a|\leq 2e^{-\left(\frac 12 -2\nu \right)s},
\ee
\be \lab{bd:boostrap improved parameters2}
\mu=\mu_{\infty}(1+O(e^{-s})), \ \ \lb=e^{\frac s 2}\tilde \lb_{\infty}(1+O(e^{-2s})),
\ee
\be \lab{bd:int e2}
\int_{Z_1}^{Z_2} u^2 wdZ +\int_{Z_3}^{+\infty} u^2 wdZ \leq 4e^{-(1-2\nu)s} , \ \ \int_{Z_1}^{Z_2} |A\pa_Zu|^2 wdZ +\int_{Z_3}^{+\infty} |A\pa_Zu|^2 wdZ \leq 4e^{2\nu s} .
\ee

\end{lemma}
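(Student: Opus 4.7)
The plan is to reintegrate in time the Lyapunov inequality of Lemma~\ref{lem:dseL2rho}, the modulation equations of Lemma~\ref{lem:modulation}, and the exterior differential inequalities of Lemmas~\ref{lem:exteleft1}--\ref{lem:exteright}. In each case Gr\"onwall's lemma combined with the initial smallness of Definition~\ref{def:ini} will reduce the bootstrap constant $K$ to the explicit universal values stated in \fref{bd:int e}--\fref{bd:int e2}.

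First, for the interior, setting $J(s):=\tfrac 12 \|\e\|_{L^2_\rho}^2$ and applying Young's inequality to absorb $C\|\e\|_{L^2_\rho}\lb^{-12}$, the estimate \fref{bd:lyapunovpara} reduces, using $\lb\approx e^{s/2}$, to
$$J'+(7-C'e^{-\eta s})J+e^{-\eta s}\|\pa_Y\e\|_{L^2_\rho}^2\lesssim e^{-(24-\eta)s}+e^{-e^s}.$$
Multiplying by the integrating factor $e^{7s}(1+O(e^{-\eta s_0}))$ and integrating from $s_0$, together with $J(s_0)\leq \tfrac 12 e^{-7s_0}$ from \fref{bd:eini}, yields both the pointwise estimate on $\|\e\|_{L^2_\rho}$ of \fref{bd:int e} and the dissipation bound $\int_{s_0}^s e^{(7-\eta)\tilde s}\|\pa_Y\e\|_{L^2_\rho}^2\, d\tilde s\leq 2$. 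This integrated dissipation is the quantity that will be consumed in all subsequent steps.

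For the parameters, inserting $\lb\approx e^{s/2}$ and the just-obtained $L^2_\rho$-decay into \fref{bd:modulation} (using $\int_{-\pi-a}^{-\pi}G_1\,dZ=O(a^3)$ and reducing the fourth line via the third) yields schematically
$$\left|\tfrac{\mu_s}{\mu}\right|+\left|\tfrac{\lb_s}{\lb}-\tfrac 12\right|\lesssim e^{-2s}+e^{(11/2-1/8)s}\|\pa_Y\e\|_{L^2_\rho}^2,\quad \left|a_s+\tfrac a2\right|\lesssim e^{-(3/2-6\nu)s}+e^{(11/2-1/8)s}\|\pa_Y\e\|_{L^2_\rho}^2.$$
For any exponent $\alpha<7-\eta$ the splitting $e^{\alpha\tilde s}=e^{-(7-\eta-\alpha)\tilde s}e^{(7-\eta)\tilde s}$ combined with the dissipation bound gives $\int_{s_0}^s e^{\alpha\tilde s}\|\pa_Y\e\|_{L^2_\rho}^2\, d\tilde s\lesssim e^{-(7-\eta-\alpha)s_0}$, which is negligible for $s_0$ large, provided $\alpha$ plus the relevant Gr\"onwall integrating-factor exponent stays below $7-\eta$; this condition is easily checked term by term. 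Integrating the first two inequalities then gives $\mu(s)=\mu_0+O(e^{-2s_0})$ and $\lb(s)e^{-s/2}=\lb_0 e^{-s_0/2}(1+O(e^{-2s_0}))$, which improve \fref{bd:parameterstrap} and, by Cauchy's criterion, produce the limits $\mu_\infty$ and $\tilde\lb_\infty$ of \fref{bd:boostrap improved parameters2}. For $a$, the integrating factor $e^{s/2}$ gives $|a(s)e^{s/2}|\leq |a_0|e^{s_0/2}+o(1)\leq 2$.

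Finally, I would apply the same strategy to the exterior differential inequalities \fref{eq:exteriorenergyidentity1}--\fref{eq:exteriorenergyidentity4}, controlling the boundary error terms via \fref{eq:estimationuz2}--\fref{eq:estimationpazuz1} and the $e^{\alpha s}\|\pa_Y\e\|_{L^2_\rho}^2$ errors (with $\alpha=6-1/10$ in \fref{eq:exteriorenergyidentity1}, \fref{eq:exteriorenergyidentity3} and $\alpha=13/2-1/10$ in \fref{eq:exteriorenergyidentity2}, \fref{eq:exteriorenergyidentity4}) via the same splitting trick. The coercive factor $(1/2-\nu/2)$ in \fref{eq:exteriorenergyidentity1}, \fref{eq:exteriorenergyidentity3} drives $\int u^2 w\leq 4e^{-(1-2\nu)s}$, while the sign of the linear coefficient in \fref{eq:exteriorenergyidentity2}, \fref{eq:exteriorenergyidentity4} only allows growth at rate $\nu$, producing $\int v^2 w\leq 4e^{2\nu s}$ with $v=A\pa_Z u$. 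The main obstacle throughout is that Lemma~\ref{lem:dseL2rho} only yields an integrated, not pointwise, control on $\|\pa_Y\e\|_{L^2_\rho}^2$, whereas every subsequent differential inequality carries an error term of the form $e^{\alpha s}\|\pa_Y\e\|_{L^2_\rho}^2$ with $\alpha$ uncomfortably close to the threshold~$7$; the scheme closes only because each of $11/2-1/8$, $6-1/10$, $13/2-1/10$ is, after inclusion of the Gr\"onwall integrating factor, strictly below $7-\eta$, and this arithmetic is precisely what selects the Lyapunov decay exponent $7/2$ and dictates the design of the exterior weight~$w$.
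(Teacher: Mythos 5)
Your plan is essentially the paper's proof: multiply the interior Lyapunov inequality by $e^{7s}$, integrate, and use the resulting dissipation integral to absorb the $e^{\alpha s}\|\pa_Y\e\|_{L^2_\rho}^2$ error terms in the modulation and exterior estimates; the arithmetic you describe for matching exponents against the threshold $7-\eta$ is the correct mechanism. There is, however, one genuine gap: your schematic bound
$$\left|a_s+\tfrac a2\right|\lesssim e^{-(3/2-6\nu)s}+e^{(11/2-1/8)s}\|\pa_Y\e\|_{L^2_\rho}^2$$
omits the dominant contribution. The fourth modulation inequality in \fref{bd:modulation} carries the nonlocal term $\frac{1}{\lb^2\mu}\int_{-\lb y^*}^0 \e\,dY = \int_{-\pi-a}^0 u\,dZ$ on its left-hand side, and this cannot be estimated by $\|\e\|_{L^2_\rho}$ (the Gaussian weight is useless for a flat integral over the large interval $|Y|\lesssim \lb y^*$). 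One must split the integral: the part $\int_{-Me^{-s}}^0 u$ is controlled by $e^{-s}\|\e\|_{L^2_\rho}$ after changing back to $Y$, but the dominant part $\int_{-\pi-a}^{-Me^{-s}} u$ must be bounded by Cauchy--Schwarz against the exterior weight $w$: $|\int u|\lesssim \bigl(\int u^2 w\bigr)^{1/2}\bigl(\int_{Z_1}^{Z_2} w^{-1}\bigr)^{1/2}\lesssim s^{1/2}\,Ke^{-(1/2-\nu)s}$, where $\int w^{-1}\lesssim s$ is exactly the reason the weight $q(Z)$ is designed to reach $1$ at $-\pi$. This term is of size $\sim e^{-(1/2-\nu-o(1))s}$, which dominates your $e^{-(3/2-6\nu)s}$ by a full power of $s$-decay and is precisely what prevents $|a|$ from decaying faster than $e^{-(1/2-2\nu)s}$. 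Your computation would otherwise yield the false conclusion $|a|\lesssim e^{-s/2}$; the lemma's bound happens to still hold, but for the wrong reason, and one would not see why the exponent $1/2-2\nu$ is sharp nor why the weight $w$ must equal $s^{-1}$ near $Z=-\pi$.

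A small secondary remark: after Young's inequality the error in the interior step should be $\lb^{-24}\approx e^{-12s}$, not $e^{-(24-\eta)s}$ (recall $\lb\approx e^{s/2}$); the paper in fact avoids Young entirely and simply injects $\|\e\|_{L^2_\rho}\lesssim Ke^{-7s/2}$ from the bootstrap. Either way the term is harmlessly subcritical, so this does not affect the argument.
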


\begin{proof}

\noindent \textbf{Step 1} \emph{Interior Lyapunov functional and energy dissipation}. We rewrite \fref{bd:lyapunovpara} as:
$$
\frac{d}{ds}\left(e^{7s} \| \e \|_{L^2_{\rho}}^2\right)+e^{(7-\eta)s}\| \pa_Y\e\|_{L^2_\rho}^2 \leq Ce^{(7-\eta)s}\|\e\|_{L^2_\rho}^2+Ce^{7s}\| \e \|_{L^2_\rho}\lb^{-12}+Ce^{7s-e^s}.
$$
Injecting the bounds \fref{bd:parameterstrap} and \fref{bd:etrap} and integrating in time using \fref{bd:eini} gives:
$$
e^{7s}\| \e \|_{L^2_\rho}^2 -1+\int_{s_0}^s e^{(7-\eta)\tilde s}\| \pa_Y \e (\tilde s)\|_{L^2_\rho}^2\leq \int_{s_0}^s \left(C(K)e^{-\eta \tilde s}+C(K)e^{-\frac 12 \tilde s}+Ce^{7\tilde s-e^{\tilde s}}\right)d\tilde s\leq 1
$$
for $s_0$ large enough depending on $K$, which implies the desired estimates \fref{bd:int e}.\\

\noindent \textbf{Step 2} \emph{Law for $\mu$}. We integrate over time the inequality \fref{bd:lossymodulation}, implying for $s_0$ large enough:
$$
|\log \mu (s)-\log (\mu(s_0))|\leq  \int_{s_0}^s e^{-\frac{13}{8}\tilde s}d\tilde s \leq 1
$$
which using \fref{bd:parametersini} gives indeed $(2e)^{-1}\leq \mu \leq 2e$ and if the solution is trapped for all times:
\begin{eqnarray*}
\mu(s)&=&\mu(s_0)\exp \left( \int_{s_0}^s O(e^{-\frac{13}{8}\tilde s})d\tilde s\right)=\mu(s_0)\exp \left( (\int_{s_0}^\infty-\int_s^\infty)   O(e^{-\frac{13}{8}\tilde s})d\tilde s\right)\\
&=&\mu_{\infty}(1+O(e^{-\frac{13}{8}s})) 
\end{eqnarray*}
where we have set $\mu_\infty:=\mu(s_0)\exp\left(\int_{s_0}^\infty O(e^{-(\frac{13}{8}\tilde s)})d\tilde s\right)$.\\
\noindent \textbf{Step 3} \emph{Law for $\lb$}. We rewrite as in Step 2 the equation for $\lambda $ in \fref{bd:lossymodulation} using \fref{bd:parameterstrap}:
\be \lab{bd:lbreintegration}
\left|\frac{\lb_s}{\lb}-\frac 12 \right| \leq C(K)e^{-2s}.
\ee
This can be written alternatively as $\left| \frac{d}{ds}(e^{-\frac s2}\lb)\right| \leq C(K)e^{-5s/2}$ which when reintegrated over time using \fref{bd:parametersini} gives:
$$
|e^{-\frac s2}\lb-e^{-\frac{s_0}{2}}\lb (s_0)|\leq C(K)\int_{s_0}^s e^{-5\tilde s/2}d\tilde s
$$
which with \fref{bd:parametersini} yields $1/4\leq e^{-s/2}\lb \leq 9/4$ for $s_0$ large enough, implying the bound for $\lb$ in \fref{bd:boostrap improved parameters}. If the solution is trapped for all times this gives:
\begin{eqnarray*}
\lb &=& e^{\frac s2} \left(e^{-\frac{s_0}{2}}\lb_0+\int_{s_0}^s O(e^{-5\tilde s /2})d\tilde s \right)=e^{\frac s2} \left(e^{-\frac{s_0}{2}}\lb_0+(\int_{s_0}^\infty-\int_s^\infty)  O(e^{-5\tilde s /2})d\tilde s \right)  \\
&=&e^{\frac s 2}\tilde \lb_{\infty}(1+O(e^{-\frac 52 s}))
\end{eqnarray*}
where we have set $\tilde \lambda_\infty=e^{-\frac{s_0}{2}}\lb_0+\int_{s_0}^\infty  O(e^{-5\tilde s /2})d\tilde s$.

\noindent \textbf{Step 4} \emph{Law for $a$}. We rewrite the equation for $a$ in \fref{bd:modulation4} and inject the bounds \fref{bd:parameterstrap}, \fref{bd:etrap} and \fref{bd:weightedSobolev}, using that $G(-\pi+Z)=O(Z^2)$ as $Z\rightarrow 0$:
\bee
\left|\frac{d}{ds} (e^{\frac s2}a)\right| & \lesssim & e^{\frac s2}\left(\left|\int_{-\pi-a}^{-\pi}G_1dZ \right|+\left|\int_{-\pi-a}^0 udZ\right|+\lambda^{-4}+\| \e \|_{L^2_\rho}+\lambda^4\| \e \|_{L^\infty}\|\e \|_{L^2_\rho}\right) \\
&\lesssim & e^{\frac s2}|a|^3+e^{\frac s2}\left|\int_{-\pi-a}^{-Me^{-s}} udZ+\int_{-Me^{-s}}^0udZ \right|+C(K)(e^{-\frac 32 s}+e^{-3s}+se^{-(1+\frac 14-\nu)s})\\
&\lesssim & e^{-(1-6\nu)s}+e^{\frac s2}\left( s\left(\int_{-\pi-a}^{-Me^{-s}} wu^2dZ\right)^{\frac 12}+e^{-s}\int_{CM}^0 |\e|dY \right) \\
&\lesssim& e^{-(1-6\nu)s}+e^{\frac s2}\left( se^{-\left(\frac 12 -\nu \right)s}+e^{-\frac 92 s} \right) \ \lesssim  \ s e^{\nu s},\\
\eee
for $s_0$ large enough. This implies in particular the following bound for $a_s$ using \fref{bd:parameterstrap}:
\be \lab{bd:as}
|a_s|\lesssim e^{-(\frac 12 -2\nu)s}.
\ee
Reintegrating over time this estimate gives using \fref{bd:parametersini}:
$$
|a|=e^{-\frac s2}\left| a_0e^{\frac s2}+\int_{s_0}^s O(\tilde s e^{\nu \tilde s})d\tilde s \right| \leq 2 e^{-\left(\frac 12-2\nu \right)s}
$$

\noindent \textbf{Step 5} \emph{Exterior energy functionals}. We inject in \fref{eq:exteriorenergyidentity1} the bounds \fref{bd:etrap2} and \fref{eq:estimationuz2}:
\bee
 &&\frac{d}{ds} \left(e^{(1-\nu)s} \int_{Z_1}^{Z_2} u^2 w \right)+\left(\frac 12-\frac{\nu}{2}  \right)\int_{Z_1}^{Z_2} u^2 w   \\
 &\leq & C(K,M) e^{(1-\nu)s}\left(e^{6s}u^2(Z_2)+e^{4s}|\pa_Z u|^2(Z_2)+e^{-(2+\frac 16)s} + \left( \int_{Z_1}^{Z_2} u^2 w \right)^{\frac 12} e^{-\frac 58 s}\right)\\
&\lesssim & C(K,M) \left(e^{(2\nu'-\nu)s}+e^{-(1+\frac 16 - \nu)s}+e^{-(\frac 18 -2\nu)s} \right) \ \lesssim \ C(K,M) e^{(2\nu'-\nu)s}
\eee
where the $e^{-(\nu-2\nu') s}$ is the worst term, due to the boundary condition at $Z_2$. Indeed, we optimised the weight $w$ to match the exterior decay with the interior decay, hence the choice of $\beta =1/2$ for the eigenfunction \fref{eq:def phinu} in the weight \fref{eq:def w}. Reintegrating in time the above identity using \fref{bd:int e} and \fref{bd:eini2} yields since $0<\eta \ll \nu'\ll \nu \ll 1$, for $s_0$ large enough:
\bee
\int_{Z_1}^{Z_2} u^2 w & \leq & e^{-(1-\nu)s}\left[e^{(1-\nu)s_0}\int_{Z_1(s_0)}^{Z_2(s_0)} u^2w+C(K,M) \int_{s_0}^s e^{-(\nu-2\nu') s}  \right]\\
&\leq & e^{-(1-2\nu)s} \left(e^{\nu (s_0-s)}+C(K,M)e^{-\nu s} \right)\leq 2e^{-(1-2\nu)s}.
\eee
The differential inequality on the right \fref{eq:exteriorenergyidentity3} can be reintegrated with time the very same way, giving $\int_{Z_3}^{+\infty} u^2 w \leq 2e^{-(1-2\nu)s}$. These two bounds imply the first bound in \fref{bd:int e2}. We now turn to the derivative. We write \fref{eq:exteriorenergyidentity2} as
$$
\left| \frac{d}{ds} \left(e^{-\nu s}\int_{Z_1}^{Z_2} |A\pa_Z u|^2 wdZ \right)\right| \leq e^{-\frac 14 s}
$$
Note that compared to the differential inequality for $u$, the above identity for $A\pa_Z u$ is better. Indeed the fact that $A\sim Z$ near the origin improves the control of the boundary term at $Z_2$, and $A\pa_Z$ kills the worst component of the error near the origin. Reintegrating in time the above identity using \fref{bd:int e} and \fref{bd:eini2} yields:
\bee
\int_{Z_1}^{Z_2} |A\pa_Z u|^2 &\leq & e^{2\nu s} \left(e^{-\nu s}e^{\nu s_0}\int_{Z_1(s_0)}^{Z_2(s_0)} |A\pa_Z u(s_0)|^2+Ce^{-\nu s}\int_{s_0}^s e^{-\frac 14 \tilde s}d\tilde s\right)\\
& \leq & e^{2\nu s}(e^{\nu (s_0-s)}+Ce^{-\nu s}) \leq 2e^{2\nu s}.
\eee
The same bound can also be proved the same way for the derivative at the right of the origin, implying the last bound in \fref{bd:int e2}.

\end{proof}

We now bootstrap the last bound and control $\e$ on $[-M^2,M^2]$ using parabolic regularity.

\begin{lemma} \lab{lem:bootstrapcompactY}

There exists $\nu^*>0$ such that for any $\nu<\nu^*$, then for $\nu'$ small enough, for $K,M$ such that Lemma \ref{lem:reintegrationbootstrap} holds true, for a solution that is trapped $[s_0,s_1]$ for $s_0$ large enough:
\be \lab{bd:boostrap ecompact}
\| \e(s_1) \|_{H^3(|Y|\leq M^2)}^2\leq 10 e^{-(7-\nu')s_1}.
\ee

\end{lemma}

\begin{proof}

The proof is a classical use of parabolic regularity: $\e$ evolves according to a parabolic equation, its size and the size of the forcing terms are precisely $e^{-7s/2}$, hence this bound propagates for higher order derivatives due to the smoothing effect of the heat kernel. In this proof, the constants $C$ might depend on $M$ and $K$ unless explicitly mentioned. We rewrite \fref{eq:e} as:
$$
\e_s-\pa_{YY}\e+ \tilde V  \e+\tilde{\mathcal T} \pa_Y \e=\mathcal F,
$$
where
$$
\tilde V:= (2\frac{\lb_s}{\lb}-2G_1-\e), \ \ \tilde{\mathcal T}:= \frac{\lb_s}{\lb}Y+\int_{(-\pi-a)\lb^2\mu}^0 f -\lb y^*_s+\lb^2\mu \pa_Z^{-1}G_1+\pa_Y^{-1}\e,
$$
\bee
\mathcal F &:=& \left(m_2-\frac{1}{2\lb^4\mu^2}\right)Z\pa_Z G_1+\left(m_1+\frac{1}{4\lb^4\mu^2}\right)(2-Z\pa_Z)G_1+m_3 \frac{1}{\lb^2\mu}\pa_Z G_1\\
&&-\frac{1}{\lb^4\mu^2}\left(\pa_{ZZ}G_1+\frac 14 Z\pa_Z G_1 +\frac 12 G_1 \right).
\eee
Note that from \fref{bd:lossymodulation}, \fref{lem:Linftybd} and \fref{bd:etrap} one has for a universal $C>0$:
\be \lab{eq:bd tildeT}
\| \tilde{\mathcal T}\|_{W^{1,\infty}(|Y|\leq M^3)}+\| \tilde V\|_{W^{1,\infty}(|Y|\leq M^3)}\leq C
\ee
We now let $\e^1:=\pa_Y \e$. It solves:
\be \lab{id:eq paYe compact}
\e_s^1-\pa_{YY}\e^1+ (\tilde V+\pa_Y \tilde{\mathcal T})  \e^1+\tilde{\mathcal T} \pa_Y \e^1=-\pa_Y \tilde V \e+\pa_Y \mathcal F.
\ee
Let $M^2<M_1<M_2<M^3$, and $\chi $ be a cut-off function with $\chi=1$ for $Y\leq M_1$ and $\chi=0$ for $Y\geq M_2$ and let $v=\chi \e^1$. Then $v$ solves:
$$
v_s-\pa_{YY}v+(\tilde V +\pa_Y \tilde{\mathcal T})v+\tilde{\mathcal T}\pa_Y v=-\pa_{YY}\chi \e^1-2\pa_Y \chi \pa_Y \e^1-\tilde{\mathcal T} \pa_Y \chi \e^1-\chi \pa_Y \tilde V \e+\chi \mathcal F.
$$
We then perform a standard energy estimate:
\bee
\frac{d}{ds} \left(\frac 12 \int v^2dY \right)+\int |\pa_Y v|^2dY & = & \int \left(-\pa_{YY}\chi \e^1-2\pa_Y \chi \pa_Y \e^1-\tilde{\mathcal T} \pa_Y \chi \e^1-\chi \pa_Y \tilde V \e+\chi \mathcal F \right)vdY \\
&&-\int \left((\tilde V +\pa_Y \tilde{\mathcal T})v+\tilde{\mathcal T}\pa_Y v\right)vdY.
\eee
Let $0<\kappa \ll 1$, integrating by parts and using Young inequality one finds since $|v|\lesssim \e^1$:
$$
\left| \int \left(-\pa_{YY}\e^1-2\pa_Y \chi \pa_Y \e^1\right)vdY \right|  \leq  \frac{C}{\kappa} \int_{|Y|\leq M_3}|\e^1|^2+\kappa C \int |\pa_Y v|^2dY \leq  C \| \pa_Y \e \|_{L^2_\rho}^2+ \frac 14  \int |\pa_Y v|^2dY
$$
for $\kappa$ small enough. Similarly, integrating by parts, using Young inequality, \fref{bd:etrap} and \fref{eq:bd tildeT}:
\bee
\left| \int  \tilde{\mathcal T} \pa_Y \chi \e^1v\right| & = & \left| \int  \tilde{\mathcal T} \pa_Y \chi \pa_Y \e v\right|\leq \frac 14 \int |\pa_Y v|^2+C\| \tilde{\mathcal T}\|_{W^{1,\infty}(|Y|\leq M_2)}\int_{|Y|\leq M_2} \e^2+C\int_{|Y|\leq M_2} |\e^1|^2 \\
&\leq &\frac 14 \int |\pa_Y v|^2+C\| \e \|_{L^2_\rho}^2+C\| \pa_Y \e \|_{L^2_\rho}^2 \leq \frac 14 \int |\pa_Y v|^2+Ce^{-7s}+C\| \pa_Y \e \|_{L^2_\rho}^2.
\eee
Next, from Cauchy-Schwarz, \fref{bd:etrap} and \fref{eq:bd tildeT}, and Young inequality:
$$
\left| \int \chi \pa_Y \tilde V \e v\right| \leq  C \| \pa_Y \tilde V \|_{L^{\infty}(|Y|\leq M^3)} \| v \|_{L^2} \| \e \|_{L^2(|Y|\leq M^3)}\leq Ce^{-\frac72 s}\| v \|_{L^2} \leq Ce^{-7s}+C\| v \|_{L^2}^2
$$
For the error, we recall the cancellation $\pa_{ZZ}G_1+\frac 14 Z\pa_Z G_1 +\frac 12 G_1 =O(|Z|^4)$ and $|\pa_Z G_1|=O(|Z|)$ as $Z\rightarrow 0$, which implies using \fref{bd:d} that:
$$
\int \chi^2 \mathcal F^2dY \leq Ce^{-\frac{29}{4}s}
$$
which by Cauchy-Schwarz and Young yields:
$$
\left| \int \chi \mathcal  F v dY \right| \leq Ce^{-\frac{29}{8}}\| v \|_{L^2}\leq Ce^{-\frac{29}{4}s}+C\| v \|_{L^2}^2.
$$
Performing an integration by parts and using \fref{eq:bd tildeT}:
$$
\left| \int \left((\tilde V +\pa_Y \tilde{\mathcal T})v+\tilde{\mathcal T}\pa_Y v\right)v \right| \leq \| v \|_{L^2}^2 (\| \tilde V\|_{W^{1,\infty}(|Y|\leq M^3)}+\| \tilde{\mathcal T}\|_{W^{1,\infty}(|Y|\leq M^3)})\leq C \| v\|_{L^2}^2.
$$
Let $0<\eta \ll \nu_1 \ll \nu'$. Collecting all the estimates above, and since $|v|\lesssim |\e^1|$ one has the energy identity:
\bee
\frac{d}{ds} \left( e^{(7-\nu_1)s} \int v^2 \right)+\frac 12 e^{(7-\nu_1)s}\int |\pa_Y v|^2 & \leq  & Ce^{(7-\nu_1)s}\| \pa_Y \e \|_{L^2_\rho}^2+Ce^{-\nu_1 s}
\eee
Reintegrated with time, using \fref{bd:int e} and \fref{bd:eini} this gives for $s_0$ large enough:
$$
e^{(7-\nu_1)s}\int v^2dY+\frac 12 \int_{s_0}^s e^{(7-\nu_1)s'}\int |\pa_Y v|^2dYds' \leq e^{(7-\nu_1)s_0}\int v_0^2dY+1\leq 2.
$$
Therefore, $\| v(\tilde s)\|_{L^2}\leq 2e^{-(\frac 72 -\nu_1)\tilde s}$. One has then proved the following pointwise bound for $\pa_Y \e$ and integrated bound for $\pa_{YY} \e$:
$$
\forall s \in [s_0,s_1], \ \ \int_{|Y|\leq M_1} |\pa_Y \e |^2dY\leq 10 e^{-(\frac 72 -\nu_1)s}, \ \ \text{and} \ \  \int_{s_0}^s e^{(7-\nu_1)s'}\int_{|Y|\leq M_1} |\pa_{YY} \e|^2dYds' \leq 2.
$$
Let now $M^2<M_4<M_3<M_1$. We claim that we can differentiate equation \fref{id:eq paYe compact} and, with the exact same arguments, obtain the analogue of the above estimates for $\pa_{YY} \e$, with an exponent $\nu_2$ such that $\nu_1 \ll \nu_2 \ll \nu'$. Indeed, the only crucial arguments to derive the above bounds were the pointwise in time boundedness \fref{bd:etrap} of $\| \e \|_{L^2_\rho}$ and the dissipation estimate \fref{bd:int e} for $\| \pa_Y \e\|_{L^2_\rho}$, and we just obtained the analogues for $\pa_Y \e$ so that the same strategy can be applied. Then, another iteration yields the analogue of the above bounds for $\pa_Y^{(3)}\e$ for $|Y|\leq M_4$ for an exponent $\nu_2\ll \nu_3 \ll \nu'$, which ends the proof of the Lemma.

\end{proof}

All the bounds of the bootstrap and the modulation equations have been investigated previously. We can now end the proof of Proposition \ref{pr:bootstrap}.

\begin{proof}[Proof of Proposition \ref{pr:bootstrap}]

Let an initial datum satisfy the properties of Definition \ref{def:ini} at time $s_0$. Let $\tilde s$ be the supremum of times such that the solution is trapped on $[s_0,\tilde s]$. Assume by contradiction that $\tilde s<+\infty$. Then from the local well-posedness Proposition \ref{pr:cauchy} and the blow-up criterion \fref{id:blowupcriterion}, the solution can be extended beyond the time $\tilde s$. Hence, from the definition of $\tilde s$ and Definition \ref{def:trap} and a continuity argument, one of the inequalities \fref{bd:parameterstrap}, \fref{bd:etrap} or \fref{bd:etrap2} must be an equality at time $\tilde s$. This is however impossible for $K$ large enough from \fref{bd:int e}, \fref{bd:boostrap improved parameters}, \fref{bd:int e2} and \fref{bd:boostrap ecompact}, which is desired contradiction. Hence $\tilde s=+\infty$ which proves Proposition \ref{pr:bootstrap}.

\end{proof}

Theorem \ref{th:main} is a direct consequence of Proposition \ref{pr:bootstrap} and we can now give its proof.

\begin{proof}[Proof of Theorem \ref{th:main}]

For an initial datum of the form \fref{id:condtion initiale}, let $s_0=2\ln (\lb_0^2)$. Then for $\epsilon (\lb_0)>0$ small enough, thanks to the smoothing effect of the equation, see Proposition \ref{pr:cauchy}, $\tilde \xi_0$ is instantaneously regularised, and $\xi (t^*)$ is initially trapped in the sense of Definition \ref{def:ini}. Applying Proposition \ref{pr:bootstrap}, the solution is then trapped for all times in the sense of Definition \ref{def:trap}. Since $ds/dt=\lb^2$ and $\lb$ satisfies \fref{bd:boostrap improved parameters2}:
$$
\frac{dt}{ds}= e^{-s}\tilde \lb_{\infty}^{-2}(1+O(e^{-2s})).
$$
Reintegrating the above equation, there exists $T>0$ such that:
$$
T-t=e^{-s}\tilde \lb_{\infty}^{-2}(1+O(e^{-2s})).
$$
This implies $e^{-s}=\lb^2_{\infty} (T-t)+O((T-t)^3)$. The identities \fref{th:bd para} are then consequences of \fref{bd:boostrap improved parameters2}. From \fref{bd:weightedSobolev}, $\tilde x(t,y)=u(s,Z)$ and \fref{id:decomposition vp} one infers:
$$
\| \tilde \xi \|_{L^{\infty}} = \lb^2 \| u \|_{L^{\infty}}\lesssim e^{-s}e^{-\left(\frac 14 -\nu \right)s}\leq C (T-t)^{1-\frac 18}
$$
which proves \fref{th:bd xi}. We now investigate the existence and asymptotic behaviour of the blow-up profile at time $T$. The existence of a limit $\xi (t,y)\rightarrow \xi^*(y)$ as $t\uparrow T$ follows from Lemma \ref{lem:noblowup} and a standard parabolic bootstrap argument. We now use more carefully Lemma \ref{lem:noblowup} to find the asymptotic of the profile at blow-up time. For $y^*\geq e^{(\frac 12 -\frac{1}{16})s_0}$ we define the following adapted time, which now depends on the point that we consider:
$$
s_0(y^*)=\left(\frac{1}{2}-\frac{1}{16} \right)^{-1}\log (y)=\log (y^\alpha), \ \ \alpha:=\left(\frac{1}{2}-\frac{1}{16} \right)^{-1}=\frac{16}{7}, \ \ \text{so} \ \text{that} \ y^*=e^{\left(\frac 12 -\frac{1}{16}\right)s_0(y)}.
$$
For $s\geq s_0(y)$, for $y \in [0,2y^*]$, one has
$$
Z(y)=\frac{ y-y^*}{\lb \mu}=-\pi-a+\frac{ y}{\lb \mu}=-\pi+O( e^{-\frac{s_0}{16}}).
$$
Therefore one can apply the Taylor expansion of $G_1$ near the origin for $s_0$ large enough. Using \fref{bd:boostrap improved parameters},\fref{bd:boostrap improved parameters2} and \fref{bd:weightedSobolev}, for $s\geq s_0(y^*)$:
$$
\lb^2(s_0)G_1(Z(y))=\frac{1}{4}\left(-a+\frac{y}{\lb \mu}\right)^2 \lb^2+\lb^2O\left( \left|-a+\frac{y}{\lb \mu}\right|^4\right)=\frac{y^2}{4\mu_{\infty}^2}+O(y^{*2-\frac{1}{16}})\leq e^{\left(1-\frac 18 \right)s_0}\leq e^{\left(1-\frac 18 \right)s},
$$
$$
|\lb^2(s_0)u(s_0,Z(y))|\leq C \lb^2(s_0) e^{-\frac 16 s_0}\leq C e^{(1-\frac 16)s_0}=Cy^{*\frac{5\alpha}{6}}=Cy^{*2-\frac{2}{21}},
$$
The two above identities imply that, writing $\xi= \frac{y^2}{4\mu_{\infty}^2}+\tilde \xi$, at time $s_0^*$ on $[0,y^*]$:
$$
\xi (t(s_0(y)),y)= \frac{y^2}{4\mu_{\infty}^2}+O(y^{*2-\frac{1}{16}}), \ \ \text{i.e.} \ \ \| \tilde \xi (s_0(y^*))\|_{L^{\infty}([0,2y^*])}\leq Cy^{*2-\frac{1}{16}},
$$
and that for $s\geq s_0(y^*)$:
$$
\| \xi \|_{L^{\infty}([0,2y^*])}\lesssim e^{(1-\frac 18)s}.
$$
Moreover, from \fref{bd:etrap2}, changing variables:
$$
\| \pa_y(\lb^2u(s,Z(y)))\|_{L^2([0,2y^*])}\lesssim \lb^{\frac 32} \| \pa_Z u(s,Z)\|_{L^2([0,2y^*/\lb])}\lesssim e^{\frac 34 s}se^{2\nu}\leq e^s,
$$
$$
\| \pa_y(\lb^2G_1(s,Z(y)))\|_{L^2([0,2y^*])}\lesssim \lb^{\frac 32} \| \pa_Z G_1(s,Z)\|_{L^2([0,2y^*/\lb])}\lesssim e^{\frac 34 s}\leq e^s,
$$
$$
\| \pa_y(y^2)\|_{L^2([0,2y^*])}\lesssim \lb^{\frac 32} \| \pa_Z G_1(s,Z)\|_{L^2([0,2y^*/\lb])}\lesssim y^{*\frac 32}\leq e^s,
$$
for $s_0$ large enough, so that for $s\geq s_0(y^*)$:
$$
\| \pa_y \tilde \xi \|_{L^2([0,2y ^*])}\leq e^s.
$$
We apply Lemma \ref{lem:noblowup} and obtain that for all $t\geq t(s_0(y^*))$:
$$
\| \tilde \xi \|_{L^{q}([0,y^*])}\lesssim y^{*2-\frac{1}{16}}y^{*\frac 1q}+y^{*2+\frac 1q}y^{*-\frac{\alpha}{16}}\lesssim y^{*2-\frac{1}{16}+\frac 1q}
$$
and for some fixed constant $c>0$:
$$
\| \pa_y \tilde \xi \|_{L^{2}([0,y^*])}\lesssim y^{*\alpha}+y^{*\frac 32} e^{-\frac{s_0}{8q}}\lesssim y^{*c}.
$$
We apply the following interpolated Sobolev inequality:
$$
\| h\|_{L^{\infty}}\lesssim \| h\|_{L^{q}}^{1-\frac{2}{q+2}}\| \pa_y h\|_{L^{2}}^{\frac{2}{q+2}},
$$
yielding that for all $t\geq t(s_0(y^*))$:
$$
\| \tilde \xi \|_{L^{\infty}([0,y^*])}\lesssim y^{*2-\frac{1}{16}+\frac{c}{q}}\lesssim y^{*2-\frac{1}{32}}
$$
for $q$ large enough. Therefore, since this remains true at the limit at time $T$ one has showed that for $y^*\geq e^{(\frac 12 -\frac{1}{16})s_0}$:
$$
\xi (y^*) =\frac{y^{*2}}{4\mu_{\infty}^2}+O(y^{*2-\frac{1}{32}})
$$
which ends the proof of \fref{eq:blowupprofile}.

\end{proof}

\subsection{Localised initial data} \lab{subsec:propadd}

We prove here Proposition \ref{pr:stability weighted}. It is obtained from the analysis of the previous subsections by controlling an additional weighted norm. We introduce $Z^*=M e^{-s }$.

\begin{lemma} \lab{lem:additionallinftybounds}

Fix any $\nu,\nu',K$ such that Lemma \ref{lem:Linftybd} holds true, and assume a solution is trapped on $[s_0,+\infty)$. Then for $M$ large enough, there exists $\delta_0 >0$ such that for $s_0$ large enough:
\begin{itemize}
\item[(i)] If $ \ \sup_{Z \geq \pi } |F(s_0,Z)|Z ^2\leq \delta_0 \ $ then $ \ \sup_{Z \geq \pi } |F(s,Z)| Z^2 \leq e^{-\frac{1}{8}s}$ for $s\geq s_0$.
\item[(ii)] If $ \ \sup_{Z\geq -(\pi+a(s_0)) } |\pa_Z u(s_0,Z)|\langle Z \rangle^{2} \leq \delta_0\ $ then $\ \sup_{Z \geq -(\pi-a(s_0)) } |\pa_{Z} u (s,Z)|\langle Z \rangle^2 \leq e^{-\frac{1}{50}s}$ for $s\geq s_0$.
\end{itemize}
\end{lemma}

\begin{proof}[Proof of Proposition \ref{pr:stability weighted}]

Let $\| f \|_*=\sup_{Z \geq \pi } |f(Z)|Z ^2+\sup_{Z\geq -(\pi+a(s_0)) } |\pa_Z f(Z)|\langle Z \rangle^{2}$. Let an initial datum $\xi_0\in \mathcal B$ for \fref{1DPrandtl} satisfy the conditions of Proposition \ref{pr:bootstrap} and $\|u(s_0)\|_*\leq \frac{\delta_0}{2}$. Consider the open set of initial datum satisfying $\| \tilde \xi_0-\xi_0\|_{\mathcal B}< \bar \delta$ with corresponding variable $\tilde u$. Then, $\tilde \xi_0$ satisfies the conditions of Proposition \ref{pr:bootstrap} and $\| \tilde u(s_0)\|_*\leq \delta$. Hence $\tilde \xi$ satisfies the conclusions of Theorem \ref{th:main} from its proof done in subsection \ref{subsec:prbootstrap}, and the bounds \fref{bd:lowerglobal} and \fref{id:lowernearboundary} are then consequences of Lemma \ref{lem:additionallinftybounds} and of \fref{bd:origin bootstrap2}.

\end{proof}

\begin{proof}[Proof of Lemma \ref{lem:additionallinftybounds}]

Recall \fref{def:coefficientsm} and \fref{eq:def tH}. Let $m_4=\frac{1}{\lambda^4\mu^2}$. Note that one has the following estimates, from \fref{bd:boostrap improved parameters}, \fref{bd:boostrap improved parameters2}, \fref{bd:lossymodulation}, \fref{bd:etrap} (using Sobolev embedding), for $s_0$ large enough:
\be \label{newpreliminarybound1}
m_1=O(e^{-\frac 32 s}), \qquad m_2=O(e^{-\frac 32 s}), \qquad m_3'=O(e^{-\frac 52 s}), \qquad m_4=O(e^{-2s}),
\ee
\be \label{newpreliminarybound2}
\|u\|_{L^{\infty}}\leq e^{-\frac{s}{8}}, \qquad \| u\|_{L^{\infty}[-Z^*,Z^*]}+e^{-s}\| \pa_{Z}u\|_{L^{\infty}[-Z^*,Z^*]} \leq e^{-3s }.
\ee

\textbf{Step 1} \emph{Proof of (i)}. We rewrite the equation \fref{eq:F} as $(\pa_{s}+\mathcal M) F=0$, with the elliptic operator $\mathcal M=2\frac{\lambda_s}{\lambda}-F+(\pa_{Z}^{-1}F-\frac{\lambda_s}{\lambda}Z-m_2Z+m_3' )\pa_{Z} -m_4 \pa_{ZZ}$. We compute that $F'=e^{-\frac 18 s}Z^{-2}$ is a supersolution on $[\pi,+\infty)$. Indeed, from \fref{newpreliminarybound1}, \fref{newpreliminarybound2}, and since $G_1(Z)=0$ for $Z\geq \pi$ and $\int_0^\pi G_1=\pi/2$:
\begin{align*}
& Z^{2}e^{\frac 18 s}(\pa_{s}+\mathcal M)F'  = -\frac 18 +1-u+2\left(\frac{Z}{2}-\frac{\pi}{2}-\pa_{Z}^{-1}u \right)Z^{-1}+4m_1+2m_2-\frac{2m_3'}{Z}-\frac{6m_4}{Z^4}\\
&\qquad \qquad = \frac{7}{8}+(Z-\pi)Z^{-1}+o_{s_0\rightarrow \infty}(1) \ > \ 0
\end{align*}
where the $o()$ is uniform for $(s,Z)\in [s_0,\infty)\times [\pi,\infty)$. At the boundary $|F(s,\pi)|\leq F'(s,\pi)$ for all $s\geq s_0$ for $s_0$ large enough from \fref{newpreliminarybound2}. (i) is then a consequence of parabolic comparison principle.

\textbf{Step 2} \emph{Proof of (ii)}. Let $\Omega_1=[-\pi+a,-Z^*]\cup [Z^*,+\infty)$ and $\Omega_2=[-Z^*,Z^*]$. For $\chi$ a smooth cut-off with $\chi(y)=1$ for $y\leq -1$ and $\chi(y)=0$ for $y\geq 0$, we define $\chi^*(s,Z)= \chi(e^{s/2}(Z-\pi)) \chi(e^{s/2}(-Z-\pi))$. After smoothing the profile near the the points $\pm \pi$, we decompose $\partial_ZF$ as follows:
$$
\pa_{Z} F= \chi^* \pa_{Z} G_1+\bar{F}.
$$
Since $\pa_{Z} u=(\chi^*-1)\pa_{Z} G_1+\bar{F}$, recalling \fref{id:G1}, it is sufficient to prove (ii) for $\bar{F}$ in order to prove it for $\pa_{Z} u$. On $\Omega_1$ we have from \fref{eq:F}, \fref{bd:lossymodulation}, \fref{newpreliminarybound1} and \fref{newpreliminarybound2} that $(\pa_{s} +\bar{\mathcal M}+\tilde{\mathcal M})\bar{F}=\mathcal E$ where:
$$
\bar{\mathcal M }=\frac 12 -G_1+\mathcal T\pa_{Z}, \qquad  \mathcal E=-(\pa_{s}+\bar{\mathcal M}+\tilde{\mathcal M})(\chi^* \pa_{Z} G_1),
$$
\begin{align*}
\tilde{\mathcal M} & = m_1 \left(2-2Z\pa_Z\right)-m_2Z\pa_Z-u+\left(\pa_Z^{-1}u +m_3' \right)\pa_{Z}-m_4 \pa_{ZZ} \\
& \qquad = \ O(e^{-\frac 18 s})+\left(O\left(e^{-\frac 18 s} |Z| \right)\right)\pa_{Z}+(O(e^{-2s}))\pa_{ZZ}.
\end{align*}
We claim that there exists a smooth positive function $\tilde{w}$ on $\mathbb R\backslash \{0\}$ such that:
\be \label{id:technicalweight}
\tilde{w} (Z)=\left|\frac{\sin \left(\frac{Z}2\right)}{\cos \left(\frac{Z} 2\right)}\right|^{\frac{1}{10}}|\sin (Z)| \quad \mbox{for } |Z|\leq \frac{\pi}{2}, \quad \tilde{w}(Z)=\frac{1}{Z^2} \quad \mbox{for }|Z| \geq \pi+1, \quad \mbox{and} \quad \bar{\mathcal M}\tilde{w} \geq \frac{1}{20}\tilde{w}.
\ee
We relegate the proof of this fact to Step 3. We now show that $\bar{F}'(s,Z)=e^{-\frac{1}{50} s}\tilde{w}(Z)$ is a supersolution on $\Omega_1$. Note first that $|\pa_{Z}^i\tilde{w}|\lesssim Z^{-i}\tilde{w} $ for $i=1,2$. Hence on $\Omega_1$:
$$
\tilde{\mathcal M}\tilde{w} =O(e^{-\frac 18 s}\tilde{w} )+O\left(e^{-\frac 18 s} |Z| \right)O(|Z|^{-1}\tilde{w} )+O(e^{-2s})O(|Z|^{-2}\tilde{w})=O((e^{-\frac 18 s}+M^{-2})\tilde{w}),
$$
as $|Z|\geq Me^{-s}$. This and \fref{id:technicalweight} imply that on $\Omega_1$, for $M$ large enough and then $s_0$ large enough:
$$
(\pa_s+\bar{\mathcal M}+\tilde{\mathcal M})\bar F' \geq \frac{1}{50} \bar F'.
$$
Next, since $\bar{\mathcal M }(\pa_{Z} G_1)=0$, from \fref{id:G1} we obtain that $(\pa_{s}+\bar{\mathcal M})(\chi^* \pa_{Z} G_1)=O(e^{-\frac s 2})$ and that this has its support inside $[-\pi,-\pi+e^{-\frac s2}]\cup [\pi-e^{-\frac s2},\pi]$. From \fref{id:G1} and \fref{newpreliminarybound2} we also obtain that $\tilde{\mathcal M}(\chi^* \pa_{Z} G_1)=O(e^{-\frac 18 s}|Z|)$ and that this has its support inside $[-\pi,\pi]$. Therefore, since $\tilde{w}(Z)\sim |Z|^{1+\frac{1}{10}}$ as $Z\rightarrow 0$, and since $e^{-\frac 18 s}\leq e^{-\frac{1}{40}s}|Z|^{\frac{1}{10}}M^{-\frac{1}{10}}$ on $\Omega_1$, we get that on $\Omega_1$:
$$
|\mathcal E|\leq \frac{1}{100}\bar{F}'
$$
for $s_0$ large enough. The two above inequalities imply that $(\pa_{s}+\bar{\mathcal M}+\tilde{\mathcal M})\bar{F}'-\mathcal E\geq 0$ on $\Omega_1$. At the boundary, $|\bar{F}(s,\pm Z^*)|\leq \bar{F}'(s,\pm Z^*)$ from \fref{newpreliminarybound2}, $|\bar{F}(s,-\pi-a)|\leq Ce^{-s/2}\leq \bar{F}'(s,-\pi-a)$ from \fref{bd:origin bootstrap2} and $|\bar F(s_0)|\leq F'(s_0)$ for $\delta_0$ small enough. Hence $|\bar{F}|\leq \bar{F}'$ on $\Omega_1$ from parabolic comparison (using similarly $-\bar{F}'$ as a subsolution). This bound on $\Omega_1$ and the bound \fref{newpreliminarybound2} on $\Omega_2$ show (ii).

\textbf{Step 3} \emph{Existence proof for \fref{id:technicalweight}}. For example, we choose $\tilde{w}(Z)=\left|\frac{\sin \left(\frac{Z}{2}\right)}{\cos \left(\frac{Z}{2}\right)}\right|^{\frac{1}{10}}|\sin (Z)|\bar{w}(Z)$ on $(0,\pi)$, with $\bar{w}(Z)=1$ for $0<Z\leq \frac \pi2$ so that on $(0,\pi)$:
$$
\bar{\mathcal M}\tilde w=\frac{1}{20} \tilde w+\frac 12\left|\frac{\sin \left(\frac{Z}{2}\right)}{\cos \left(\frac{Z}{2}\right)}\right|^{\frac{1}{10}}|\sin (Z)|(\sin Z) \pa_Z \bar{w}.
$$
We choose $\bar{w}>0$ an increasing function of $Z$ such that $\tilde w$ is smooth on $(0,\pi]$ all the way up to $\pi$ with $\tilde w(\pi)=1$, and then $\bar{\mathcal M}\tilde w\geq \frac{1}{20} \tilde w$ on $(0,\pi]$ from the above identity. Next, on $[\pi,+\infty)$ we choose $\tilde w$ to be any smooth extension that is a non-increasing function of $Z$ with $\tilde w (Z)=Z^{-2}$ for $Z\geq \pi+1$. Then $\bar{\mathcal M}\tilde w=\frac 12 \tilde w-\frac 12(Z-\pi)\pa_{Z}\tilde w\geq \frac 12 \tilde w$ on $[\pi,+\infty)$. Hence the desired properties hold on $(0,\infty)$. We finally extend $\tilde w $ to $(-\infty,0)$ by even symmetry.

\end{proof}

\section{Application to the two-dimensional Prandtl system} \label{sec:analytic}

Here we prove Theorem \ref{th:analytic}. Recalling that $\xi_i$ is defined by \fref{def:xii}, we introduce:
\be \label{def:xiik}
\xi_{i,k}(t,y):=\pa_y^k \xi_{i}(t,y)
\ee
(i.e. $\xi_{i,k}=\pa_x^{2i+1} \pa_y^k u_{|x=0}$). First, we apply Proposition \ref{lwp:pr:analytic} using \fref{bd:analytichypothesis}, and get that there exist $T_0,\tau_0',C_0''>0$, and $(\xi_i)_{i\geq 0}\in C([0,T_0]\times [0,\infty))$ with $(\xi_i)_{i\geq 0}\in C^\infty((0,T_0]\times [0,\infty))$ a classical solution to \fref{eq:evolutionxii} on $(0,T_0]$ such that:
\begin{align} \label{bd:analytichypothesis3}
& |\xi_i(t,y) |  \leq C_0'' \tau_0^{'-2i-1}(2i+1)!  \langle y \rangle^{-2} \qquad \quad \mbox{for all }(t,y)\in [0,T_0]\times [0,\infty),\\
& \label{bd:analytichypothesis2} |\xi_{i,k}(T_0,y)|\leq C_0'' \tau^{'-2i-k-1}_0 (2i+k+1)! \langle y \rangle^{-2} \qquad \quad \mbox{for all } y\in  [0,\infty).
\end{align}
Thus now our aim is to control $(\xi_{i})_{i\geq 0}$ from $T_0$ up to time $T$.

We first establish linear estimates in Subsection \ref{subsec:linearanalytic}, then study $\xi_{1,0}$ in Subsection \ref{subsec:paxxx}, then all remaining derivatives in Subsection \ref{subsec:higheranalytic}. Theorem \ref{th:analytic} is proved in Subsubsection \ref{subsubsec:analytic}. Throughout this section, we assume that all the hypotheses of Theorem \ref{th:analytic}, \fref{bd:analytichypothesis3} and \fref{bd:analytichypothesis2} hold true. In particular the parameters $T,T_0,C_0,C'_0,C_0'',\iota,\tau_0,\tau_0'$ are independent of all other forthcoming parameters, since fixed a priori. We shall denote by $C$ a constant that may change values from line to line, but that depends solely on these parameters. Since the precise value of $\mu$ will never play a role, we assume:
$$
\mu=1
$$
without loss of generality. We perform the following renormalizations:
$$
\sfs=-\ln (T-t), \qquad \sfs_0=-\ln (T-T_0), \qquad z=(T-t)^{\frac 12}y-\pi, 
$$
$$
\xi(t,y)=(T-t)^{-1}\sfF(\sfs,z), \qquad \sfF(\sfs,z)=G_1(z)+\sfu (\sfs,z), \qquad \xi_{i,k}(t,y)=(T-t)^{\frac k2-3i-1}F_{i,k}(\sfs,z),
$$
and will use the following notation:
$$
(\partial^{-1}_zf)(z)=\int_{0}^z f, \qquad (\partial^{-1} f)(z)=\int_{-\pi}^z f
$$
so that $\pa^{-1}f=\int_{-\pi}^0 f+\pa_z^{-1}f$. The evolution equation for $F_{i,k}$ for $i+k\geq 1$ is from \fref{eq:evolutionxii}:
\begin{align}
\label{eq:Fikevo} &\pa_s F_{i,k}+\mathcal L_{i,k}F_{i,k}=\delta_{k\neq 0}\delta_{i\neq 0}(2i+1)F_{0,k+1} \pa^{-1} F_{i,0}+\sum_{j=1}^{i-1}\binom{2i+1}{2j} (\pa^{-1}F_{j,0})F_{i-j,k+1}\\
\nonumber &\qquad \qquad \quad \qquad \qquad-\sum_{(j,l)\in E^1_{i,k}} \binom{2i+1}{2j+1}\binom{k}{l} F_{j,l} F_{i-j,k-l}+\sum_{(j,l)\in E^2_{i,k} } \binom{2i+1}{2j}\binom{k}{l+1} F_{j,l}F_{i-j,k-l} 
\end{align}
where the sums run over indices in the sets $E^1_{i,k}=\{0\leq j\leq i, \ 0\leq l\leq k \}\backslash \{(0,0),(i,k)\}$ and $E^2_{i,k}=\{0\leq j\leq i, \ 0\leq l\leq k-1\}\backslash \{(0,0)\}$, with Kronecker notation $\delta_{p\neq 0}=0$ if $p=0$ and $\delta_{p\neq 0}=1$ if $p\geq 1$ and similarly for $\delta_{p=0}$, and where the linearised operator is:
\begin{align}
\nonumber \mathcal L_{i,k}F_{i,k}=&-\left(\frac k2-3i-1+(2i-k+2)\sfF\right) F_{i,k}-\left(\frac z 2-\pa^{-1}\sfF+\frac \pi2\right) \pa_z F_{i,k}-e^{-2s} \pa_{zz}F_{i,k}\\
\label{id:defmathcalLik}&+\delta_{k= 0}\delta_{i\neq 0}(2i+1)\pa_z \sfF \pa^{-1}F_{i,k}.
\end{align}
Note that above, from the assumptions of Theorem \ref{th:analytic}, $\sfF$ is well-defined for all times $\sfs \geq \sfs_0$. The quantity $ \mathcal L_{i,k}F_{i,k}$ is then linear with respect to $F_{i,k}$, but the coefficients of $ \mathcal L_{i,k}$ involve $\sfF$. We introduce the weight $\sfw$ and its associated weighted space:
$$
\sfw(z)=\left\{ \begin{array}{l l} 1 \qquad \mbox{for }-\pi\leq z \leq \pi,\\
\langle z-\pi \rangle^{-2} \qquad \mbox{for }z\geq \pi \end{array} \right.  \qquad \| f \|_{L^{\infty}_\sfw}:=\sup_{z\geq -\pi} \ \frac{|f(z)|}{\sfw(z)}.
$$
so that hypothesis (i) in Theorem \ref{th:analytic} implies, with $0<\iota\leq 1/8$ without loss of generality:
\be \label{bd:decayassumptionpaxxx}
\| \sfu \|_{L^{\infty}_\sfw}+\|\pa_z\sfu \|_{L^{\infty}_\sfw}\leq C_0e^{-\iota \sfs}.
\ee

\subsection{Linear bounds} \label{subsec:linearanalytic}

The semi-group generated by the linear part is denoted by $S_{i,k}$. That is, we write $v(\sfs)=S_{i,k}(\sfs_1,\sfs)(v_0)$ for the solution $v$ on $[-\pi,+\infty)$ of:
\be \lab{eq:linearFik}
\pa_{\sfs} v+\mathcal L_{i,k}v=0, \qquad v(\sfs,-\pi)=0, \qquad v(\sfs_1,z)=v_0.
\ee

\begin{proposition} \label{pr:linearanalytic}

Assume hypothesis (i) in Theorem \ref{th:analytic} holds, and set for any $\eta>0$ the constants:
\be \label{def:cik}
c_{i,k}:=  \max \left(-i-\frac k2 +1\ ,\ \frac k2-3i-1 \right)+\eta \langle i \rangle .
\ee
Then, there exists $C,\sfK>0$ depending on $\eta$, $T$, $C_0$ and $\iota$ but independent of $i$ and $k$ such that the following bound holds true for any $i+k\geq 1$ and $\sfs_2\geq \sfs_1 \geq \sfs_0$:
\be \label{bd:lineaire}
\left\| S_{i,k}(\sfs_1,\sfs_2)(v_0)\right\|_{L^{\infty}_\sfw}\leq C e^{c_{i,k}(\sfs_2-\sfs_1)}\left(\frac{(1+e^{-\frac{\iota}{2}\sfs_1})e^{\sfK e^{-\sfs_1}}}{(1+e^{-\frac{\iota}{2}\sfs_2})e^{\sfK e^{-\sfs_2}}}\right)^{a_{i,k}} \left\| v_0\right\|_{L^{\infty}_\sfw}
\ee
where $a_{i,k}$ is defined by \fref{def:bik}. Moreover, one can take $C=1$ in the inequality above if $k\geq 1$.

\end{proposition}

\begin{remark}

In the right-hand side of \fref{bd:lineaire}, $e^{c_{i,k}(\sfs_2-\sfs_1)}$ is the sharp leading factor. The factor $(1+e^{-\frac{\iota}{2}\sfs})$ controls lower order terms close to the blow-up time. The blow-up time $T$ is arbitrary, hence there is a transient regime between $t=0$ and a time close to $T$, in which $\xi$ has not yet entered its asymptotic regime described by (i) in Theorem \ref{th:analytic} (i.e. $\tilde \xi$ may be large). The $e^{Ke^{-\sfs}}$ factor controls the solution in this transient regime. Together with the exponent $a_{i,k}$, they could have been chosen differently, but such formulation will be easier to use in the sequel.

\end{remark}

To prove Proposition \ref{pr:linearanalytic}, when $k\geq 1$ we decompose $\mathcal L_{i,k}$ as:
$$
\mathcal L_{i,k}=\mathcal L_{i,k}'+\tilde{\mathcal L}_{i,k}'
$$
where the leading order and lower order linear operators are ($\mathcal T$ being defined in \fref{eq:def T}):
\be \lab{def:Lik'tildeLik'}
\mathcal L'_{i,k}=3i+1-\frac k2-(2i+2-k)G_1+\mathcal T(z)\pa_z, \qquad  \tilde{\mathcal L}'_{i,k}=-(2i+2-k)\sfu +(\pa^{-1} \sfu)\pa_z -e^{-2s} \pa_{zz}.
\ee
For $k=0$, there is a nonlocal term in \fref{id:defmathcalLik}. We will write $\pa_z\sfF\pa^{-1}=\pa_z\sfF \int_{-\pi}^0+\pa_z\sfF\pa_z^{-1}$ as the sum of a projection onto $\pa_z \sfF$ and of a nonlocal term that we treat perturbatively. $\pa_z \sfF$ is indeed a stable eigenfunction at leading order for $\mathcal L_{i,0}$, as the next Lemma will show. Let us define
\be \lab{def:chi*}
\chi^*(\sfs,z)= \chi\left(e^{\frac{\sfs}{2}}(z-\pi)\right) \chi \left(e^{\frac{\sfs}{2}}(-z-\pi)\right)
\ee
where $\chi$ is a smooth cut-off with $\chi(y)=1$ for $y\leq -1$ and $\chi(y)=0$ for $y\geq 0$. Then

\begin{lemma} \label{lem:linearpaxxx2}
For any $i\geq 1$, there holds the identity for $\pa_z G_1$:
\be \label{id:eigenfunctionpaZG1}
\left(\mathcal L_{i,0}'+(2i+1)\pa_z G_1\pa^{-1}\right)\pa_z G_1=\left(3i+\frac 12 \right)\pa_z G_1.
\ee
Moreover, assume \fref{bd:decayassumptionpaxxx} and set $\phi (\sfs,z)= \chi^*(\sfs,z)\pa_z G_1(z)$, with $\chi^*$ given by \fref{def:chi*}. Then there holds for all $i\geq 1$ (the constant in the $O()$ being universal and uniform):
\be \label{bd:tildeR}
\sfR_i:=\pa_{\sfs} \phi +(\mathcal L_{i,0}+(2i+1)\pa_z G_1\pa^{-1} )\phi-\left(3i+\frac 12 \right) \phi=O(ie^{-\iota \sfs})
\ee
and that $\sfR_i$ has compact support on $[-\pi,\pi]$.

\end{lemma}

\begin{proof}

Differentiating equation 
\fref{eq:F1} yields the identity:
$$
\frac 12 \pa_z G_1-G_1\pa_z G_1+\left(-\frac z2 +\pa_{z}^{-1}G_1 \right)\pa_{zz} G_1=0.
$$
In turn, this directly implies \fref{id:eigenfunctionpaZG1} as $\int_{-\pi}^0G_1=\pi/2$. Using \fref{id:eigenfunctionpaZG1} and \fref{bd:decayassumptionpaxxx}, we next make the following computation which proves \fref{bd:tildeR} :
\bea
\nonumber | \sfR_i| &=&\biggl| \pa_{\sfs} \phi+ \left(3i+1-(2i+2)\sfF\right) (\phi-\pa_z G_1)-\left(\frac z 2-\pa_z^{-1}\sfF+\frac \pi2 \right) \pa_z (\phi-\pa_z G_1)-e^{-2\sfs} \pa_{zz}\phi\\
\nonumber &&\quad +(2i+1)(\pa_z\sfF) \pa^{-1}(\phi-\pa_z G_1)+\left(3i+\frac 12 \right)(\phi-\pa_z G_1)\\
\nonumber &&\quad -(2i+2)\sfu \pa_z G_1+(\pa^{-1}\sfu )\pa_z \pa_z G_1+(2i+1)(\pa_z \sfu)\pa_z^{-1}\pa_z G_1 \biggr|\\
\nonumber &\lesssim & e^{-\frac \sfs2}+ie^{-\frac{\sfs}2}+e^{-\frac{\sfs}{2}}+e^{-\frac 32 \sfs}+ i e^{-\sfs}+ ie^{-\frac \sfs2}+ie^{-\iota \sfs}+e^{-\iota \sfs}+ie^{-\iota \sfs}.
\eea

\end{proof}

In the case $k=0$ we thus decompose a solution $v$ of \fref{eq:linearFik} with $\phi$ defined in Lemma \ref{lem:linearpaxxx2}:
\be \label{id:eqlineardecomp}
v(\sfs,z)=b (\sfs)\phi(\sfs,z) +v'(\sfs,z), \qquad \mbox{with} \qquad \left\{ \begin{array}{l l} \frac{d b}{d\sfs}=-(3i+\frac 12)b-(2i+1)\int_{-\pi}^0  v', \\ b(\sfs_1)=0. \end{array} \right.
\ee
We obtain the following evolution equation for $v'$ using \fref{id:defmathcalLik} and Lemma \ref{lem:linearpaxxx2}: 
\be \label{id:eqlineardecomp2}
(\pa_{\sfs}+\mathcal L_{i,0}'+\tilde{\mathcal L}_{i,0}'+\hat{\mathcal L}_{i,0}')v'=b\sfR_i
\ee
where the leading order and lower order elliptic operators $\mathcal L_{i,0}'$ and $\tilde{\mathcal L}_{i,0}'$ are given by \fref{def:Lik'tildeLik'} with $k=0$, and where the nonlocal operator is:
$$
\hat{\mathcal L}'_{i,0}v'=(2i+1)\pa_z G_1\pa_z^{-1}v'+(2i+1)\pa_z \sfu \pa^{-1}v' +(2i+1)(1-\chi^*)\pa_z G_1\int_{-\pi}^0 v',
$$
We first study the dynamics generated by $\mathcal L_{i,k}'+\tilde{\mathcal L}_{i,k}'$. We write $\tilde v(\sfs)=\tilde S_{i,k}(\sfs_1,\sfs)(\tilde v_0)$ for the solution $\tilde v$ on $[-\pi,+\infty)$ of:
\be \lab{eq:linearFiktilde}
\pa_{\sfs} \tilde v+\mathcal L_{i,k}'\tilde v+\tilde{\mathcal L}_{i,k}'\tilde v=0, \qquad \tilde v(\sfs,-\pi)=0, \qquad \tilde v(\sfs_1,z)=\tilde v_0.
\ee
Let $\tilde{\sfw}:[-\pi,\infty)\rightarrow (0,\infty)$ be a function that satisfies all the following properties (note that it is possible to construct explicitly such a weight $\tilde{\sfw}$ for any $\eta>0$):
\begin{itemize}
\item[(i)] $\tilde{\sfw}$ is $C^2$. $\tilde{\sfw}$ is non-increasing on $[-\pi,0]$, $\tilde{\sfw} (0)=1$, and $\tilde{\sfw}$ is non-decreasing on $[\pi,2\pi]$.
 $\tilde{\sfw}(z)=\tilde{\sfw} (\pi)\langle z-\pi \rangle^{-2}$ for $z\geq \pi$.
\item[(ii)] For all $z \in [-\pi,\pi]$, $\left|\pa_z^{-1} \tilde{\sfw}(z) \right|\leq \eta^2 \tilde{\sfw}(z)$.
\end{itemize}

We introduce the space $\| f \|_{L^{\infty}_{\tilde\sfw}}=\sup_{z\geq -\pi} \ |f(z)|\tilde{\sfw}^{-1}(z)$.

\begin{lemma} \label{lem:premilinearylinear}

For any $\eta>0$, there exist $\sfs^*$ and $\sfK>0$ such that for all $i+k\geq 1$ and $\sfs_2\geq \sfs_1\geq \sfs_0$:
\be
\lab{bd:lineairepreliminaire2}  \| \tilde S_{i,k}(\sfs_1,\sfs_2)(\tilde v)\|_{X}\leq e^{c_{i,k}(\sfs_2-\sfs_1)}\left(\frac{(1+e^{-\frac{\iota}{2}\sfs_1})e^{\sfK e^{-\sfs_1}}}{(1+e^{-\frac{\iota}{2}\sfs_2})e^{\sfK e^{-\sfs_2}}}\right)^{a_{i,k}} \| \tilde v\|_{X} \qquad \mbox{if }k\geq 1 \mbox{ and }i+k\geq2,
\ee
\be \lab{bd:lineairepreliminaire} \| \tilde S_{i,k}(\sfs_1,\sfs_2)(\tilde v)\|_{X}\leq e^{c_{i,k}(\sfs_2-\sfs_1)}\left(\frac{1+e^{-\frac{\iota}{2}\sfs_1}}{1+e^{-\frac{\iota}{2}\sfs_2}}\right)^{a_{i,k}} \| \tilde v\|_{X}, \qquad \mbox{if }\sfs_2\geq \sfs_1\geq \sfs^*,
\ee
where $X$ denotes either $L^{\infty}_{\tilde{\sfw}}$ or $L^{\infty}_\sfw$, and $c_{i,k}$ and $a_{i,k}$ are defined in \fref{def:cik} and \fref{def:bik}.
\end{lemma}

\begin{proof}

Let $\mathbf{w}$ denote either $\sfw$ or $\tilde{\sfw}$, and let $h(\sfs)$ denote either $1$ or $e^{-a_{i,k}\sfK e^{-\sfs}}$. We prove that
\be \lab{def:barf}
\sfS (\sfs,z):=e^{c_{i,k} \sfs}(1+e^{-\frac{\iota}{2}\sfs})^{-a_{i,k}}h(\sfs)  \mathbf{w} (z)
\ee
is a supersolution for the parabolic operator $\pa_{\sfs}+\mathcal L_{i,k}'+\tilde{\mathcal L}_{i,k}'$. We compute:
\begin{align}
\lab{id:intersupersol}R & := \pa_{\sfs} \sfS - \left(\frac k2-3i-1+(2i-k+2)\sfF\right) \sfS-\left(\frac z 2-\pa^{-1}\sfF-\frac \pi2\right) \pa_z \sfS-e^{-2s} \pa_{zz}\sfS\\
\nonumber &\quad = \sfS\left[c_{i,k}+\frac{\iota a_{i,k}}{2}e^{-\frac \iota2\sfs}+\frac{\pa_{\sfs}h}{h}-\frac k2+3i+1-(2i-k+2)\sfF+\left(\mathcal T(z)+\pa^{-1}\sfu\right) \frac{\pa_z\mathbf{w}}{\mathbf{w}}-e^{-2\sfs} \frac{\pa_{zz}\mathbf{w}}{\mathbf{w}} \right].
\end{align}
On the interval $[\pi,+\infty)$, using that $G_1(z)=0$, $\pa_{\sfs}h\geq 0$ and $\mathcal T(z)=(z-\pi)/2$ and \fref{bd:decayassumptionpaxxx}:
$$
R\geq \sfS \left[c_{i,k}+\frac{\iota a_{i,k}}{2}e^{-\frac \iota2\sfs}-\frac k2+3i+1+O(\langle i+k\rangle e^{-\iota \sfs})-\left(\frac{z-\pi}{2}+O(\langle z \rangle e^{-\iota \sfs})\right) \frac{\pa_z\mathbf{w}}{\mathbf{w}}-e^{-2\sfs} \frac{\pa_{zz}\mathbf{w}}{\mathbf{w}} \right].
$$
We have $c_{i,k}-\frac k2+3i+1\geq \eta \langle i \rangle$. On $[\pi,+\infty)$, $\pa_z \sfw \leq 0$ and $|\pa_z^j \sfw| \lesssim \sfw \langle z \rangle^{-j}$ for $j=1,2$. Hence:
$$
R\geq \sfS \left[ \eta \langle i \rangle+\frac{\iota a_{i,k}}{2}e^{-\frac \iota2\sfs}-C\langle i+k \rangle e^{-\iota \sfs}-Ce^{-2\sfs} \langle z\rangle^{-2} \right]>0
$$
for $\sfs$ larger than some $\sfs^*$ depending on $\eta$, $C_0$, $\iota$ and $T$, but independent of $i$ and $k$.

On $[-\pi,\pi]$, we have $0\leq G_1\leq 1$ so that $c_{i,k}-\frac k2+3i+1-(2i-k+2)G_1\geq \eta \langle i \rangle$ from \fref{def:cik}. As $\mathcal T(z)$ is nonpositive on $[-\pi,0]$ and nonnegative on $[0,\pi]$, and $\mathbf{w}$ is non-increasing on $[-\pi,0]$ and non-decreasing on $[0,\pi]$ we get $\mathcal T(z)\pa_z \mathbf{w}\geq 0$. Hence from \fref{id:intersupersol}, using \fref{bd:decayassumptionpaxxx} and $\pa_{\sfs}h\geq 0$:
\begin{align*}
R & \geq  \sfS\left[\eta \langle i \rangle+\frac{\iota a_{i,k}}{2}e^{-\frac \iota2\sfs}-(2i-k+2)\sfu+\pa^{-1}\sfu \frac{\pa_z\mathbf{w}}{\mathbf{w}}-e^{-2\sfs} \frac{\pa_{zz}\mathbf{w}}{\mathbf{w}} \right]\\
&\qquad \geq \sfS\left[\eta \langle i \rangle+\frac{\iota a_{i,k}}{2}e^{-\frac \iota2\sfs}-C\langle i+k\rangle e^{-\iota \sfs}-Ce^{-2\sfs} \right] \ > \ 0
\end{align*}
for $\sfs$ large enough as $\langle i\rangle +a_{i,k}\gtrsim \langle i+k\rangle$. We conclude that $R>0$ on $[-\pi,+\infty)$ for $\sfs$ large enough. Hence there exists $\sfs^*$ such that $\sfS$ is a supersolution for $\sfs\geq \sfs^*$. The bound \fref{bd:lineairepreliminaire} is then a consequence of the maximum principle. Assume now $i+k\geq 2$ and $h(\sfs)=e^{-a_{i,k}\sfK e^{-\sfs}}$. We proved that $\sfS$ is a supersolution for $\sfs\geq \sfs^*$. For $\sfs^*\geq \sfs\geq \sfs_0$ we have that $\frac{\pa_{\sfs}h}{h}=Ka_{i,k}e^{-\sfs}\geq cK\langle i+k\rangle$ for some $c>0$ depending on $s^*$, since $a_{i,k}\gtrsim \langle i+k\rangle$. All other terms in \fref{id:intersupersol} are $O(\langle i+k\rangle)$ with some uniform constant, hence for all $\sfs\in [\sfs_0,\sfs^*]$ and $z\geq -\pi$:
$$
R \geq \sfS\left[\frac{\pa_{\sfs}h}{h}-C\langle i+k\rangle \right]\geq \sfS \left[cK\langle i+k\rangle-C\langle i+k\rangle\right]>0
$$
for $\sfK$ large enough depending on $\sfs^*$. Hence $\sfS$ is a supersolution for all $\sfs\geq \sfs_0$, proving \fref{bd:lineairepreliminaire2} applying the maximum principle.

\end{proof}

We now study the dynamics of \fref{id:eqlineardecomp} for $k=0$, setting $b=0$. We write $\hat v(\sfs)=\hat S_{i,0}(\sfs_1,\sfs)(\hat v_0)$ for the solution $\hat v$ on $[-\pi,+\infty)$ of:
\be \lab{eq:linearFik2}
\pa_{\sfs} \hat v+\mathcal L_{i,0}'\hat v+\tilde{\mathcal L}_{i,0}'\hat v+\hat{\mathcal L}_{i,0}'\hat v=0, \qquad \hat v(\sfs,-\pi)=0, \qquad \hat v(\sfs_1,z)=\hat v_0.
\ee

\begin{lemma}

For any $\eta>0$, if $\sfs_2\geq \sfs_1$ are large enough, there holds for all $i\geq 1$:
\be \lab{bd:lineairepreliminaire3}
\| \hat S_{i,0}(\sfs_1,\sfs_2)(\hat v)\|_{L^{\infty}_\sfw}\leq C(\eta)e^{c_{i,0}(\sfs_2-\sfs_1)}\left(\frac{1+e^{-\frac{\iota}{2}\sfs_1}}{1+e^{-\frac{\iota}{2}\sfs_2}}\right)^{a_{i,0}} \| \hat v\|_{L^{\infty}_\sfw}
\ee
where $c_{i,0}$ and $a_{i,0}$ are defined in \fref{def:cik} and \fref{def:bik}.

\end{lemma}

\begin{proof}

We reason with a parameter $\eta'>0$, and let $\tilde{\sfw}'=\tilde{\sfw}[\eta']$ and $c_{i,0}'=c_{i,0}[\eta']$. Using the assumption (ii) on $\tilde{\sfw}'$, the bound \fref{bd:decayassumptionpaxxx} and that $G_1$ vanishes outside $[-\pi,\pi]$ we get the bound, if $\eta'$ has been chosen small enough, and then $\sfs$ large enough:
$$
\| \hat{\mathcal L}'_{i,0} \hat v\|_{L^{\infty}_{\tilde{\sfw}'}}\leq \left(Ci \eta^{'2}+C(\eta') i e^{-\iota \sfs}+C(\eta') i e^{-\frac{\sfs}{2}}\right)\| \hat v \|_{L^{\infty}_{\tilde{\sfw}'}}\leq \eta' i \| \hat v \|_{L^{\infty}_{\tilde{\sfw}'}}.
$$
Duhamel gives the identity $\hat S_{i,0}(\sfs_1,\sfs_2)(\hat v)=\tilde S_{i,0}(\sfs_1,\sfs_2)(\hat v)-\int_{\sfs_1}^{\sfs_2}\tilde S_{i,0}(\sfs,\sfs_2)(\hat{\mathcal L}'_{i,0}(S_{i,0}(\sfs_1,\sfs)(\hat v)))d\sfs$. Set $\Phi(\sfs)=\left(\frac{1+e^{-\frac{\iota}{2}\sfs}}{1+e^{-\frac{\iota}{2}\sfs_1}}\right)^{a_{i,0}}\| \hat S_{i,0}(\sfs_1,\sfs)(\hat v)\|_{L^{\infty}_{\tilde \sfw'}}$. The above bound and \fref{bd:lineairepreliminaire} imply:
$$
\Phi(\sfs_2) \leq   e^{c_{i,0}'(\sfs_2-\sfs_1)} \| \hat v\|_{L^{\infty}_{\tilde{\sfw}'}}+i \eta' \int_{\sfs_1}^{\sfs_2} e^{c_{i,0}'(\sfs_2-\sfs)} \Phi(\sfs)d\sfs.
$$
Gronwall then gives $\Phi(\sfs)\leq  e^{(c_{i,0}'+i\eta')(\sfs_2-\sfs_1)} \| \hat v\|_{L^{\infty}_{\tilde{\sfw}'}}$. This proves the Lemma, upon noticing that $c_{i,k}[\eta']+i\eta'\leq c_{i,k}[\eta] $ for $\eta'$ small enough, and that the weights $\tilde{\sfw}'$ and $\sfw$ are equivalent.

\end{proof}

\begin{proof}[Proof of Proposition \ref{pr:linearanalytic}]

\textbf{Step 1} We claim that for any $\eta>0$, there exists $\sfs^*,C'>0$ such that:
\be \label{bd:lineaireblowuptime}
\left\| S_{i,k}(\sfs_1,\sfs_2)(v_0)\right\|_{L^{\infty}_\sfw}\leq C' e^{c_{i,k}(\sfs_2-\sfs_1)}\left(\frac{1+e^{-\frac{\iota}{2}\sfs_1}}{1+e^{-\frac{\iota}{2}\sfs_2}}\right)^{a_{i,k}} \left\| v_0\right\|_{L^{\infty}_\sfw} \qquad \mbox{for }\sfs_2\geq \sfs_1\geq \sfs^*,
\ee
for all $i+k\geq 1$, and that one can take $C'=1$ if $k\geq 1$. For $k\geq 1$, this is Lemma \ref{bd:lineairepreliminaire}. So we only need to prove the above inequality for $k=0$. Let $\eta'>0$, and $c_{i,0}'=c_{i,0}[\eta']$. Recall \fref{id:eqlineardecomp} and \fref{id:eqlineardecomp2}. We write by Duhamel $b(\sfs)=(2i+1)e^{-(3 i+\frac 12) \sfs}\int_{\sfs_1}^{\sfs}e^{(3i+\frac 12) \sfs'}\int_{-\pi}^0v'(\sfs')d\sfs'$ and $v'(\sfs)=\hat S_{i,0}(\sfs_1,\sfs)(v_0)+\int_{\sfs_1}^{\sfs} b(\sfs')\hat S_{i,0}(\sfs',\sfs)(\sfR_i(\sfs'))d\sfs'$. Recall that \fref{bd:tildeR} gives $\| \sfR_i\|_{L^{\infty}_\sfw}\leq Cie^{-\iota \sfs}$. We take $\sfs_1$ large enough, and apply the linear estimate \fref{bd:lineairepreliminaire3} with parameter $\eta'$ to get:
\bea
\nonumber &|b(\sfs)|\leq Ci \int_{\sfs_1}^{\sfs}e^{-\left(3i+\frac12\right) (\sfs-\sfs')}\| v'(\sfs ')\|_{L^{\infty}_\sfw}d\sfs', \\
\nonumber & \| v'(\sfs)\|_{L^{\infty}_\sfw}\leq C e^{c_{i,k}'(\sfs-\sfs_1)}\left(\frac{1+e^{-\frac{\iota}{2}\sfs_1}}{1+e^{-\frac{\iota}{2}\sfs}}\right)^{a_{i,k}} \left\| v_0\right\|_{L^{\infty}_\sfw}+Ci \int_{\sfs_1}^{\sfs} e^{c_{i,k}'(\sfs-\sfs')}\left(\frac{1+e^{-\frac{\iota}{2}\sfs'}}{1+e^{-\frac{\iota}{2}\sfs}}\right)^{a_{i,k}} e^{-\iota \sfs'}|b(\sfs')|d\sfs'
\eea
Consider the function $\Phi(\sfs)=\left(\frac{1+e^{-\frac{\iota}{2}\sfs}}{1+e^{-\frac{\iota}{2}\sfs_1}}\right)^{a_{i,k}}(\| v'(\sfs)\|_{L^{\infty}_\sfw}+\eta' |b(\sfs)|)$. Take $\sfs_1$ large enough so that $e^{-\iota \sfs'}\leq \eta^{'2}$ for $\sfs'\geq \sfs_1$. Note that for all $i+k\geq1$ one has $c_{i,k}'\geq -3i-\frac 12$. Hence $\Phi$ satisfies the integral inequality $\Phi(\sfs)\leq C e^{c_{i,k}'(\sfs-\sfs_1)} \left\| v_0\right\|_{L^{\infty}_\sfw}+Ci \eta' \int_{\sfs_1}^{\sfs}e^{c_{i,k}'(\sfs-\sfs')}\Phi(\sfs')d\sfs'$. Hence $\Phi(\sfs)\leq C e^{(c_{i,k}'+Ci \eta')(\sfs-\sfs_1)} \left\| v_0\right\|_{L^{\infty}_\sfw}$ by Gronwall Lemma. This shows \fref{bd:lineaireblowuptime}, taking $\eta'$ small so that $c_{i,k}[\eta']+Ci\eta'\leq c_{i,k}[\eta] $.

\textbf{Step 2} Fix $\eta>0$, $\sfs^*$ as in Step 1, and $\sfs^*\geq \sfs_2\geq \sfs_1\geq -\ln T$. The control of \fref{eq:linearFik} on $[-\ln T,s^*]$ is direct since this is a linear equation with bounded coefficients, over a \emph{finite} interval. Indeed, the functions $|\sfF|,|\pa_z \sfF|\leq C \sfw$ are uniformly bounded from \fref{bd:decayassumptionpaxxx}. Then, \fref{eq:linearFik} is a linear parabolic equation, with variable coefficients in the elliptic part that are uniformly bounded by $C\langle i+k\rangle$, and with a nonlocal operator $v\mapsto \delta_{i\neq 0}(2i+1)\pa_z \sfF \pa^{-1}v$ that is bounded from $L^{\infty}_{\tilde{\sfw}}$ onto $L^{\infty}_{\tilde \sfw}$ with operator norm $\leq C \langle i \rangle$. As a result, we have a classical linear bound using a standard Gronwall argument: there exists $C>0$ depending on $T$, $s^*$, $\iota$ and $C_0$ such that $ \left\| S_{i,k}(\sfs_1,\sfs_2)(v_0)\right\|_{L^{\infty}_\sfw}\leq e^{C\langle i+k\rangle (\sfs_2-\sfs_1)} \left\| v_0 \right\|_{L^{\infty}_\sfw}$. Since for $\sfK',C''$ large enough, $e^{C\langle i+k\rangle (\sfs_2-\sfs_1)}\leq C''e^{a_{i,k}\sfK'(e^{-\sfs_1}-e^{-\sfs_2})}$ uniformly for $\sfs^*\geq \sfs_2\geq \sfs_1\geq -\ln T$, we get:
$$
\left\| S_{i,k}(\sfs_1,\sfs_2)(v_0)\right\|_{L^{\infty}_\sfw}\leq C''e^{a_{i,k}\sfK'(e^{-\sfs_1}-e^{-\sfs_2})} \left\| v_0 \right\|_{L^{\infty}_\sfw}.
$$
The above estimate on $[-\ln T,\sfs^*]$ and \fref{bd:lineaireblowuptime} on $[\sfs^*,\infty)$ directly imply \fref{bd:lineaire} for all $\sfs_2\geq \sfs_1\geq -\ln T$ (upon using them after writing $S_{i,k}(\sfs_1,\sfs_2)=S_{i,k}(\sfs^*,\sfs_2)\circ S_{i,k}(\sfs_1,\sfs^*)$ if $\sfs_1\leq \sfs^*\leq \sfs_2$, and up to taking $\sfK,C>0$ large enough depending on $\sfK',C',C'',\sfs^*,T$).

\end{proof}

\subsection{Control of the third order tangential derivative on the axis} \lab{subsec:paxxx}

We first control $\xi_{1,0}$ (equivalently, $F_{1,0}$). This is because the growth of this function as $t\rightarrow T$ will be responsible for the $(T-t)^{7/4}$ bound of the radius of analyticity, and because the bound below is critical for the linearised analysis due to the presence of a non-trivial kernel, thus requiring a more careful treatment.

\begin{proposition} \lab{pr:paxxx}

Assume hypothesis (i) in Theorem \ref{th:analytic} and $\| \xi_{1,0}(0)\|_{L^{\infty}(\langle y\rangle^{-2})}<+\infty$. Then the solution $\xi_{1,0}$ of \fref{eq:Fikevo} is defined for all $t\in [0,T)$. Moreover, there exists $C_2>0$ such that for all $t\in [0,T)$ and $\sfs\geq \sfs_0$:
\be \label{bd:paxxxoptimal}
\| \xi_{1,0}(t)\|_{L^{\infty}([0,\frac 14])}\leq C_2, \qquad \mbox{and}\qquad \| F_{1,0}(\sfs)\|_{L^{\infty}_\sfw}\leq C_2.
\ee

\end{proposition}

The proof is decomposed in several steps, and Proposition \ref{pr:paxxx} is proved at the end of this subsection. The existence up to time $T$ is straightforward, since $F_{1,0}$ solves a linear equation:
\be \label{id:evoF10}
(\pa_{\sfs}+\mathcal L_{1,0}) F_{1,0}=0 \qquad \Leftrightarrow \qquad (\pa_{\sfs}+\mathcal N+\tilde{\mathcal N}-e^{-2s} \pa_{zz}) F_{1,0}=0,
\ee
where the leading and lower order linear operators are:
$$
\mathcal N F_{1,0}=4(1-G_{1}(z))F_{1,0}+\mathcal T(z)\pa_zF_{1,0}+3 \pa_z G_1(z) \pa^{-1}F_{1,0},
$$
$$
\tilde{\mathcal N}F_{1,0}=-4\sfu F_{1,0}+\pa^{-1}\sfu \pa_zF_{1,0}+3 \pa_z \sfu \pa^{-1}F_{1,0} .
$$
Applying Proposition \ref{pr:linearanalytic}, for any $\eta>0$, as $c_{1,0}=\langle 1 \rangle \eta$ and $a_{1,0}=0$, we obtain that $\| F_{1,0}(\sfs)\|_{L^{\infty}}\leq C e^{\langle 1 \rangle \eta \sfs}$. By taking a smaller $\eta$ in this inequality and $\sfs$ large enough we get:
\be \label{bd:lossypaxxx}
\| F_{1,0}(\sfs)\|_{L^{\infty}(\mathsf w)}\leq e^{\eta \sfs} \qquad \mbox{for any }\eta>0,  \mbox{ for } \sfs \mbox{ large enough depending on }\eta.
\ee
This is almost the second bound in \fref{bd:paxxxoptimal} we wish to prove. The problem with improving to $\eta=0$ above is the presence of a nontrivial kernel.

\begin{lemma}[\cite{CGM2}, (vi) in Proposition 6] \label{lem:linearpaxxx}

There exists a $C^1$ solution $Q$ to $\mathcal N Q=0$ on $[-\pi,+\infty)$ that has the following properties. The support of $Q$ is $[-\pi,\pi]$, and $Q$ restricted to $[-\pi,\pi]$ is smooth. $Q$ is positive on $(-\pi,\pi)$ with $Q(0)=1$. There exist two positive constants $c$ and $c'$ such that $Q(z)\sim c(z+\pi)^8$ as $z\downarrow -\pi$ and $Q(z)\sim c'(\pi-z)$ as $z \uparrow \pi$.

\end{lemma}

To improve \fref{bd:lossypaxxx}, we prove boundedness in a parabolic neighborhood of a particular characteristics of the transport operator, and extend this local bound to a global one.

\begin{lemma} \label{lem:char}

There exists a solution $z^*(\sfs)$ of $\pa_{\sfs}z^*=-\frac{z^*}{2}+\int_{-\pi}^{z^*} \sfF(\sfs,z)dz-\frac{\pi}{2}$ such that:
\be \lab{bd:zstarpaxxx}
|z^*|\leq C e^{-\iota \sfs}.
\ee

\end{lemma}

\begin{proof}

For an initial time $\sfs_1\geq \sfs_0$, using $\int_{-\pi}^0G_1=\pi/2$ and \fref{bd:decayassumptionpaxxx} the ODE becomes:
\be \label{paxxx:id:oderecast}
\pa_{\sfs}z^*=-\frac{z^*}{2}+\int_{0}^{z^*}G_1 +\int_{-\pi}^{z^*} \sfu = \frac 12 z^*+O(z^{*3})+O(e^{-\iota \sfs}), \qquad z^*(\sfs_1)=z^*_0.
\ee
Consider for $M>0$ the sets $I^-_{M,\sfs_1}$ and $I^+_{M,\sfs_1}$ defined by
$$
I^\pm_{M,\sfs_1}=\{|z^*_0|\leq Me^{-\iota \sfs_1}, \ \exists \sfs_2\geq\sfs_1, \ |z^*(\sfs)|< Me^{-\iota \sfs} \ \mbox{for } \sfs_1\leq \sfs<\sfs_2 \mbox{ and }z^*(\sfs_2)=\pm Me^{-\iota \sfs_2}\}.
$$
Then for $M$ large enough and then for $\sfs_1$ large enough the following holds true. For $z^*_0\in I^\pm_{M,\sfs_1}$, at time $\sfs_2$ there holds using \fref{paxxx:id:oderecast}: $\pa_s (|e^{\iota \sfs}z^*|)(\sfs_2)=\frac M2+O(M^3e^{-2\iota \sfs_2})+O(1)>0$. This inequality, by continuity of the flow of the ODE \fref{paxxx:id:oderecast}, implies that both $I^-_{M,\sfs_1}$ and $I^+_{M,\sfs_1}$ are open in $[-Me^{-\iota \sfs_1},Me^{-\iota \sfs_1}]$. They are moreover disjoints by definition, and non-empty as they contain $-Me^{-\iota \sfs_1}$ and $Me^{-\iota \sfs_1}$ respectively. Hence, by connectedness, there exists $z^*_0\in[-Me^{-\iota \sfs_1},Me^{-\iota \sfs_1}]$ with $z^*_0 \notin I^-_{M,\sfs_1} \cup I^+_{M,\sfs_1}$. The solution to \fref{paxxx:id:oderecast} with data $z^*_0$ at time $\sfs_1$ then satisfies the conclusions of the Lemma by definition of $I^-_{M,\sfs_1}$ and $I^+_{M,\sfs_1}$.

\end{proof}

\begin{lemma} \label{lem:controlchar}

Let $z^*$ satisfy the conclusion of Lemma \ref{lem:char}. Then there exists $d\in \mathbb R$ such that for any $\sfM>0$ and $0<\iota'<\iota$, for all large enough $\sfs$ and all $z\in [z^*-\sfM e^{-s},z^*+\sfM e^{-s}]$:
\be \label{bd:Linftynearzstar}
|F_{1,0}(\sfs,z)-d|\leq e^{-\iota' \sfs}.
\ee

\end{lemma}

\begin{proof}
We switch toward the following parabolic variables:
\be \label{id:varparapaxxx}
\sfY= \frac{z-z^*}{T-t}, \qquad F_{1,0}(\sfs,z)= f_{1,0}(\sfs,\sfY), \qquad \sfY^*= \frac{\pi+z^*}{(T-t)}.
\ee
Then $f_{1,0}$ solves the following equation on $[-\sfY^*,+\infty)$ with Dirichlet boundary condition:
$$
\pa_{\sfs} f_{1,0}+\left(\frac{\sfY}{2}+\pa_{\sfY}^{-1}\sfF(\sfs,z)\right)\pa_{\sfY} f_{1,0}-\pa_{\sfY\sfY} f_{1,0}=(4\sfF-4) F_{1,0}-3(\int_{-\pi}^z F_{1,0})\pa_z \sfF,
$$
the right-hand side being a function of the space variable $z$. Set $d(\sfs)=\int_{\mathbb R} \tilde \chi f_{1,0}\rho(\sfY)d\sfY$, where $\rho(\sfY)=e^{-3\sfY^2/4}$ and $\tilde \chi(\sfs,\sfY)=\chi(e^{-\frac{\sfs}{2}}\sfY)$ for $\chi$ a smooth cut-off, $\chi(y)=1$ for $|y|\leq 1$ and $\chi(y)=0$ for $|y|\geq 0$. Notice that the support of $\tilde \chi$ is strictly inside $[-\sfY^*,+\infty)$ for $\sfs$ large, justifying that the integral for $d$ is on $\mathbb R$. Note that $\pa_{\sfY}\rho(\sfY)=-\frac 32 \sfY \rho(\sfY)$. Then integrating by parts, using the exponential decay of $\rho$ and \fref{bd:lossypaxxx} to upper bound by $Ce^{-e^{\sfs}}$ all boundary terms due to $\tilde \chi$:
\be \lab{id:interpaxxx}
\pa_{\sfs} d= \int_{\mathbb R} \tilde \chi \left( \left( 4\sfF(\sfs,z)-4+\rho^{-1} \pa_{\sfY}\left(\rho \pa_{\sfY}^{-1}(\sfF(\sfs,z)-1)\right) \right)F_{1,0}-3(\int_{-\pi}^z F_{1,0})\pa_z \sfF \right)\rho d\sfY+O(e^{-e^{\sfs}}).
\ee
Above, we note that, using \fref{id:varparapaxxx}, \fref{bd:zstarpaxxx} and \fref{bd:decayassumptionpaxxx}:
\begin{align}
\lab{bd:interpaxxx1} |\sfF(\sfs,z)-1|\leq |G_1(z)-G_1(z^*)|+|G_1(z^*)-1|+|\sfu|\leq C\sfY e^{-\sfs}+Ce^{-\iota \sfs},\\
\lab{bd:interpaxxx2} |\pa_{z}\sfF(\sfs,z)|\leq |\pa_zG_1(z)-\pa_zG_1(z^*)|+|\pa_zG_1(z^*)|+|\pa_z\sfu|\leq C\sfY e^{-\sfs}+Ce^{-\iota \sfs}.
\end{align}
Let $0<\iota''<\iota$, and $\sfs$ large so that $|F_{1,0}|\leq e^{-\iota''\sfs}$ from \fref{bd:lossypaxxx}. Injecting \fref{bd:interpaxxx1} and \fref{bd:interpaxxx2} in \fref{id:interpaxxx} gives $|\pa_{\sfs}d|\leq Ce^{-(\iota-\iota'')\sfs}$. Hence the existence of $d_{\infty}\in \mathbb R$ with $|d-d_{\infty}|\leq Ce^{-(\iota-\iota'')\sfs}$. Set now $\bar f_{1,0}=f_{1,0}-d$. It solves the equation:
$$
\pa_{\sfs} \bar f_{1,0}+\left(\frac{\sfY}{2}+\pa_{\sfY}^{-1}\sfF(\sfs,z) \right)\pa_{\sfY}\bar f_{1,0}-\pa_{\sfY\sfY} \bar f_{1,0}=(4\sfF-4) F_{1,0}-3(\int_{-\pi}^z F_{1,0})\pa_z \sfF-\pa_s d.
$$
We compute the following energy estimate by integrating by parts, using the exponential decay of $\rho$, that $d$ is bounded and \fref{bd:lossypaxxx} to upper bound all boundary terms due to $\tilde \chi$ by $Ce^{-e^{\sfs}}$:
\begin{align*}
&\frac{d}{d\sfs} \frac{1}{2}\left(\int_{\mathbb R} |\tilde \chi \bar f_{1,0}|^2\rho \right)=-\int_{\mathbb R} |\pa_{\sfY}(\tilde \chi f_{1,0})|^2\rho+\int_{\mathbb R} \tilde \chi^2 \pa_{\sfY}(\rho \pa_{\sfY}^{-1}(\sfF(\sfs,z)-1)) |\bar f_{1,0}|^2\\
&\qquad \qquad \qquad \qquad \qquad +\int_{\mathbb R} \tilde \chi^2 \bar f_{1,0}\left((4\sfF-4) F_{1,0}-3(\int_{-\pi}^z F_{1,0})\pa_z \sfF-\pa_s d \right)\rho +O(e^{-e^{\sfs}}).
\end{align*}
Above, since $\int_{\mathbb R}\tilde \chi \bar f_{1,0}\rho=0$, we obtain the coercivity $\int_{\mathbb R} |\pa_{\sfY}(\tilde \chi f_{1,0})|^2\rho\geq \frac 32 \int_{\mathbb R} |\tilde \chi f_{1,0}|^2\rho $ from Proposition \ref{pr:Ls} (note that $1=h_0$). Bounding the remaining terms by using that $d$ is bounded, that $|\bar f_{1,0}|=|f_{1,0}-d|\leq Ce^{\iota'' \sfs}$, \fref{bd:lossypaxxx}, \fref{bd:interpaxxx1} and \fref{bd:interpaxxx2} we get:
$$
\frac{d}{d\sfs} \frac{1}{2}\left(\int_{\mathbb R} |\tilde \chi \bar f_{1,0}|^2\rho \right)\leq -\frac{3}{2}\int_{\mathbb R} |\tilde \chi \bar f_{1,0}|^2\rho+Ce^{-(\iota-2\iota'')\sfs}.
$$
Reintegrating this inequality gives $\int_{\mathbb R} |\tilde \chi \bar f_{1,0}|^2\rho \leq Ce^{-(\iota-2\iota'')\sfs}$. Hence $\| \bar f_{1,0}\|_{L^2([-2\sfM,2\sfM])}\leq Ce^{-(\iota-2\iota'')\sfs}$. A standard application of parabolic regularisation gives that $\| \bar f_{1,0}\|_{L^{\infty}([-\sfM,\sfM])}\leq Ce^{-(\iota-2\iota'')\sfs}\leq e^{-\iota' \sfs}$ (upon choosing $\iota''$ small depending on $\iota'$, and then $\sfs$ large). This and the bound on $d$ show the Lemma upon renaming $d_\infty$ by $d$.

\end{proof}

\begin{lemma} \label{lem:cvpaxxxtokernel}

Let $(z^*,d)$ be given by Lemmas \ref{lem:char} and \ref{lem:controlchar}, and $Q$ be defined in Lemma \ref{lem:linearpaxxx}. Then:
$$
\lim_{\sfs \rightarrow +\infty} \| F_{1,0}(\sfs,z)-dQ(z)\|_{L^{\infty}_\sfw}=0,
$$

\end{lemma}

\begin{proof}

We regularise the element of the kernel $Q$ near the points $\pm \pi$ and decompose:
$$
F_{1,0}(\sfs,z)=d\chi^*(\sfs,z)Q(z)+\bar F_{1,0}(\sfs,z).
$$
where $\chi^*$ was defined in \fref{def:chi*}. Then $\bar F_{1,0}$ solves:
$$
(\pa_{\sfs}+\mathcal N''+\hat{\mathcal N}''-\gamma_{2}\pa_{zz})\bar F_{1,0}=\sfE (\sfs,z),
$$
where the elliptic linear operator, the nonlocal linear operator and the error are:
$$
\mathcal N''=4(1-G_1(z))+\left(\pa_{z}^{-1}G_1+\int_{-\pi}^z \sfu -\frac z2\right)\pa_z , \qquad \hat{\mathcal N}''\bar F_{1,0}=3\pa_z G_1 \pa_{z}^{-1}\bar F_{1,0}
$$
$$
\sfE=-d\left((\pa_{\sfs}+\tilde{\mathcal N}-e^{-2s} \pa_{zz})(\chi^* Q)+\mathcal N((\chi^*-1)Q)\right)+4\sfu \bar{F}_{1,0}-3\pa_z u \int_{-\pi}^z F_{1,0}
$$
Since $d$ is bounded, from the asymptotic behaviour of $Q$ near $\pm \pi$ in Lemma \ref{lem:linearpaxxx} and \fref{bd:decayassumptionpaxxx} we get:
\be \label{bd:sfE}
\| \sfE \|_{L^{\infty}_\sfw}\leq e^{-\frac{3\iota}{4}\sfs}.
\ee
We introduce the domain $\bar \Omega=[-\pi,z^*-\sfM e^{-s}]\cup [z^*+\sfM e^{-s},+\infty]$ and let $\sfs_1$ be large.

\textbf{Step 1} Let $\bar{\sfw}:(-\infty,\infty)\rightarrow (0,\infty)$ be a function that satisfies all the following properties (note that it is possible to construct explicitly such a weight $\bar{\sfw}$ for any $\iota>0$, along the very same lines as in Subsection \ref{subsubsec:lin}):
\begin{itemize}
\item[(i)] $\bar{\sfw}$ is an even $C^2$ solution of the differential inequality $\mathcal N \bar{\sfw}\geq \frac{\iota}{4}\bar w$ on $\mathbb R\backslash \{0\}$.
\item[(ii)] $\bar{\sfw}(z)= |z|^{\frac{\iota}{2}}$ for $|z|$ small enough and $\bar{\sfw}(z)=\bar{\sfw} (\pi)\langle Z-\pi \rangle^{-2}$ for $z\geq \pi$.
\item[(iii)] For all $z \in [-\pi,\pi]$, $\left|\pa_z^{-1} \bar{\sfw}(z) \right|\leq \iota^2\bar{\sfw}(z)$.  
\end{itemize}
Then we claim that there exists $\sfM>0$ such that for $\sfs$ large enough, $\bar \sfS(\sfs,z)=e^{-\frac{\iota}{8}\bar s}\bar w(z-z^*)$ satisfies $(\pa_s+\mathcal N''-e^{-2s}\pa_{zz})\bar \sfS\geq \frac{\iota}{10}\bar \sfS$ on $\bar \Omega$. This is a direct computation. We indeed compute using the evolution equation for $z^*$ that, for $\sfs$ large enough:
\begin{align*}
&(\pa_s+\mathcal N''-e^{-2s}\pa_{zz})\bar \sfS \\
 =& -\frac{\iota}{8}\bar \sfS +e^{-\frac{\iota}{4}\sfs}\left( 4(1-G_1(z-z^*) \bar{\sfw}(z-z^*)+\left(\int_{z^*}^z G_1(\tilde z-z^*)d\tilde z-\frac{z-z^*}{2}\right)\pa_z \bar{\sfw} (z-z^*)\right)\\
 & -e^{-2s} \pa_{zz}\bar \sfS+\int_{z^*}^z u \pa_z \bar \sfS+4(G_1(z-z^*)-G_1(z))\bar \sfS +\int_{z^*}^z (G_1(\tilde z)-G_1(\tilde z-z^*))d\tilde z \pa_z \bar \sfS\\
\geq & -\frac{\iota}{8}\bar \sfS+\frac{\iota}{4}\bar \sfS +O(\sfM^{-2}\bar \sfS)+O(e^{-\iota \sfs}|\bar \sfS|) \ \geq \frac{\iota}{10}\bar \sfS
\end{align*}
where we used property (i) for the second term, $e^{-2s}|\pa_{zz}\bar \sfS|\lesssim e^{-2s}|z-z^*|^{-2}\bar \sfS\lesssim \sfM^{-2}\bar \sfS$ for $|z-z^*|\geq \sfM e^{-s}$ for the third one, and \fref{bd:decayassumptionpaxxx} and \fref{bd:zstarpaxxx} for the remaining terms.

\textbf{Step 2}. Let $\bar F_{1,0}^1$ solve $(\pa_s+\mathcal N'' -e^{-2s} \pa_{zz})\bar F_{1,0}^1=\sfE$ on $\bar \Omega$, with  boundary conditions $\bar F_{1,0}^1(-\pi)=0$, $\bar F_{1,0}^1(z^*\pm \sfM e^{-s})=\bar F_{1,0}(z^*\pm \sfM e^{-s})$ and $\bar F^1_{1,0}(\sfs_1)=\bar F_{1,0}(\sfs_1)$. From the behaviour of $\bar{\sfw}$ near $0$, there holds that $\bar \sfS\geq C(\sfM)e^{-\frac{\iota}{2}\sfs}\sfw$ uniformly on $\bar \Omega$ for a constant $C(\sfM)>0$. Hence from Step 1 and \fref{bd:sfE} we get $(\pa_s+\mathcal N'' -e^{-2s} \pa_{zz})\bar \sfS \geq |\sfE|$. Moreover, $\bar \sfS\geq |\bar F_{1,0}|$ at the boundary $0$ and $z^*\pm Me^{-s}$ from \fref{bd:Linftynearzstar}. Hence from parabolic comparison, for some $C(\sfs_1)>0$, for $\sfs \geq \sfs_1$ large:
$$
|\bar F_{1,0}^1|\leq C(\sfs_1)|\bar \sfS|\leq C(\sfs_1)e^{-\frac{\iota}{8}\sfs}\bar \sfw.
$$

\textbf{Step 3}. Let $\bar F_{1,0}=\bar F_{1,0}^1+\bar F_{1,0}^2$. Then $\bar F_{1,0}^2$ solves $(\pa_s+\mathcal N'' +\hat{\mathcal N}''-e^{-2s} \pa_{zz})\bar F_{1,0}^2=-\bar{\mathcal N}'' \bar F_{1,0}^1$ on $\bar \Omega$ with Dirichlet boundary conditions and zero initial datum at time $\sfs_1$. From Step 1, the solution to $(\pa_s+\mathcal N'' -e^{-2s} \pa_{zz})v=0$ on $\bar \Omega$ with Dirichlet boundary conditions satisfies $\|v(\sfs) \|_{L^{\infty}(\bar \sfw)}  \leq e^{-\frac{\iota}{8}(\sfs-\sfs_2)}\| v(\sfs_2)\|_{L^{\infty}(\bar \sfw)}$ for $\sfs\geq \sfs_2$ large enough. Hence, making the exact same reasoning as in Step 1 in the proof of Proposition \ref{pr:linearanalytic}, one obtains that the solution to $(\pa_s+\mathcal N'' +\bar{\mathcal N}''-e^{-2s} \pa_{zz})u=0$ on $\bar \Omega$ with Dirichlet boundary conditions satisfies $\| u(\sfs) \|_{L^{\infty}(\bar \sfw)} \leq e^{-\frac{\iota}{16}(\sfs-\sfs_2)}\| u(\sfs_0)\|_{L^{\infty}(\bar \sfw)}$ for $\iota$ small enough. This linear bound and the bound for $\bar F_{1,0}^1$ obtained in Step 2 show that for $\sfs \geq \sfs_1$:
$$
|\bar F_{1,0}^2|\leq C|\bar \sfS|\leq Ce^{-\frac{\iota}{16}\sfs}\bar \sfw
$$
for $\sfs_1$ large enough. This bound and the one from Step 2 prove the lemma since $\bar F_{1,0}=\bar F_{1,0}^1+\bar F_{1,0}^2$, and since $\bar{\sfw}\leq C \sfw$ for some constant $C>0$.

\end{proof}

We can now end the proof of Proposition \ref{pr:paxxx}

\begin{proof}[Proof of Proposition \ref{pr:paxxx}]

The second bound is a direct consequence of Lemma \ref{lem:cvpaxxxtokernel}. As for the first one, from \fref{id:lowernearboundary} we get that $\xi_{1,0}$ solves a linear parabolic equation on $[0,1/2]$, with variable coefficients involving $\xi$ and $\pa_y \xi$, but which are uniformly bounded on $[0,T]\times [0,1/2]$ from \fref{id:lowernearboundary}. Hence this first bound is obtained via a standard parabolic bootstrap.

\end{proof}

\subsection{Control of higher order derivatives} \label{subsec:higheranalytic}

\subsubsection{The analytic norm and formal explanations} \label{subsubsec:analytic}

We will use in this subsection the bound \fref{bd:paxxxoptimal} obtained for the third order tangential derivative and the constant $C_2$ is now considered as a universal constant. Our aim is to control the following semi norm for derivatives in an analytical setting (we recall that $i$ denotes $2i+1$ derivatives in the tangential variable):
$$
\sup_{i+k\geq 2} \ \tau^{a_{i,k}} \hat \tau^{a_{i,k}} \bar \tau^{b_{i,k}} \frac{\langle i+k\rangle^{3}}{(2i+1+k)!}\left\| F_{i,k}(\sfs) \right\|_{L^{\infty}_\sfw}
$$
where $0<\bar \tau,\hat \tau\leq 1$ are constants, $3$ is a correction exponent\footnote{There is no need to optimise the value of the exponent $3$.}, the other exponents are
\be \lab{def:bik}
a_{i,k}=\left\{ \begin{array}{l l} 0 \qquad \mbox{if } i+k\leq 1,\\ i+k-\frac 74\qquad  \mbox{otherwise} ,\end{array} \right. \quad b_{i,k}=\left\{ \begin{array}{l l} 0 \qquad \mbox{if }i=0,\\ 2i-1 \qquad  \mbox{if } k\geq 1\mbox{ and }i\geq 1,\\ 2i-2 \qquad  \mbox{if } k=0 \mbox{ and }i\geq 1, \end{array} \right.
\ee
and for $\sfK>0$ a constant such that Proposition \ref{pr:linearanalytic} holds true:
\be \label{def:tau}
\tau(s)=e^{-\frac 12 \sfs}\left(1+e^{-\frac{\iota}{2} \sfs} \right)^2e^{\sfK e^{-\sfs}}=(T-t)^{\frac1 2}\tilde \tau, \qquad \tilde \tau(t)=\left(1+(T-t)^{\frac{\iota}{2}} \right)^2e^{\sfK (T-t)}.
\ee
Let us now explain formally why the above semi-norm will remain bounded, and the role of the parameters. For this, let us only keep the term with $j=1$ in the first line of \fref{eq:Fikevo}:
$$
\pa_s F_{i,k}+\mathcal L_{i,k}F_{i,k}= \binom{2i+1}{2} (\pa^{-1}F_{1,0})F_{i-1,k+1}+...
$$
Since $\pa^{-1}F_{1,0}=O(1)$ from Proposition \ref{pr:paxxx}, this means that the evolution of $F_{i,k}$ has a forcing term that is $O(F_{i-1,k+1})$, that that of $F_{i-1,k+1}$ has a $O(F_{i-2,k+2})$ forcing and so on. At the end of this chain, we see that $F_{0,k+i}$ is somehow forcing the evolution of $F_{i,k}$. The optimal bound on the linear evolution of $F_{0,k+i}$ is $F_{0,k+i}=O(e^{-\frac{1}{2}(k+i)\sfs+O(1)})$ from Proposition \ref{pr:linearanalytic}. Hence we get formally that $F_{i,k}=O(e^{-\frac{1}{2}(k+i)\sfs+O(1)})$ and in particular $F_{i,0}=O(e^{-\frac{i}{2}\sfs+O(1)})$. Back in original variables, this gives that $\xi_{i,0}=O((T-t)^{-\frac{3}{2}(2i+1)}F_{i,0})=O((T-t)^{-\frac 72i+O(1)})$, hence a radius of analyticity of $(T-t)^{7/4}$ in the $x$ direction.

We separate the radius of analyticity in three parts that will play different roles. $\tau^{a_{i,k}}$ is the time dependent part: it encodes the above expected temporal bound, and is compatible with the linear estimates of Proposition \ref{pr:linearanalytic}. $\hat \tau$ is the constant part, and taking it small enough allows to control the second line in \fref{eq:Fikevo}. $\bar \tau^{b_{i,k}}$ gives a different estimate for $\pa_x$ and $\pa_y$ derivatives ; this anisotropy in the norm allows to control the first line in \fref{eq:Fikevo}.

Finally, let us mention that certain short time analytical results as \cite{KV} only require the control of a finite number of $\pa_y$ derivatives, relying on parabolic regularising effects. However, here the viscosity is negligible as $s\rightarrow \infty$, and $\pa_y$ derivatives are forcing $\pa_x$ derivatives as explained above, requiring us to control an infinite number of $\pa_y$ derivatives.\\

The heart of the analysis is to control the analytic norm using a bootstrap argument. We introduce the weight $\omega(t,y)=\sfw (y\sqrt{T-t}-\pi)$ and the space $\| f\|_{L^{\infty}_\omega}=\sup_{y\geq 0} |f(y)|\omega^{-1}(s,y)$.

\begin{definition} \label{def:trappedanalytic}
Let $\sfL,\hat \tau,\bar \tau,\sfK,3>0$. We say for $T_0< t_1<T$ that $u$ is in the analytic trap on $[T_0,t_1]$ if $(\xi_{i,k})_{i,k\in\mathbb N}$ is a $C^\infty$ solution of \fref{eq:evolutionxii} on $[T_0,t_1]\times [0,\infty)$ such that, initially:
\be \label{bd:indainitialtrap}
\| \xi_{i,k}(T_0,\cdot )\|_{L^{\infty}_\omega}\leq (T-T_0)^{-\frac 72 i}\hat \tau^{-a_{i,k}}\bar \tau^{-b_{i,k}} \tilde \tau^{-a_{i,k}} \frac{(2i+k+1)!}{\langle i +k\rangle^3} \qquad \mbox{for }i+k\geq 2,
\ee
and for all $t\in [T_0,t_1]$, setting $\tilde L=L/(2(T-T_0)^{\frac 18})$:
\be \label{bd:indatrap}
\|\xi_{i,k}(t,\cdot)\|_{L^{\infty}_\omega}\leq \sfL(T-t)^{-\frac 72 i-\frac 18}\hat \tau^{-a_{i,k}}\bar \tau^{-b_{i,k}} \tilde \tau^{-a_{i,k}} \frac{(2i+k+1)!}{\langle i +k\rangle^3}\qquad \mbox{for }i+k\geq 2,
\ee
\be \label{bd:boundarytrap}
|\xi_{i,k}(t,0)|\leq \tilde{\sfL}(T-t)^{-\frac 72 i}\hat \tau^{-a_{i,k}}\bar \tau^{-b_{i,k}} \tilde \tau^{-a_{i,k}} \frac{(2i+k+1)!}{\langle i +k\rangle^3} \qquad \mbox{for }i+k\geq 0.
\ee
\end{definition}

\begin{proposition} \label{pr:analyticbootstrap}

For any constants $T_0$, $T$, $\mu$, $\iota$, $C_0$, $C'_0$ and $\tau_0$ in the hypotheses of Theorem \ref{th:analytic}, and $C_2$ in the inequality \fref{bd:paxxxoptimal}, there exists $\sfL^*,\sfK>0$ such that for any $\sfL\geq \sfL^*$, there exist $\hat \tau^*,\bar \tau^*>0$ such that, for any $0<\bar \tau<\bar \tau^*$ and $0<\hat \tau<\hat \tau^*$, if $u$ is in the analytic trap on $[T_0,t_1]$ in the sense of the previous definition, then at time $t_1$ for all $i+k\geq 2$:
\be \label{bd:indatrapimproved}
\|\xi_{i,k}(t_1,\cdot)\|_{L^{\infty}_\sfw}\leq \frac{3}{4}\sfL(T-t)^{-\frac 72 i-\frac 18}\hat \tau^{-a_{i,k}}\bar \tau^{-b_{i,k}} \tilde \tau^{-a_{i,k}} \frac{(2i+k+1)!}{\langle i +k\rangle^3},
\ee
\be \label{bd:boundarytrapimproved}
|\xi_{i,k}(t_1,0)|\leq \frac 12 \tilde \sfL(T-t)^{-\frac 72 i}\hat \tau^{-a_{i,k}}\bar \tau^{-b_{i,k}} \tilde \tau^{-a_{i,k}} \frac{(2i+k+1)!}{\langle i +k\rangle^3}.
\ee
\end{proposition}

\begin{proof}[Proof of Proposition \ref{pr:analyticbootstrap}]
The inequality \fref{bd:indatrapimproved} is proved in Proposition \ref{pr:nonlinearrenormalisedanalytic}. The inequality \fref{bd:boundarytrapimproved} for $k$ even is proved in Corollary \ref{cor:improvedboundaryeven} and for $k$ odd in Lemma \ref{lem:improvedboundaryodd}.
\end{proof}

\begin{remark}

The bounds \fref{bd:indatrapimproved} and \fref{bd:boundarytrapimproved} improve \fref{bd:indatrap} and \fref{bd:boundarytrap} by $<1$ factors. This is used to prove Theorem \ref{th:analytic} as follows: for a solution starting in the analytic trap (Definition \ref{def:trappedanalytic}), then the bounds \fref{bd:indatrap} and \fref{bd:boundarytrap} can never be saturated, showing that the solution remains in this trap up to the blow-up time $T$.

The bound \fref{bd:indatrapimproved}, since valid up to the blow-up time, is used to prove Theorem 3. The bounds \fref{bd:indatrap} and \fref{bd:boundarytrapimproved} however are only used as additional estimates to prove Proposition \ref{pr:analyticbootstrap}.

\end{remark}

The above Proposition directly implies Theorem \ref{th:analytic}.

\begin{proof}[Proof of the Theorem \ref{th:analytic}]

\underline{Proof of (ii)}. Fix all constants in Definition \ref{def:trappedanalytic} such that Proposition \ref{pr:analyticbootstrap} holds true. Then \fref{bd:indainitialtrap} is satisfied because of \fref{bd:analytichypothesis2}, by choosing possibly a smaller coefficient $\hat \tau$.

Let now $\ell\in \mathbb N$ and define $t^*(\ell)$ as the supremum of times $t_1> T_0$ such that $(\xi_{i,k})_{i+k\leq \ell}$ satisfies the estimates \fref{bd:indatrap} and \fref{bd:boundarytrap} in $[T_0,t_1]$. Assume $t^*(\ell)<T$ by contradiction. Notice that $(\xi_{i,k})_{i+k\leq \ell}$ solves a closed system of equations of the form $\pa_t \xi_{i,k}-\pa_{yy}\xi_{i,k}-\pa_y^{-1}\xi \pa_y \xi_{i,k}=f_{i,k}((\xi_{i',k'})_{i'+k'\leq \ell})$ from \fref{eq:evolutionxii}. Notice that, as a consequence, the proof of the bounds \fref{bd:indatrapimproved} and \fref{bd:boundarytrapimproved} for $i+k\leq \ell$, only relies on the use of the bounds \fref{bd:indatrap} and \fref{bd:boundarytrap} for $i+k\leq \ell$. Thus, by definition of $t^*(\ell)$, the bounds \fref{bd:indatrapimproved} and \fref{bd:boundarytrapimproved} hold true for $i+k\leq \ell$ at any time $t_1<t^*$, hence at time $t^*$ as well by continuity. By a continuity argument and because of propagation of regularity, using that \eqref{bd:indatrapimproved} and \fref{bd:boundarytrapimproved} strictly improve \fref{bd:indatrap} and \fref{bd:boundarytrap}, $(\xi_{i,k})_{i+k\leq \ell}$ satisfies \fref{bd:indatrap} and \fref{bd:boundarytrap} on $[t^*,t^*+\delta]$ for some $\delta>0$, contradicting the definition of $t^*$.

Hence $t^*(\ell)=T$. Letting $\ell \rightarrow \infty$, we get that, $(\xi_{i,k})_{i,k\geq 0}$, on $[0,T)$, satisfies \fref{bd:indatrap} and \fref{bd:boundarytrap}. Thus, $(\xi_{i,0})_{i\geq 0}$ satisfy \fref{bd:analytiresult} and \fref{bd:analytiresult2}, as consequences of \fref{bd:analytichypothesis3}, \fref{bd:paxxxoptimal} and \fref{bd:indatrapimproved}.\\

\noindent \underline{Proof of (i)}. We now define $u(t,x,y)=\sum_{i=0}^\infty x^{2i+1}\frac{\xi_{i,0}(t,y)}{(2i+1)!}$. The convergence in $E_{T,\tau^*}$ for $\tau^*$ independent of time small enough is a direct consequence of \fref{bd:analytiresult}. Moreover, the trace of all $x$ derivatives of $u$ on the vertical axis $\{x=0\}$ solves the corresponding trace of Prandtl's equations. Hence $u$ solves Prandtl's equations on $E_{T,\tau^*}$ by uniqueness of analytic extensions.\\

\noindent \underline{Proof of (iii)}. We set $\tau$ to be constant in time on $[T-\delta',T]$ and then bound for $x=\pm \tau(T-t)^{7/4}$ for $T-\delta' \leq t \leq T$, using \fref{bd:lowerglobal}, \fref{bd:analytiresult} and \fref{bd:analytiresult2}:
\begin{align*}
&|u| \leq  \tau (T-t)^{\frac 74}|\xi_{0}(t,y)|+\tau^3 (T-t)^{\frac{21}{4}}\frac{|\xi_{1}(t,y)|}{6}+ \sum_{i=2}^{\infty} \tau^{2i+1}(T-t)^{\frac{7(2i+1)}{4}}\frac{|\xi_{i}(t,y)|}{(2i+1)!}\\
&\qquad \leq  \tau (T-t)^{\frac 74}\frac{(1+o_{\delta \rightarrow 0}(1))}{T-t}+C_1\tau^3(T-t)^{\frac{5}{4}}+C_1 \sum_{i=2}^{\infty} \frac{\tau^{2i+1}}{\tau_1^{2i+1}}(T-t)^{\frac{13}{8}}\ <\ \frac 74 \tau (T-t)^{\frac 34}
\end{align*}
if $\tau$ and $\delta' $ have been chosen small enough. This shows \fref{id:causality} on $[T-\delta,T)$. Since on $[0,T-\delta']$, $\xi_0$ and $\xi_1$ remain bounded, it suffices to take $\tau$ decreasing fast enough on $[0,T-\delta]$ to obtain \fref{id:causality} on this interval.
\end{proof}

We turn to the proof of Proposition \ref{pr:analyticbootstrap}. We use the following throughout this section. For any $\sfK>0$, one has $0\leq \tau \hat \tau \leq1$ on $[T_0,T)$ for $\hat \tau$ small enough. The function $\tilde \tau$ satisfies:
\be \label{id:proptildetau}
\tilde \tau \geq 1 \quad \mbox{and} \quad \tilde \tau \mbox{ is decreasing on } [T_0,T].
\ee
We shall use the following properties of the exponents for any $i,i',k,k'\geq 0$:
\be \label{bd:exponents3}
a_{i,k}\leq a_{i',k'}, \quad \mbox{and} \quad b_{i,k}\leq b_{i',k'} \qquad \mbox{if }i\leq i' \mbox{ and }k\leq k',
\ee
\be \label{bd:exponents1}
a_{i,k}+a_{i',k'}\leq a_{i+i',k+k'}, \quad b_{i,k}+b_{i',k'}\leq b_{i+i',k+k'},
\ee
\be \label{bd:exponents6}
a_{i,k}\leq a_{i,k+1}-\frac 14 \quad \mbox{if }i+k\geq 1, \qquad a_{i,k}= a_{i,k+1}-1\quad \mbox{if }i+k\geq 2
\ee
\be \label{bd:exponents2}
a_{i,k}+a_{i',k'}\leq a_{i+i',k+k'}-\frac 14 \quad \mbox{if }i+k\geq 1\mbox{ and }i'+k'\geq 1
\ee
and if $i\geq 1$,
\be \label{bd:exponents4}
a_{i,0}+a_{i',k+1}\leq a_{i+i',k}, \quad \mbox{ and if moreover }k\geq1 \mbox{ or }i'\geq 1 \mbox{ then }b_{i,0}+b_{i',k+1}\leq b_{i+i',k}-1.
\ee

\subsubsection{Analytic control at the boundary}

The aim of this subsubsection is to prove \fref{bd:boundarytrapimproved}. We rely on the fact that the control of $\pa_{y}^{2m}$ derivatives is similar to that of $\pa_t^m$ ones for parabolic equations, these latter having the advantage of maintaining Dirichlet boundary conditions. However, this equivalence degenerates as one approaches the blow-up time $T$. We need to exploit two gains coming from the fact that near the boundary one is away from the blow-up zone: first the bound \fref{id:lowernearboundary}, and then the fact that $\pa_y^{-1}$ lose a $(T-t)^{-1/2}$ factor for $y\sim (T-t)^{-1/2}$ but not for $y=O(1)$.

\begin{lemma}[Improved estimates at the boundary for even derivatives] \label{lem:analyticboundary1}

For any $\sfL,\sfK,\bar \tau^*>0$, there exists $\hat \tau^*>0$ such that the following holds true for any $0<\bar \tau\leq \bar \tau^*$ and $0<\hat \tau\leq \hat \tau^*$. Assume \fref{id:lowernearboundary}, \fref{bd:paxxxoptimal} and (c) in Definition \ref{def:trappedanalytic}. Then for any $m\geq 1$, for any $i$:
\be \label{id:decompoevenboundary}
\pa_t^m \xi_{i,0}=\pa_{y}^{2m}\xi_{i,0}+\xi_{i}^m
\ee
where for some universal $C>0$, for all $k$ and $t\in [T_0,t_1]$:
\be \label{bd:relationpatpay}
| \pa_y^k \xi_{i}^m (t,0)|\leq C \tilde{\sfL} (T-t)^{-\frac 72 i} \bar \tau^{-b_{i,k+2m-2}} \hat \tau^{-a_{i,k+2m-2}}\tilde \tau^{-a_{i,k+2m-2}} \frac{(2i+k+2m+1)!}{\langle i+k+m\rangle^{3}},
\ee
with the convention that for all $i$, $a_{i,k}=b_{i,k}=0$ for $k=-1$ and $k=-2$.
\end{lemma}

\begin{corollary} \label{cor:improvedboundaryeven}

With the same hypotheses, for a universal $C>0$, for any $i,k\geq 0$ with $k$ even:
\be \label{bd:improvedboundaryeven1}
|\xi_{i,k}(t,0)|\leq \hat \tau \tilde \tau \sfL (T-t)^{-\frac 72 i} \bar \tau^{-b_{i,k}} \hat \tau^{-a_{i,k}}\tilde \tau^{-a_{i,k}} \frac{(2i+k+1)!}{\langle i+k\rangle^{3}},
\ee
\be \label{bd:improvedboundaryeven2}
|\pa_t \xi_{i,k}(t,0)|\leq \sfL (T-t)^{-\frac 72 i} \bar \tau^{-b_{i,k}} \hat \tau^{-a_{i,k}}\tilde \tau^{-a_{i,k}} \frac{(2i+k+3)!}{\langle i+k\rangle^{3}}.
\ee

\end{corollary}

\begin{proof}[Proof of Corollary \ref{cor:improvedboundaryeven}]

The boundary condition $u_{|y=0}=0$ implies \fref{bd:improvedboundaryeven1} for all $i\geq 0$ for $k=0$. By time differentiation, and from the equation \fref{eq:evolutionxii} for $\xi_{0,0}$ one obtains $\xi_{0,2}(t,0)=0$ hence \fref{bd:improvedboundaryeven1} for $(i,k)=(0,2)$. By differentiation again, $\pa_t^k \xi_{i,0}(t,0)=0$ for all $k$, hence $\pa_{y}^{2k}\xi_{i,0}(t,0)=-\xi_{i}^k(t,0)$. So \fref{bd:improvedboundaryeven1} is then a direct consequence of \fref{bd:relationpatpay}, since $a_{i,k-2}\leq a_{i,k}-1$ for any $i$ if $k\geq 2$ and $(i,k)\neq (0,2)$. Next, we write $\pa_t \xi_{i,2k}=\xi_{i,2k+2}+\pa_y^{2k}\xi_i^1$, so that $\pa_t \xi_{i,2k}(t,0)=-\xi_{i}^{k+1}(t,0)+\pa_y^{2k}\xi_i^1(t,0)$ at the boundary, and \fref{bd:improvedboundaryeven2} is again obtained from \fref{bd:relationpatpay}.

\end{proof}

\begin{proof}[Proof of Lemma \ref{lem:analyticboundary1}]
We introduce the notation:
$$
\xi_{i,k,m}=\pa_t^m \xi_{i,k}.
$$
By induction on the equation \fref{eq:evolutionxii} we obtain the recurrence identity:
\bee
\pa_t^m \xi_{i,0}=\pa_y^{2m}\xi_{i,0}+\sum_{n=0}^{m-1}\pa_{y}^{2m-2n-2}\pa_t^{n}\left(-\sum_{j=0}^{i}  \binom{2i+1}{2j+1} \xi_{j,0} \xi_{i-j,0}+\sum_{j=0}^{i}\binom{2i+1}{2j} (\pa_y^{-1}\xi_{j,0})\xi_{i-j,1} \right).
\eee
We now reason by induction on $m\geq 0$ to prove \fref{bd:relationpatpay}. For $m=0$ the bound is trivial since $\xi^0_i=0$ for all $i$. We now assume the desired bound holds true for all $m'\leq m$, for all $i$ and $k$. Note that if $m\geq 1$ then $a_{i,k+2m-2}\leq a_{i,k+2m}-1$ so that $\hat \tau^{-a_{i,k+2m-2}}\tilde \tau^{-a_{i,k+2m-2}}\leq  (\hat \tau \tilde \tau) \hat \tau^{-a_{i,k+2m}}\tilde \tau^{-a_{i,k+2m}}$. Note also that if $m=0$ then $\xi^m_i=\xi^0_i=0$. In particular, the identity \fref{id:decompoevenboundary} and the bounds \fref{bd:boundarytrap} and \fref{bd:relationpatpay} give for $m'\leq m$ for $\hat \tau$ small enough:
\be \label{bd:xiikm}
| \xi_{i,k,m'} (t,0)| \leq 2 \tilde{\sfL} (T-t)^{-\frac 72 i} \bar \tau^{-b_{i,k+2m}}\hat \tau^{-a_{i,k+2m}}\tilde \tau^{-a_{i,k+2m}}  \frac{(2i+k+2m+1)!}{\langle i+k+m\rangle^3}
\ee
To prove the desired bound for $m+1$ we first obtain the following identity from the recurrence identity using Leibniz rule and the fact that $\pa_y^{-1}$ terms vanish at the boundary (with the convention that $\binom{a}{b}=0$ if $b>a$):
\begin{align}
\nonumber \pa_y^k \xi_{i}^{m+1}(t,0)& = \sum_{n=0}^{m}\sum_{j=0}^{i}  \sum_{p=0}^n\sum_{l=0}^{k+2m-2n}  \binom{n}{p} \xi_{j,l,p}(t,0) \xi_{i-j,k+2m-2n-l,n-p}(t,0) \\
\label{analytic:idpaykxiim+1}& \qquad \qquad \left(\binom{2i+1}{2j}\binom{k+2m-2n}{l+1}-\binom{2i+1}{2j+1}\binom{k+2m-2n}{l}\right)
\end{align}
Note that in the sum, if $(j,l,p)\in \{(0,0,0),(i,2m-2n+k,n)\}$ then the term is zero because $\xi_{0,0,0}(t,0)=u_x(t,0)=0$ from the Prandtl boundary condition $u_{|y=0}=0$. Therefore we assume $(j,l,p)\notin \{(0,0,0),(i,2m-2n+k,n)\}$ without loss of generality. Introducing $r=2j+l+2p+1$ we bound using \fref{bd:xiikm}:
\begin{align}
\nonumber & |  \xi_{j,l,p} (t,0)\xi_{i-j,2m-2n+k-l,n-p}(t,0)|\\
\nonumber \leq & C\tilde{\sfL}^2 (T-t)^{-\frac 72 j-\frac 72 (i-j)} \bar \tau^{-b_{j,2p+l}-b_{i-j,2(m-p)+k-l}}\hat \tau^{-a_{j,2p+l}-a_{i-j,2(m-p)+k-l}}\tilde \tau^{-a_{j,2p+l}-a_{i-j,2(m-p)+k-l}} \\
\nonumber &  \frac{(2j+2p+l+1)!}{\langle j+p+l\rangle^3}\frac{(2(i-j)+2(m-p)+k-l+1)!}{\langle i-j+m-p+k-l\rangle^3}\\
\label{analytic:bd:paykxiim+1inter} \leq & \tilde{\sfL} (T-t)^{-\frac 72 i} \bar \tau^{-b_{i,2m+k}}\hat \tau^{-a_{i,2m+k}}\tilde \tau^{-a_{i,k+2m}}  \frac{r!}{\langle r\rangle^3}\frac{(2i+2m+k+2-r)!}{\langle 2i+k+2m+1-r\rangle^3},
\end{align}
where in the last bound we used \fref{bd:exponents1} and \fref{bd:exponents2} for the exponents, and $C\tilde{\sfL}\tilde \tau \hat \tau \leq 1$ for $\hat \tau$ small enough. We recall the estimate for some universal $C>0$:
$$
\sum_{r=0}^{2i+2m+k+1}\binom{2i+2m+k+1}{r}\frac{r!}{\langle r\rangle^3}\frac{(2i+k+2m+2-r)!}{\langle 2i+k+2m+1-r\rangle^3}\leq C \frac{(2i+k+2m+2)!}{\langle i+k+m\rangle^3}.
$$
Using the inequality $\binom{n}{p}\leq \binom{2n}{2p}$, \fref{bd:technicalcombinatorial3} with $(A_1,A_2,A_3,r_2)=(2i+1,k+2m-2n,2n,2p+2j+l+1)$ and the above inequality:
\begin{align}
\label{bd:combinatorics2} & \sum_{j=0}^{i}  \sum_{p=0}^n\sum_{l=0}^{k+2m-2n} \frac{r!}{\langle r\rangle^3}\frac{(2i+2m+k+2-r)!}{\langle 2i+k+2m+1-r\rangle^3} \binom{n}{p} \\
\nonumber &\qquad \qquad \qquad \qquad \left| \binom{2i+1}{2j}\binom{k+2m-2n}{l+1}-\binom{2i+1}{2j+1}\binom{k+2m-2n}{l} \right|\\
\nonumber &\quad \qquad \quad \leq   \sum_{r=0}^{2i+2m+k+1}\binom{2i+2m+k+1}{r}\frac{r!}{\langle r\rangle^3}\frac{(2i+k+2m+2-r)!}{\langle 2i+k+2m+1-r\rangle^3}\leq   C \frac{(2i+k+2m+2)!}{\langle i+k+m\rangle^3}.
\end{align}
Injecting \fref{analytic:bd:paykxiim+1inter} in the identity \fref{analytic:idpaykxiim+1}, then using \fref{bd:combinatorics2} and the inequality $\sum_{n=0}^m 1\leq \langle 2i+k+2m+3\rangle$, we upper bound:
$$
|\pa_y^k \xi_{i}^{m+1}(t,0) | \leq  C\tilde{\sfL} (T-t)^{-\frac 72 i} \bar \tau^{-b_{i,k+2m}}\hat \tau^{-a_{i,k+2m}}\tilde \tau^{-a_{i,k+2m}}  \frac{(2i+k+2(m+1)+1)!}{\langle i+k+m\rangle^3}.
$$
Thus \fref{bd:relationpatpay} holds true for $m+1$, for any $i$ and $k$. It holds true for any $i,k,m$ by induction.

\end{proof}

\begin{lemma}[Improved estimates at the boundary for odd derivatives] \label{lem:improvedboundaryodd}
Assume that bounds \fref{id:lowernearboundary}, \fref{bd:paxxxoptimal}, \fref{bd:indatrap}, \fref{bd:improvedboundaryeven1} and \fref{bd:improvedboundaryeven2} are satisfied. Then for any $\sfL>0$, there exists $\hat \tau^*>0$ small enough such that for all $0<\hat \tau \leq \hat \tau^*$ for all $k$ odd (with $k\geq 3$ if $i=0$) and $t\in [T_0,t_1]$:
$$
| \xi_{i,k}(t,0)|\leq  \frac{\tilde{\sfL}}{2} \tilde \tau^{-a_{i,k}}(T-t)^{-\frac{7}{2}i} \bar \tau^{-b_{i,k}}\hat \tau^{-a_{i,k}} \frac{(2i+k+1)!}{\langle i+k\rangle^3},
$$
\end{lemma}

\begin{proof}

Assume $k$ is even, with $k\geq 2$ if $i=0$. Let $\chi:[0,\infty)\rightarrow \mathbb R$ be a smooth cut-off function with $\chi (y)=1$ for $|y|\leq 1/8$ and $\chi(y)=0$ for $|y|\geq 1/4$. Set $\zeta_{i,k}=\chi \xi_{i,k}$. Then from the equation \fref{eq:evolutionxii} we infer the evolution equation of $\zeta_{i,k}$:
\bee
\pa_t \zeta_{i,k}-\pa_{yy}\zeta_{i,k}&=& \underbrace{-\sum_{j=0}^{i}\sum_{l=0}^k \binom{2i+1}{2j+1}\binom{k}{l} \chi \xi_{j,l} \xi_{i-j,k-l}+\sum_{j=0}^{i}\sum_{l=0}^{k-1} \binom{2i+1}{2j}\binom{k}{l+1} \chi \xi_{j,l}\xi_{i-j,k-l}}_{I}\\
&&+\underbrace{\sum_{j=1}^{i} \binom{2i+1}{2j} \chi (\pa_y^{-1}\xi_{j,0})\xi_{i-j,k+1}}_{II}+\underbrace{\chi (\pa_y^{-1}\xi_{0,0})\xi_{i,k+1}-2\pa_{y}\chi \xi_{i,k+1}-\pa_{yy}\chi \xi_{i,k}}_{III}.
\eee
We decompose $\zeta_{i,k}(t,y)=\xi_{i,k}(t,0)\chi(y)+\eta_{i,k}(t,y)+\eta_{i,k}'(t,y)$ where
\begin{align*}
& \left\{ \begin{array}{l l} \pa_t \eta_{i,k}-\pa_{yy}\eta_{i,k}=\xi_{i,k}(t,0)\pa_{yy}\chi-\pa_t\xi_{i,k}(t,0) \chi,\\ \eta_{i,k}(T_0,y)=\chi(y)(\xi_{i,k}(T_0,y)-\xi_{i,k}(T_0,0)), \quad \eta_{i,k}(t,0)=0,\end{array} \right., \\
& \left\{ \begin{array}{l l} \pa_t \eta_{i,k}'-\pa_{yy}\eta_{i,k}'=I+II+III,\\ \eta_{i,k}'(T_0,y)=0, \quad \eta_{i,k}'(t,0)=0. \end{array} \right.
\end{align*}
\underline{The first term $\eta_{i,k}$}. Recall \fref{id:defsymmetryodd}, and that $\eta$ is given by the representation formula \fref{id:representationformulaheat}. For the first part, as $\pa_{yy}\chi=0$ on $[0,1/8]$, $\overline{\pa_{yy}\chi}$ is a smooth function so that from \fref{bd:improvedboundaryeven1} and \fref{id:proptildetau}:
\bee
&&\| \pa_y \int_{T_0}^t K_{t-t'}*\left(\xi_{i,k}(t',0)\overline{\pa_{yy}\chi}\right)dt'\|_{L^{\infty}}= \| \int_{T_0}^t \xi_{i,k}(t',0)K_{t-t'}*\left(\pa_y\overline{\pa_{yy}\chi}\right)dt'\|_{L^{\infty}} \\
&&\leq C \| \xi_{i,k}(\cdot,0)\|_{L^{\infty}([T_0,t])}\leq  C\hat \tau \tilde \tau \sfL (T-t)^{-\frac 72 i} \bar \tau^{-b_{i,k}} \hat \tau^{-a_{i,k}}\tilde \tau^{-a_{i,k}} \frac{(2i+k+1)!}{\langle i+k\rangle^{3}}.
\eee
For the second part, we let $\bar t=\max (t-\langle i+k\rangle^{-2},T_0)$, decompose the time integral and integrate by parts:
\bee
\int_{T_0}^t \pa_{t'}\xi_{i,k}(t',0)  K_{t-t'}* \bar \chi dt'&=& \xi_{i,k}(\bar t,0) K_{t-\bar t} *\bar \chi-\xi_{i,k}(T_0,0) K_{t-T_0} *\bar \chi\\
&&+\int_{T_0}^{\bar t}\xi_{i,k}(t',0) \pa_tK_{t-t'}* \bar \chi+\int_{\bar t}^t \pa_{t'}\xi_{i,k}(t',0) K_{t-t'} \bar \chi.
\eee
We estimate the first line. It is zero if $\bar t=T_0$ so we assume $t>T_0+\langle i+k\rangle^{-2}$. For the first term on the first line we have using \fref{bd:heatanalytic2} and \fref{bd:improvedboundaryeven1}:
$$
\left|  \pa_y \left(\xi_{i,k}(\bar t,0) K_{\langle i+k\rangle^{-2}} *\bar \chi\right)\right|  \leq C \langle i+k\rangle |\xi_{i,k}(\bar t,0)|\leq C\hat \tau \tilde \tau \sfL (T-t)^{-\frac 72 i} \bar \tau^{-b_{i,k}} \hat \tau^{-a_{i,k}}\tilde \tau^{-a_{i,k}} \frac{(2i+k+2)!}{\langle i+k\rangle^{3}}.
$$
The second term on the first line enjoys the same estimate. For the first term on the second line, using \fref{bd:heatanalytic2}, \fref{bd:improvedboundaryeven1} and \fref{id:proptildetau}:
\bee
&& \left| \pa_y \int_{T_0}^{\bar t}\xi_{i,k}(t',0) \pa_tK_{t-t'}* \bar \chi dt'\right| \lesssim \int_{T_0}^{\bar t} \frac{1}{(t-t')^{\frac 32}}|\xi_{i,k}(t',0) |dt'\\
&\lesssim & \langle i +k\rangle \| \xi_{i,k}(\cdot ,0)\|_{L^{\infty}([T_0,t])}\leq C \sfL (T-t)^{-\frac 72 i} \bar \tau^{-b_{i,k}} \hat \tau^{-a_{i,k}+1}\tilde \tau^{-a_{i,k}} \frac{(2i+k+2)!}{\langle i+k\rangle^{3}}.
\eee
For the second term on the second line, from \fref{bd:paxxxoptimal}, \fref{bd:improvedboundaryeven2}, \fref{id:proptildetau} and \fref{bd:heatanalytic2}:
\bee
&& \left|\pa_y\int_{\bar t}^t \pa_{t'}\xi_{i,k}(t',0) K_{t-t'} \bar \chi dt'\right| \leq C\int_{\max(t-\langle i+k\rangle^{-2},T_0)}^t \frac{|\pa_{t'}\xi_{i,k}(t',0)|}{\sqrt{t-t'}}dt'\\
&\leq& C\langle i+k\rangle^{-1}\| \pa_t\xi_{i,k}(\cdot ,0)\|_{L^{\infty}[T_0,t]}\leq C \sfL (T-t)^{-\frac 72 i} \bar \tau^{-b_{i,k}} \hat \tau^{-a_{i,k}}\tilde \tau^{-a_{i,k}} \frac{(2i+k+2)!}{\langle i+k\rangle^{3}}.
\eee
From \fref{bd:exponents1}, \fref{bd:exponents6}, and the initial bound \fref{bd:indainitialtrap}, the collection of above estimates imply:
\be \label{bd:analyticetaboundary}
\| \pa_y \eta_{i,k}\|_{L^{\infty}}\leq (1+C(\hat \tau \tilde \tau)^{\frac 14}\sfL) (T-t)^{-\frac 72 i} \bar \tau^{-b_{i,k+1}} \hat \tau^{-a_{i,k+1}}\tilde \tau^{-a_{i,k+1}} \frac{(2i+k+2)!}{\langle i+k\rangle^{3}}.
\ee

\noindent \underline{The second term $\eta_{i,k}'$}. For $I$ we first have the bound from \fref{id:lowernearboundary}, \fref{bd:paxxxoptimal} and \fref{bd:indatrap}:
\begin{eqnarray*}
 \| \xi_{j,l} \xi_{i-j,k-l}\|_{L^{\infty}([0,1/4])} &\leq& (T-t)^{-\frac{7}{2}j-\frac{1}{8}-\frac{7}{2}(i-j)-\frac{1}{8}} \sfL^2\bar \tau^{-b_{j,l}-b_{i-j,k-l}} \hat \tau^{-a_{j,l}-a_{i-j,k-l}}\tilde \tau^{-a_{j,l}-a_{i-j,k-l}}\\
&&\cdot \frac{(2j+l+1)!}{\langle j+l\rangle^3} \frac{(2i-2j+k-l+1)!}{\langle i-j+k-l\rangle^3}\\
& \leq& (T-t)^{-\frac{7}{2}i-\frac{1}{4}} \sfL^2 \bar \tau^{-b_{i,k}} \tilde \tau^{-a_{i,k}}\hat \tau^{-a_{i,k}} \frac{(r)!}{\langle r\rangle^3} \frac{(2i+k+2-r)!}{\langle 2i+k+1-r\rangle^3}
\end{eqnarray*}
where we used \fref{bd:exponents1} and set $r=2j+l+1$. Therefore, using the bound \fref{bd:combinatorics2} with $n=m=0$:
$$
\| I \|_{L^{\infty}([0,1/4])}\leq (T-t)^{-\frac{7}{2}i-\frac{1}{4}} \sfL^2 \bar \tau^{-b_{i,k}} \hat \tau^{-a_{i,k}}\tilde \tau^{-a_{i,k}}\frac{(2i+k+2)!}{\langle i+k\rangle^3}.
$$
We turn to $II$ and estimate from the bounds \fref{id:lowernearboundary}, \fref{bd:paxxxoptimal} and \fref{bd:indatrap}, the inequalities \fref{bd:exponents4} for the exponents since $j\geq 1$ in the sum:
\begin{eqnarray*}
 \| (\pa_y^{-1}\xi_{j,0})\xi_{i-j,k+1}\|_{L^{\infty}([0,1/4])} & \leq  & (T-t)^{-\frac{7}{2}j-\frac{1}{8}-\frac{7}{2}(i-j)-\frac{1}{8}} \sfL^2 \bar \tau^{-b_{j,0}-b_{i-j,k+1}} \hat \tau^{-a_{j,0}-a_{i-j,k+1}}\\
 && \cdot \tilde \tau^{-a_{j,0}-a_{i-j,k+1}} \frac{(2j+1)!}{\langle j\rangle^3} \frac{(2i-2j+k+2)!}{\langle i-j+k+1\rangle^3}\\
&\leq &C (T-t)^{-\frac{7}{2}i-\frac{1}{4}} \sfL^2 \bar \tau^{-b_{i,k}}\hat \tau^{-a_{i,k}}\tilde \tau^{-a_{i,k}} \frac{(2j+1)!}{\langle j\rangle^3} \frac{(2i+k+2-2j)!}{\langle i-j+k\rangle^3}.
\end{eqnarray*}
As $ \sum_{j=0}^{i}\binom{2i+1}{2j} \frac{(2j+1)!}{\langle j\rangle^3} \frac{(2i+k+2-2j)!}{\langle i-j+k\rangle^3}\leq C\frac{(2i+k+2)!}{\langle i+k\rangle^3} $, we conclude that $II$ enjoys the same estimate as $I$, namely:
$$
\| II \|_{L^{\infty}([0,1/4])}\leq C (T-t)^{-\frac{7}{2}i-\frac{1}{4}} \sfL^2 \bar \tau^{-b_{i,k}}\hat \tau^{-a_{i,k}}\tilde \tau^{-a_{i,k}} \frac{(2i+k+2)!}{\langle i+k\rangle^3}.
$$
Therefore using \fref{id:proptildetau} and \fref{bd:heatanalytic2}:
\begin{eqnarray}
\nonumber \left| \pa_y \int_{T_0}^t K_{t-t'}*\overline{(I+II)}dt'\right| &\leq &C\sfL^2 \bar \tau^{-b_{i,k}}\hat \tau^{-a_{i,k}} \frac{(2i+k+2)!}{\langle i+k\rangle^3} \int_{T_0}^t \frac{1}{\sqrt{t-t'}} (T-t')^{-\frac{7}{2}i-\frac{1}{4}} \tilde \tau^{-a_{i,k}}(t') dt'\\
 \label{bd:analyticeta'boundary} &\leq & C (\hat \tau \tilde \tau)^{\frac 14}\sfL^2(T-t)^{-\frac 72 i} \bar \tau^{-b_{i,k+1}}\hat \tau^{-a_{i,k+1}}\tilde \tau^{-a_{i,k+1}}(t) \frac{(2i+k+2)!}{\langle i+k\rangle^3}
\end{eqnarray}
where we used \fref{bd:exponents1} and \fref{bd:exponents6}. We turn to III. Let $r_0>0$ to be fixed small in a universal way. Let $\chi_0$ be a smooth function on $[0,\infty)$ such that $\chi_0(y)=1$ on $[0,r_0]$ and $\chi_0(y)=0$ for $y\geq 2r_0$. We decompose $\chi\pa_y^{-1}\xi_{0,0}\xi_{i,k+1}=\chi_0\pa_y^{-1}\xi_{0,0}\xi_{i,k+1}+(\chi-\chi_0)\pa_y^{-1}\xi_{0,0}\xi_{i,k+1}$. Since from \fref{id:lowernearboundary} we have $|\pa_y^{-1}\xi_{0,0}|\leq Cy$, then from \fref{bd:indatrap}, \fref{id:proptildetau} and \fref{bd:heatanalytic2}:
\bee
\left|\pa_y \int_{T_0}^t K_{t-t'}* (\overline{\chi_0\pa_y^{-1}\xi_{0,0}\xi_{i,k+1}})dt'\right| & \leq & Cr_0\sfL \bar \tau^{-b_{i,k+1}}\hat \tau^{-a_{i,k+1}} \frac{(2i+k+2)!}{\langle i+k\rangle^3} \int_{T_0}^t \frac{\tilde \tau^{-a_{i,k+1}}(t')(T-t')^{-\frac{7}{2}i-\frac{1}{8}}dt'}{\sqrt{t-t'}}\\
&\leq & Cr_0\sfL (T-t)^{-\frac{7}{2}i} \bar \tau^{-b_{i,k+1}}\hat \tau^{-a_{i,k+1}} \tilde \tau^{-a_{i,k+1}} \frac{(2i+k+2)!}{\langle i+k\rangle^3}.
\eee
For the other term we write:
$$
(\chi-\chi_0)\pa_y^{-1}\xi_{0,0}\xi_{i,k+1}=\pa_y\left((\chi-\chi_0)\pa_y^{-1}\xi_{0,0}\xi_{i,k}\right)+\pa_y\chi_0\pa_y^{-1}\xi_{0,0}\xi_{i,k}+(\chi-\chi_0)\xi_{0,0}\xi_{i,k}.
$$
Notice that all terms are supported away from the origin, at distance $r_0$ from it. We have thus using \fref{bd:heatanalytic2}, \fref{id:lowernearboundary}, \fref{bd:indatrap}, \fref{id:proptildetau} and integration by parts:
\bee
&&\left| \int_{T_0}^t \pa_y\left( K_{t-s}*\overline{(\chi-\chi_0)\pa_y^{-1}\xi_{0,0}\xi_{i,k+1}}\right)(0) ds \right|  \leq  C(r_0) \int_{T_0}^t \| \xi_{i,k}\|_{L^{\infty}}ds\\
&& \qquad \qquad \qquad \qquad \qquad \qquad \leq  (\hat \tau \tilde \tau)^{\frac 14}C(r_0)\sfL (T-t)^{-\frac{7}{2}i} \tilde \tau^{-a_{i,k+1}}  \bar \tau^{-b_{i,k+1}}\hat \tau^{-a_{i,k+1}} \frac{(2i+k+1)!}{\langle i+k\rangle^3} 
\eee
where we used \fref{bd:exponents1} and \fref{bd:exponents6}. The other remaining terms in $III$ can be treated the very same way. Hence:
\be \label{bd:analyticeta'boundary2}
\left| \int_{T_0}^t \pa_y\left( K_{t-s}*\overline{III}\right)(0) ds \right| \leq (C(r_0)(\hat \tau \tilde \tau)^{\frac 14}+Cr_0) \sfL\bar \tau^{-b_{i,k+1}}\hat \tau^{-a_{i,k+1}} \frac{(2i+k+1)!}{\langle i+k\rangle^3} \tilde \tau^{-a_{i,k+1}} (T-t)^{-\frac{7}{2}i}.
\ee

\underline{Conclusion} Gathering the estimates \fref{bd:analyticetaboundary}, \fref{bd:analyticeta'boundary}, \fref{bd:analyticeta'boundary2}, we have proved that:
$$
|\pa_y \xi_{i,k}(0)|\leq (\sfL^{-1}+C(r_0)(\hat \tau \tilde \tau)^{\frac 14}+Cr_0+C(\hat \tau \tilde \tau)^{\frac 14} \sfL) \sfL \bar \tau^{-b_{i,k+1}}\hat \tau^{-a_{i,k+1}} \frac{(2i+k+1)!}{\langle i+k\rangle^3} \tilde \tau^{-a_{i,k+1}} (T-t)^{-\frac{7}{2}i}
$$
which is the desired estimate upon taking $\sfL \geq 2$, $r_0>0$ small enough in a universal way, and then $\hat \tau$ small enough depending on $r_0$, $\sfK$, $\sfL$ and $T$.

\end{proof}

\subsubsection{Analytic analysis in the blow-up zone}

The aim of this subsubsection is to prove \fref{bd:indatrapimproved}. Note that \fref{bd:indatrap} is equivalent to for $i+k\geq 2$:
\be \label{bd:indarenormalisedtrap}
\|F_{i,k}(\sfs)\|_{L^{\infty}_\sfw}\leq \sfL\tau^{-a_{i,k}}\bar \tau^{-b_{i,k}} \tilde \tau^{-a_{i,k}} \frac{(2i+k+1)!}{\langle i +k\rangle^3}.
\ee
Recall the evolution equation \fref{eq:Fikevo} for $F_{i,k}$, and Proposition \ref{pr:linearanalytic} for the linear evolution.

\begin{proposition} \label{pr:nonlinearrenormalisedanalytic}

Assume (i) and (ii) in Theorem \ref{th:analytic}, and that \fref{bd:indainitialtrap}, \fref{bd:boundarytrap} and \fref{bd:indarenormalisedtrap} hold on $[\sfs_0,\sfs_1]$. Then there holds:
\be \label{bd:indarenormalisedtrapimproved}
\|F_{i,k}(\sfs_1)\|_{L^{\infty}_\sfw}\leq \frac 34 \sfL \tau^{-a_{i,k}}\bar \tau^{-b_{i,k}} \tilde \tau^{-a_{i,k}} \frac{(2i+k+1)!}{\langle i +k\rangle^3}.
\ee

\end{proposition}

\begin{proof}

Note that combining the assumption \fref{bd:lowerglobal} in Theorem \ref{th:analytic}, \fref{bd:paxxxoptimal} and \fref{bd:indarenormalisedtrap}, we get that for all $i+k\geq 1$:
\be \label{bd:estimationlow}
\|F_{i,k}(\sfs)\|_{L^{\infty}_\sfw}\leq \sfL\tau^{-a_{i,k}}\bar \tau^{-b_{i,k}} \tilde \tau^{-a_{i,k}} \frac{(2i+k+1)!}{\langle i +k\rangle^3}.
\ee
if $\sfL$ has been chosen large enough. We fix $i+k\geq 2$ and recall $\sfs_0=-\ln (T-T_0)$.

\textbf{Step 1} \emph{The case $k\geq 1$}. Assume $k\geq 1$. We write from \fref{eq:Fikevo}, where $S_{i,k}$ is the semigroup \fref{eq:linearFik}:
$$
F_{i,k}(\sfs_1)=\tilde F_{i,k}(\sfs_1)+\int_{\sfs_0}^{\sfs_1} S_{i,k}(\sfs,\sfs_1)\left(I+II\right)d\sfs,
$$
where $\tilde F_{i,k}$ solves the free evolution with the same boundary conditions than $F_{i,k}$:
$$
\pa_{\sfs}\tilde F_{i,k}+\mathcal L_{i,k}F_{i,k}=0, \quad \tilde F_{i,k}(\sfs,-\pi)=F_{i,k}(\sfs,-\pi), \quad \tilde F_{i,k}(\sfs_0,z)=F_{i,k}(\sfs_0,z),
$$
and the second term is obtained via Duhamel formula with forcing terms:
\begin{align*}
& I=\sum_{j=1}^{i}\binom{2i+1}{2j} (\pa^{-1}F_{j,0})F_{i-j,k+1}, \\
& II=-\sum_{ E^1_{i,k}} \binom{2i+1}{2j+1}\binom{k}{l} F_{j,l} F_{i-j,k-l}+\sum_{ E^2_{i,k} } \binom{2i+1}{2j}\binom{k}{l+1} F_{j,l}F_{i-j,k-l}.
\end{align*}

\noindent \underline{The free evolution term}. Let $e_{i,k}=\frac{\sfL}{2} \frac{(2i+k+1)!}{\langle i+k\rangle^3}\hat \tau^{-a_{i,k}}\bar \tau^{-b_{i,k}} $ and define $\tilde{\sfS}(\sfs,z):=e_{i,k}\tau^{-a_{i,k}}(\sfs)\sfw (z)$. Then notice that $\sfS=e_{i,k} (1+e^{-\frac{\iota}{2}\sfs})^{-a_{i,k}}e^{(\frac{a_{i,k}}{2}-c_{i,k}) \sfs}\sfS$ where $\sfS$ was defined in \fref{def:barf} (with $h(\sfs)=e^{-\sfK a_{i,k}e^{-\sfs}}$). We compute for $i+k\geq 2$, from the definitions \fref{def:cik} and \fref{def:bik} of $c_{i,k}$ and $a_{i,k}$:
$$
c_{i,k}-\frac{a_{i,k}}{2} = \max \left(-\frac 32 i-k+\frac{15}{8},-\frac 72i-\frac 18 \right)+\eta \langle i \rangle.
$$
Therefore, there exists $\eta^*>0$ and $c>0$ independent of $i$ and $k$ such that for all $0<\eta\leq \eta^*$ and $i+k\geq 2$:
\be \label{bd:encadrementdecay}
-\frac{1}{c}\langle i \rangle\leq c_{i,k}-\frac{a_{i,k}}{2}\leq -c\langle i \rangle<0.
\ee
As a result, since $\sfS$ was proved to be a supersolution for $\pa_{\sfs}+\mathcal L_{i,k}$ in the proof of Lemma \ref{lem:premilinearylinear} for all $\sfs\geq \sfs_0$, then $\tilde{\sfS}$ is also a supersolution for $\pa_{\sfs}+\mathcal L_{i,k}$. At the boundary $\{\sfs=\sfs_0\}$ or $\{z=-\pi\}$ there holds $|\tilde F_{i,k}|\leq \tilde{\sfS}$ from \fref{bd:boundarytrapimproved} (proved in the previous subsubsection) and \fref{bd:indainitialtrap}. Hence we deduce that $|\tilde F_{i,k}|\leq \tilde{\sfS}$ for all $\sfs\geq \sfs_0$ and $z\geq -\pi$ by maximum principle. Hence the bound:
\be \label{bd:nonlinearrenormalised1}
\|\tilde F_{i,k}(\sfs_1)\|_{L^{\infty}_\sfw}\leq \frac 12 \sfL \tau^{-a_{i,k}}(\sfs_1)\bar \tau^{-b_{i,k}} \tilde \tau^{-a_{i,k}} \frac{(2i+k+1)!}{\langle i +k\rangle^3}.
\ee

\noindent \underline{The first term $I$}. Note that this term is zero if $i=0$ so we assume $i\geq 1$. We have using that $\| \pa^{-1}fg\|_{L^{\infty}_\sfw}\lesssim \| f \|_{L^{\infty}_\sfw} \| g \|_{L^{\infty}_\sfw}$ and \fref{bd:estimationlow}:
$$
\left\| (\pa^{-1}F_{j,0})F_{i-j,k+1}\right\|_{L^{\infty}_\sfw}\lesssim \sfL^2 \bar \tau^{-b_{i,k}+1}\tau^{-a_{i,k}} \hat \tau^{-a_{i,k}}\frac{(2j+1)!}{\langle j\rangle^3}\frac{(2i-2j+k+2)!}{\langle i-j+k\rangle^3}.
$$
where we used \fref{bd:exponents4} as $j\geq 1$ and $k\geq 1$. We then compute:
$$
 \frac{(2i+1)!}{(2i-2j+1)!} \frac{(2i-2j+k+2)!}{\langle i-j+k\rangle^3} = \frac{(2i+1+k)!}{\langle i+k\rangle^3}\frac{\langle i+k\rangle^{3}}{\langle i-j+k\rangle^3} \frac{(2i+1)!(2i-2j+k+2)!}{(2i-2j+1)!(2i+k+1)!}.
$$
Using that for $m=0,...,2j-1$: $(2i+1-m)/(2i+k+1-m)\leq 1$ we get:
$$
\frac{(2i+1)!(2i-2j+k+2)!}{(2i-2j+1)!(2i+k+1)!}=\frac{(2i+1)\times ...\times (2i-2j+2)}{(2i+k+1)\times...\times (2i-2j+k+3)}\lesssim \langle i-j\rangle.
$$
As a result:
$$
\frac{\langle i+k\rangle^3}{(2i+1+k)!} \sum_{j=1}^{i-1} \frac{(2i+1)!}{(2j)!(2i-2j+1)!} \frac{(2j+1)!}{\langle j\rangle^3}\frac{(2i-2j+k+2)!}{\langle i-j+k\rangle^3}\lesssim \sum_{j=1}^{i-1} \frac{\langle i+k\rangle^3 \langle i-j\rangle }{\langle j \rangle^{2}\langle i-j+k\rangle^3}\lesssim \langle i \rangle.
$$
Hence the bound:
$$
\| I \|_{L^{\infty}_\sfw}\leq (C\bar \tau \sfL)\sfL   \bar \tau^{-b_{i,k}}\tau^{-a_{i,k}}\hat \tau^{-a_{i,k}} \langle i \rangle \frac{(2i+1+k)!}{\langle i+k\rangle^3} .
$$
Using this and \fref{bd:lineaire} we find that for the first term, for a universal constant $C>0$:
$$
\| \int_{\sfs_0}^{\sfs_1}S_{i,k}(\sfs,\sfs_1)(I)d\sfs \|_{L^{\infty}_\sfw}\leq(C\bar \tau \sfL)\sfL  \bar \tau^{-b_{i,k}}   \hat \tau^{-a_{i,k}}  \langle i \rangle  \frac{(2i+1+k)!}{\langle i+k\rangle^3} \int_{\sfs_0}^{\sfs_1} p(\sfs,\sfs_1)\tau^{-a_{i,k}}(\sfs)d\sfs
$$
where we introduced the notation $p(\sfs,\sfs_1)=e^{c_{i,k}(\sfs_1-\sfs)}\left(\frac{(1+e^{-\frac{\iota}{2}\sfs})e^{\sfK e^{-\sfs}}}{(1+e^{-\frac{\iota}{2}\sfs_1})e^{\sfK e^{-\sfs_1}}}\right)^{a_{i,k}}$ so that:
$$
p(\sfs,\sfs_1)\tau^{-a_{i,k}}(\sfs)=e^{-a_{i,k}\sfK e^{-\sfs_1}}e^{c_{i,k}\sfs_1}(1+e^{-\frac \iota2 \sfs_1})^{-a_{i,k}}e^{\left(\frac{a_{i,k}}{2}-c_{i,k}\right)\sfs}(1+e^{-\frac \iota2 \sfs})^{-a_{i,k}}
$$
We compute using \fref{def:tau} and integration by parts that for $\eta$ small independently of $i$ and $k$:
\bee
&& \int_{\sfs_0}^{\sfs_1} e^{\left(\frac{a_{i,k}}{2}-c_{i,k}\right)\sfs}(1+e^{-\frac \iota2 \sfs})^{-a_{i,k}}d\sfs=  \int_{\sfs_0}^{\sfs_1} \pa_s \left(\frac{e^{\left(\frac{a_{i,k}}{2}-c_{i,k}\right)s}}{\frac{a_{i,k}}{2}-c_{i,k}} \right)\left(1+e^{-\frac \iota 2 s} \right)^{-a_{i,k}}ds\\
&\leq &\frac{e^{\frac{a_{i,k}}{2}(s-\sfs_1)}}{\frac{a_{i,k}}{2}-c_{i,k}}\left(1+e^{-\frac \iota 2 \sfs_1}\right)^{-a_{i,k}}-\frac{\frac \iota 2 a_{i,k}}{\frac{a_{i,k}}{2}-c_{i,k}}e^{c_{i,k}\sfs_1} \int_{\sfs_0}^{\sfs_1}e^{\left(\frac{a_{i,k}}{2}-c_{i,k}-\frac \iota 2\right)s}\left( 1+e^{-\frac \iota 2 s} \right)^{-a_{i,k}-1}ds.
\eee
Using the identity above and \fref{bd:encadrementdecay}, we infer that there exists $C>0$ depending on $T$ and $T_0$ (since $\sfs_0=-\ln (T-T_0)$) and $\iota>0$ such that:
\be \label{bd:integrationtau}
\int_{\sfs_0}^{\sfs_1} p(\sfs,\sfs_1)\tau^{-a_{i,k}}(\sfs)d\sfs+\frac{\langle i+k\rangle}{\langle i \rangle} \int_{\sfs_0}^{\sfs_1} p(\sfs,\sfs_1)\tau^{-a_{i,k}}(\sfs)e^{-\frac \iota 2\sfs}d\sfs\leq \frac{C}{\langle i \rangle}\tau^{-a_{i,k}}(\sfs_1).
\ee
In particular, we obtain finally that for $\bar \tau$ small enough depending only on $\sfL$ and $T$:
\be \label{bd:nonlinearrenormalised2}
\| \int_{\sfs_0}^{\sfs_1}S_{i,k}(\sfs,\sfs_1)(I)ds \|_{L^{\infty}_\sfw} \leq \frac{\sfL}{8} \bar \tau^{-b_{i,k}}\tau^{-a_{i,k}}(\sfs_1)\hat \tau^{-a_{i,k}} \frac{(2i+1+k)!}{\langle i+k\rangle^3}.
\ee

\noindent \underline{The second term $II$}. We compute that for this term from \fref{bd:estimationlow}:
\bee
 \| F_{j,l}\|_{L^{\infty}_\sfw} \| F_{i-j,k-l} \|_{L^{\infty}_\sfw} \leq (\hat \tau \tau)^{\frac 14}\sfL^2 \tau^{-a_{i,k}}\bar \tau^{-b_{i,k}}\hat \tau^{-a_{i,k}} \frac{(2j+\ell+1)!}{\langle j +\ell \rangle^3} \frac{(2i-2j+k-\ell+1)!}{\langle i-j+k-\ell \rangle^3}.
\eee
where we used \fref{bd:exponents1} and \fref{bd:exponents2} as $l+j\geq 1$ and $i-j+k-l\geq 1$ in the sums. We use the identity \fref{bd:technicalcombinatorial2} with $(A_1,A_2,r_1)=(2i+1,k,2j+l+1)$ to obtain:
\bee
\| II\|_{L^{\infty}_\sfw} &\leq & (\hat \tau  \tau)^{\frac 14}\sfL^2 \hat \tau^{-a_{i,k}}\tau^{-a_{i,k}}\bar \tau^{-b_{i,k}} \sum_{r=2}^{2i-1+k}\binom{2i+1+k}{r} \frac{r!}{\langle r \rangle^3} \frac{(2i+k-r+2)!}{\langle 2 i+k+1-r \rangle^3}.
\eee
Hence, since $\sum_{r=0}^{2i+1+k} \langle r \rangle^{-3} \langle 2i+k+1-r\rangle^{-2}\langle i+k\rangle^{3}\lesssim \langle i+k\rangle$ and $\tau=e^{-\frac 12s}\tilde \tau$ we get:
$$
\| II \|_{L^{\infty}_\sfw}\leq C \langle i+k\rangle (\hat \tau \tilde \tau)^{\frac 14}e^{-\frac 1 8 s}\sfL^2 \hat \tau^{-a_{i,k}} \tau^{-a_{i,k}}\bar \tau^{-b_{i,k}} \frac{(2i+1+k)!}{\langle i+k\rangle^3}.
$$
From the above bound and the linear estimate \fref{bd:lineaire}:
$$
\| \int_{\sfs_0}^{\sfs_1}S_{i,k}(\sfs,\sfs_1)(II)d\sfs \|_{L^{\infty}_\sfw}\leq C(\hat \tau \tilde \tau)^{\frac 14} \sfL^2 \hat \tau^{-a_{i,k}} \bar \tau^{-b_{i,k}}  \frac{(2i+1+k)!}{\langle i+k\rangle^3} \langle k+i \rangle\int_{\sfs_0}^{\sfs_1} p(\sfs,\sfs_1)\tau^{-a_{i,k}}(\sfs)e^{-\frac 18 \sfs}d\sfs.
$$
Using \fref{bd:integrationtau}, for $\hat \tau$ small enough depending on $\sfL$, $\sfK$ and $T$, we obtain from the above identity:
\be \label{bd:nonlinearrenormalised3}
\|  \int_{\sfs_0}^{\sfs_1}S_{i,k}(\sfs,\sfs_1)(II)ds \|_{L^{\infty}_\sfw}\leq   \frac{\sfL}{8} \bar \tau^{-b_{i,k}}\tau^{-a_{i,k}}(\sfs_1)\hat \tau^{-a_{i,k}} \frac{(2i+1+k)!}{\langle i+k\rangle^3} .
\ee

\noindent \underline{End of the proof} Summing the three estimates \fref{bd:nonlinearrenormalised3}, \fref{bd:nonlinearrenormalised3} and \fref{bd:nonlinearrenormalised3} shows \fref{bd:indarenormalisedtrapimproved}.\\

\textbf{Step 2.} The case $k=0$. Note that $i\geq 2$ since $i+k\geq 2$. This case can be treated almost exactly the same way. We just point out the minor modifications.

For the free evolution $\tilde F_{i,0}$, it now satisfies the Dirichlet boundary condition at $z=-\pi$ as $F_{i,0}$ does so. To estimate it, we use the linear estimate \fref{bd:lineaire} and the initial datum estimate \fref{bd:indainitialtrap}. The resulting bound is acceptable if $\sfL$ has been taken large enough depending solely on the universal constant $C>0$ in \fref{bd:lineaire}.

Next, the forcing terms $I$ and $II$ are treated the exact same way. Note that for $I$ the sum is performed only on $j\in \{1,i-1\}$ since the term corresponding to $i$ is zero for $k=0$ from \fref{eq:Fikevo}. The estimate \fref{bd:exponents4} is still valid in this case, and so $I$ is estimated the same way. There are no changes to make to treat $II$. This concludes the proof of the proposition.

\end{proof}


\section{Analyticity in the transverse variable close to the axis} \label{sec:regularisation}

Here we show that solutions $\vec \xi=(\xi_i)_{i\geq 0}$ to system \eqref{eq:evolutionxii}, rewritten as:
\be \label{id:systeme}
\left\{ \begin{array}{l l} \pa_t\xi_i=\pa_{yy}\xi_i+H_i(\vec \xi, \vec \xi)+ J_i(\vec \xi,\vec \xi) , \\ \xi_i(0,y)=\xi^0_i(y), \\ \xi_i(t,0)=0,  \end{array} \right. \qquad i\in \mathbb N, \ y\in [0,\infty), \ t>0,
\ee
with
$$
H_i(\vec \xi, \vec \xi')= - \sum_{j=0}^i \binom{2i+1}{2j+1}  \xi_j \xi_{i-j}', \quad J_i(\vec \xi, \vec \xi')=\sum_{j=0}^i \binom{2i+1}{2j} (\pa_y^{-1}\xi_j)\pa_y \xi_{i-j}',
$$
are instantaneously regularized for $t>0$ and become analytic in $y$, up to the boundary $y=0$. Solutions with only bounded initial data will be understood in an integral sense, and will be classical solutions for $t>0$. Indeed, there is a representation formula for solutions to
\be \label{id:linearheatdirichlet}
\left\{ \begin{array}{l l} \pa_t\phi=\pa_{yy} \phi, \\ \phi(0,y)=\phi^0(y), \\ \phi(t,0)=0,  \end{array} \right.  \qquad y\in [0,\infty), \ t>0.
\ee
Given $f$ a real valued function on $[0,\infty)$, let $\bar f$ denote its extension to $\mathbb R$ by odd symmetry:
\be \label{id:defsymmetryodd}
\bar f (y)=\left\{ \begin{array}{l l} f(y) \quad \mbox{for }y\geq 0, \\ - f(-y) \quad \mbox{for }y< 0. \end{array}\right.
\ee
Then the solution $\phi(t)=S(t)\phi^0$ to \eqref{id:linearheatdirichlet} is given by ($K_t$ being defined in \eqref{eq:defheatkernel})
\be \label{id:representationlinearheat}
(S(t)\phi^0)(y)=\int_{-\infty}^\infty K_t(y-\tilde y) \bar \phi^0(\tilde y)d\tilde y,\qquad y>0.
\ee
We shall therefore look for solutions to \eqref{id:systeme} in the following integral sense, using Duhamel's formula:
\be \label{id:systemesolution}
\xi_i(t,y)=S(t)\xi_i^0+\int_0^t S(t-t') (H_i(\vec \xi(t'), \vec \xi(t'))+ J_i(\vec \xi(t'),\vec \xi(t')))dt'.
\ee
Throughout this section, $\omega$ denotes the weight
$$
\omega (y)=\langle y \rangle^{-2}
$$
and we introduce the weighted $L^\infty$ spaces for $\phi:[0,\infty)\rightarrow \mathbb R$ or $\phi:\mathbb R\rightarrow \mathbb R$ respectively:
$$
\| \phi \|_{L^\infty_\omega}=\sup_{y\geq 0} \frac{|\phi(y)|}{\omega(y)} \ \mbox{ or } \ \| \phi\|_{L^\infty_\omega}=\sup_{y\in \mathbb R} \frac{|\phi(y)|}{\omega(y)}.
$$
For $\tilde \tau>0$, we introduce the weighted (in $y$) analytic space (in $x$, recalling that $\xi_i$ stands the trace of $\pa_x^{2i+1}u$ on the axis) 
$$
\| \vec \xi^0 \|_{X^0}=\sup_{i \in \mathbb N} \ \frac{\tilde \tau^{2i+1} }{(2i+1)!} \| \xi^0_i\|_{L^\infty_\omega}.
$$

The main result of this section is the following proposition.

\begin{proposition} \label{lwp:pr:analytic}

Let $\tilde \tau>0$ and assume $\| \vec \xi^0 \|_{X^0}<\infty$. Then there exists $T_0>0$ and $\vec \xi$ a solution to \eqref{id:systeme} on $[0,T_0]$ in the sense of \eqref{id:systemesolution} such that for each $i$, $\xi_i \in C([0,T_0],L^\infty_\omega)$. Moreover, there holds:
\begin{itemize}
\item[(i)] \emph{Immediate regularization up to the boundary:} For each $i$, $\xi_i \in C^\infty ((0,T_0]\times [0,\infty))$, and $\vec \xi$ is a classical solution to \fref{id:systeme} on $(0,T_0]\times [0,\infty)$.
\item[(ii)] \emph{Analytic bounds:} There exist $C,\tau>0$ such that for all $i\geq 0$ and $(t,y)\in [0,T_0]\times [0,\infty)$:
\be \label{lwp:bd:analyticy2}
 |\xi_i(t,y) |  \leq C \tau^{-2i-1}(2i+1)!  \langle y \rangle^{-2}
\ee
For each $\tilde T\in (0,T_0)$, there exists $\bar C,\bar \tau>0$ such that for all $i,n\geq0$ and $(t,y) \in [\tilde T,T_0]\times [0,\infty)$:
\be \label{lwp:bd:analyticy}
 |\pa_y^n \xi_i(t,y) |  \leq C \bar \tau^{-2i-1-n}(2i+n+1)!  \langle y \rangle^{-2}
\ee
\end{itemize}

\end{proposition}

\begin{proof}

Proposition \ref{lwp:pr:analytic} is a direct consequence of the results of Lemmas \ref{lwp:lemGevreytime} and \ref{lwp:lem:elliptic}.

\end{proof}

\begin{remark}

We believe our proof of Proposition \ref{lwp:pr:analytic} could be adapted to show instantaneous analytic in $y$ regularization for solutions to the Prandtl system \fref{2DPrandtl}, for data that are everywhere $x$-analytic, and without the oddness in $x$ assumption.

\end{remark}

To ease notation, from now on and throughout this section, in the estimates we will use quantities of the form $(2i+n)!$ instead of $(2i+n+1)!$, and $\tau^i$ instead of $\tau^{2i+1}$. These are equivalent, up to changing certain constants that will appear by a fixed factor, which is harmless for the analysis. We write $T$ instead of $T_0$ for commodity, so $T$ here is not the blow-up time.

\subsection{Strategy of the proof of Proposition \ref{lwp:pr:analytic}}

For small times, we approximate the solution to \eqref{id:systeme} by the linear solution to \eqref{id:systemelinear}, showing that it undergoes a parabolic regularization like the linear solution does. We proceed as follows.

\begin{itemize}
\item We construct the solution through a Picard approximation scheme \eqref{lwp:id:picardscheme}. At each iterative step, the scheme preserves $C^\infty$ differentiability in $t$ but not necessarily the $C^\infty$ differentiability in $y$ due to boundary effects. That is why we first obtain the $C^\infty$ in time regularity.
\item This $C^\infty$ in $t$ regularity is measured in Gevrey-$2$ spaces. Indeed, first the system of homogeneous linear heat equations \eqref{id:systemelinear} regularizes the initial data, making it analytical in time, and so Gevrey-$\alpha$ for all $\alpha\geq 1$, see Lemma \ref{lem:estimatelinearheat2}. Second, for the inhomogeneous linear system \eqref{id:equationheatinhomogeneous} with a source term that has an analyticity radius $\sqrt t$ (singular at initial time), we show continuity in Gevrey-$2$ regularity, see Lemma \ref{lem:estimatelineaduhamel}.
\item Once a Gevrey-$2$ in $t$ solution is obtained, we get its analyticity in $y$ in Lemma \ref{lwp:lem:elliptic} by elliptic regularity techniques applied to equation \eqref{id:systeme}.
\end{itemize}

\subsection{Regularization for the system of homogeneous heat equations}

Our strategy is to approximate, for small times $t>0$, solutions to \eqref{id:systeme} by solutions to the system of linear heat equations:
\be \label{id:systemelinear}
\left\{ \begin{array}{l l} \pa_t \xi_i=\pa_{yy}\xi_i , \\ \xi_i(0,y)=\xi^0_i(y), \\ \xi_i(t,0)=0,  \end{array} \right. \qquad i\geq 0, \  y\in [0,\infty), \ t\in (0,T].
\ee
Standard regularization estimates for \eqref{id:systemelinear} rely on the above formula and on the standard heat kernel estimates given in Lemma \ref{lwp:lem:heatkernel} in Appendix \ref{ap:sec:heat}.

\begin{lemma}[Estimates for the system of homogeneous heat equations] \label{lem:estimatelinearheat2}

Let $\tau_0$ be given by Lemma \ref{lwp:lem:heatkernel}. There exists $C>0$ such that for each $\tilde \tau>0$, for $0<\tau < \min(\tilde \tau^2/2,\tau_0)$, given $\vec \xi^0$ satisfying $\| \vec \xi^0\|_{X^0}<\infty$, the solution $\vec \xi=S(t)\vec \xi^0$ to \eqref{id:systemelinear} satisfies for all $i\in \mathbb N$, $t\in (0,1]$ and $y\in [0,\infty)$:
$$
 |\pa_t^k \xi_i(t,y) |+\sqrt t |\pa_t^k \pa_y \xi_i(t,y) | \leq C\omega(y) (2i+k)! t^{-k} \tau^{-i-k}\| \vec \xi^0\|_{X^0}.
$$

\end{lemma}

\begin{proof}

Recall from \eqref{id:representationlinearheat} that $\xi_i(t)=K_t*\bar \xi_i^0$ where $\bar{\xi}_i^0$ is defined by \eqref{id:defsymmetryodd}. Differentiating, using \eqref{bd:heatanalytic2}, then \eqref{bd:heatanalytic21}, and then $a!b!\leq (a+b)!$, we obtain for any $i\in \mathbb N$, $m=0,1$, $k\in \mathbb N$, $t\in (0,1]$ and $y\geq 0$:
\begin{align*}
|\pa_t^k \pa_y^m \xi_i(t,y)| & \leq C k! t^{-k-\frac m 2} \tau_0^{-k} \int_{\tilde y\in \mathbb R} |\bar{\xi}_0^i(\tilde y)| K_{\kappa t} (y-\tilde y)d\tilde y\\
& \leq C k! t^{-k-\frac m 2} \tau_0^{-k}(2i+1)!\tilde \tau^{-2i} \omega (y) \| \vec \xi^0\|_{X^0} \\
 & \leq C\omega (y)  t^{-k-\frac m 2} \tau^{-i-k} (2i+k)!   \| \vec \xi^0\|_{X^0} (i+1) \frac{\tau^{i+k}}{\tau_0^k\tilde \tau^{2i}}.
\end{align*}
This proves the Lemma, as $(i+1) \frac{\tau^{i+k}}{\tau_0^k\tilde \tau^{2i}}$ is uniformly bounded since $0<\tau < \min(\tilde \tau^2/2,\tau_0)$.

\end{proof}

\subsection{Estimates for the system of inhomogeneous heat equations}

Nonlinear terms in \eqref{id:systeme} will be considered as forcing terms for a linear inhomogeneous heat equation. That is why here we study solutions to
\be \label{id:equationheatinhomogeneous}
\left\{ \begin{array}{l l} \phi_t=\pa_{yy}\phi+f , \\ \phi(0,y)=0, \\ \phi(t,0)=0,  \end{array} \right. \qquad  y\in [0,\infty), \ t\in (0,T].
\ee
We will formulate estimates in particular function spaces, in order to be able to apply them to \eqref{id:systeme} later on. Namely we introduce for $\tau$ defined by:
\be \label{bd:decreasetau}
\frac{\pa_t \tau}{\tau}=-\frac{1}{\sqrt{T}\sqrt{t}}, \qquad (\mbox{i.e.} \ \tau(t)=\tau(0)e^{-2\sqrt{\frac{t}{T}}}).
\ee
For $i\in \mathbb N $ and a Sobolev correction exponent $\alpha$ (that we will take equal\footnote{The exact value $2$ is not relevant. It only needs to be large enough for the inequality \eqref{bd:technicalcombinatorial} to hold true.} to $2$), define the coefficients
$$
\Lambda_{i,k,\alpha}(t)=t^{-k}\tau^{-i-k}(2i+2k)!\langle i+k\rangle^{-\alpha}.
$$
For measurable functions $u$ such that, for each $y\in [0,\infty)$, the function $t\mapsto u(t,y)$ is $C^\infty$ with respect to $t\in (0,T]$, we introduce the Gevrey $2$ in time norms:
\begin{align}
\label{lwp:iddefXi}& \| \phi \|_{X_{T,\alpha}^{i}([0,\infty))}=  \sup_{t\in (0,T], \ k,i\in \mathbb N} \Lambda_{i,k,\alpha}^{-1}(t) \ \| \pa_t^k \phi(t)\|_{L^\infty_\omega}, \\
\label{lwp:iddefYi}& \| \phi \|_{Y_{T,\alpha}^{i}([0,\infty))}=  \sup_{t\in (0,T], \ k,i\in \mathbb N}  t^{\frac 12}\Lambda_{i,k,\alpha}^{-1}(t) \ \| \pa_t^k \phi(t)\|_{L^\infty_\omega}, \\
\label{lwp:iddefZi} & \| \phi \|_{ Z_{T,2}^{i}([0,\infty))}= \| \phi \|_{X_{T,2}^{i}([0,\infty))}+ \| \pa_y \phi \|_{Y_{T,1}^{i}([0,\infty))}.
\end{align}
To ease notation, we write:
$$
\Lambda_{i,k}=\Lambda_{i,k,2}.
$$

\begin{lemma}[Estimates for the inhomogeneous heat equation]  \label{lem:estimatelineaduhamel}

There exist $C>0$, $\tau^*>0$, such that for any $0<T\leq 1$, $\tau$ satisfying \eqref{bd:decreasetau} with $0<\tau(0)\leq \tau^*$, the following holds true. Let $\Theta$ be the mapping which to $f$ associates the solution $\phi=\Theta(f)$ to \eqref{id:equationheatinhomogeneous}. Then $\Theta$ satisfies the continuity estimate:
\be \label{bd:inhomogeneoustimeanalytic}
\| \Theta (f) \|_{Z_{T,2}^{i}} \leq C \sqrt{T}  \| f \|_{Y_{T,1}^{i}}
\ee

\end{lemma}

\begin{proof}

Pick $m\in \{0,1\}$. The solution to \eqref{id:equationheatinhomogeneous} is given by the following formula
\be \label{id:representationformulaheat}
u(t)=\int_0^t K_{t-t'}*h(t')dt', \qquad h(t)=\bar f(t).
\ee
Recall that if $\varphi \in L^\infty_\omega$, then $\bar \varphi \in L^\infty_\omega$ with same norm. For $k\in \mathbb N$, we let $\theta_k=1-\frac{1}{2+k}$ and decompose:
\be \label{id:lwpdecompositionu}
 u(t)=\underbrace{\int_0^{\theta_k t} ( K_{t-t'})*h(t')dt'}_{=u_1}+\underbrace{\int_{\theta_k t}^t ( K_{t-t'})* h(t')dt'}_{=u_2}.
\ee

\noindent \textbf{Step 1}. \emph{Estimates for $u_1$}. By a direct computation:
\be \label{id:lwpdecompositionu1}
\pa^k_t \pa_y^m u_1=\theta_k \sum_{p=0}^{k-1}\pa_t^{k-1-p}[\pa_y^m\pa_t^p K_{(1-\theta_k)t}*h(\theta_k t)]+\int_0^{\theta_k t} (\pa_y^m  \pa_t^k K_{t-t'})* h(t')dt'.
\ee
For the first term in \eqref{id:lwpdecompositionu1}, we can assume $k\geq 1$, and using the Leibniz identity:
\be \label{id:lwpdecompositionu122}
\pa_t^{k-1-p}[\pa_y^m\pa_t^p K_{(1-\theta_k )t}*h(\theta_k t)]= \sum_{l=0}^{k-1-p} \binom{k-1-p}{l}(\pa_t^l[\pa_y^m\pa_t^p K_{(1-\theta_k )t}]* \pa_t^{k-1-p-l}[h(\theta_k t)] ).
\ee
Using \eqref{bd:heatanalytic2} and that $(1-\theta_k)^{-p-m/2}=(2+k)^{p+m/2}\leq Ck^{p+m/2} $ with $C$ independent of $k$ and $p$ as $p\leq k-1$, we obtain that for $t\in (0,T]$:
\begin{align}
\nonumber  |(\pa_t^l[\pa_y^m\pa_t^p K_{(1-\theta_k )t}] | &= (1-\theta_k )^{l} | ( \pa_y^m\pa_t^{l+p} K)_{(1-\theta_k )t} |  \leq C\tau_0^{-l-p} t^{-l-p-\frac m 2} (1-\theta_k)^{-p-\frac m2} (l+p)! K_{(1-\theta_k)\kappa t} \\
\label{bd:heatanalyticadd1} &\leq C\tau_0^{-l-p} t^{-l-p-\frac m 2} k^{p+\frac m2} (l+p)! K_{(1-\theta_k)\kappa t}.
\end{align}
Using the definition of $Y_{T,1}^i$, $\theta_k^{-1/2}\lesssim 1$ and then the inequality $(2i+2k-2-2p-2l)!\lesssim (2i+2k-2p-2l)!\langle i+k-p-l\rangle^{-2}$:
\begin{align}
\nonumber & | \pa_t^{k-1-p-l}[h(\theta_k t)] )| = \theta_k^{k-1-p-l} | (\pa_t^{k-1-p-l} h) (\theta_k t) | \ \leq \theta_k^{-\frac 12}t^{-\frac 12} \Lambda_{i,k-1-p-l,1}(t)  \| f\|_{Y_{T,1}^i} \omega \\
\label{bd:heatanalyticadd2} &\qquad \qquad \lesssim (2i+2k-2p-2l)! \langle i+k-p-l\rangle^{-3} t^{-k+p+l+\frac 12} \tau^{-i-k+p+l+1} \| f\|_{Y_{T,1}^i} \omega .
\end{align}
Combining \eqref{bd:heatanalytic21}, \eqref{bd:heatanalyticadd1} and \eqref{bd:heatanalyticadd2}, choosing $\tau(0)\leq \tau_0/2$ so that $\tau\leq \tau_0/2$ from \eqref{bd:decreasetau}:
\begin{align*}
& \|  (\pa_t^l[\pa_y^m\pa_t^p K_{(1-\theta_k )t}]* \pa_t^{k-1-p-l}[h(\theta_k t)] ) \|_{L^\infty_\omega} \\
& \leq C \sqrt T \| f\|_{Y_{T,1}^i} \tau^{-i-k+1}t^{- k-\frac m 2} \langle i+k-p-l\rangle^{-3}k^{\frac m2} 2^{-l-p} (2i+2k-2p-2l)!(l+p)!k^p.
\end{align*}
We now estimate using $a!b!\leq (a+b)!$, then $(l!)^{-1}\leq 1$, then that for each $p\leq k-1$ and $l\leq k-1-p$ one has $2i+2k-p-l+1\geq k-1-p$ and $2i+2k-p+1\geq k$:
\begin{align*}
& \binom{k-1-p}{l} (2i+2k-2p-2l)!(l+p)!k^p \leq \frac{(k-1-p)...(k-p-l)}{l!}(2i+2k-p-l)!k^{p} \\
&\qquad \leq \frac{k-1-p}{2i+2k-p} \ ... \ \frac{k-p-l}{2i+2k-p-l+1}(2i+2k-p)!k^{p} \leq (2i+2k-p)!k^p \  \leq \ (2i+2k)!.
\end{align*}
Injecting the two above inequalities in \eqref{id:lwpdecompositionu122} we obtain:
\begin{align} 
\nonumber & \| \pa_t^{k-1-p}[\pa_y^m\pa_t^p K_{(1-\theta_k )t}*h(\theta_k t)] \|_{L^\infty_\omega}\\
\nonumber & \leq C \sqrt T \| f\|_{Y_{T,1}^i} \tau^{-i-k+1}t^{- k-\frac m 2}(2i+2k)! k^{\frac m2}\sum_{p=0}^{k-1}\sum_{l=0}^{k-1-p} \langle i+k-p-l\rangle^{-3} 2^{-l-p}  \\
\label{id:lwpdecompositionu1bound1} & \leq C \sqrt{T} \langle i+k\rangle^{-\frac 12} \| f\|_{Y_{T,1}^i} \tau t^{-\frac m2} \Lambda_{i,k}.
\end{align}
where we used $\sum_{p=0}^{k-1}\sum_{l=0}^{k-1-p} \langle i+k-p-l\rangle^{-3} 2^{-l-p}\leq C \langle i+k\rangle^{-3}$ for $C$ independent of $i,k$.

For the second term in \eqref{id:lwpdecompositionu1}, we estimate using \eqref{bd:heatanalytic2} and \eqref{bd:heatanalytic21}, then \eqref{bd:decreasetau}:
\begin{align*}
\|\pa_t^k \pa_y^m K_{t-t'}* h(t')\|_{L^\infty_\omega} & \leq Ck! (t-t')^{-k-\frac m 2} \tau_0^{-k}  \| f \|_{L^\infty_\omega} \\
&\leq Ck! (2i)!  \langle i \rangle^{-1} (\frac{\tau(0)}{\tau_0})^k \| f\|_{Y_{T,1}^i} (t-t')^{-k-\frac m 2}t^{'-\frac 12}\tau^{-i-k}.
\end{align*}
We estimate the following time integral if $i=k=0$:
$$
\int_0^{\theta_k t} (t-t')^{-k-\frac m 2} t^{'-\frac 12}\tau^{-i-k}(t')dt' =\int_0^{\theta_k t} (t-t')^{-\frac m 2} t^{'-\frac 12}dt' \lesssim \sqrt{T}t^{-\frac m 2},
$$
and using \eqref{bd:decreasetau} if $i+k\geq 1$:
\begin{align*}
& \int_0^{\theta_k t} (t-t')^{-k-\frac m 2} t^{'-\frac 12}\tau^{-i-k}(t')dt' \ \leq \ (t-t\theta_k)^{-k-\frac m 2} \int_0^t t^{'-\frac 12}\tau^{-i-k}(t')dt'  \\
& = t^{-k-\frac m 2}(1-\theta_k)^{-k-\frac m 2} \frac{\sqrt{T}}{i+k} \int_0^t \pa_{t'} (\tau^{-i-k})(t')dt' \ \leq \ t^{-k-\frac m 2}(1-\theta_k)^{-k-\frac m 2} \frac{\sqrt{T}}{i+k} \tau^{-i-k}.
\end{align*}
Hence, using $\langle i \rangle^{-1} \langle i+k\rangle^{-1}(\frac{\tau(0)}{\tau_0})^k(1-\theta_k)^{-\frac 12}\lesssim(\frac{\tau(0)}{\tau_0})^{\frac k2} \langle i+k\rangle^{-2}$ if $\tau(0)$ is small enough:
$$
\|  \int_0^{\theta_k t} \pa_y^{m}\pa_t^k K_{t-t'}* h(t')dt'  \|_{L^\infty_\omega}\leq  C k! (2i)! \langle i +k \rangle^{-2} t^{-k-\frac m 2} (1-\theta_k)^{-k}\tau^{-i-k} (\frac{\tau(0)}{\tau_0})^{\frac k2} \sqrt{T}  \| f\|_{Y_{T,1}^{i}}.
$$
We estimate using Stirling's formula that $\frac{k!}{(2k)!}\sim \frac{1}{\sqrt{2}}(\frac e4)^{k}k^{-k}$, so that since $1-\theta_k=\frac{1}{k+2}$:
$$
k!(1-\theta_k)^{-k} \sim (2k)! \frac{1}{\sqrt{2}}(\frac e4)^{k} (\frac{k+2}{k})^k \sim (2k)! \frac{e^2}{\sqrt{2}}(\frac e4)^{k} \qquad \mbox{as }k\to \infty,
$$
and hence, for $\tau(0)/\tau_0$ small enough, using $(2k)!(2i)!\leq (2i+2k)!$ we upper bound:
\be \label{id:lwpdecompositionu1bound2}
\|  \int_0^{\theta_k t} \pa_y^m \pa_t^k K_{t-t'}* h(t')dt'v  \|_{L^\infty_\omega}\leq  C \sqrt T \| f\|_{Y_{T,1}^{i}}  t^{-\frac m 2} \Lambda_{i,k}
\ee
Combining \eqref{id:lwpdecompositionu1bound1} and \eqref{id:lwpdecompositionu1bound2}, we get that $u_1$ satisfies the bound:
\be \label{id:lwpdecompositionu1bound}
\| u_1\|_{Z_{T,2}^{i}} \leq C\sqrt{T} \| f\|_{Y_{T,1}^{i}}.
\ee

\noindent \textbf{Step 2.} \emph{Estimate for $u_2$}. We differentiate with respect to time and then integrate by parts to find:
\begin{align*}
\pa_t u_2 & =h(t)-\theta_k K_{(1-\theta_k)t}*h(\theta_k t)+\int_{\theta_k t}^t \pa_t [K_{t-t'}]*h(t')dt' \\
& =h(t)-\theta_k K_{(1-\theta_k)t}*h(\theta_k t)-\int_{\theta_k t}^t \pa_{t'} [K_{t-t'}]*h (t')dt' \\
& = (1-\theta_k) K_{(1-\theta_k)t}*h(\theta_k t)+\int_{\theta_k t}^t  K_{t-t'}* (\pa_{t}h)(t')dt' .
\end{align*}
Iterating the above computation, we find the following identity for all $k\in \mathbb N$:
\be \label{id:lwpdecompositionu2}
\pa_t^k \pa_y^m u_2  = \sum_{p=0}^{k-1}\pa_t^{k-1-p}[ (1-\theta_k) \pa_y^m K_{(1-\theta_k)t}*(\pa_t^p h)(\theta_k t)]+\int_{\theta_k t}^t  \pa_y^m K_{t-t'}* (\pa_{t}^k h)(t')dt' .
\ee
The first term in \eqref{id:lwpdecompositionu2} is estimated as the first term in \eqref{id:lwpdecompositionu1} in Step 1. Namely, using Leibniz:
\be \label{heatanalytic:add:inter10}
\pa_t^{k-1-p}[ \pa_y^m K_{(1-\theta_k)t}*(\pa_t^p h)(\theta_k t)]=\sum_{l=0}^{k-p-1} \binom{k-p-1}{l} \pa_t^{l}[ \pa_y^m K_{(1-\theta_k)t}]*\pa_t^{k-1-p-l}[(\pa_t^p h)(\theta_k t)].
\ee
Using \eqref{bd:heatanalyticadd1} with $p=0$ one has:
$$
|\pa_t^{l}[ \pa_y^m K_{(1-\theta_k)t}]|\leq C \tau_0^{-l}t^{-l-\frac m2}k^{\frac m2}l! K_{(1-\theta_k)\kappa t}.
$$
Using $\pa_t^{k-1-p-l}[(\pa_t^p h)(\theta_k t)]=\theta_k^{-p}\pa_t^{k-1-l}[( h)(\theta_k t)]$, then \eqref{bd:heatanalyticadd2} (with $p+l$ replaced by $l$), and $\theta_k^{-p}\leq \theta_k^{-k}=(1-1/(k+2))^{-k}\lesssim 1$ one has:
$$
|\pa_t^{k-1-p-l}[(\pa_t^p h)(\theta_k t)]|\leq C (2i+2k-2l)! \langle i+k-l\rangle^{-3}t^{-k+l+\frac 12} \tau^{-i-k+l+1} \| f\|_{Y^i_{T,1}}\omega
$$
Choosing $\tau\leq \tau_0/2$ and using \eqref{bd:heatanalytic21}, the two inequalities above give:
\begin{align}
\nonumber & \|  \pa_t^{l}[ \pa_y^m K_{(1-\theta_k)t}]*\pa_t^{k-1-p-l}[(\pa_t^p h)(\theta_k t)] \|_{L^\infty_\omega} \\
\label{bd:heatanalyticadd10} & \leq C \sqrt T \| f\|_{Y_{T,1}^i} \tau^{-i-k+1}t^{- k-\frac m 2} \langle i+k-l\rangle^{-3}k^{\frac m2} 2^{-l} (2i+2k-2l)!l!.
\end{align}
Using $a!b!\leq (a+b)!$, then $(l!)^{-1}\leq 1$, and then that for $p\leq k-1$ and $l\leq k-1-p$ there holds $k-1-p\leq 2i+2k-l$:
\begin{align}
\nonumber & \binom{k-1-p}{l} (2i+2k-2l)!l! \  \leq \  \frac{(k-1-p)...(k-p-l)}{l!} (2i+2k-l)! \\
 \label{bd:heatanalyticadd11}& \qquad  \qquad  \qquad \qquad \qquad  \leq \frac{k-1-p}{2i+2k} \ ... \ \frac{k-p-l}{2i+2k-l+1} (2i+2k)! \ \leq  \ (2i+2k)!.
\end{align}
Injecting \eqref{bd:heatanalyticadd10} and \eqref{bd:heatanalyticadd11} in \eqref{heatanalytic:add:inter10}, using that $\sum_{p=0}^{k-1}\sum_{l=0}^{k-1-p}\langle i+k-l\rangle^{-3} 2^{-l}\lesssim k \langle i+k\rangle^{-3}$, that $1-\theta_k\leq k^{-1}$ and that $k^{m/2}\leq \langle i+k\rangle^{1/2}$, one finds that for the first term in \eqref{id:lwpdecompositionu2}:
\be \label{id:lwpdecompositionu2bound1}
\|  \sum_{p=0}^{k-1}\pa_t^{k-1-l}[ (1-\theta_k) \pa_y^m K_{(1-\theta_k)t}*(\pa_t^p h)(\theta_k t)] \|_{L^\infty_\omega} \leq C\tau  \sqrt{T} \| f \|_{Y_{T,1}^{i}} \langle k+i\rangle^{-\frac 12} t^{-\frac m2} \Lambda_{i,k}.
\ee

For the second term in \eqref{id:lwpdecompositionu2} we first estimate by \eqref{bd:heatanalytic21}:
$$
\| \pa_y^m  K_{t-t'}* (\pa_{t}^kh )(t') \|_{L^\infty_\omega} \lesssim (t-t')^{-\frac m 2}  \|  (\pa_{t}^k h)(t') \|_{L^\infty_\omega} \lesssim (t-t')^{-\frac m 2}\tau^{-i-k} (2k+2i)!\langle i+k\rangle^{-1} t^{'- k-\frac 12} \| f\|_{Y_{T,1}^{i}}
$$
and hence, since $\theta_k^{-k}=(1-\frac{1}{2+k})^{-k}\rightarrow e$ as $k\rightarrow \infty$:
\begin{align*}
\| \int_{\theta_k t}^t  \pa_y^m K_{t-t'}* (\pa_{t}^kh)(t')dt' \|_{L^\infty_\omega} & \leq C (2k+2i)!\langle k+i\rangle^{-1}\| f\|_{Y_{T,1}^{i}} \int_{\theta_k t}^t (t-t')^{-\frac m 2}t^{'- k-\frac 12}  \tau^{-i-k}(t') dt'\\
&\leq C (2k+2i)!\langle k+i\rangle^{-1}\| f\|_{Y_{T,1}^{i}} t^{-k} \int_{\theta_k t}^t (t-t')^{-\frac m 2} t^{-\frac 12}\tau^{-i-k}(t') dt'.
\end{align*}
We now estimate the above integral. For $i=k=0$, we have $\int_{\theta_k t}^t (t-t')^{-\frac m 2}t^{'-\frac 12} \tau^{-i-k}(t') dt'\lesssim \sqrt{T}t^{-\frac m 2} $. For $i+k\geq 1$, for $m=0$, using \eqref{bd:decreasetau}:
$$
\int_{\theta_k t}^t t^{'-\frac 12} \tau^{-i-k}(t') dt'=\frac{\sqrt T}{i+k} \int_{\theta_k t}^t \pa_{t'}( \tau^{-i-k})(t') dt' \leq \frac{\sqrt{T}}{i+k} \tau^{i+k}(t),
$$
while for $m=1$, we get using $\tau(t')\geq \tau(t)$ for $t'\leq t$:
$$
\int_{\theta_k t}^t (t-t')^{-\frac 12} t^{'-\frac 12}\tau^{-i-k}(t') dt' \leq \tau^{-i-k}(t) \int_{\theta_k t}^t (t-t')^{-\frac 12} t^{'-\frac 12}dt'\leq  \tau^{-i-k} \int_0^1 (1-\sigma)^{-\frac 12}\sigma^{-\frac 12}d\sigma \lesssim   \tau^{-i-k} .
$$
Combining the four above inequalities, one ends up with
\be \label{id:lwpdecompositionu2bound2}
\| \int_{\theta_k t}^t  \pa_y^m K_{t-t'}* (\pa_{t}^kh)(t')dt' \|_{L^\infty_\omega} \leq C \sqrt{T} t^{-\frac m2}\langle i+k\rangle^m \Lambda_{i,k} \| f\|_{Y_{T,1}^{i}}.
\ee
Therefore, summing \eqref{id:lwpdecompositionu2bound1} and \eqref{id:lwpdecompositionu2bound2} we obtain that for the $u_2$ term:
\be \label{id:lwpdecompositionu2bound}
\| u_2\|_{Z_{T,2}^{i}} \leq C\sqrt{T} \| f\|_{Y_{T,1}^{i}}.
\ee

\noindent \underline{Conclusion.} Injecting \eqref{id:lwpdecompositionu1bound} and \eqref{id:lwpdecompositionu2bound} in \eqref{id:lwpdecompositionu} shows the bound \eqref{bd:inhomogeneoustimeanalytic}.

\end{proof}

\subsection{Bilinear estimates}

We estimate here the quadratic terms in \eqref{id:systeme}. We introduce the $X_{T,2}^i$ and $Z_{T,2}^i$ based (defined by \eqref{lwp:iddefXi} and \eqref{lwp:iddefZi}) vector spaces:
$$
\| \vec \xi \|_{X_{T,2}([0,\infty))}= \sup_{i\in \mathbb N} \ \| \xi_i \|_{X_{T,2}^{i}([0,\infty))}, \qquad \| \vec \xi \|_{ Z_{T,2}([0,\infty))}= \sup_{i\in \mathbb N} \ \| \xi_i \|_{ Z_{T,2}^{i}([0,\infty))}.
$$
The following Lemma states that $H_i$ and $J_i$ both loose a derivative in a combinatorics sense (a $\langle i+k\rangle$ factor). Moreover, as $\pa_y$ derivatives are regularized each with a $t^{-1/2}$ factor, $J_i$ looses an additional $t^{-1/2}$ factor.

\begin{lemma}[Bilinear estimates]

There holds the following estimates for $C>0$ independent of $\tau$ and $i$:
\be \label{bd:bilinearXT22}
\| H_i(\vec \xi,\vec \xi') \|_{X_{T,1}^i} \leq C \| \vec \xi \|_{X_{T,2}} \| \vec \xi' \|_{X_{T,2}}, \qquad \|  J_i(\vec \xi,\vec \xi') \|_{Y_{T,1}^{i}} \leq C \| \vec \xi \|_{X_{T,2}} \| \vec \xi' \|_{Z_{T,2}}.
\ee

\end{lemma}

\begin{proof}

We write $H_i=H_i(\vec \xi,\vec \xi')$ and $J_i=J_i(\vec \xi,\vec \xi')$ for simplicity. Then, by Leibniz rule for differentiation, for $t\in (0,T]$:
$$
|\pa_t^{k}H_i| \leq  \sum_{j=0}^i \sum_{l=0}^k \binom{2i+1}{2j+1}  \binom{k}{l} \ | \pa_t^l \xi_j \ \pa_t^{k-l} \xi_{i-j}'|.
$$
We introduce $r=2j+2l$ and bound using the definition of the $X_{T,2}$ norms:
\begin{align*}
 \|  \pa_t^l \xi_j \ \pa_t^{k-l} \xi_{i-j}' \|_{L^\infty_\omega} &\leq \Lambda_{j,l}  \| \vec \xi \|_{X_{T,2}} \Lambda_{i-j,k-l}  \| \vec \xi' \|_{X_{T,2}} \\
& \lesssim \| \vec \xi \|_{X_{T,2}}  \| \vec \xi' \|_{X_{T,2}} \tau^{-i-k}t^{-k} r!(2i+2k-r)! \langle r\rangle^{-2}\langle 2i+2k-r \rangle^{-2}.
\end{align*}
Therefore, using that $\binom{k}{l}\leq \binom{2k}{2l}$ and \eqref{bd:technicalcombinatorial2} with $A_1=2i+1$, $A_2=2k$ and $r_1=r+1$:
\begin{align*}
\| \pa_t^{k}H_i\|_{L^\infty_\omega}&  \lesssim t^{-k} \tau^{-i-k} \| \vec \xi \|_{X_{T,2}}  \| \vec \xi' \|_{X_{T,2}}  \sum_{j=0}^i \sum_{l=0}^k \binom{2i+1}{2j+1}  \binom{k}{l} r!(2i+2k-r)! \langle r\rangle^{-2}\langle 2i+2k-r \rangle^{-2} \\
 & \lesssim t^{-k} \tau^{-i-k}  \| \vec \xi \|_{X_{T,2}}  \| \vec \xi' \|_{X_{T,2}}  \sum_{r=0}^{2i+2k+1} \binom{2i+2k+1}{r+1} r!(2i+2k-r)! \langle r\rangle^{-2}\langle 2i+2k-r \rangle^{-2} \\
& \lesssim t^{-k} \tau^{-i-k}  (2i+2k+1)!\| \vec \xi \|_{X_{T,2}}  \| \vec \xi' \|_{X_{T,2}}  \sum_{r=0}^{2i+2k+1} \langle r\rangle^{-3}\langle 2i+2k-r \rangle^{-2} \\
& \lesssim t^{-k} \tau^{-i-k}  (2i+2k)!  \langle i+k \rangle^{-1} \| \vec \xi \|_{X_{T,2}}  \| \vec \xi' \|_{X_{T,2}} \ \lesssim \ \Lambda_{i,k} \langle i+k\rangle  \| \vec \xi \|_{X_{T,2}}  \| \vec \xi' \|_{X_{T,2}} ,
\end{align*}
where we used \eqref{bd:technicalcombinatorial} with $K=2i+2k+1$. This precisely implies the first inequality in \eqref{bd:bilinearXT22}.

To prove the second inequality in \eqref{bd:bilinearXT22}, we first write:
$$
\pa_t^k  J_i=\sum_{l=0}^k\sum_{j=0}^i \binom{k}{l} \binom{2i+1}{2j} (\pa_y^{-1}\pa_t^l \xi_j) \pa_y \pa_t^{k-l}\xi_{i-j}'.
$$
We then bound, using the definition of the $X_{T,2}$ and $Z_{T,2}$ norms, that $\int_0^\infty \omega(y)dy<\infty$, and introducing $r=2l+2j$:
\begin{align*}
 \| (\pa_y^{-1} \pa_t^l \xi_j) \ \pa_y \pa_t^{k-l} \xi_{i-j}' \|_{L^\infty_\omega} & \lesssim  \Lambda_{j,l} \| \vec \xi \|_{X_{T,2}} \Lambda_{i-j,k-l} \langle i-j+k-l\rangle t^{-\frac 12} \| \vec \xi' \|_{Z_{T,2}} \\
& \lesssim \| \vec \xi \|_{X_{T,2}}  \| \vec \xi' \|_{Z_{T,2}} \tau^{-i-k}t^{-k-\frac 12} r!(2i+2k-r)! \langle r\rangle^{-2}\langle 2i+2k-r \rangle^{-1},
\end{align*}
so that, using $\binom{k}{l}\leq \binom{2k}{2l}$:
$$
\|\pa_t^k  J_i\|_{L^\infty_\omega}  \lesssim  \| \vec \xi \|_{X_{T,2}}  \| \vec \xi' \|_{Z_{T,2}} \tau^{-i-k}t^{-k-\frac 12} \sum_{l=0}^k\sum_{j=0}^i \binom{2k}{2l} \binom{2i+1}{2j} r!(2i+2k-r)! \langle r\rangle^{-2}\langle 2i+2k-r \rangle^{-1}.
$$
Using \eqref{bd:technicalcombinatorial2} with $(A_1,A_2,r_1)=(2i+1,2k,r)$, and then \eqref{bd:technicalcombinatorial} with $K=2i+2k+1$:
\begin{align*}
& \sum_{l=0}^k\sum_{j=0}^i \binom{2k}{2l} \binom{2i+1}{2j} r!(2i+2k-r)! \langle r\rangle^{-2}\langle 2i+2k-r \rangle^{-1}\\
 &\quad \leq \sum_{r=0}^{2i+2k} \binom{2i+2k+1}{r} r !(2i+2k-r)! \langle r\rangle^{-2}\langle 2i+2k-r \rangle^{-1} \\
 &\qquad =\sum_{r=0}^{2i+2k} (2i+2k+1)! (2i+2k-r+1)^{-1} \langle r\rangle^{-2}\langle 2i+2k-r \rangle^{-1} \ \lesssim \ (2i+2k)! \langle i+k\rangle^{-1}.
\end{align*}
Combining the two above inequalities shows $\|\pa_t^k  J_i\|_{L^\infty_\omega}  \leq C \Lambda_{i,k} t^{-\frac 12} \langle i+k\rangle  \| \vec \xi \|_{X_{T,2}}  \| \vec \xi' \|_{Z_{T,2}} $ which is precisely the second inequality in \eqref{bd:bilinearXT22}.

\end{proof}

\subsection{Obtention of Gevrey-$2$ in time regularity by Picard iteration scheme}

The Lemma below shows analytic regularization for solutions to \eqref{id:systemelinear}.

\begin{lemma} \label{lwp:lemGevreytime}

For any $\tilde \tau>0$, there exists $\tau(0)>0$ such that the following holds. For any $\vec \xi^0$ satisfying $\| \vec \xi^0\|_{X^0}<\infty$, then there exists a $T>0$ and a solution $\vec \xi$ to \eqref{id:systeme} in the sense of \eqref{id:systemesolution} such that for each $i$, $\xi_i \in C([0,T],L^\infty_\omega)$. Moreover, there holds:
$$
\| \vec \xi \|_{Z_{T,2}}<\infty.
$$

\end{lemma}

\begin{proof}

Let $\vec \zeta=S(t)\vec \xi^0$. We look for a solution under the form $\vec \xi=\vec \zeta+\vec{\tilde \zeta}$. We consider the mapping $\Phi$ which to $\vec v \in Z_{T,2}$ associates the unique solution $\vec w=\Phi(\vec v)$ to:
\be \label{lwp:id:picardscheme}
\left\{ \begin{array}{l l} \pa_tw_i=\pa_{yy}w_i+H_i(\vec \zeta+\vec v, \vec \zeta+\vec v)+ J_i(\vec \zeta+\vec v,\vec \zeta+\vec v) , \\ w_i(0,y)=0, \\ w_i(t,0)=0,  \end{array} \right. \qquad i\in \mathbb N, \ y\in [0,\infty), \ t\in [0,T], 
\ee
By Lemma \ref{lem:estimatelinearheat2} and \eqref{bd:bilinearXT22}, we have for all $i\in \mathbb N$ that
\begin{align*}
\| H_i(\vec \zeta+\vec v, \vec \zeta+\vec v) \|_{Y_{T,1}} & \leq \sqrt{T}\| H_i(\vec \zeta+\vec v, \vec \zeta+\vec v) \|_{X_{T,1}} \lesssim \sqrt{T}(\| \vec \zeta \|_{X_{T,2}}+\| \vec v \|_{X_{T,2}})^2 \\
& \lesssim  \sqrt{T}(C+\| \vec v \|_{Z_{T,2}})^2
\end{align*}
where $C$ is independent of $T$, and similarly for $\vec{v'}\in Z_{T,2}$, since $H_i$ is bilinear:
\begin{align*}
\| H_i(\vec \zeta+\vec v, \vec \zeta+\vec v)-H_i(\vec \zeta+\vec v', \vec \zeta+\vec v')  \|_{Y_{T,1}} &= \| H_i(\vec \zeta+\vec v, \vec v-\vec{v'})+H_i(\vec v-\vec v', \vec \zeta+\vec v')  \|_{Y_{T,1}}  \\
& \lesssim  \sqrt{T}\| \vec v-\vec{v'} \|_{Z_{T,2}} \left(C+\| \vec v \|_{Z_{T,2}}+\| \vec{v'} \|_{Z_{T,2}} \right).
\end{align*}
Similarly, using again Lemma \ref{lem:estimatelinearheat2} and \eqref{bd:bilinearXT22}, we have:
\begin{align*}
\| J_i(\vec \zeta+\vec v, \vec \zeta+\vec v) \|_{Y_{T,1}} & \lesssim (\| \vec \zeta \|_{Z_{T,2}}+\| \vec v\|_{Z_{T,2}})^2\lesssim (C+\| \vec v\|_{Z_{T,2}})^2
\end{align*}
In addition, using the fact that $J_i$ is bilinear
\begin{align*}
\| J_i(\vec \zeta+\vec v, \vec \zeta+\vec v)-J_i(\vec \zeta+\vec v', \vec \zeta+\vec v')  \|_{Y_{T,1}} &= \| J_i(\vec \zeta+\vec v, \vec v-\vec{v'})+J_i(\vec v-\vec v', \vec \zeta+\vec v')  \|_{Y_{T,1}}  \\
& \lesssim  \| \vec v-\vec{v'} \|_{Z_{T,2}} \left(C+\| \vec v \|_{Z_{T,2}}+\| \vec{v'} \|_{Z_{T,2}} \right).
\end{align*}
Therefore, thanks to \eqref{bd:inhomogeneoustimeanalytic} we deduce from the above estimates that:
$$
\| \Phi(\vec v)\|_{Z_{T,2}}\lesssim \sqrt{T}(C+\| \vec v \|_{Z_{T,2}})^2, \quad \| \Phi(\vec v)-\Phi(\vec{v'})\|_{Z_{T,2}}\lesssim \sqrt{T}  \| \vec v-\vec{v'} \|_{Z_{T,2}} \left(C+\| \vec v \|_{Z_{T,2}}+\| \vec{v'} \|_{Z_{T,2}} \right).
$$
Thus, there exists $T$ small enough such that $\Phi$ is a contraction on the unit ball of $Z_{T,2}$. Hence $\Phi$ possesses a unique fixed point $\vec{\tilde \zeta}$ by Banach-Picard fixed point Theorem. Then $\vec \xi=\vec{\zeta}+\vec{\tilde \zeta}$ solves the system \eqref{id:systeme} on $(0,T]$, and belongs to $Z_{T,2}$.

\end{proof}

\subsection{Instantaneous analytic regularization in the transverse variable}

Thanks to Lemma \eqref{lwp:lemGevreytime}, for any $0<t_0<T$, the solution is Gevrey-$2$ in time on $[t_0,T]$, with a radius of analyticity that is now \emph{bounded from below uniformly on $[t_0,T]$}. In this subsection $\tau$ is then now independent of time. Analyticity in the $y$ variable is given by the following Lemma.

\begin{lemma} \label{lwp:lem:elliptic}

Let $\tau>0$ and assume that $\vec \xi$ is a smooth in space and time on $[t_0,T]\times [0,\infty)$ solution to \eqref{id:systeme} such that for all $k,i\in \mathbb N$ and $m=0,1$:
\be \label{bd:hpanalytic}
\| \pa_t^k \pa_y^m \xi_i \|_{L^\infty([t_0,T],L^\infty_\omega )}\leq C \tau^{-i-k}(2i+2k+m)!\langle i+k+m \rangle^{-2}.
\ee
Then, there exists $\tau'>0$ such that for all $m,i\in \mathbb N$:
$$
\| \pa_y^m \xi_i \|_{L^\infty([t_0,T], L^\infty_\omega)}\leq C \tau^{'-i-m}(2i+m)!\langle i+k+m \rangle^{-2}.
$$

\end{lemma}

\begin{proof}

To shorten notation, we shall write $L^\infty$ for $L^\infty([t_0,T],L^\infty_\omega [0,\infty))$. In the proof $\tilde C$ denotes a constant that is independent of the other parameters, whose value may change from one line to another. We prove the following bound by induction on $m\in \mathbb N$:
\be \label{bd:analyticintermediate}
\| \pa_t^{k} \pa_y^n \xi_i \|_{L^\infty}\leq C\Lambda_{i,k,n} \qquad \mbox{for all } 0\leq n\leq m \mbox{ and } i,k\in \mathbb N.
\ee
where we introduce the notation:
$$
\Lambda_{i,k,m}=\tau^{-k-i} \tau^{'-m}(2i+2k+m)!\langle i+k+m\rangle^{-2}.
$$
Note that, if $0<\tau'\leq 1$, then \eqref{bd:analyticintermediate} is true for $m=1$ by the hypothesis \eqref{bd:hpanalytic} of the Lemma. We now assume it is true for $m-1\geq 1$, and aim at proving it is true for $m$.\\

\noindent \textbf{Step 1}. \emph{If $m=2p$ is even.} By induction on $p$, using \eqref{id:systeme} we get the following formula:
$$
\pa_y^{m}\xi_i =\pa_t^{p} \xi_i -\sum_{q=0}^{p-1} \pa_t^q \pa_y^{2(p-1-q)}(H_i+J_i),
$$
and hence, for all $k\in \mathbb N$:
\be \label{lwp:id:decompositionttoy}
\pa_t^k \pa_y^{m}\xi_i =\pa_t^{p+k} \xi_i -\sum_{q=0}^{p-1} \pa_t^{k+q} \pa_y^{2(p-1-q)}(H_i+J_i).
\ee
We bound the first term in the right hand side of \eqref{lwp:id:decompositionttoy} using \eqref{bd:hpanalytic}:
\be \label{lwp:bd:firzttern} 
\| \pa_t^{p+k} \xi_i \|_{L^\infty} \leq C \tau^{-i-p-k}(2i+2p+2k)!\langle i+k+p\rangle^{-2} \leq \tilde C C (\frac{\tau^{'}}{\sqrt{\tau}})^m \Lambda_{i,k,m} .
\ee
For the second and third terms in \eqref{lwp:id:decompositionttoy}, using Leibniz formula for differentiation:
\begin{align}
\label{lwp:id:Fttoy}\pa_t^{k+q} \pa_y^{2(p-1-q)}H_i &= -\sum_{l=0}^{k+q} \sum_{n=0}^{2(p-1-q)} \sum_{j=0}^i \binom{k+q}{l} \binom{2(p-1-q)}{n} \binom{2i+1}{2j+1} \ \pa_t^l\pa_y^n \xi_j \ \pa_{t}^{k+q-l} \pa_y^{2(p-1-q)-n} \xi_{i-j} ,\\
\nonumber  \pa_t^{k+q} \pa_y^{2(p-1-q)}J_i &= \underbrace{\sum_{l=0}^{k+q} \sum_{n=0}^{2(p-1-q)-1} \sum_{j=0}^i \binom{k+q}{l} \binom{2(p-1-q)}{n+1} \binom{2i+1}{2j} \ \pa_t^l\pa_y^{n} \xi_j \ \pa_{t}^{k+q-l} \pa_y^{2(p-1-q)-n} \xi_{i-j}}_{=I} \\
\label{lwp:id:Gttoy} &+\underbrace{ \sum_{l=0}^{k+q} \sum_{j=0}^i \binom{k+q}{l} \binom{2i+1}{2j} \ \pa_y^{-1}(\pa_t^l \xi_j) \ \pa_{t}^{k+q-l} \pa_y^{2(p-1-q)+1} \xi_{i-j}}_{=II}.
\end{align}
We next bound using \eqref{bd:analyticintermediate}, introducing $r=2j+2l+n$, and using the inequality $\langle k+q-l+2(p-1-q)-n+i-j\rangle^{-2}\lesssim \langle 2i+2k+m-r\rangle^{-2}$ given the range of the parameters $l$, $n$ and $j$ in the sum:
\begin{align*}
& \| \pa_t^l\pa_y^n \xi_j \pa_{t}^{k+q-l} \pa_y^{2(p-1-q)-n} \xi_{i-j}  \|_{L^\infty} \ \leq  \ C^2 \Lambda_{j,l,n} \Lambda_{i-j,k+q-l,2(p-1-q)-n} \\
 &\qquad  \leq \ C^2 \tau^{-l-j} \tau^{'-n}(2j+2l+n)! \langle j+l+n\rangle^{-2} \times \tau^{-(k+q-l)-(i-j)} \tau^{'-(2(p-1-q)-n)} \\
 &\qquad \qquad  (2(i-j)+2(k+q-l)+2(p-1-q)-n)! \langle 2i+2k+m-r\rangle^{-2} \\
&\qquad= \ C^2 \tau^{'2}(\frac{\tau^{'2}}{\tau})^q \tau^{-i-k}\tau^{'-m} r!  (2i+2k+m-r-2)!\langle r \rangle^{-2} \langle 2i+2k+m-r\rangle^{-2}.
\end{align*}
Using $\binom{k+q}{l} \leq \binom{2k+2q}{2l}$ and \eqref{bd:technicalcombinatorial3} with $(A_1,A_2,A_3,r_2)=(2k+2q,2(p-1-q),2i+1,r+1)$:
\begin{align*}
& \sum_{l=0}^{k+q} \sum_{n=0}^{2(p-1-q)} \sum_{j=0}^i \left(\binom{k+q}{l} \binom{2(p-1-q)}{n} \binom{2i+1}{2j+1} +\binom{k+q}{l} \binom{2(p-1-q)}{n+1} \binom{2i+1}{2j}\right) \\
& \qquad \qquad \qquad \qquad \qquad  r!  (2i+2k+m-r-2)! \langle r \rangle^{-2} \langle 2i+2k+m-r\rangle^{-2} \\
& \leq  \sum_{r=0}^{2i+2k+m} \binom{2i+2k+m-1}{r+1}  r!  (2i+2k+m-r-2)! \langle r \rangle^{-2} \langle 2i+2k+m-r\rangle^{-2} \\
& = \sum_{r=0}^{2i+2k+m}  (2i+2k+m-1)!(r+1)^{-1}\langle r \rangle^{-2} \langle 2i+2k+m-r\rangle^{-2} \\
&\lesssim  (2i+2k+m-1)! \langle i+k+m\rangle^{-2}
\end{align*}
where we used \eqref{bd:technicalcombinatorial} with $K=2i+2k+m$. Combining the three inequalities above, \eqref{lwp:id:Fttoy} and \eqref{lwp:id:Gttoy} one finds:
\be \label{lwp:bd:interFandG1} 
\| \pa_t^{k+q} \pa_y^{2(p-1-q)}H_i +I  \|_{L^\infty} \leq \tilde C C^2  \tau^{'2}(\frac{\tau^{'2}}{\tau})^q  \langle i+k+m\rangle^{-1} \Lambda_{i,k,m}.
\ee

For the second term in $J_i$, using \eqref{bd:analyticintermediate}, $\int_0^\infty \omega(y)dy<\infty$, letting $r=2j+2l$, and using that $\langle i-j+ k+q-l +2(p-1-q)+1 \rangle^{-2}\lesssim \langle 2i+2k+m-r \rangle^{-2} $ for the range of parameters $l$ and $j$ in the sum:
\begin{align}
\nonumber &  \|   \pa_y^{-1}(\pa_t^l \xi_j) \ \pa_{t}^{k+q-l} \pa_y^{2(p-1-q)+1} \xi_{i-j} \|_{L^\infty_\omega} \ \leq \ \tilde C\| \pa_t^l \xi_j \|_{L^\infty_\omega} \| \pa_{t}^{k+q-l} \pa_y^{2(p-1-q)+1} \xi_{i-j}\|_{L^\infty_\omega} \\
\nonumber &\qquad \leq \tilde C C \tau^{-l-j} (2j+2l )! \langle j+l \rangle^{-2} \times C \tau^{-(k+q-l)-(i-j)}\\
\nonumber &\qquad \qquad  \tau^{'-(2(p-1-q)+1)}(2(i-j)+2(k+q-l)+2(p-1-q)+1)! \langle 2i+2k+m-r\rangle^{-2} \\
\label{lwp:bd:interGII} &\qquad = \tilde C C^2 \tau' (\frac{\tau^{'2}}{\tau})^q  \tau^{-i-k} \tau^{'-m}  r!(2i+2k+m-r-1)! \langle r\rangle^{-2}\langle 2 i+2k+m-r\rangle^{-2}.
\end{align}
Using $\binom{k+q}{l}\leq \binom{2k+2q}{2l}$ and \eqref{bd:technicalcombinatorial2} with $(A_1,A_2,r_1)=(2k+2q,2i+1,r)$ we have:
\begin{align*}
& \sum_{l=0}^{k+q} \sum_{j=0}^i \binom{k+q}{l} \binom{2i+1}{2j}  r!(2i+2k+m-r-1)! \langle r\rangle^{-2}\langle i+k+m-r\rangle^{-2} \\
\leq& \sum_{r=0}^{2k+2q+2i} \binom{2k+2q+2i+1}{r}    r!(2i+2k+m-r-1)! \langle r\rangle^{-2}\langle i+k+m-r\rangle^{-2} \\
\leq& (2k+2i+m-1)!  \sum_{r=0}^{2k+2q+2i} \frac{(2k+2q+2i+1)!}{(2k+2i+m-1)!} \frac{(2i+2k+m-1-r)!}{(2k+2q+2i+1-r)!} \langle r\rangle^{-2}\langle 2 i+2k+m-r\rangle^{-2} \\
\end{align*}
We estimate:
\begin{align*}
 \frac{(2k+2q+2i+1)!}{(2k+2i+m-1)!} \frac{(2i+2k+m-1-r)!}{(2k+2q+2i+1-r)!}  & = \frac{2i+2k+2p-1-r}{2i+2k+2p-1} \ ... \ \frac{2i+2k+2q+2-r}{2i+2k+2q+2}\leq 1.
\end{align*}
Combining the two above inequalities and \eqref{bd:technicalcombinatorial} with $K=2k+2q+2i$ we obtain:
$$
 \sum_{l=0}^{k+q} \sum_{j=0}^i \binom{k+q}{l} \binom{2i+1}{2j}  r!(2i+2k+m-1-r)! \langle r\rangle^{-2}\langle i+k+m-r\rangle^{-2}  \leq \tilde C(2k+2i+m)!\langle i+k+m \rangle^{-3}.
$$
Combining \eqref{lwp:bd:interGII} and the above inequality, we get for the second term in \eqref{lwp:id:Gttoy}:
\be \label{lwp:bd:interG2} 
\| II \|_{L^\infty}\leq \tilde C C^2  \tau' (\frac{\tau^{'2}}{\tau})^q \langle i+k+m \rangle^{-1} \Lambda_{i,k,m}
\ee
Combining \eqref{lwp:bd:interFandG1} and \eqref{lwp:bd:interG2} we get for $\tau'$ small enough:
$$
\| \pa_t^{k+q} \pa_y^{2(p-1-q)}(H_i+J_i)  \|_{L^\infty} \leq  \tilde C C^2  \tau' (\frac{\tau^{'2}}{\tau})^q \langle i+k+m \rangle^{-1} \Lambda_{i,k,m}.
$$
Therefore, for $\tau^{'2}\leq \frac{\tau}{2}$, we have $\sum_{q=0}^{p-1} (\frac{\tau^{'2}}{\tau})^q\leq 2$ and so from the above identity:
$$
\| \sum_{q=0}^{p-1} \pa_t^{k+q} \pa_y^{2(p-1-q)}(H_i+J_i) \|_{L^\infty}  \leq   \tilde C C^2  \tau' \langle i+k+m \rangle^{-1} \Lambda_{i,k,m}.
$$
Injecting the above inequality and \eqref{lwp:bd:firzttern} in \eqref{lwp:id:decompositionttoy} gives:
$$
\| \pa_t^{k}\pa_y^{m}\xi_i \|_{L^\infty}  \leq  C \Lambda_{i,k,m} \left(  C\tilde C\tau' +\tilde C (\frac{\tau^{'}}{\sqrt{\tau}})^m \right)  \leq  C \Lambda_{i,k,m} 
$$
for $\tau'$ small enough, since $m\geq 2$. Therefore, \eqref{bd:analyticintermediate} is true for $m$.\\

\noindent \textbf{Step 2} \emph{If $m=2p+1$ is even.} By the formula of Step 1 we obtain
$$
\pa_t^k \pa_y^{m}\xi_i =\pa_t^{p+k} \pa_y \xi_i -\sum_{q=0}^{p-1} \pa_t^{k+q} \pa_y^{2(p-1-q)+1}(H_i+J_i).
$$
and the same computations show the desired result. We omit the details.

\end{proof}


\begin{appendix}

\section{Functional analysis}

\begin{lemma}

There exists $C>0$ such that for all $-\infty\leq Y_0<0$ 
and $\e :(Y_0,+\infty)\rightarrow \mathbb R$ with $\e\in H^1_\rho$, the following inequality holds true:
\be \lab{bd:poincare}
\int_{Y_0}^{+\infty} Y^2\e^2e^{-\frac{3Y^2}{4}}dY \leq C \| \e \|_{H^1_\rho}^2
\ee

\end{lemma}

\begin{proof}

Let first $Y_0=-\infty$. For $\e \in C^{\infty}_c(\mathbb R)$, by integrating by parts:
$$
\frac43\int_{\mathbb R} \e \pa_Y \e Y e^{-\frac{3Y^2}{4}}dY+\frac23\int_{\mathbb R} \e^2  e^{-\frac{3Y^2}{4}}dY=\int_{\mathbb R} \e^2 Y^2  e^{-\frac{3Y^2}{4}}dY.
$$
From Cauchy-Schwarz and Young inequalities, $4|\int \e \pa_Y \e Y e^{-\frac{3Y^2}{4}}|\leq 1/2 \int Y^2\e^2 e^{-\frac{3Y^2}{4}}+8 \int |\pa_Y\e|^2 e^{-\frac{3Y^2}{4}}$ and we infer from the above identity that:
$$
\int_{\mathbb R} \e^2 Y^2  e^{-\frac{3Y^2}{4}}dY\leq 4\int_{\mathbb R}\e^2  e^{-\frac{3Y^2}{4}}dY+\frac{16}5 \int_{\mathbb R} |\pa_Y \e |^2 e^{-\frac{3Y^2}{4}}dY.
$$
By density, this proves \fref{bd:poincare} for all $\e \in H^1_\rho$ in case $Y_0=-\infty$. For $-\infty<Y_0< 0$, define the even extension: $\tilde \e(Y)=\e (Y)$ for $Y\geq Y_0$ and $\tilde \e(Y)=\e (2Y_0-Y)$ for $Y<Y_0$, and $\tilde Y_0=-\infty$. Then $\| \e \|_{H^1_{\rho,Y_0}}^2\leq \| \tilde \e \|_{H^1_{\rho,\tilde Y_0}}^2\leq 2\|  \e \|_{H^1_{\rho,Y_0}}^2$, where the second inequality holds since $\rho(Y)\leq \rho(2Y_0-Y)$ for $Y\leq Y_0$. Applying \fref{bd:poincare} for $\tilde \e$ with $\tilde Y_0=\infty$ then implies \fref{bd:poincare} for $\e$ with $Y_0$.

\end{proof}


\section{Geometrical decomposition} \lab{ap:decomposition}

\begin{proof}[Proof of Lemma \ref{lem:decomposition}]

The proof relies on a classical use of the implicit function theorem, preceded by a renormalization procedure to obtain a result which is uniformly valid for all $\lambda$ large enough. Define the mapping
$$
\Phi : (\e,\lb,\mu,\tilde Y_0) \mapsto \lb_0^4 \left(\la \tilde \e,h_0\ra_\rho,\la \tilde \e,h_1\ra_\rho,\la \tilde \e,h_2\ra_\rho \right),
$$
where $\langle u,v\rangle_\rho=\int_{Y_0-\tilde Y_0}^{+\infty}uv\rho$ and, for $Y\geq Y_0-\tilde Y_0$:
$$
\tilde \e (Y)=G_1\left(\frac{Y+\tilde Y_0}{\lb_0^2}\right)-(1+\lb_0^{-4}\lb)^2G_1\left(\frac{Y}{\lb_0^2(1+\lb_0^{-4}\lb)^2\mu}\right)+\frac{\e(Y+\tilde Y_0)}{\lb_0^4}.
$$
$\Phi$ is a $C^2$ mapping on $L^2_\rho \times (-\lb_0^4,+\infty)\times (0,+\infty)\times \mathbb R$. Moreover, one computes that its differential at $(0,0,1,0)$ is, where $\langle u,v\rangle=\int_{Y\geq Y_0}uv\rho$:
\bee
&&J\Phi (0,0,1,0)+O(e^{-\lb_0^2}) \\
&=& \begin{pmatrix} \la \cdot,h_0 \ra & \la -2G_1\left(\frac{Y}{\lb_0^2}\right)+2\frac{Y}{\lb_0^2}\pa_Z G_1 \left(\frac{Y}{\lb_0^2}\right),h_0\ra & \lb_0^2 \la Y \pa_Z G_1 \left(\frac{Y}{\lb_0^2}\right),h_0\ra & \lb_0^2 \la \pa_Z G_1 \left(\frac{Y}{\lb_0^2}\right),h_0\ra \\
\la \cdot,h_1 \ra & \la -2G_1\left(\frac{Y}{\lb_0^2}\right)+2\frac{Y}{\lb_0^2}\pa_Z G_1 \left(\frac{Y}{\lb_0^2}\right),h_1\ra  & \lb_0^2 \la Y\pa_Z G_1 \left(\frac{Y}{\lb_0^2}\right),h_1\ra & \lb_0^2 \la \pa_Z G_1 \left(\frac{Y}{\lb_0^2}\right),h_1\ra \\
\la \cdot,h_2 \ra & \la -2G_1\left(\frac{Y}{\lb_0^2}\right)+2\frac{Y}{\lb_0^2}\pa_Z G_1 \left(\frac{Y}{\lb_0^2}\right),h_2\ra & \lb_0^2 \la Y \pa_Z G_1 \left(\frac{Y}{\lb_0^2}\right),h_2\ra & \lb_0^2 \la \pa_Z G_1 \left(\frac{Y}{\lb_0^2}\right),h_2\ra \end{pmatrix}
\eee
where the $O(e^{-\lb_0^2})$ comes from the boundary terms. Using the Taylor expansion of $G_1$ one has:
$$
-2G_1\left(\frac{Y}{\lb_0^2}\right)+2\frac{Y}{\lb_0^2}\pa_Z G_1 \left(\frac{Y}{\lb_0^2}\right)=-2-\frac{Y^2}{2\lb_0^4}+O\left(\frac{Y^4}{\lb_0^8}\right)=-\frac{1}{6\lb_0^4}h_2(Y)-\left(2+\frac{1}{3\lb_0^4}\right)h_0(Y)+O\left(\frac{Y^4}{\lb_0^8}\right),
$$
$$
\lb_0^2Y\pa_Z G_1 \left(\frac{Y}{\lb_0^2}\right)=-\frac{Y^2}{2}+O\left(\frac{Y^4}{\lb_0^4}\right)=-\frac{1}{6}h_2(Y)-\frac{1}{3}h_0(Y)+O\left(\frac{Y^4}{\lb_0^4}\right),
$$
and
$$
\lb_0^2 \pa_Z G_1  \left(\frac{Y}{\lb_0^2}\right) =-\frac Y2+O\left(\frac{|Y|^3}{\lb_0^4}\right)=-\frac{1}{2\sqrt 3}h_1(Y)+O\left(\frac{|Y|^3}{\lb_0^4}\right).
$$
Therefore:
$$
J\Phi (0,0,1,0)= \begin{pmatrix} \la \cdot,h_0 \ra & -2\| h_0\|_{L^2_\rho}^2+O(\lb_0^{-4}) &-\frac{1}{3} \| h_0\|_{L^2_\rho}^2+O(\lb_0^{-4}) & O(\lb_0^{-4}) \\
\la \cdot,h_1 \ra & O(\lb_0^{-4}) &O(\lb_0^{-4}) & -\frac{1}{2\sqrt 3}\| h_1\|_{L^2_\rho}^2+O(\lb_0^{-4}) \\
\la \cdot,h_2 \ra & O(\lb_0^{-4}) & -\frac{1}{6}\| h_2\|_{L^2_\rho}^2+O(\lb_0^{-4}) & O(\lb_0^{-8}) \end{pmatrix}.
$$
This implies that the restriction of the differential to $\{0\}\times \mathbb R^3$ is invertible for $\lambda_0$ large enough, with a uniform size. Moreover, one can also check similarly that the second differential of $\Phi$ is bounded near $(0,0,1,0)$, and this uniformly for large $\lambda$. Therefore the implicit function theorem applies uniformly for all $\lambda_0\geq \lambda^*$ large enough and $Y_0\leq -\lb_0^2$. There exists $\delta,K>0$ such that for each $\e \in L^2_\rho$ with $\| \e \|_{L^2_\rho}\leq \delta$, there exist unique parameters $(\lambda,\mu,\tilde Y_0)$ with $|\lambda|+|\mu-1|+|\tilde Y_0|\leq K$ such that $\Phi(\e,\lambda,\mu,\tilde Y_0)=0$. Moreover, they define $C^1$ functions with respect to the $L^2_\rho$ topology.\\

\noindent Let $\lambda_0\geq \lambda^*$ and $\| \varepsilon \|_{L^2_\rho}\leq \delta \lambda_0^{-4}$. The above discussion yields the existence, uniqueness, and differentiability of $(\lambda,\mu,Y_0)$ such that $\Phi (\lambda_0^4 \e,\lambda,\mu,Y_0)=0$. Let $(\tilde \lambda,\tilde \mu,\tilde Y_0)=(1+\lambda_0^{-4}\lambda,\mu,Y_0)$. Then they produce indeed
$$
G_1\left(\frac{Y}{\lambda_0^2}\right)+\varepsilon (Y)=\tilde \lambda^2G_1\left(\frac{Y-\tilde Y_0}{\tilde \lambda^2\tilde \mu}\right) +\tilde \varepsilon (Y-\tilde Y_0)\ \ \text{with} \ \ \tilde \varepsilon \perp h_0,h_1,h_2 \ \text{in} \ L^2_\rho 
$$
and one has $|\tilde \lambda-1 |\leq K\lb_0^{-4}$ and $|\mu-1|+|Y_0|\leq K$. The uniqueness when requiring these bounds follows similarly, and implies the smoothness from the above discussion. This ends the proof.

\end{proof}


\section{Estimates for the heat kernel} \label{ap:sec:heat}

\begin{lemma} \label{lwp:lem:heatkernel}

Let $K_t$ be given by \fref{eq:defheatkernel}. First, for any $T>0$, there exists $C(T)>0$ such that for any $t\in [0,T]$ and $y\in \mathbb R$:
\be \label{bd:heatanalytic21}
(K_t * \omega)(y) \leq C \omega(y).
\ee
Second, there exist $C,\kappa,\tau_0>0$ such that for all $k\in \mathbb N$, for any $t>0$ and $y\in \mathbb R$:
\be \label{bd:heatanalytic2}
| \pa_t^k K_t (y) |\leq C\tau_0^{-k} t^{-k}  k! K_{\kappa t}(y), \qquad | \pa_t^k \pa_y K_t (y) |\leq C\tau_0^{-k} t^{-k-\frac 12}  k! K_{\kappa t}(y).
\ee

\end{lemma}

\begin{proof}[Proof of Lemma \ref{lwp:lem:heatkernel}]

From a direct computation, $ \int_{z\in \mathbb R} \omega(y-z) K_{t} (z)dz\leq C \omega(y)$ for all $y\in \mathbb R$ and $t\in [0,T]$ for some universal constant $C(T)>0$, we omit the details. This shows \eqref{bd:heatanalytic21}.

Below we will denote by $C>0$ some universal constant whose value may change from one line to another.

Let $z\in \mathbb C$ denote a complex variable, and $(\rho,\theta)$ be its $(\mbox{radius},\mbox{angle})$ variables. Let $\phi(z)=e^{-\frac{1}{z}}$. Then, $\phi$ is an analytic function on $\mathbb C\backslash \{ 0\}$. Consider for any $t>0$ the circle  $\mathcal C_t:=\{ z\in \mathbb C, \ |z-t|=\frac{t}{10}\}$. Then, for all $z\in \mathcal C_t$, we have $\frac{9}{10}t \leq \rho \leq \frac{11}{10}t $ and $|\theta| \leq \theta_0$ for some $\theta_0<\frac{\pi}{2}$. Hence, denoting $\tilde z=\frac 1z$, we have for $z\in \mathcal C_t$ that:
$$
\frac{10}{11 t}\leq \tilde \rho \leq\frac{10}{9t} \qquad \mbox{and}\qquad |\tilde \theta|\leq \theta_0.
$$
Therefore, there exists a constant $c_0>0$ independent of $t$ such that $\frac{c_0}{ t} \leq \Re (\frac 1z)\leq \frac{1}{c_0 t} $ for all $z\in \mathcal C_t$. Consequently, for all $z\in \mathcal C_t$, there holds:
$$
|\phi(z)|\leq e^{-\frac{c_0}{t}}.
$$
Applying Cauchy contour formula to the holomorphic function $\phi$ with the contour $\mathcal C_t$, and differentiating, one finds that for some constant $C>0$, for all $j\in \mathbb N$:
$$
|\pa_t^j \phi (t) |\leq C t^{-j} 10^{j} j! e^{-\frac{c_0}{t}}.
$$
Let now $y\in \mathbb R\backslash \{0\}$ and $\phi_y(t)=e^{-\frac{y^2}{4t}}=\phi(\frac{4 t}{y^2})$. Then:
$$
|\pa_t^j \phi_y(t)|=\frac{4^j}{y^{2j}} |\pa_t^j \phi (\frac{4 t}{y^2})|\leq \frac{4^j}{y^{2j}} C (\frac{4 t}{y^2})^{-j} 10^{j} j! e^{-\frac{c_0y^2}{4t}} = C t^{-j}10^{j} j! e^{-\frac{c_0y^2}{4t}}.
$$
Combining the above bound with the bounds $|\pa_t^j (t\mapsto \frac{1}{\sqrt t})|\leq j! t^{-j-\frac 12}$ and $|\pa_t^j (t\mapsto \frac{1}{t^{\frac 32}})|\leq (j+1)! t^{-j-\frac 32}$, using Leibniz rule for differentiation, one finds that for all $j\in \mathbb N$, $t>0$ and $y\in \mathbb R$:
$$
|\pa_t^j K_t(y)| \leq C t^{-j-\frac 12} 11^{j} j! e^{-\frac{c_0y^2}{4t}} \quad \mbox{and} \quad |\pa_t^j (\frac{1}{t} K_t(y))| \leq C t^{-j-\frac 32} 11^{j} j! e^{-\frac{c_0y^2}{4t}}
$$
The first bound above is precisely the first bound in \eqref{bd:heatanalytic2}, while the second one above gives the second one in \eqref{bd:heatanalytic2}, using that $\pa_y K_t=-\frac{y}{2t}K_t$, and that for any $0<c_0'<c_0$ there exists $C>0$ with $\frac{|y|}{\sqrt{t}}e^{-\frac{c_0y^2}{4t}}\leq Ce^{-\frac{c_0'y^2}{4t}}$.

\end{proof}


\section{Combinatorics estimates}

\begin{lemma}[Combinatorics estimates]

There exists $C>0$ such that for any $A\in \mathbb N$:
\be \label{bd:technicalcombinatorial}
\sum_{a=0}^{A} \langle a\rangle^{-2}\langle A-a \rangle^{-2}\leq  C \langle A \rangle^{-2}
\ee
For any $A_1,A_2,A_3\in \mathbb N$, $r_1\leq A_1+A_2$ and $r_2\leq A_1+A_2+A_3$:
\begin{align} 
\label{bd:technicalcombinatorial2} & \sum_{a_1\leq A_1, \ a_2\leq A_2, \ a_1+a_2=r_1} \binom{A_1}{a_1}\binom{A_2}{a_2} = \binom{A_1+A_2}{r_1},\\
\label{bd:technicalcombinatorial3} & \sum_{a_1\leq A_1, \ a_2\leq A_2, \ a_3\leq A_3, \ a_1+a_2+a_3=r_2} \binom{A_1}{a_1}\binom{A_2}{a_2}\binom{A_3}{a_3} = \binom{A_1+A_2+A_3}{r_2}.
\end{align}

\end{lemma}

\begin{proof}

\underline{Proof of \eqref{bd:technicalcombinatorial}}. We decompose:
$$
\sum_{k=0}^{K} \langle k\rangle^{-2}\langle K-k \rangle^{-2}=\sum_{k=0}^{\lfloor \frac K2\rfloor} \langle k\rangle^{-2}\langle K-k \rangle^{-2}+\sum_{k=0}^{\lceil \frac K2\rceil} \langle k\rangle^{-2}\langle K-k \rangle^{-2}\lesssim \langle K\rangle^{-2} \sum_{k\geq 0}\langle k\rangle^{-2}\lesssim \langle K\rangle^{-2}.
$$
\underline{Proof of \eqref{bd:technicalcombinatorial2} and \eqref{bd:technicalcombinatorial3}}. These equalities are obtained from a standard counting argument.

\end{proof}


\end{appendix}

\end{document}